\documentclass[12pt]{amsart}  

\usepackage{custom_preamble} 

\begin{document}

\title{The Erd{\H o}s-Moser sum-free set problem}

\author{\tsname}
\address{\tsaddress}
\email{\tsemail}

\begin{abstract}
We show that there is an absolute $c>0$ such that if $A$ is a finite set of integers then there is a set $S \subset A$ of size at least $\log_3^{1+c}|A|$ such that the restricted sumset $\{s+s': s,s' \in S \text{ and } s \neq s'\}$ is disjoint from $A$.
\end{abstract}

\maketitle

\section{Introduction}

In this paper we are interested in some problems in additive combinatorics.  The standard introduction to this area is the book \cite{taovu::} by Tao and Vu and we have tried to give references to this book where possible.

Given finite sets $A$ and $S$ in an abelian group we write $A+S$ for the \textbf{sumset} $\{a+s: a \in A \text{ and }s \in S\}$ and $A \wh{+} S$ for the \textbf{restricted sumset} $\{a+s: a\in A, s \in S \text{ and } a \neq s\}$.  Erd{\H o}s \cite[p187]{erd::0} describes joint work with Moser in which they investigate the following question: if $A$ is a finite set of integers then what is the size of the largest set $S \subset A$ such that $(S \wh{+} S)\cap A=\emptyset$? They consider the restricted sumset to make the problem non-trivial: if $A$ is a set of consecutive powers of $2$ and $S \subset A$ has $(S+S) \cap A =\emptyset $ then $S$ has size at most $1$.

To discuss the problem we make a definition: for a finite set $A$ of integers define
\begin{equation*}
M(A):=\max\{|S|: S \subset A \text{ and } (S \wh{+} S) \cap A =\emptyset\}.
\end{equation*}
We are interested in lower bounds on $M(A)$ that are uniform in the size of $A$.  We record the history relevant to our interests below.  \cite[\S6.2.1]{taovu::} contains more details and a full survey can be found in \cite{taovu::1}.

In \cite{erd::0} a simple example is given to show (that for any natural number $N$ there is a set $A$ of size $N$ such) that $M(A) \leq \frac{1}{3}|A| +O(1)$ and this was improved, first by Selfridge \cite[p187]{erd::0}, then by Choi \cite[p190]{erd::0}, and then more substantially by Choi \cite[(2)]{cho::0} where it is shown that $M(A)\leq |A|^{2/5+o(1)}$.  The $o(1)$-term was refined by Baltz, Schoen, and Srivastav in \cite[Corollary 3]{balschsri::}\footnote{Equivalently \cite[Corollary 1]{balschsri::0}.} before Ruzsa adapted a classical construction of Behrend \cite{beh::} in \cite[Theorem]{ruz::04} to show the following.
\begin{theorem}[Ruzsa]\label{thm.ruz}  Given a natural number there is a set $A$ of that size such that
\begin{equation*}
M(A) = \exp(O(\sqrt{\log |A|})).
\end{equation*}
\end{theorem}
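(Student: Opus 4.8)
\emph{Reduction to $\mathbb{Z}^d$.} The plan is to adapt Behrend's sphere construction, the twist being that Behrend wants a set with \emph{no} additive structure while here we want one carrying as much additive structure as a bounded set of integers can, so that no large subset avoids having a restricted sum back inside it. First I would pass into $\mathbb{Z}^d$ by the standard digit map: fix $d$ and $q$ and let $\phi\colon\{0,\dots,\lfloor(q-1)/2\rfloor\}^d\to\mathbb{Z}$, $\phi(x)=\sum_{i<d}x_iq^i$. This $\phi$ is injective, and for $x,y$ in its domain $\phi(x)+\phi(y)=\phi(x+y)$ with $x+y$ computed coordinatewise in $\mathbb{Z}^d$ (no digit of $x+y$ reaches $q$, so there are no carries), whence $\phi(x)+\phi(y)\in\phi(\mathcal A)\iff x+y\in\mathcal A$ for $\mathcal A$ inside the domain. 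So $|\phi(\mathcal A)|=|\mathcal A|$ and $M(\phi(\mathcal A))$ equals the analogous quantity for $\mathcal A\subseteq\mathbb{Z}^d$, and it suffices --- after fixing $d,q$ with $d\log q\asymp\log N$ --- to build $\mathcal A\subseteq\{0,\dots,\lfloor(q-1)/2\rfloor\}^d$ with $|\mathcal A|=N$ such that $(S\wh{+}S)\cap\mathcal A=\emptyset$ forces $|S|\le\exp(O(\sqrt{\log N}))$.

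\emph{The set $\mathcal A$.} I would assemble $\mathcal A$ from shells $\mathcal A_0,\dots,\mathcal A_m$ --- level sets of a fixed linear form such as $x\mapsto\langle x,\mathbf1\rangle$, cut down by a sphere condition to control the sizes --- arranged so that the shell index is essentially additive: for $s\in\mathcal A_i$, $s'\in\mathcal A_j$ with $s\ne s'$, $i+j\le m$ and $s+s'$ not overshooting the box, one has $s+s'\in\mathcal A_{i+j}\subseteq\mathcal A$. The design problem is to arrange simultaneously that $|\mathcal A|=N$; that the top band $\bigcup_{j>m/2}\mathcal A_j$ --- which is unavoidably free, its internal sums having index past $m$ --- has total size only $\exp(O(\sqrt{\log N}))$; and that every lower shell is \emph{productive}, meaning ``$(S\wh{+}S)\cap\mathcal A_{2j}=\emptyset$'' forces $|S|\le\exp(O(\sqrt{\log N}))$ for $S\subseteq\mathcal A_j$ even when $\mathcal A_j$ is large. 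Reconciling productivity with size is where Behrend's convexity estimate enters, and the price of that --- once $d,m\asymp\sqrt{\log N}$ and $q\asymp\exp(\sqrt{\log N})$ --- is the factor $\exp(O(\sqrt{\log N}))$.

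\emph{Assembling the bound and the main obstacle.} Given such shells, a free $S$ meets each non-top shell in a set that is sum-free relative to the shell above it, and the additivity of the index caps how much $S$ can accumulate by straddling shells; combining the productivity estimates with this spreading bound yields $|S|\le\exp(O(\sqrt{\log N}))$. I expect the real work to be twofold. First, the productivity requirement itself: ensuring ``$(S\wh{+}S)\cap\mathcal A_{2j}=\emptyset\Rightarrow|S|$ small'' for a \emph{large} shell $\mathcal A_j$ is delicate precisely because shells are rigid, and it is here that a genuine Behrend-type sphere --- not an interval or an arithmetic progression --- is forced and the $\sqrt{\log N}$ appears. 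Second, the boundary bookkeeping: one must forbid the two trivial kinds of large free set --- a subset hugging a face of the box, whose sums leave the box, and a subset living in the top shells, whose sums have index too large --- and this constrains sharply which shells may be kept and where the sphere radii may sit, so that getting $|\mathcal A|$ all the way up to $N$ while killing both is the crux.
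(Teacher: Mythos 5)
First, a point of orientation: the paper does not prove Theorem \ref{thm.ruz} at all --- it is quoted from Ruzsa \cite[Theorem]{ruz::04} --- so your proposal has to stand on its own, and as written it does not. Its central design is self-contradictory. You want each shell $\mathcal A_j$ to carry a sphere condition, so that ``Behrend's convexity estimate'' can be invoked, and simultaneously you want exact additivity of the shell index, $s+s'\in\mathcal A_{i+j}$ for distinct $s\in\mathcal A_i$, $s'\in\mathcal A_j$. These demands cannot coexist: by the parallelogram law, if $\|s\|=\|s'\|=r$ and $s\neq s'$ then $\|s+s'\|<2r$ strictly, so the restricted sums of a sphere are spread over many norms and never lie on the doubled sphere --- convexity is exactly the mechanism by which Behrend's set \emph{avoids} additive structure, and here it works against absorption, not for it. If instead $\mathcal A_{2j}$ is a sphere of some other radius, then $s+s'$ lands on it only when $\langle s,s'\rangle$ takes one exact value, and nothing in your sketch prevents a large $S\subseteq\mathcal A_j$ from dodging that value. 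More fundamentally, your ``productivity'' requirement --- every $S\subseteq\mathcal A_j$ with $(S\wh{+}S)\cap\mathcal A_{2j}=\emptyset$ has $|S|\leq\exp(O(\sqrt{\log N}))$, where $\mathcal A_j$ is large and, by your own index-additivity, $\mathcal A_{2j}$ is the only part of $\mathcal A$ such sums can meet --- is precisely the kind of statement being proved (in the paper's language, that $\mathcal A_j$ is $(K,\mathcal A_{2j})$-summing with $K=\exp(O(\sqrt{\log N}))$); it is assumed, not established, and it is the entire content of the theorem.

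Second, even granting productivity, your parameters do not add up. Under exact index-additivity a greedy selection inside $\mathcal A_j$ (each chosen point forbids at most $|\mathcal A_{2j}|$ further points per point already chosen) yields an avoiding set of size at least $|\mathcal A_j|/(|\mathcal A_{2j}|+1)$, so productivity with threshold $K$ forces $|\mathcal A_{2j}|\geq |\mathcal A_j|/K-1$. Following the doubling chain $j\to 2j\to 4j\to\cdots$ into the free top band, whose total size must itself be at most about $K$, gives $|\mathcal A_j|\leq K^{O(\log m)}$ for every shell and hence $N\leq (m+1)K^{O(\log m)}$. With your choices $m\asymp\sqrt{\log N}$ and $K=\exp(O(\sqrt{\log N}))$ this reads $\log N=O(\sqrt{\log N}\,\log\log N)$, false for large $N$; the choices $d,m\asymp\sqrt{\log N}$, $q\asymp\exp(\sqrt{\log N})$ are Behrend's and optimise a different quantity (and, if the levels are consecutive values of the digit sum, a box with $d\asymp m\asymp\sqrt{\log N}$ contains only $\exp(O(\sqrt{\log N}))$ admissible points in the first place). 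So both pillars of the plan --- an absorbing shell structure and the catching property for large shells with small catchers --- are missing, and the parameter bookkeeping fails as stated. Whatever route you take (Ruzsa's, which the paper only cites, uses Behrend's theorem, i.e.\ the existence of a dense $3$-AP-free set, as an ingredient rather than turning spheres into additively closed shells), these two points are where a genuine argument has to be supplied.
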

In the other direction, Erd{\H o}s and Moser showed that $M(A) \rightarrow \infty$ as $|A| \rightarrow \infty$, and Klarner showed that $M(A)=\Omega(\log |A|)$ (both results are mentioned on \cite[p187]{erd::0} though the proofs, or at least Klarner's, seem to have been lost \cite[{$\dagger$}, p630]{cho::0}).  Ruzsa showed that $M(A)> 2\log_3 |A| -1$ in \cite[Theorem]{ruz::04} by a greedy algorithm, and then Sudakov, Szemer{\'e}di and Vu made an important breakthrough in \cite{sudszevu::} giving the first super-logarithmic lower bound on $M(A)$ in \cite[Theorem 1.1]{sudszevu::}.  Their argument was improved by Dousse \cite[\S4]{dou::}, and then Shao \cite[Corollary 1.3]{sha::0} who showed that $M(A)=(\log \log |A|)^{\frac{1}{2}-o(1)}\log |A|$.  We shall show the following.
\begin{theorem}\label{thm.main}
For every finite set of integers $A$ we have
\begin{equation*}
M(A) = \log^{1+\Omega(1)} |A|.
\end{equation*}
\end{theorem}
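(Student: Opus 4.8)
The plan is to recast the problem as a question about independent sets. For distinct $a,b\in A$, declare $\{a,b\}$ to be an edge of a graph $G=G_A$ on vertex set $A$ whenever $a+b\in A$; then a set $S\subseteq A$ satisfies $(S\wh{+}S)\cap A=\emptyset$ precisely when $S$ is independent in $G$, so $M(A)=\alpha(G)$. The degree of $a$ in $G$ is $|A\cap(A-a)|$, up to a correction according to whether $2a\in A$, so $\sum_a d_G(a)$ is, up to lower‑order terms, the number of Schur triples in $A$. Hence, if $A$ has few Schur triples --- equivalently, the average degree of $G$ is at most $n/(\log n)^{1+c}$, where $n:=|A|$ --- then the Caro--Wei bound $\alpha(G)\ge\sum_a (d_G(a)+1)^{-1}$ already gives $M(A)\ge(\log n)^{1+c}$. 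So we may assume that $A$ carries substantial additive structure: it has $\gg n^{2}/(\log n)^{O(1)}$ Schur triples, hence large additive energy, and the difference function $x\mapsto |A\cap(A-x)|$ is of size $\gg n/(\log n)^{O(1)}$ on a positive proportion of $A$.

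In this structured regime I would first peel off the ``easy'' pieces of $A$ --- any chunk dissociated from the rest of $A$ may simply be thrown into $S$, since its restricted pairwise sums then miss $A$ --- and reduce to the case in which a large proportion of $A$ is densely contained in a proper generalized arithmetic progression $P$ of small rank $r$ and size comparable to $|A|$; this is a Balog--Szemer{\'e}di--Gowers argument followed by a Freiman‑type structure theorem, where it pays to keep the rank and covering losses small (the quasi‑polynomial Bogolyubov--Ruzsa bounds are the natural tool). After a dilation one can take $P$ to be generic, i.e.\ a box $\{\,\sum_{i=1}^{r}x_iv_i : 0\le x_i<L_i\,\}$ in which no carrying occurs, so that for $s,s'\in P$ one has $s+s'\notin P$ exactly when $x^{(s)}_i+x^{(s')}_i\ge L_i$ for some coordinate $i$. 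The problem then becomes combinatorial: given $A$ dense in $\prod_i[0,L_i)$, find a large $S\subseteq A$ in which any two distinct elements have coordinate sum at least $L_i$ in some common coordinate $i$ --- equivalently, the family of high‑coordinate sets $\{\,i : x^{(s)}_i\ge L_i/2\,\}$, $s\in S$, must be intersecting. A slicing analysis over the coordinates, run together with a fibrewise recursion that trades a drop in rank (or a halving of one of the $L_i$) against a gain in the density of the portion of $A$ under consideration, should build such an $S$, and when the rank has been driven down to $1$ a top‑half argument finishes that block.

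The output of all this should be a lemma roughly of the form: either $M(A)\ge(\log n)^{1+c}$ outright, or there is a sub‑problem $A'\subseteq A$ with $|A'|\ge |A|/O(1)$ and a block $B\subseteq A$, suitably separated from $A'$, with $|B|\gg(\log n)^{c}$ and $(B\wh{+}B)\cap A=\emptyset$, so that $M(A)\ge M(A')+(\log n)^{c}$. Iterating this about $\log n$ times, down to a residual set of bounded size, makes the per‑step contributions $(\log n_j)^{c}$ with $n_j\approx n/C^{j}$ sum to $\sum_j(\log n - j\log C)^{c}\approx (\log n)^{1+c}$. The delicate points --- and the source of the gain over the $(\log\log n)^{1/2-o(1)}\log n$ bound of Shao --- are two. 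First, one must avoid \emph{all} of $A$, not merely the ambient progression $P$, so the generic‑box reduction has to be handled carefully; degenerate progressions, where $s+s'$ can fall back into $P$ through carrying, are dealt with by passing to a large generic sub‑progression or by reducing modulo a well‑chosen integer. Second, and more seriously, the density and rank parameters in the recursion must be controlled so that the loss at each of the roughly $\log n$ levels is only sub‑polynomial; only then do the per‑level gains compound to a genuine power saving $(\log n)^{1+\Omega(1)}$ rather than collapsing to $O(\log n)$. Setting this recursion up so that it closes, and then optimising $c$ against the losses incurred in the structure‑theory step, is where essentially all the work lies; the remaining estimates should be routine.
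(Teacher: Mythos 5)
Your opening dichotomy (few Schur triples, hence small average degree and Caro--Wei; otherwise large additive energy and Balog--Szemer{\'e}di--Gowers--Freiman structure) and your closing bootstrap (extract a block of size $\log^{c}n$, recurse on a constant-proportion subproblem, sum $\approx\log n$ contributions) are both in the spirit of what is actually done: the paper likewise passes from $(k,X)$-summing to large hereditary energy and then to Freiman structure, and deduces the theorem from a single-block statement (Proposition \ref{prop.key2}) by an ordering/window iteration. But the core of your plan has a genuine gap, and it is exactly the point where the paper's machinery lives. After the Freiman step you only ensure that restricted sums of your candidate set $S$ leave the box $P$ (via the ``intersecting high-coordinate sets'' device). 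That is not enough: the structure theorem only places a $k^{-O(1)}$- or $\log^{-O(1)}n$-proportion of $A$ inside $P$, so almost all of $A$ may lie outside $P$, and nothing in your argument prevents $s+s'$ from landing in $A\setminus P$. The fixes you mention (passing to a generic sub-progression, reducing modulo a well-chosen integer) address properness/carrying of $P$, not this issue at all. The paper's resolution is structurally different: the ordering trick reduces ``avoid all of $A$'' to ``avoid a window $X$ with $|X|\le(1+\eta)|A|$'', and then, inside the structured piece, either $X$ has density $O(k^{-2})$ on the relevant (doubled) neighbourhood --- in which case a pigeonhole count (Lemma \ref{lem.c}) produces $S$ with $(S\wh{+}S)\cap X=\emptyset$, contradicting $(k,X)$-summing --- or $A$ retains density there, and one iterates along dilates $2^{j}\cdot S$ until the $2$-adic valuation argument (Lemmas \ref{lem.energy} and \ref{lem.int}), which uses the integers essentially, yields a contradiction. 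Your proposal contains no substitute for either mechanism.

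The second acknowledged-but-unresolved point is equally serious: the requirement that the rank/density losses be sub-polynomial at each of $\approx\log n$ levels so that per-level gains of $\log^{c}n$ compound is not a routine optimisation; it is precisely the quantitative content of Proposition \ref{prop.key2}, obtained in the paper through the Bohr-set energy-increment argument (Corollary \ref{cor.iju}) whose uniformity cost is polynomial rather than exponential in $k$. There is also a smaller unaddressed issue: for $M(A)\ge M(A')+\log^{c}n$ to iterate, the blocks extracted at different levels must have cross-sums avoiding $A$ as well; the paper arranges this by taking windows of largest elements and deleting $Z_i-\bigl(\{0\}\cup\bigcup_j S_j\bigr)$ from the next window, whereas your ``suitably separated'' is left unspecified. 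As it stands the proposal is a programme whose hardest steps are flagged rather than carried out, and the flagged steps are where the theorem is actually proved.
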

The result as recorded in the abstract follows immediately from this.\footnote{For $|A|\in \{1,2,3\}$ it is trivial since $\log_3|A| \leq 1$ and any $S \subset A$ of size $1$ has $S\wh{+}S=\emptyset$.  On the other hand Theorem \ref{thm.main} immediately gives the result in the abstract for $|A| \geq C$ for some absolute $C>0$.  Finally, for $4 \leq |A| <C$ \cite[Theorem]{ruz::04} shows that
\begin{equation*}
M(A) > 2\log_3 |A| -1 > \left(1+\frac{\log 4- \log 3}{\log 4}\right)\log_3 |A| > \exp\left(\frac{1}{6}\right)\log_3 |A| > \log_3^{1+\frac{1}{6\log\log_3C}}|A|.
\end{equation*}
The result is proved.  (Note if $A=\{-1,0,1\}$ then $M(A)=1$ and so the best lower bound on $M(A)$ over all size $3$ sets is $1$, which is the reason for taking logarithms to the base $3$.)}

Our argument has two parts: the first makes use of specific properties of the integers and is dealt with in \S\ref{sec.over}.  The second part does not (at least it works in any abelian group with no $2$-torsion), and is covered in the remainder of the paper from \S\ref{sec.mod} onwards.  Indeed, because the integers do not play a significant role we are able to give a model argument in \S\ref{sec.mod} and we hope the reader familiar with the area will be able to understand the main ideas of our proof from \S\S\ref{sec.over}\&\ref{sec.mod} alone.

Our approach falls within the general strategy proposed by \cite{sudszevu::} so we begin in the next section with an overview of that, which also serves to explain how the improvements of Dousse and Shao arise. 

\section{Overview of the Sudakov-Szemer{\'e}di-Vu strategy}\label{sec.ssv}

Given sets $A$ and $X$ in an abelian group we say that $A$ is \textbf{$(k,X)$-summing}\footnote{Our terminology is not standard.  In \cite{sudszevu::} the authors say a set $S$ is \textbf{sum-free with respect to} $X$ if $(S \wh{+} S) \cap X =\emptyset$.  In \cite{taovu::2} the authors say that a set $S$ is \textbf{summing in} $X$ for the same thing.} if for any set $S \subset A$ with $|S| \geq k$ we have $(S \wh{+} S) \cap X \neq \emptyset$.  (We shall always take $k \geq 2$.)

The following proposition is the focus of the Sudakov-Szemer{\'e}di-Vu strategy.
\begin{proposition}\label{prop.key}
Suppose that $A\subset X \subset \Z$ have $|X| \leq (1+\eta)|A|$, and $A$ is $(k,X)$-summing for some $k \in \N$.  Then either $\eta = k^{-O(1)}$ or $|A| \leq F(k)$ for some universal (monotonically increasing) function $F:\N \rightarrow \N$.
\end{proposition}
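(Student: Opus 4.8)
\emph{Setup and Step 1 (an additive-closure estimate; the only use of $\eta$).} Assume $A$ is $(k,X)$-summing; I want to show $\eta \geq k^{-C}$ for a suitable absolute constant $C$, or else $|A| \leq F(k)$ for a suitable monotone $F$. So suppose $\eta < k^{-C}$ and let me bound $|A|$. Put $N := |\{(a,a') \in A^2 : a \neq a',\ a+a' \in X\}|$. If $N$ were smaller than $c|A|^2/k$ (for an absolute $c$), then keeping each point of $A$ independently with probability $p := \min(1, |A|/2N)$ and then deleting one endpoint of every kept pair summing into $X$ would leave, in expectation, a set that is sum-free with respect to $X$ and of size more than $k$ --- contradicting the $(k,X)$-summing hypothesis. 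Hence $N \geq c|A|^2/k$. Since $|\{(a,a') \in A^2 : a+a' \in X \setminus A\}| \leq |X \setminus A|\,|A| \leq \eta |A|^2 \leq k^{-C}|A|^2$, choosing $C \geq 2$ gives $\sum_{a \in A} r_A(a) \gtrsim |A|^2/k$, where $r_A(a) = |\{(b,c) \in A^2 : b+c = a\}|$; that is, $A$ contains $\gtrsim |A|^2/k$ triples $b+c = a$ with $a,b,c \in A$. By Cauchy--Schwarz the additive energy satisfies $E(A) := \sum_x r_A(x)^2 \geq (\sum_{a \in A} r_A(a))^2/|A| \gtrsim |A|^3/k^2$. (If instead $\eta \geq k^{-C}$ we are already in the case $\eta = k^{-O(1)}$, so this is the only place the smallness of $\eta$ enters.)

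\emph{Step 2 (structure).} By the Balog--Szemer\'edi--Gowers theorem (\cite{taovu::}) there is $A' \subseteq A$ with $|A'| \geq |A|/k^{O(1)}$ and $|A'+A'| \leq k^{O(1)}|A'|$, and then by Freiman's theorem (\cite{taovu::}) $A'$ lies in a proper generalised arithmetic progression $P = \{x_0 + \ell_1 x_1 + \cdots + \ell_d x_d : 0 \leq \ell_i < L_i\}$ of rank $d \leq k^{O(1)}$ and volume $|P| \leq C_0(k)|A'|$, where $C_0$ is the quantitative (tower-type) bound in Freiman's theorem; shrinking $P$ by a bounded-in-$d$ factor we may also assume $P+P$ is proper. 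If $|A|$ exceeds a suitably chosen $F(k)$, then $|P| \geq |A|/k^{O(1)}$ is so large that the longest side $L := \max_i L_i \geq |P|^{1/d}$ is large purely as a function of $k$; relabel so that $i=1$ achieves this.

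\emph{Step 3 (a corner in the long direction).} Starting from $A' \subseteq P$, iterate: split the current progression into its halves $\{\ell_1 < \tfrac12 L_1\}$ and $\{\ell_1 \geq \tfrac12 L_1\}$; if the current set meets the upper half in at least a $1/C_1(k)$-fraction of its points (for an appropriate tower-type $C_1$), stop; otherwise pass to the still-proper lower half, discarding the small fraction of points above. The volume halves at each step while only a small fraction of the set is lost, so comparing $|P|/2^m$ with $|A'| \geq |P|/C_0(k)$ shows the process stops after at most $\log_2 C_0(k) + O(1)$ steps --- this is exactly what forces $F(k)$ to be large. One is left with $S_0 \subseteq A' \subseteq A$ with $|S_0| \geq |A|/C_2(k)$ (tower-type $C_2$) all of whose points have $x_1$-coordinate at least $\tfrac12 L_1^{(m^*)}$, with $L_1^{(m^*)} \geq L/C_0(k)$ still large; hence, $P+P$ being proper, every restricted pairwise sum of $S_0$ has $x_1$-coordinate at least $L_1^{(m^*)}$ and so lies outside $P^{(m^*)}$, indeed $S_0 \wh{+} S_0$ is confined to a fixed translate $R$ of a proper progression of rank $d$.

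\emph{Step 4 (defeating $X$; the hard part).} It remains to pass from the large $S_0 \subseteq A$, whose restricted sumset avoids $P^{(m^*)}$, to an $S \subseteq S_0$ with $|S| \geq k$ and $(S \wh{+} S) \cap X = \emptyset$ (such an $S$ contradicts the assumption that $A$ is $(k,X)$-summing, finishing the proof). This is the main obstacle: although $S_0 \wh{+} S_0$ avoids $P^{(m^*)}$ it need not avoid $X$, and even though $|X| \leq (1+\eta)|A|$ is only a function-of-$k$ multiple of $|S_0|$, the offending part $X \cap R$ may sit precisely on the high-multiplicity part of the convolution $1_{S_0} \ast 1_{S_0}$, so a naive second alteration --- which would need $\sum_{x \in X} r_{S_0}(x) \lesssim |S_0|^2/k$ --- can fail. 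The plan is a dichotomy: if $S_0$ is sufficiently spread (small additive energy, equivalently $1_{S_0} \ast 1_{S_0}$ has small $L^\infty$ norm on $R$) then the second alteration inside $S_0$ goes through; if instead $S_0$ is highly additively structured, then it is itself a small-doubling set that is $(k, X \cap R)$-summing against a now thinner obstruction, and one recurses through Steps 2--3. Making this dichotomy precise --- balancing the various $k^{O(1)}$ losses so that a single absolute exponent $C$ emerges, and controlling how the recursion compounds the tower-type bound $C_0(k)$ --- is the technical heart of the argument and is what determines the (tower-type, but entirely admissible) final form of $F$.
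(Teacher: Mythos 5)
Your Steps 1 and 2 are fine and match the standard opening (they are essentially Lemma \ref{lem.hen}/Lemma \ref{lem.i} followed by the Balog--Szemer\'edi--Gowers--Freiman machinery), but the proposal is not a proof: Step 4 is an acknowledgement of the real difficulty rather than a resolution of it, and the dichotomy you sketch there does not work as stated. The ``spread'' branch is miscalibrated: $S_0$ sits inside the small-doubling set $A'$ with $|S_0|\geq |A'|/C_2(k)$, so $E(S_0)\geq |S_0|^4/|A'+A'|\geq |S_0|^3/\mathrm{tower}(k)$ automatically; at the threshold you would actually need (namely $\sum_{x\in X}r_{S_0}(x)\lesssim |S_0|^2/k$, with $|X\cap R|$ possibly as large as $\mathrm{tower}(k)\cdot|S_0|$ because all of $A$, not just $A'$, can meet $R$) the ``small energy'' case is essentially empty, and in any event energy is not the relevant quantity. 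Worse, the ``structured'' branch discards the one hypothesis that makes the proposition true: after passing to $(S_0,\,X\cap R)$ you only retain $|X\cap R|\leq C(k)|S_0|$ with $C(k)$ of tower type, and the statement is false under a hypothesis of that shape --- take $A=\{1,\dots,N\}$ and $X=\{1,\dots,3N\}$: then $A$ is $(2,X)$-summing, $|X|=3|A|$, and $|A|$ is unbounded. So ``recursing through Steps 2--3'' cannot close the argument; the near-equality $|X|\leq(1+\eta)|A|$ must be preserved or replaced by something, and your scheme provides neither.

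What the paper (following Sudakov--Szemer\'edi--Vu) does at exactly this point is the step your plan is missing: it exploits an integer-specific dilation argument instead of trying to push sums out of the Freiman progression. Because $\eta$ is small and $A$ has large (hereditary) energy, the $2$-adic decomposition of $\Z$ (Lemma \ref{lem.52} in the overview; Lemmas \ref{lem.energy} and \ref{lem.int} in the paper's own quantitative argument via Proposition \ref{prop.key2}) produces a large subset $A'\subset A$ of large energy with $(2\cdot A')\cap X=\emptyset$. One then seeks $S\subset A'$ with every restricted sum $s+s'$ lying in $2\cdot A'$ --- a dense, complexity-one counting problem inside a progression (Proposition \ref{prop.ss}, handled by Szemer\'edi/Fourier technology) --- and avoidance of $X$ is then automatic, since $(2\cdot A')\cap X=\emptyset$. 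This converts ``avoid the adversarial set $X\supseteq A$'' into ``hit the structured set $2\cdot A'$'', which is precisely what your Steps 3--4 never achieve: forcing $S_0\wh{+}S_0$ out of $P^{(m^*)}$ says nothing about avoiding $A$ itself, let alone $X$. Until you either carry out your dichotomy with a correct replacement for the lost $(1+\eta)$-hypothesis (which, for the reasons above, seems unlikely in this form) or import a device like the dilation trick, the statement remains unproved.
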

\cite[Theorem 1.2]{sudszevu::} says we may take $F(k)=\exp(\exp(\exp(\exp(\exp(O(k))))))$.

Applying Proposition \ref{prop.key} with $X=A$  gives us that $M(A)\rightarrow \infty$ as $|A|\rightarrow \infty$, but the point of the proposition is the extra flexibility afforded by being able to take $X$ to be a little larger than $A$.  This means that it can be bootstrapped to give \cite[Theorem 1.1]{sudszevu::} (as is done in \cite[\S2]{sudszevu::}, and as we will do in \S\ref{sec.over}), and in general, given $F$, one gets
\begin{equation}\label{eqn.ssv}
M(A)=\Omega\left(\frac{F^{-1}(|A|)}{\log F^{-1}(|A|)} \log |A|\right).
\end{equation}
Many tools in additive combinatorics do not distinguish between different abelian groups and so to make use of them one needs to be in a situation where the conclusion does not depend on the underlying group.  In this case, if we try to replace the integers in Proposition \ref{prop.key} with a general abelian group we run into the problem that $A$ might be a subgroup.  In some sense this is the only obstacle as shown by Tao and Vu:
\begin{theorem}[{\cite[Theorem 1.2]{taovu::2}}]\label{thm.tv}
Suppose that $G$ is an abelian group and $A \subset G$ is finite and $(k,A)$-summing.  Then there is an integer $m \leq k$ and subgroups $H_1,\dots,H_m \leq G$ such that $|A \setminus (H_1\cup \cdots \cup H_m)| =O_k(1)$ and $|A\cap H_i| = \Omega_k(|H_i|)$ for $1 \leq i \leq m$.
\end{theorem}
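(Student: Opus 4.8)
The plan is to turn the combinatorial hypothesis into quantitative additive structure and then promote that structure to genuine subgroups using the summing condition itself. First I would pass to the \emph{sum graph} $\Gamma$ on vertex set $A$, joining two distinct points $a,a'$ precisely when $a+a'\in A$. By definition $A$ is $(k,A)$-summing if and only if $\Gamma$ has no independent set of size $k$, so we may assume $\alpha(\Gamma)\le k-1$; we may also assume $|A|$ is larger than any prescribed constant depending on $k$ (it is the $O_k(1)$ in the conclusion that buys us this freedom). Tur\'an's theorem then forces $\Gamma$ to have $\Omega_k(|A|^2)$ edges, i.e.\ the equation $a_1+a_2=a_3$ has $\Omega_k(|A|^2)$ solutions in $A$ with $a_1\ne a_2$, and Cauchy--Schwarz upgrades this to the additive-energy bound $E(A)=\Omega_k(|A|^3)$.

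Now I would run the usual structural pipeline: Balog--Szemer\'edi--Gowers produces $A'\subset A$ with $|A'|=\Omega_k(|A|)$ and $|A'+A'|=O_k(|A'|)$, and the Green--Ruzsa form of Freiman's theorem for arbitrary abelian groups places $A'$ inside a bounded number of translates of a coset progression $H_0+P$, where $H_0\le G$ is finite, $P$ is a generalised arithmetic progression of rank $O_k(1)$, and $|H_0+P|=O_k(|A'|)$. The crux — and the step I expect to be the main obstacle — is to discard the progression part. The intuition is that $\alpha(\Gamma)\le k-1$, i.e.\ $M(A)\le k-1$, forbids $A$ from containing a long progression in a direction of large order: such a progression sits inside a copy of $\{0,\dots,N\}$ in a cyclic group of order $\gg N$, and Ruzsa's greedy bound ($M(B)>2\log_3|B|-1$) applied inside that cyclic group, where the relevant restricted sums do not wrap round, produces a set of size $\gg\log N$ that is sum-free with respect to $A$, contradicting $M(A)\le k-1$ once $N\gg_k 1$. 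Since $A'$ is dense in $H_0+P$, some direction of $P$ would supply such a progression (after a Szemer\'edi-type slicing argument) unless every ``long'' direction of $P$ in fact has bounded order; feeding that back one should conclude that $A'$ lies, up to $O_k(1)$ points, in a genuine finite subgroup $H_1\supseteq H_0$ with $|H_1|=O_k(|A'|)$, so that $|A\cap H_1|\ge|A'|-O_k(1)=\Omega_k(|A|)=\Omega_k(|H_1|)$. Controlling the interplay between $A$, the finite part $H_0$, and the torsion-free directions of $P$ cleanly is where the real difficulty lies; note that for $G=\Z$ there is no non-trivial finite subgroup, so the argument degenerates to $|A|=O_k(1)$, matching the $X=A$ case of Proposition~\ref{prop.key}.

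Finally I would iterate: $A\setminus H_1$ is still $(k,A)$-summing and the induced subgraph of $\Gamma$ on it still has independence number $\le k-1$, so the same argument yields $H_2$ absorbing a positive proportion of $A\setminus H_1$, then $H_3$, and so on, discarding any $H_i$ contained in another so that the family stays an antichain; this covers all but $O_k(1)$ points of $A$ by boundedly many finite subgroups, each meeting $A$ in a positive proportion of itself. Sharpening ``boundedly many'' to the stated $m\le k$ is the delicate endgame, handled by arguing directly against $\Gamma$: a carefully chosen transversal of the cosets $A\cap H_i$ over genuinely distinct $H_i$ forms an independent set in $\Gamma$, so having more than $k$ of them would violate $\alpha(\Gamma)\le k-1$; pushing this through while absorbing the $O_k(1)$ exceptional points and the merely $\Omega_k(1)$ densities takes some care.
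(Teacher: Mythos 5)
You should note first that the paper itself does not prove this statement at all: it is quoted verbatim from Tao and Vu \cite{taovu::2} as background for the discussion in \S\ref{sec.ssv}, so your sketch has to stand on its own rather than be measured against an in-paper argument. Your opening moves are fine and standard: the Tur\'an passage from ``no independent set of size $k$ in the sum graph'' to $\Omega_k(|A|^2)$ edges and hence $E(A)=\Omega_k(|A|^3)$ is the content of Lemma \ref{lem.i} (proved here via Lemma \ref{lem.hen}), and Balog--Szemer\'edi--Gowers plus Green--Ruzsa does put a large piece of $A$ efficiently inside a coset progression $H_0+P$. The genuine gap is exactly where you anticipate it, and it is not a technicality: the claim that a progression $Q=a+\{0,\dots,N\}\cdot d\subset A$ with $\mathrm{ord}(d)\gg N$ contradicts $M(A)\le k-1$ does not follow from Ruzsa's greedy bound. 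That bound produces $S\subset Q$ whose restricted sums avoid $Q$ itself, a set measured against itself; here $S\wh{+}S$ lies in the coset $2a+\langle d\rangle$, and what it must avoid is the trace of $A$ on $2a+\{0,\dots,2N\}\cdot d$, an arbitrary set unrelated to $Q$. If that trace is the whole interval, no sum-avoiding $S$ of size $2$ exists and there is no contradiction; instead you have merely learned that $A$ contains a progression twice as long in a new coset, and you are forced to iterate through the cosets $2^ja+\langle d\rangle$ with doubling lengths. A greedy count does give a dichotomy at each stage (either a $k$-set of $Q$ sum-avoiding in $A$, or the trace has density $\gtrsim 1/k$ on the doubled interval), but in the second case you now face a dense subset of an interval whose pairwise sums must be tracked into yet another coset --- precisely the counting problem of Proposition \ref{prop.ss} and the $2^j$-dilate iteration that occupies the rest of this paper. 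So the step you label ``intuition'' conceals essentially the whole difficulty, and the lemma it leans on is false as stated.

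The endgame is also incomplete in a way that matters. Removing subgroups one at a time, each capturing a positive ($k$-dependent) proportion of what remains, shrinks the remainder only geometrically, so it reaches an $O_k(1)$-sized leftover after roughly $\log|A|$ steps, not after at most $k$ (or even $O_k(1)$) steps; some further idea is needed to cap the number of subgroups. The fix you propose --- that a transversal of the sets $A\cap H_i$ is an independent set in $\Gamma$ --- is not true: for $a_i\in A\cap H_i$ and $a_j\in A\cap H_j$ the sum $a_i+a_j$ lies in neither subgroup in general but may perfectly well lie in $A$ (in a third $H_l$, or among the exceptional points), and nothing guarantees that representatives avoiding all such coincidences exist. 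A smaller slip: the rank of the Green--Ruzsa coset progression is only $O_k(1)$ (polynomial in the doubling constant $k^{O(1)}$), not $\le k-1$, so the bound $m\le k$ cannot simply be inherited from the number of directions. For a complete argument you should consult Tao and Vu's paper, where the theorem is established by a different and considerably more involved argument with correspondingly weak quantitative dependence on $k$.
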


The order property of the integers (not available in general abelian groups) is used in the derivation of (\ref{eqn.ssv}) from Proposition \ref{prop.key}, and the integers are also used essentially in Lemma \ref{lem.52} (and hence Lemma \ref{lem.comb}), but all other uses in this discussion section are for convenience.

Sudakov, Szemer{\'e}di and Vu capture a property of the integers that eliminates the subgroup examples of Theorem \ref{thm.tv} in the next lemma for which we require a definition.  Given a set $A$ in an abelian group $G$ the \textbf{additive energy of $A$} is defined\footnote{See  \cite[Definition 2.8]{taovu::} for a discussion.} to be
\begin{equation*}
E(A):=\|1_A \ast 1_{-A}\|_{\ell_2(G)}^2= \sum_z{\left(\sum_y{1_A(y)1_{-A}(z-y)}\right)^2}.
\end{equation*}
\begin{lemma}\label{lem.52}
Suppose that $X \subset \Z$ has $E(X) \geq \eta|X|^3$.  Then there is a set $X' \subset X$ such that $|X'| \geq \eta^{O(1)}|X|$ and $(2\cdot X') \cap X = \emptyset$.
\end{lemma}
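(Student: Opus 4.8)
The plan is to reduce to the case $X \subseteq \Z_{>0}$ and then exploit the ordering of $\Z$ through a dyadic decomposition; this route in fact gives the clean bound $|X'| \gg \eta|X|$, with an absolute constant in place of $\eta^{O(1)}$. Write $R(X)$ for the size of the largest $X' \subseteq X$ with $(2\cdot X')\cap X = \emptyset$; such an $X'$ is exactly a subset of $\{x \in X : 2x \notin X\}$, so $R(X) = |X| - |\{x \in X : 2x \in X\}|$. Split $X$ into its positive, negative, and zero parts $X^+,X^-,X^0$. Since $2x$ has the same sign as $x$, the set $\{x : 2x \in X\}$ splits accordingly and the zero part cancels, giving $R(X) = R(X^+) + R(-X^-)$; meanwhile $1_X = 1_{X^+}+1_{X^-}+1_{X^0}$, $E(X^0)\le 1$, and the triangle inequality for $A \mapsto E(A)^{1/4}$ give $E(X) \ll E(X^+) + E(X^-) + 1$. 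As $|X^\pm| \le |X|$, it therefore suffices to prove
\[
R(Y) \gg \frac{E(Y)}{|Y|^2} - O(1) \qquad \text{for every finite } Y \subseteq \Z_{>0}.
\]
Applying this to $Y = X^+$ and $Y = -X^-$, inserting $E(X) \ge \eta|X|^3$, and (when $|X|$ is bounded in terms of $1/\eta$) simply taking $X' = \{x_0\}$ for $x_0\in X$ of maximal absolute value, one recovers $R(X) \gg \eta|X|$, which is stronger than needed.

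So let $X \subseteq \Z_{>0}$ and decompose $X = \bigsqcup_{j\ge 0}Y_j$ with $Y_j := X \cap [2^j,2^{j+1})$, $n_j := |Y_j|$. The dyadic cutoff is chosen so that $2\cdot Y_j \subseteq [2^{j+1},2^{j+2})$, whence $(2\cdot Y_j)\cap X = (2\cdot Y_j)\cap Y_{j+1}$ and $|\{y\in Y_j : 2y\in X\}| = |(2\cdot Y_j)\cap Y_{j+1}| \le \min(n_j,n_{j+1})$. Summing over $j$,
\[
R(X) = |X| - \sum_j|\{y\in Y_j : 2y\in X\}| \ge \sum_j\bigl(n_j - \min(n_j,n_{j+1})\bigr) = \sum_j\max(0,\,n_j - n_{j+1}).
\]
The sequence $(n_j)$ starts at $n_0\le 1$ and is eventually $0$, so its total decrease exceeds its total increase by $n_0$, and its total increase is at least $\max_jn_j - n_0$; hence $R(X) \ge \max_jn_j - 1$.

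It remains to show $\max_jn_j \gg E(X)/|X|^2$, for which it suffices to prove $E(X)\ll |X|\sum_jn_j^2$, since $\sum_jn_j^2 \le (\max_jn_j)\sum_jn_j = |X|\max_jn_j$. Positivity enters a second time here. Consider a quadruple $a+b = c+d =: s$ in $X$ with $a\ge b$ and $c\ge d$ (this restriction costs only a factor $\le 4$). Then $a,c\in[s/2,s)$, so $a/c\in(1/2,2)$ and, writing $j$ for the index with $a\in Y_j$, we get $c\in Y_{j-1}\cup Y_j\cup Y_{j+1} =: Y_j^\ast$, while $b\le a$ and $d\le c$ force $b\in Z_j$ and $d\in Z_{j+1}$, where $Z_i := X\cap[1,2^{i+1})$. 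Counting these quadruples with $a \in Y_j$ and applying Cauchy--Schwarz in $s$ bounds their number by $\sqrt{E(Y_j,Z_j)\,E(Y_j^\ast,Z_{j+1})}$, where $E(A,B) := \#\{(a,b,a',b')\in(A\times B)^2 : a+b = a'+b'\}$. Using the crude bound $E(A,B) \le \min(|A|,|B|)\,|A|\,|B|$ together with $Y_j\subseteq Z_j$, $Y_j^\ast\subseteq Z_{j+1}$, and $|Z_i|\le|X|$, this is $\ll n_j(n_{j-1}+n_j+n_{j+1})|X|$; summing over $j$ and using $\sum_jn_jn_{j\pm1} \le \sum_jn_j^2$ gives $E(X)\ll|X|\sum_jn_j^2$.

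The only substantive step is the energy bound $E(X)\ll|X|\sum_jn_j^2$; the sign reduction and the "total decrease $\ge$ total increase" estimate are soft. I expect the care needed in that step to be entirely bookkeeping — tracking which dyadic blocks can meet in $a+b=c+d$ and choosing the right pairing for Cauchy--Schwarz — rather than any real obstacle.
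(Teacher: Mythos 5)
Your argument is correct, and it is genuinely different from the paper's. The paper never proves Lemma \ref{lem.52} in full: it quotes \cite[Lemma 5.2]{sudszevu::} together with the Balog--Szemer{\'e}di--Gowers theorem, and the decomposition it sketches (and later actually uses, in Lemmas \ref{lem.energy} and \ref{lem.int}) is $2$-adic, into classes of fixed valuation; passing from an energy hypothesis to control of those classes is exactly what costs BSG (or the hereditarily-energetic device) and hence the $\eta^{O(1)}$. You instead split off signs and decompose the positive part by Archimedean dyadic scale, and both pivotal steps are sound: in a quadruple $a+b=c+d$ of positive integers ordered within pairs, the two larger elements lie within a factor of $2$ of each other, which yields $E(Y)\ll |Y|\sum_j n_j^2\le |Y|^2\max_j n_j$ (your Cauchy--Schwarz can even be skipped, since choosing $a\in Y_j$, $c\in Y_j^{\ast}$ and $b\in Z_j$ determines $d$ and gives the same bound); and the telescoping estimate $\sum_j\max(0,n_j-n_{j+1})\ge\max_j n_j-1$ turns a single large dyadic block into a large $X'$ without having to single out a particular block, a step with no direct analogue in the valuation picture. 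The payoff is an elementary, BSG-free proof with the stronger bound $|X'|\gg\eta|X|$ (your singleton fallback for $|X|=O(\eta^{-1})$ needs a nonzero element of maximal modulus, a degeneracy already implicit in the statement when $X=\{0\}$). What you do not get is what the paper really wants the $2$-adic decomposition for: Lemma \ref{lem.int} compares two different sets across all dilates $2^j\cdot S$ for $1\le j\le r$, where the valuation identity $j=i-i'$ is used and an Archimedean scale argument would not substitute cleanly; so your proof improves the quoted Lemma \ref{lem.52} but does not replace the surrounding machinery.
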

The proof of this is \cite[Lemma 5.2]{sudszevu::} coupled with the Balog-Szemer{\'e}di-Gowers Theorem\footnote{In its usual form, which corresponds to \cite[Theorem 2.31((i) {$\Rightarrow$} (iv))]{taovu::} and then \cite[Exercise 2.3.15]{taovu::}).}, but we only need the statement for our discussion so do not record the details.  The rough idea (which we shall use to prove Lemma \ref{lem.int}) is to partition $\Z$ into sets $T_i:=\{x \in \Z: 2^i \divides x \text{ and } 2^{i+1} \not \divides x\}$ for $0 \leq i \leq \infty$.  There cannot be a lot of additive quadruples with each element in a different $T_i$ -- we then essentially take the largest index $i$ where the intersection with $X$ is not too small.

It turns out that a short application of Tur{\'a}n's theorem from graph theory -- essentially \cite[Lemma 3.1]{sudszevu::} -- shows that if $A$ is $(k,X)$-summing then $A$ has large additive energy.  One can think of this as saying a large part of $A$ is highly structured.
\begin{lemma}\label{lem.i}
Suppose that $G$ is an abelian group and $A,X \subset G$ are such that $|X| \leq K|A|$ and $A$ is $(k,X)$-summing.  Then either $|A|=k^{O(1)}$ or $E(A)=(kK)^{-O(1)}|A|^3$.
\end{lemma}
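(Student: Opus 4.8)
The plan is to argue by contradiction using an extremal graph-theoretic argument. Suppose $E(A)$ is small, say $E(A) = o((kK)^{-C}|A|^3)$ for a large constant $C$ to be chosen. The key observation is that the number of elements $x$ with many representations $x = a - a'$ ($a,a' \in A$) is controlled by the energy: by a standard dyadic pigeonholing / Cauchy--Schwarz argument, if $E(A)$ is small then for $D := (kK)^{\Theta(1)}$ the ``popular difference set'' $P := \{x : r_{A-A}(x) \geq |A|/D\}$ is itself small, in fact $|P| \leq D \cdot E(A)/|A|^2 \cdot (\text{something})$, and more to the point only a tiny fraction of pairs $(a,a') \in A \times A$ have their difference lying outside $P$ contributing few representations. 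I would make this precise: a $(1-\tfrac{1}{2k})$-fraction of the pairs $(a,a')$ have $a-a'$ landing in a set of size at most $2kD$ (or similar), because otherwise the $\ell_2$ mass forcing $E(A) \geq |A|^3/(\text{poly})$ would be too large.

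Next I form an auxiliary graph on vertex set $A$, joining $a \sim a'$ when $a + a' \in X$ (equivalently, using that $A$ is $(k,X)$-summing to control independent sets). Here is where one uses the hypothesis: the $(k,X)$-summing property says every subset of $A$ of size $\geq k$ has a restricted pair summing into $X$, i.e.\ the graph where we join $a \sim a'$ iff $a + a' \notin X$ (for $a \neq a'$) has no independent set — wait, rather its \emph{complement} has independence number $< k$, so by Tur\'an's theorem (\cite[Lemma 3.1]{sudszevu::}) that graph has at least $\approx \binom{|A|}{2}/k$ edges; that is, at least $\frac{1}{k}\binom{|A|}{2}$ pairs $\{a,a'\}$ satisfy $a + a' \in X$. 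Now combine the two facts: we have many pairs $(a,a')$ with $a + a' \in X$, and (from the energy being small) few differences $a - a'$ are popular. For a fixed $t \in X$, the pairs with $a + a' = t$ correspond to $a \in A \cap (t - A)$, and the number of such pairs summed over $t \in X$ is exactly $\sum_{t \in X} |A \cap (t-A)| = \sum_{t} r_{A,A}(t) \cdot 1_X(t)$; I want to show this cannot be as large as $\frac1k \binom{|A|}{2}$ unless the energy is large. The mechanism: if many pairs sum into the small set $X$ (size $\leq K|A|$), then by Cauchy--Schwarz $\sum_{t \in X} r_{A,A}(t)^2 \geq (\frac1k\binom{|A|}{2})^2/|X| \geq (kK)^{-O(1)}|A|^3$, and $\sum_t r_{A,A}(t)^2$ is precisely $E(A)$ (up to the usual identity $E(A) = \sum_t r_{A+A}(t)^2 = \sum_t r_{A-A}(t)^2$). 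This directly gives $E(A) = (kK)^{-O(1)}|A|^3$, the desired conclusion, \emph{provided} $\binom{|A|}{2}/k$ genuinely dominates — which fails only when $|A| = k^{O(1)}$, matching the dichotomy in the statement.

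So the structure is: (i) Tur\'an's theorem converts the $(k,X)$-summing hypothesis into a lower bound $e(G) \geq \frac{1}{2k}\binom{|A|}{2}$ on the number of pairs summing into $X$, handling the $|A| > Ck$ case and otherwise landing in the ``$|A| = k^{O(1)}$'' alternative; (ii) Cauchy--Schwarz against $|X| \leq K|A|$ converts that edge count into $\sum_{t} r_{A+A}(t)^2 \geq (kK)^{-O(1)}|A|^3$; (iii) recognise the left side as $E(A)$. I would present steps (ii) and (iii) as the short computation they are. The main obstacle — really the only place needing care — is step (i): making sure the restricted-sum condition ($a \neq a'$, so the graph has no loops) interacts cleanly with Tur\'an, and tracking that the Tur\'an bound $\binom{|A|}{2}/k$ exceeds the lower-order terms exactly when $|A|$ is not polynomially bounded in $k$; this is exactly the content of \cite[Lemma 3.1]{sudszevu::} and amounts to checking $\binom{|A|}{2} - \frac{|A|}{2} \geq \frac{1}{2}\binom{|A|}{2}$ once $|A| \geq 2$, together with absorbing the $-|A|$ diagonal correction in $E(A) = \sum r_{A+A}(t)^2$ versus the restricted energy. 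No subtlety depends on working in $\Z$, consistent with the paper's remark that this part of the argument is group-agnostic.
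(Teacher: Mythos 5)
Your core argument is correct, and it is the route the paper explicitly flags as an alternative rather than the one it actually follows: the paper deduces Lemma \ref{lem.i} from the stronger, hereditary Lemma \ref{lem.hen}, whose proof replaces your Tur\'an step by a direct pigeonhole over $k$-tuples (every $x\in A^k$ either has a repeated coordinate or some $x_i+x_j\in X$, so $\sum_{x\in A^k}\sum_{i<j}1_X(x_i+x_j)\geq |A|^k-\binom{k}{2}|A|^{k-1}$, giving $\langle 1_A\ast 1_A,1_X\rangle\gtrsim |A|^2/k^2$ once $|A|>k^2$), after which the second half of your argument and the paper's coincide exactly: Cauchy--Schwarz (Plancherel in the paper) against $|X|\leq K|A|$ and the identity $E(A)=\sum_t r_{A+A}(t)^2$. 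Your Tur\'an route -- independence number of the graph ``$a\sim a'$ iff $a+a'\in X$'' is below $k$, hence at least roughly $\binom{|A|}{2}/k$ edges once $|A|\gg k$, otherwise the $|A|=k^{O(1)}$ alternative -- is precisely the method of \cite[Lemma 3.1]{sudszevu::} which the paper acknowledges gives better constants (an energy bound with $k^{-2}$ rather than $k^{-4}$); what the paper's tuple-counting buys is a uniform treatment of all subsets $S\subset A$ in one stroke, which is the hereditary strengthening (Lemma \ref{lem.hen}) actually needed downstream, though your argument would hereditize too since any $S\subset A$ is again $(k,X)$-summing. Two small criticisms of the write-up: the opening paragraph about popular difference sets is both garbled (small energy makes popular differences rare, it does not concentrate pairs on a small difference set) and entirely unused by your steps (i)--(iii), so it should be deleted; and the contradiction framing is unnecessary since the argument is direct. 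Also note there is no ``diagonal correction'' to absorb in step (iii): $E(A)=\sum_t r_{A+A}(t)^2$ holds exactly, and you only need the one-sided bound $\sum_{t\in X}r_{A+A}(t)^2\leq E(A)$ together with $\sum_{t\in X}r_{A+A}(t)$ dominating the count of ordered pairs with distinct entries.
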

The above lemma is a consequence of Lemma \ref{lem.hen} proved later, though the method of \cite[Lemma 3.1]{sudszevu::} gives better constants for the $O(1)$-terms; again we only need the above form for the discussion.

In fact Lemma \ref{lem.hen} is importantly stronger than Lemma \ref{lem.i} and tells us that if $A$ is $(k,X)$-summing then either $A$ is small or else every subset of $A$ that is not small has large energy.  We formalise this in \S\ref{sec.over} in the notion of `hereditarily energetic'.  With this additional fact Lemmas \ref{lem.52} and \ref{lem.i} can be combined to give the following.
\begin{lemma}\label{lem.comb}
Suppose that $A\subset X \subset \Z$ are such that $|X| \leq (1+\eta)|A|$ and $A$ is $(k,X)$-summing.  Then either $|A|=k^{O(1)}$; or $\eta \geq k^{-O(1)}$; or there is a set $A' \subset A$ with $|A'|  \geq k^{-O(1)}|A|$ such that $(2\cdot A') \cap X = \emptyset$ and $E(A') \geq k^{-O(1)}|A'|^3$.
\end{lemma}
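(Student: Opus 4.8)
The plan is to establish the third alternative assuming the first two fail. So suppose $|A|$ is not $k^{O(1)}$, with the implicit exponent taken as large as the rest of the argument demands, and suppose also that $\eta \leq k^{-C}$ for a suitable absolute constant $C>0$, so that neither of the first two conclusions holds; I will then construct $A'$. Since $|X| \leq (1+\eta)|A| \leq 2|A|$ and $|A|$ is not $k^{O(1)}$, Lemma \ref{lem.hen} applies with $K:=2$ and shows that $A$ is hereditarily energetic: every subset $B \subset A$ that is not $k^{O(1)}$ in size satisfies $E(B) \geq k^{-O(1)}|B|^3$. Taking $B=A$ recovers the bound $E(A) \geq k^{-O(1)}|A|^3$ of Lemma \ref{lem.i}, but the full hereditary statement will be needed at the end.

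Since $A \subset X$ we have $E(X) \geq E(A) \geq k^{-O(1)}|A|^3 \geq k^{-O(1)}|X|^3$, the last step using $|X| \leq 2|A|$. As $X \subset \Z$, Lemma \ref{lem.52} then produces a set $X' \subset X$ with $|X'| \geq k^{-O(1)}|X| \geq k^{-O(1)}|A|$ and $(2\cdot X') \cap X = \emptyset$. I would set $A' := A \cap X'$; then $2\cdot A' \subset 2\cdot X'$, so $(2\cdot A')\cap X = \emptyset$, while
\begin{equation*}
|A'| \geq |X'| - |X \setminus A| \geq k^{-O(1)}|A| - \eta|A|.
\end{equation*}
Because the second alternative fails, $\eta$ is smaller than (half of) the $k^{-O(1)}$ factor appearing in the first term on the right, and so $|A'| \geq k^{-O(1)}|A|$. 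Finally, since $|A|$ is not $k^{O(1)}$, a $k^{-O(1)}$-proportion of $A$ is still not $k^{O(1)}$ in size, so $A'$ is not small, and the hereditary conclusion of Lemma \ref{lem.hen} applied to $A' \subset A$ gives $E(A') \geq k^{-O(1)}|A'|^3$. This is the required set.

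I expect the main subtlety to be bookkeeping rather than any individual inequality, namely the order in which the implicit constants are fixed. The exponent in the alternative ``$\eta \geq k^{-O(1)}$'' can only be named once the exponent in the conclusion of Lemma \ref{lem.52} is known, so that subtracting $\eta|A|$ does not destroy the relative density of $A'$ in $A$; and the exponent hidden in ``$|A| = k^{O(1)}$'' must be taken large enough that a $k^{-O(1)}$-proportion of $A$ still clears the smallness threshold built into Lemma \ref{lem.hen}. With the constants chosen in that order, the argument is otherwise routine.
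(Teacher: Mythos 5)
Your argument is correct and is precisely the combination the paper itself sketches (it omits the details): Lemma \ref{lem.hen} makes $A$ hereditarily energetic, the energy passes to $X$, Lemma \ref{lem.52} gives $X'$, and the smallness of $\eta$ ensures $A':=A\cap X'$ is a $k^{-O(1)}$-proportion of $A$, whence the hereditary property yields $E(A')\geq k^{-O(1)}|A'|^3$. The only nitpick is your paraphrase of the hereditary property as applying to subsets ``not $k^{O(1)}$ in size'': the definition is relative (subsets of density $\sigma$ in $A$ get $E\geq\nu\sigma|S|^3$), but since you establish $|A'|\geq k^{-O(1)}|A|$ this is exactly what your application uses, so nothing is lost.
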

Again we omit the details as we only need the statement for discussion.  The key point is that if we are not in the first two outcomes, then after applying Lemma \ref{lem.i} to get that $A$ has large additive energy we apply Lemma \ref{lem.52} to $X$ (which inherits large additive energy from $A$), and since $\eta$ is small enough the set $X'$ in Lemma \ref{lem.52} is large enough that it necessarily has large intersection with $A$.

Although it was more unusual at the time of \cite{sudszevu::}, it is now common-place to apply the Balog-Szemer{\'e}di-Gowers-Freiman machinery in this sort of situation.   This tells us that
if $A \subset \Z$ and $E(A) \geq \eta |A|^3$, then there is an arithmetic progression $P$ such that
\begin{equation}\label{eqn.f}
|P| \geq |A|^{\eta^{o(1)}} \text{ and }|A \cap P| \geq \eta^{O(1)}|P|.
\end{equation}
This result with the $o(1)$-term replaced by $O(1)$ follows from \cite[Theorem 2.29]{taovu::} combined with \cite[Theorem 5.32]{taovu::}; the stronger bounds require the replacement of \cite[Theorem 2.29]{taovu::} by the improved estimates of Schoen \cite{sch::1}.

For our purposes arithmetic progressions are the same as intervals and the final ingredient we need is the following.
\begin{proposition}\label{prop.ss}
Suppose that $A \subset \{1,\dots,N\}$ has size $\alpha N$ and there is no proper $k$-tuple $(a_1,\dots,a_k) \in A^k$ with $a_i +a_j \in 2\cdot A$ for all $i<j$.  Then $N \leq F'(\alpha,k)$ for some universal function $F':(0,1]\times \N \rightarrow \N$ (decreasing in the first coordinate and increasing in the second).
\end{proposition}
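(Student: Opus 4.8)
The plan is to derive Proposition \ref{prop.ss} from Szemer{\'e}di's theorem on arithmetic progressions, the point being that a single long arithmetic progression inside $A$ already produces one of the forbidden $k$-tuples.

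The combinatorial heart of the matter is the following observation. Suppose $A$ contains an arithmetic progression $Q=\{x,x+d,\dots,x+(2k-2)d\}$ of length $2k-1$ with common difference $d\geq 1$. Put $a_i:=x+2(i-1)d$ for $1\leq i\leq k$. Then the $a_i$ are pairwise distinct elements of $Q\subseteq A$ (their positions in $Q$ are $0,2,\dots,2k-2$), and for $i<j$ we have $a_i+a_j=2\bigl(x+(i+j-2)d\bigr)$, where $x+(i+j-2)d$ is the term of $Q$ in position $i+j-2$; since $1\leq i+j-2\leq 2k-3$ this term lies in $Q\subseteq A$, so $a_i+a_j\in 2\cdot A$. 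Thus $(a_1,\dots,a_k)$ is a proper $k$-tuple of the type the hypothesis forbids. Consequently, the hypothesis of the proposition forces $A$ to contain no $(2k-1)$-term arithmetic progression.

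Now invoke Szemer{\'e}di's theorem in the quantitative form: for every $\alpha\in(0,1]$ and $\ell\in\N$ there is $W(\alpha,\ell)\in\N$, which may be taken non-increasing in $\alpha$ and non-decreasing in $\ell$, such that any subset of $\{1,\dots,M\}$ of density at least $\alpha$ with $M\geq W(\alpha,\ell)$ contains an $\ell$-term arithmetic progression. Apply this with $\ell=2k-1$ and $M=N$: if $N\geq W(\alpha,2k-1)$ then $A$, of density $\alpha$ in $\{1,\dots,N\}$, would contain a $(2k-1)$-term progression, contradicting the previous paragraph. Hence $N<W(\alpha,2k-1)$, and one may take $F'(\alpha,k):=W(\alpha,2k-1)$ (adding, say, $\lceil 1/\alpha\rceil+k$ if strict monotonicity in each variable is wanted), which is decreasing in the first coordinate and increasing in the second as required.

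There is essentially no obstacle beyond citing Szemer{\'e}di's theorem; the only mild care needed is in pinning down the construction of the forbidden $k$-tuple from a progression and checking the index arithmetic that puts each $a_i+a_j$ into $2\cdot A$. If an effective value of $F'$ is desired one can feed in Gowers's bounds for Szemer{\'e}di's theorem in place of the qualitative statement, but this is not needed for the application.
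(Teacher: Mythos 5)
Your proof is correct and is the same approach the paper attributes to Sudakov, Szemer\'edi and Vu: the construction $a_i=x+2(i-1)d$ reduces the hypothesis to the non-existence of a $(2k-1)$-term arithmetic progression in $A$, after which Szemer\'edi's theorem (with Gowers's bounds if one wants the quantitative form $F'(\alpha,k)\leq\exp(\exp(\alpha^{-\exp(\exp(O(k)))}))$ quoted in the paper) finishes the job. The index arithmetic putting $a_i+a_j$ into $2\cdot A$ checks out, and the monotonicity adjustment at the end is fine.
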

To see why this is enough, apply the proposition to the output $A'$ of Lemma \ref{lem.comb} after applying (\ref{eqn.f}).  If $S \subset A'$ has $s+s' \in 2\cdot A'$ for all $s \neq s' \in S$ then since $(2\cdot A') \cap X = \emptyset$ we have that $(S\wh{+}S)\cap X = \emptyset$ and so $A'$ is not $(k,X)$-summing.  It follows that we can take
\begin{equation}\label{eqn.fdep}
F(k)=F'(k^{-O(1)},k)^{k^{-o(1)}}.
\end{equation}
in Proposition \ref{prop.key}.

Sudakov, Szemer{\'e}di and Vu proved in \cite[Corollary 3.3]{sudszevu::} that
\begin{equation*}
F'(\alpha,k) \leq \exp\left(\exp\left(\alpha^{-\exp(\exp(O(k)))}\right)\right)
\end{equation*}
by using Gowers' bounds for Szemer{\'e}di's Theorem.  Dousse noted that the system being counted in Proposition \ref{prop.ss} has complexity $1$ (in the sense of \cite[Definition 1.5]{gretao::7}) so one can study the configurations in Proposition \ref{prop.ss} using Fourier analysis rather than the higher order analogues of Gowers.  This is cheaper and she proved \cite[Corollary 3.4]{dou::} that
\begin{equation*}
F'(\alpha,k) \leq \exp\left(\exp\left(\alpha^{-O(k^2)}\right)\right).
\end{equation*}
Finally Shao implemented Dousse's Fourier argument in Bohr sets (following Bourgain \cite{bou::5} for three-term progressions which is the special case $k=2$ of Proposition \ref{prop.ss}; see \cite[Theorem 10.29]{taovu::} for an exposition) to show \cite[Theorem 1.3]{sha::0} that
\begin{equation*}
F'(\alpha,k) \leq \exp\left(\alpha^{-O(k^2)}\right).
\end{equation*}
(In fact this does not quite do justice to Shao's work: Dousse used a weaker version of the Balog-Szemer{\'e}di-Gowers-Freiman machinery which arises from applying a version of Freiman's theorem providing a progression containing the whole of the set $A$, rather than a progression inside $2A-2A$.  Shao noted that Ruzsa's Embedding Lemma \cite[Lemma 5.26]{taovu::} suffices and gives better bounds, although those improvements do not impact the level of our discussion above.)

It is natural to ask what sort of bounds we can expect on $F'$.  The worst example of bad $\alpha$-dependence in $F'$ comes from Behrend's construction \cite{beh::} (see \cite{elk::} and \cite{grewol::} for the state of the art) which tells us that $F'(\alpha,2) = \exp(\Omega(\log^2\alpha^{-1}))$.  On the other hand if we choose $A \subset \{1,\dots,N\}$ by selecting elements independently with probability $\alpha$, then\footnote{Any $S \subset \{1,\dots,N\}$ has $\P(S \wh{+}S \subset 2\cdot A) \leq \alpha^{|S\wh{+}S|}$.  Moreover if $S$ has size $k$ then $|S\wh{+}S| \geq 2k-3$, as can be seen by writing $S=\{s_1<\dots<s_k\}$ and noting that $s_1+s_2,\dots,s_1+s_k,s_k+s_2,\dots,s_k+s_{k-1}$ are distinct elements.  By embedding $\{1,\dots,N\}$ in $\Z/2N\Z$ and applying \cite[Proposition 23]{gre::07} it follows that
\begin{align*}
&\E{|\{S \subset A: |S|=k\text{ and } (S\wh{+}S) \subset 2\cdot A\}|}\\
 & \qquad \qquad \qquad \qquad \leq \E{|\{S \subset \{1,\dots,N\}: |S|=k \text{ and }(S\wh{+}S) \subset 2\cdot A\}|} \\
& \qquad \qquad \qquad \qquad \leq \sum_{m=2k-3}^{\binom{k}{2}}{\alpha^m|\{S \subset \{1,\dots,N\}: |S|=k \text{ and }|S\wh{+}S| =m\}|}\\
& \qquad \qquad \qquad \qquad \leq \sum_{m=2k-3}^{\binom{k}{2}}{\alpha^m(2N)^{1+O\left(\frac{m}{k}\right)}O\left(\frac{m}{k}\right)^{O(k)}}.
\end{align*}
If $k \geq C\log N$ for some absolute $C>0$ sufficiently large then each term in this last sum is $\alpha^m\exp(O(k\log mk^{-1}))=\exp(m(O(1)-\log\alpha^{-1}))$ from which it follows that we can choose $\alpha=\Omega(1)$ such that the expectation is strictly less than $\frac{1}{2}$.  On the other hand for $N$ sufficiently large $\P(|A| \geq \frac{1}{2}\alpha N)>\frac{1}{2}$, and it follows that there is some set $A$ with the claimed property.} there is a choice of $\alpha=\Omega(1)$ such that any $S \subset A$ with $S\wh{+}S \subset 2\cdot A$ has $|S| =O(\log N)$, from which it follows that $F'(\Omega(1),k)=\exp(\Omega(k))$.  By monotonicity of $F'$ in each of its variables we conclude that
\begin{equation}\label{eqn.lower}
F'(\alpha,k)=\exp(\Omega(k+\log^2\alpha^{-1})),
\end{equation}
and it is a natural question to ask if one can do better.

Although we may not have given the best combination of examples above, as far as we know it might be that $F'(\alpha,k)=\exp(O(k\alpha^{-o(1)}))$.  This would imply a considerable strengthening of Roth's theorem\footnote{The current best bounds there are due to Bloom \cite{blo::0} and imply that $F'(\alpha,2) \leq \exp(\alpha^{-1-o(1)})$.} and also that $F(k)\leq \exp(k^{1+o(1)})$ which in turn would give $M(A)\geq \log^{2-o(1)}|A|$ improving Theorem \ref{thm.main}.  Any bound of the form $F'(\alpha,k)=\exp((k\alpha^{-1})^{O(1)})$ already leads to $F(k)=\exp(k^{O(1)})$ and a different proof of Theorem \ref{thm.main}.  That being said it is not completely clear how to get a singly rather than doubly exponential dependence on $k$ in $F'$ since Fourier methods seem to rely on regularising a set that is exponentially small in $k$; perhaps a first step of showing $F'(\alpha,k) \leq \exp(\alpha^{-O(k)})$ is within reach of those methods.

We do not prove Theorem \ref{thm.main} by proving better bounds for $F'$.  Our advantage comes from one weakness of the above: all we are looking for is sums from $A$ that are in $A^c$, in principle a much less demanding condition (at least if $A$ is thin) than that in Proposition \ref{prop.ss} where we ask that they are in $2\cdot A$.  We shall explain this further in \S\ref{sec.mod}.

The aim of the remainder of the paper is to prove the following quantitative version of Proposition \ref{prop.key}.
\begin{proposition}\label{prop.key2}
Suppose that $A\subset X \subset \Z$; $|X| \leq (1+\eta)|A|$; and $A$ is $(k,X)$-summing for some $k \in \N$.  Then either $\eta = k^{-O(1)}$; or $|A| \leq\exp(k^{C+o(1)})$ for some absolute $C>0$.
\end{proposition}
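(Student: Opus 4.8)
The plan is to run the Sudakov--Szemer\'edi--Vu template of \S\ref{sec.ssv} but to \emph{bypass} its last step---Proposition \ref{prop.ss}, whose known bounds confine that route to an estimate doubly exponential in $k$---exploiting the weakness flagged at the end of \S\ref{sec.ssv}: to contradict the $(k,X)$-summing hypothesis we only need a proper $k$-tuple from $A$ whose $\binom{k}{2}$ pairwise sums avoid $X$, rather than one whose sums land in the thin set $2\cdot A'$. So I would first dispose of trivial ranges and apply Lemma \ref{lem.comb} together with the hereditarily-energetic strengthening underlying Lemma \ref{lem.i} (i.e.\ Lemma \ref{lem.hen}): if $|A|$ is not already at most $\exp(k^{C+o(1)})$ and $\eta$ is not already $k^{-O(1)}$, we may pass to $A'\subseteq A$ with $|A'|\ge k^{-O(1)}|A|$, with $(2\cdot A')\cap X=\emptyset$, and with $A'$ \textbf{hereditarily energetic}: every subset of $A'$ of size at least $k^{-O(1)}|A'|$ has additive energy at least $k^{-O(1)}$ times the cube of its size. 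Since $A'$ inherits being $(k,X)$-summing, and since the order structure of $\Z$ lets us localise $A$, $A'$ and $X$ to a common interval so that $X$ is only a $(1+k^{-O(1)})$-factor larger than the set we actually work with (this being one place the integers are genuinely used, and to be carried out in \S\ref{sec.over}), it then suffices to show that such an $A'$ cannot be $(k,X)$-summing once $|A'|>\exp(k^{C+o(1)})$.

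For the core I would set this up as a density increment. Using the Balog--Szemer\'edi--Gowers--Freiman machinery in the sharp form (\ref{eqn.f})---Schoen's bounds \cite{sch::1} being the source of the $o(1)$ in the exponent---pass to a long arithmetic progression $P$, of length at least $|A'|^{k^{-o(1)}}$, on which $A'$ has density $k^{-O(1)}$; identify $P$ with an interval and set $B:=A'\cap P$. Now run the dichotomy: \emph{either} $B$ does not correlate with any not-too-short sub-progression of $P$, in which case, after sliding $B$ into an upper sub-interval (so that $B+B$, away from the diagonal---which meets $2\cdot A'$ and hence misses $X$---lies almost entirely above $X$), the number of bad pairs $\sum_{x\in X}r_{B+B}(x)$ is below $|B|^2/k$, whence Tur\'an's theorem yields a $k$-element subset of $B$ with all restricted sums off $X$, a contradiction; \emph{or} $1_B$ correlates with a proper sub-progression $P'$ of $P$ of length at least $|P|/\exp(k^{O(1)})$ on which $A'$ has density larger by a factor $1+k^{-O(1)}$, and we replace $P$ by $P'$ and repeat. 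The decisive point is that the quantity driving the increment is a single, $k$-independent inequality---``bad-pair density below $1/k$''---so that, unlike in the analysis of Proposition \ref{prop.ss}, the $k$-tuple is never built element by element; hence the iteration halts after only $k^{O(1)}$ steps (density being bounded by $1$), each step costing a factor $\exp(k^{O(1)})$ in length, so it must terminate in the first horn while $|P|\le\exp(k^{O(1)})$, and $|P|\ge|A'|^{k^{-o(1)}}$ then forces $|A'|\le\exp(k^{C+o(1)})$. One checks that the $(k,X)$-summing property and the structural hypotheses needed at each stage pass to the sub-progressions encountered, so the recursion is legitimate.

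I expect the main obstacle to be the ``structured'' horn: proving that when $B$ admits no usable density increment its sumset genuinely concentrates off $X$ (not merely off $2\cdot A'$), and doing so with only polynomial-in-$k$ losses so that the $\exp(k^{C+o(1)})$ bound survives; this is where hereditary energeticity, the condition $(2\cdot A')\cap X=\emptyset$, and a descendant of Lemma \ref{lem.52} in the spirit of Lemma \ref{lem.int} have to be combined, and---along with the localisation in \S\ref{sec.over}---is the step that really uses the integers and not just the ambient torsion-free group. A secondary difficulty is the bookkeeping that makes the two horns mutually affordable: calibrating the increment threshold and the choice of upper sub-interval so that the Tur\'an count and the density increment can be run with the same parameters, and tracking the $o(1)$ terms from \cite{sch::1} through the iteration so that they contribute only to the ``$+o(1)$'' in the final exponent.
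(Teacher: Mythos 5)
Your proposal correctly identifies the key philosophical advantage over the Sudakov--Szemer\'edi--Vu route (it suffices to find restricted sums in $X^{c}$ rather than in $2\cdot A'$, so the pigeonhole/Tur\'an threshold is $k$-polynomial rather than $\alpha$-exponential), and this matches the paper's diagnosis at the end of \S\ref{sec.ssv}. But the proof you build around it has a genuine gap, precisely in what you call the ``structured'' horn, and the paper resolves that gap by a mechanism that is entirely absent from your outline.

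Here is the problem. After localising to a progression $P$ carrying a $k^{-O(1)}$-dense $B=A'\cap P$, you propose a Roth-type dichotomy: either $B$ correlates with a shorter sub-progression (density increment on $B$), or $B$ is uniform and then the bad-pair density $\sum_{x\in X} r_{B+B}(x)/|B|^{2}$ is below $1/k$, so Tur\'an/Bonferroni produces a proper $k$-tuple from $B$ all of whose pairwise sums lie off $X$. The second horn does not follow. Uniformity of $B$ on $P$ controls $r_{B+B}$, not $1_X$; the set $X$ is external and can perfectly well have density $\Theta(1)$ on the sumset window without $B$ correlating with any sub-progression of $P$, in which case neither horn of your dichotomy fires. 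The ``slide $B$ into an upper sub-interval'' trick does not rescue this: after BSG--Freiman the progression $P$ captures only a $k^{-O(1)}$-fraction of $A$, so most of $A$ --- and therefore most of $X$, since $|X\setminus A|\le\eta|A|$ --- lies outside $P$ and may sit arbitrarily far above it, squarely on top of $B+B$. The hypothesis $(2\cdot A')\cap X=\emptyset$ only removes the diagonal and gives no off-diagonal control. So you end up stuck exactly at the point you flag as the ``main obstacle,'' and the gesture towards ``hereditary energeticity plus a descendant of Lemma \ref{lem.52}'' does not close it.

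What the paper actually does in this case is different in structure. In Proposition \ref{prop.g}, the counting lemma (Lemma \ref{lem.c}, modelled by Lemma \ref{lem.ct}) takes the smallness of $m_{2z+Z}(X)$ as a \emph{hypothesis}, and when that hypothesis fails the argument does not attempt a density increment on $A$ in $P$. Instead it observes that if $X$ is dense on $2z+Z$ then (because $|X\setminus A|\le\eta|A|$ with $\eta$ tiny) $A$ is also dense on $2z+Z$, and it moves on to the next scale $4z+Z$, then $8z+Z$, and so on up to $2^{r}z+Z$. The conclusion of Proposition \ref{prop.g} is therefore that $|(2^{j}\cdot S)\cap A|\ge k^{-O(1)}|S|$ for \emph{all} $1\le j\le r$ with $r=k^{O(1)}$; the final contradiction is Lemma \ref{lem.int}, a $2$-adic valuation argument descended from Lemma \ref{lem.52}, which says a hereditarily energetic set of integers cannot stay dense on $r$ consecutive dilates by $2$ once $r$ is polynomially large. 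There is also a genuine increment in the paper --- but it is a local $U_{2}$-energy increment (Lemma \ref{lem.unif}/\ref{lem.increment}) on a \emph{weighted cover} $(z,Z)$ of $S$, iterated $r$ times across all scales simultaneously (Corollary \ref{cor.y}/\ref{cor.iju}); it is not a density increment for $A$ on $P$, and its purpose is to ensure the counting lemma can be run at every scale, not to terminate by density reaching $1$. So the missing idea in your proposal is this iteration over the dilates $2^{j}\cdot S$ together with the $2$-adic contradiction of Lemma \ref{lem.int}; those are what replace the ``slide above $X$'' step, and without them the uniform horn cannot be closed.
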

A value of $C$ could be calculated from our work but there is considerable scope for optimising it.  We have tried to indicate some places this might be possible, but it also seems likely that some of the convenient decoupling of the argument (for example the introduction of hereditarily-energetic sets in the next section) might be lost in a very careful optimisation.

It seems quite possible that $C=1$ is achievable, though it is not clear one can expect to do better without a new idea, and in light of (\ref{eqn.lower}) one cannot do better by trying to improve $F'$ in Proposition \ref{prop.ss}.  Given this it seems to us that the natural next question is whether or not $M(A) = \log^{2+\Omega(1)}|A|$.

\section{Overview of our argument}\label{sec.over}

In this section we decouple our arguments into those that require order and divisibility properties of the integers and those that in some sense do not.  The latter are captured by Proposition \ref{prop.g} below; everything else is covered in the present section.

We begin by using the order structure on the integers to prove Theorem \ref{thm.main} using Proposition \ref{prop.key2}.  The argument is included for completeness; it is essentially\footnote{Our argument is slightly sloppier which we can afford because of the strength of Proposition \ref{prop.key2}.} the same as the bootstrapping of \cite[Theorem 1.2]{sudszevu::} to get \cite[Theorem 1.1]{sudszevu::} in \cite[\S2]{sudszevu::}.

\begin{proof}[Proof of Theorem \ref{thm.main}]
Let $k,m \in \N$ be parameters to be optimised; let $l \in \N$ be such that $l=k^{O(1)}$ and if $\eta \geq l^{-1}$ then the first conclusion of Proposition \ref{prop.key2} does not hold; and let $D=\exp(k^{O(1)})$ be a natural number such that if $|A| \geq D$ then the second conclusion does not hold. 

We construct sets iteratively as follows: let $m \in \N_0$ be such that $2D(2l(mk+1))^m \leq |A|$, and for $0 \leq i \leq m$ let $Z_i$ be the $2D(2l(mk+1))^i$ largest elements of $A$ so that $|Z_{i}| \geq 2l(mk+1)|Z_{i-1}|$.  For $0 \leq r < m$ we shall define sets $S_0,\dots,S_r$ of size $k$ such that $(S_i \wh{+} S_i )\cap Z_i = \emptyset$ and an auxiliary sequence of $A_i$s with $S_i \subset A_i$ and
\begin{equation*}
A_0:=Z_0 \text{ and }A_{i+1}:=Z_{i+1} \setminus \left(Z_{i}-\left(\{0\}\cup\bigcup_{j\leq i}{S_j}\right)\right) \text{ for }0 \leq i \leq r.
\end{equation*}
For clarity we note that $\setminus$ denotes relative complement of sets here and $-$ denotes the difference of two set.  If $i<m$ then
\begin{align*}
|Z_{i+1}| & \leq |A_{i+1}| + |Z_{i}|\left(1+\sum_{j\leq i}{|S_j|}\right)\\ & \leq |A_{i+1}| + \frac{1}{2l(mk+1)}|Z_{i+1}|(1+mk) = |A_{i+1}| + \frac{1}{2l}|Z_{i+1}|.
\end{align*}
Rearranging we have that $|Z_{i+1}| \leq (1+l^{-1})|A_{i+1}|$ and by design $A_{i+1} \subset Z_{i+1}$. Moreover, $|A_{i+1}| \geq \frac{1}{2}|Z_{i+1}| \geq D$, and so it follows from Proposition \ref{prop.key2} that $A_{i+1}$ is not $(k,Z_{i+1})$-summing -- equivalently, there is a set $S_{i+1} \subset A_{i+1}$ with $(S_{i+1} \wh{+} S_{i+1} )\cap Z_{i+1} = \emptyset$ as required.

For $0 \leq r<m$ consider the set $S:=\bigcup_{i=0}^r{S_i}$.  Then
\begin{align*}
(S\wh{+}S)\cap A  & =\left( \left(\bigcup_{0 \leq i <j \leq r}{S_i+S_j}\right) \cup \left(\bigcup_{i=0}^r{ S_i \wh{+} S_i }\right)\right)\cap A\\
&= \left(\bigcup_{0 \leq i <j \leq r}{(S_i+S_j) \cap A}\right) \cup \left(\bigcup_{i=0}^r{ (S_i \wh{+} S_i)\cap A }\right).
\end{align*}
Suppose that $0 \leq i \leq r$.  Since $S_i \subset Z_i$ -- of the largest $2D(2l(mk+1))^i$ elements of $A$ -- and when two positive integers are added their size increases we see that $(S_i \wh{+} S_i)\cap A = (S_i \wh{+} S_i) \cap Z_i = \emptyset$.

Then suppose that $0 \leq i<j\leq r$.  Then by similar reasoning $(S_i+S_j)\cap A = (S_i + S_j)\cap Z_{j-1}$ and $S_j \cap (Z_{j-1}-S_i)=\emptyset$ by design so $(S_i+S_j)\cap A=\emptyset$.

Combining these we see that $(S\wh{+}S)\cap A=\emptyset$.  Since the sets $(A_i)_i$ are disjoint so are the $S_i$s, so $|S| \geq mk$, and our task is to maximise this subject to $2D(2l(mk+1))^m \leq |A|$.  We can certainly take $k=\log^{\Omega(1)}|A|$ and $2D \leq \sqrt{|A|}$ and $m = \Omega(\log |A|/\log \log |A|)$ such that $(2l(mk+1))^m \leq \sqrt{|A|}$ from which the result follows.
\end{proof}
It will be useful to have some notation for the Fourier transform.  This is developed in \cite[Chapter 4]{taovu::}, but we shall use different conventions.  Suppose that $G$ is an abelian group.  Given $f,g \in \ell_1(G)$ we write $f \ast g$ for the \textbf{convolution} of $f$ and $g$ defined point-wise by
\begin{equation*}
f \ast g(x):=\sum_z{f(z)g(x-z)} \text{ for all }x \in G.
\end{equation*}
We write $\wh{G}$ for the compact abelian group of characters on $G$ and define the Fourier transform of $f \in \ell_1(G)$ to be
\begin{equation*}
\wh{f}:\wh{G} \rightarrow \C; \gamma \mapsto \sum_{z \in G}{f(z)\overline{\gamma(z)}}.
\end{equation*}
The group $\wh{G}$ is endowed with a Haar probability measure in such a way that we have Plancherel's theorem (see \cite[Theorem 1.6.1]{rud::1}):
\begin{equation*}
\sum_x{f(x)^2} = \int{|\wh{f}(\gamma)|^2d\gamma} \text{ for all }f.
\end{equation*}

Sets with small doubling (see \cite[\S2.2]{taovu::}) and large additive energy \cite[Theorem 2.31]{taovu::} are staples of additive combinatorics.  We shall need an intermediate concept: we say that $A \subset G$ is \textbf{$\nu$-hereditarily energetic} if
\begin{equation*}
E(S) \geq \nu \sigma |S|^3 \text{ for all }S\subset A \text{ with }|S| \geq \sigma |A|.
\end{equation*}
If $A$ has $E(A) \geq \nu |A|^3$ and $S \subset A$ is chosen independently at random with probability $\sigma$ then typically
\begin{equation*}
|S| \approx \sigma |A| \text{ and }E(S) \gtrsim \nu \sigma^4|A|^3 \approx \nu \sigma |S|^3;
\end{equation*}
the definition says we never do worse than this. We take the name from the related concept of hereditarily non-uniform sets defined in \cite[\S2]{gre::0}, and the notion is implicit in numerous papers.

There are many examples including the $\alpha$-spectrum \cite[Definition 4.33]{taovu::}, which we shall not discuss\footnote{If a set has large additive energy then the large spectrum is large (for a suitable threshold) by Parseval's theorem.  The case $k=2$ of \cite[Theorem 5]{shk::4} then essentially shows that the large spectrum is hereditarily energetic in a similar way to Lemma \ref{lem.hed} (\ref{pt.4}).}, and symmetry sets, which we shall.  Recall, following \cite[Definition 2.32]{taovu::} that if $\nu \in (0,1]$ and $X \subset G$ then the \textbf{symmetry set of $X$ at threshold $\nu$} is the set
\begin{equation*}
\Sym_\nu(X):=\{x \in G: 1_X \ast 1_{-X} (x)>\nu |X|\}.
\end{equation*}
\begin{lemma}[Basic facts about hereditarily energetic sets]\label{lem.hed}\
\begin{enumerate}
\item\label{pt.1} \emph{(Sets with small doubling)} Suppose that $|A+A| \leq K|A|$.  Then $A$ is $K^{-1}$-hereditarily energetic.
\item\label{pt.2} \emph{(Monotonicity)} Suppose that $A$ is $\nu$-hereditarily energetic and $A' \subset A$ has size $\epsilon |A|$.  Then $A'$ is $\epsilon\nu$-hereditarily energetic.
\item\label{pt.3} \emph{(Unions)} Suppose that $A$ and $A'$ are $\nu$-hereditarily energetic.  Then $A \cup A'$ is $\Omega(\nu)$-hereditarily energetic.
\item\label{pt.4} \emph{(Symmetry sets)} Suppose that $E(A) \geq \nu |A|^3$.  Then $\Sym_{\frac{1}{2}\nu}(A)$ is $\Omega(\nu^{4})$-hereditarily energetic.
\end{enumerate}
\end{lemma}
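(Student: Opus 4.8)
The plan is to dispatch the elementary parts (1)--(3) quickly and concentrate on (4).

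For (1): for any finite $S$, Cauchy--Schwarz applied to $1_S \ast 1_S$ (which has $\ell_1$-norm $|S|^2$ and is supported on $S+S$) gives $E(S) \geq |S|^4/|S+S|$. If $S \subseteq A$ with $|S| \geq \sigma|A|$ then $S+S \subseteq A+A$, so $|S+S| \leq K|A| \leq K|S|/\sigma$ and $E(S) \geq \sigma|S|^3/K$, which is precisely $K^{-1}$-hereditary energeticness. For (2): if $S \subseteq A'$ with $|S| \geq \sigma|A'|$ then $S \subseteq A$ and $|S| \geq (\sigma\epsilon)|A|$, so $E(S) \geq \nu(\sigma\epsilon)|S|^3 = (\epsilon\nu)\sigma|S|^3$. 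For (3): given $S \subseteq A\cup A'$ with $|S| \geq \sigma|A\cup A'|$, one of $S\cap A$ and $S\setminus A$ has size $\geq |S|/2$; in the former case $S \cap A \subseteq A$ has size $\geq |S|/2 \geq (\sigma/2)|A|$, so by monotonicity of additive energy and $\nu$-hereditary energeticness of $A$ we get $E(S) \geq E(S\cap A) \geq \nu(\sigma/2)(|S|/2)^3 = (\nu/16)\sigma|S|^3$; the latter case is symmetric (with $S\setminus A \subseteq A'$).

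For (4), I would write $f := 1_A \ast 1_{-A}$ and $T := \Sym_{\nu/2}(A)$, so that $f(0)=|A|$, $\sum_x f(x) = |A|^2$, $\sum_x f(x)^2 = E(A)$, and $f > \tfrac12\nu|A|$ exactly on $T$. The argument rests on two estimates. First, a lower bound for $|T|$: splitting $E(A) = \sum_x f(x)^2$ according to whether $x\in T$ and bounding one factor of $f$ trivially,
\begin{equation*}
E(A) \;\leq\; |A|^2|T| + \tfrac12\nu|A|\sum_x f(x) \;=\; |A|^2|T| + \tfrac12\nu|A|^3 \;\leq\; |A|^2|T| + \tfrac12 E(A),
\end{equation*}
where the last step uses the hypothesis $E(A)\geq \nu|A|^3$; hence $|T| \geq E(A)/(2|A|^2)$. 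Second, a Fourier bound: for $S\subseteq T$ we have $\tfrac12\nu|A||S| < \sum_{x\in S} f(x)$, and by Parseval this sum equals $\int \wh{1_S}\,|\wh{1_A}|^2$, so two applications of Cauchy--Schwarz (first pairing $|\wh{1_S}|\,|\wh{1_A}|$ against $|\wh{1_A}|$ and using $\int|\wh{1_A}|^2=|A|$, then pairing $|\wh{1_S}|^2$ against $|\wh{1_A}|^2$) give
\begin{equation*}
\tfrac12\nu|A||S| \;<\; \Big(\int|\wh{1_S}|^2|\wh{1_A}|^2\Big)^{1/2}|A|^{1/2} \;\leq\; E(S)^{1/4}E(A)^{1/4}|A|^{1/2},
\end{equation*}
using $\int|\wh{1_S}|^4 = E(S)$ and $\int|\wh{1_A}|^4 = E(A)$. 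Hence $E(S) > \nu^4|A|^2|S|^4/(16E(A))$. Combining the two estimates, for any $S\subseteq T$ with $|S|\geq\sigma|T|$,
\begin{equation*}
E(S) \;>\; \frac{\nu^4|A|^2}{16E(A)}|S|^4 \;\geq\; \frac{\nu^4|A|^2}{16E(A)}\,\sigma|T|\,|S|^3 \;\geq\; \frac{\nu^4}{32}\,\sigma|S|^3,
\end{equation*}
so $T$ is $\tfrac{1}{32}\nu^4$-hereditarily energetic.

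The only substantive part is (4), and the point to get right there is that the seemingly harmful factor $1/E(A)$ in the Fourier bound $E(S)\gtrsim \nu^4|A|^2|S|^4/E(A)$ is cancelled exactly by the lower bound $|T|\gtrsim E(A)/|A|^2$ for the symmetry set; both of these, and in particular the hypothesis $E(A)\geq\nu|A|^3$, enter in the final line. No order or divisibility property of $\Z$ is used, so (4) holds in any abelian group.
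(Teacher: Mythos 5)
Your proposal is correct, and for parts (\ref{pt.1})--(\ref{pt.3}) it is the paper's own argument: Cauchy--Schwarz on $1_S \ast 1_S$ for (\ref{pt.1}), triviality for (\ref{pt.2}), and passing to whichever of $S\cap A$, $S\setminus A$ is the larger half for (\ref{pt.3}) (your constant $\nu/16$ versus the paper's $\nu/8$ is immaterial since only $\Omega(\nu)$ is claimed). For (\ref{pt.4}) the overall shape is also the same -- a lower bound on $|\Sym_{\frac{1}{2}\nu}(A)|$ combined with a Fourier-duality lower bound on $E(S)$ for $S \subset \Sym_{\frac{1}{2}\nu}(A)$ -- but your execution differs in a way worth flagging. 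The paper applies H{\"o}lder once with exponents $(4,\tfrac43)$ and discards $E(A)$ via the crude bound $\||\wh{1_A}|^2\|_{L_{4/3}(\wh{G})}^4 \leq |A|^5$, then uses only $|\Sym_{\frac{1}{2}\nu}(A)| \geq \frac{1}{2}\nu|A|$; chasing constants, that written argument gives $E(S) \geq \frac{\nu^4}{16}|S|^4/|A|$ and hence only $\Omega(\nu^5)$-hereditary energeticness. You instead keep $E(A)$ in both estimates -- the double Cauchy--Schwarz giving $E(S) \gtrsim \nu^4|A|^2|S|^4/E(A)$, paired with the sharper $|\Sym_{\frac{1}{2}\nu}(A)| \geq E(A)/2|A|^2$ -- and the two factors of $E(A)$ cancel, delivering exactly the stated $\Omega(\nu^4)$. (Equivalently one could patch the paper's H{\"o}lder step by the interpolation bound $\||\wh{1_A}|^2\|_{L_{4/3}(\wh{G})}^4 \leq E(A)|A|^2$.) The distinction is of no consequence for the paper, where only a $\nu^{O(1)}$ loss matters and part (\ref{pt.4}) is illustrative, but your version is the one that matches the exponent as stated.
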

\begin{proof}
For (\ref{pt.1}) suppose $S \subset A$ has $|S| \geq \sigma |A|$.  Then
\begin{equation*}
\|1_{S} \ast 1_{-S}\|_{\ell_2(G)}^2 \geq \frac{|S|^4}{|S+S|} \geq \frac{|S|^4}{|A+A|} \geq K^{-1}\sigma |S|^3,
\end{equation*}
and we have the claim.  (\ref{pt.2}) is trivial.  For (\ref{pt.3}) suppose that $S \subset A \cup A'$ has size $\sigma |A \cup A'|$.  Then without loss of generality $|S\cap A| \geq \frac{1}{2}\sigma |A\cup A'|$. It follows that
\begin{equation*}
E(S) \geq E(S\cap A) \geq \nu\frac{1}{|A|} |S\cap A|^4 \geq \frac{\nu}{8}\sigma |S|^3
\end{equation*}
as claimed.

Finally to prove (\ref{pt.4}) note that since $E(A) \geq \nu |A|^3$ we have $\left|\Sym_{\frac{1}{2}\nu}(A)\right| \geq \frac{1}{2}\nu |A|$.  Suppose that $S \subset \Sym_{\frac{1}{2}\nu}(A)$.  Then
\begin{align*}
E(S)|A|^5 \geq \|\wh{1_S}\|_{L_4(\wh{G})}^4\||\wh{1_A}|^2\|_{L_{\frac{4}{3}}(\wh{G})}^4& \geq \left|\langle \wh{1_S},|\wh{1_A}|^2\rangle_{L_2(\wh{G})}\right|^4 \\ &= \left|\langle 1_S,1_A \ast 1_{-A}\rangle_{\ell_2(G)}\right|^4 \geq \left(\frac{1}{2}\nu|S| |A|\right)^4.
\end{align*}
The final result follows.
\end{proof}
In view of (\ref{pt.1}) and (\ref{pt.3}) above, the union of two sets with small doubling is hereditarily energetic, but of course the union need not have small doubling so the notion is strictly weaker than having small doubling.  At the other end of the range, if $A$ is $\nu$-hereditarily energetic then it has large additive energy but the converse need not be true as can be seen by taking any set of large additive energy and adjoining a dissociated set of the same size.

We formulate the version of Lemma \ref{lem.i} we need as follows.
\begin{lemma}\label{lem.hen}
Suppose that $G$ is an abelian group and $A,X \subset G$ are such that $|X| \leq K|A|$ and $A$ is $(k,X)$-summing.  Then $A$ is $(kK)^{-O(1)}$-hereditarily energetic.
\end{lemma}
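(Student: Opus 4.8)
The plan is to deduce this from Tur\'an's theorem and the Cauchy--Schwarz inequality, following the method of \cite[Lemma 3.1]{sudszevu::} but keeping track of enough to obtain the hereditary conclusion. First I would unwind the definition: $A$ being $\nu$-hereditarily energetic is equivalent to the family of inequalities $E(S) \geq \nu|S|^4/|A|$ over all $S \subset A$, since the most demanding choice of $\sigma$ in the definition is $\sigma = |S|/|A|$. I would also make two harmless reductions. We may assume $K \geq 1$, as replacing $K$ by $\max(K,1)$ only weakens the conclusion. And when $|S| = O(k)$ the trivial bound $E(S) \geq |S|^2$ already suffices: $|S|^4/|A| \leq |S|^3 = O(k) \cdot |S|^2$, and $1/k \geq (kK)^{-O(1)}$ since $k \geq 2$ and $K \geq 1$. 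So the substance is the regime $|S| \geq 2k$ (say).

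Fix such an $S \subset A$. Since $S \subset A$ and $A$ is $(k,X)$-summing, so is $S$: every $I \subset S$ with $|I| \geq k$ is a subset of $A$ of size at least $k$, hence has $(I \wh{+} I) \cap X \neq \emptyset$. Now form the graph $G$ on vertex set $S$ in which $\{s,s'\}$ is an edge precisely when $s \neq s'$ and $s + s' \in X$. An independent set of size $k$ in $G$ is exactly a set $I \subset S$ of size $k$ with $(I \wh{+} I) \cap X = \emptyset$, which is forbidden; so $G$ has independence number at most $k - 1$. Applying Tur\'an's theorem to the complement of $G$ then gives $e(G) \geq \tfrac{|S|^2}{2(k-1)} - \tfrac{|S|}{2} \geq \tfrac{|S|^2}{4k}$, the last step using $|S| \geq 2k$.

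Finally I would pass from edges to energy. Let $r(x)$ be the number of ordered pairs $(s,s') \in S \times S$ with $s + s' = x$; recall that $E(S) = \sum_x r(x)^2$ (the standard reformulation of additive energy, immediate from Plancherel's theorem). Each edge $\{s,s'\}$ of $G$ contributes the two ordered pairs $(s,s')$ and $(s',s)$ to $\sum_{x \in X} r(x)$, and distinct edges contribute distinct pairs, so $\sum_{x \in X} r(x) \geq 2e(G) \geq \tfrac{|S|^2}{2k}$. Hence, by Cauchy--Schwarz,
\begin{equation*}
E(S) \geq \sum_{x \in X} r(x)^2 \geq \frac{\left( \sum_{x \in X} r(x) \right)^2}{|X|} \geq \frac{(|S|^2/2k)^2}{K|A|} = \frac{|S|^4}{4k^2K|A|},
\end{equation*}
which is of the required form $(kK)^{-O(1)} |S|^4/|A|$.

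I do not expect a genuine obstacle. The only mild nuisance is arranging the small-$|S|$ and lower-order terms so that a single exponent in $(kK)^{-O(1)}$ covers all cases, which is mechanical given $k \geq 2$ and $K \geq 1$. Specialising to $S = A$ recovers Lemma \ref{lem.i}; sharper constants follow from the more careful graph-theoretic argument of \cite[Lemma 3.1]{sudszevu::}, but are not needed here.
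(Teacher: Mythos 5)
Your proof is correct, and its skeleton is the same as the paper's: show that the number of ordered pairs from $S$ summing into $X$ is $\Omega_k(|S|^2)$, then apply Cauchy--Schwarz (your physical-space version over $X$ is identical to the paper's Fourier version $\langle 1_S \ast 1_S, 1_X\rangle = \int \wh{1_S}^2\overline{\wh{1_X}} \leq E(S)^{1/2}|X|^{1/2}$). The one real difference is in how the pair count is established: the paper counts $k$-tuples in $S^k$, subtracting those with a coincidence, and concludes $\langle 1_S\ast 1_S,1_X\rangle \geq |S|^2/k^2$ for $|S| > k^2$ (with a preliminary reduction to $|A| \geq k^4$); you instead invoke Tur\'an's theorem on the auxiliary graph and get $\langle 1_S\ast 1_S,1_X\rangle \geq |S|^2/(2k)$ already for $|S| \geq 2k$, with the small-$S$ case disposed of directly by $|A| \geq |S|$. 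Your variant is the one used in \cite[Lemma 3.1]{sudszevu::}, which the paper itself points out gives better constants than its own proof; the net effect is that you obtain $E(S) \geq |S|^4/(4k^2K|A|)$ versus the paper's $|S|^4/(k^4K|A|)$, and avoid the preliminary hypothesis $|A| \geq k^4$. Both are well within the $(kK)^{-O(1)}$ budget, so the improvement is cosmetic here, but your write-up is marginally cleaner.
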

\begin{proof}
Suppose that $|A| \geq k^4$, and note that if $x \in A^k$ then there are two indices $i \neq j$ such that either $x_i=x_j$ or $x_i+x_j \in X$.  It follows that if $S \subset A$ we have
\begin{equation*}
\sum_{x \in S^k}{\sum_{i<j}{1_X(x_i+x_j)}} \geq |S|^k - \binom{k}{2}|S|^{k-1}.
\end{equation*}
Defining $\sigma$ by $|S|=\sigma |A|$ we conclude that either $|S| \leq k^2$, in which case $E(S) \geq |S|^2 \geq \sigma |S|^3$; or $|S| >k^2$ in which case
 \begin{equation*}
\binom{k}{2}|S|^{k-2}\langle 1_S \ast 1_S,1_X\rangle_{\ell_2(G)} =\sum_{x \in S^k}{\sum_{i<j}{1_X(x_i+x_j)}} \geq \frac{1}{2}|S|^k.
\end{equation*}
But then
\begin{equation*}
\frac{1}{k^2}|S|^2 \leq \langle 1_S \ast 1_S,1_X\rangle_{\ell_2(G)} = \int{\wh{1_S}(\gamma)^2\overline{\wh{1_X}(\gamma)}d\gamma} \leq E(S)^{\frac{1}{2}}|X|^{\frac{1}{2}},
\end{equation*}
and so
\begin{equation*}
E(S) \geq \frac{1}{k^4}\cdot \frac{|S|}{|X|}\cdot |S|^3 \geq \frac{1}{Kk^4}\sigma |S|^3.
\end{equation*}
The result is proved.
\end{proof}
We need a little notation.  We use the language of $2$-adic valuations but this is only for brevity.  Direct discussion of divisibility properties rather, say, than the use of the ultra-metric property, is very easy.

If $z \in \Z$ we write $|z|_2$ for the $2$-adic valuation of $z$, that is $|z|_2=2^{-i}$ where $2^i \divides z$ and $2^{i+1} \not \divides z$, with the convention that $|0|_2=0$.  For $A \subset \Z$ we write
\begin{equation*}
A_i:=\{z\in A: |z|_2=2^{-i}\} \text{ for all }i \in \N_0\cup\{\infty\}
\end{equation*}
(with the convention that $2^{-\infty}=0$).  Thus $\{A_i: i \in \N_0\cup \{\infty\}\}$ is a set of disjoint sets whose union is $A$.  For $\epsilon \in (0,1]$ we write
\begin{equation*}
I_\epsilon(A):=\{i \in \N_0\cup\{\infty\} : |A_i| > \epsilon |A|\} \text{ and } A_\epsilon:=\bigcup{\{A_i: i \in I_\epsilon(A)\}};
\end{equation*}
note that $|I_\epsilon(A)| < \epsilon^{-1}$.  This method of decomposing $A$ is used in the proof of \cite[Lemma 5.2]{sudszevu::}, and the following lemma is very much in the spirit of that result.
\begin{lemma}\label{lem.energy}
Suppose that $A \subset \Z$ is $\nu$-hereditarily energetic; and $\epsilon \in (0,1]$ is a parameter.  Then $|A\setminus A_{\epsilon^2\nu}| =O(\epsilon|A|)$.
\end{lemma}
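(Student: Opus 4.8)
The plan is to play the hereditary energy lower bound for $S := A \setminus A_{\epsilon^2\nu}$ against an upper bound for $E(S)$ coming from the $2$-adic structure of the integers. Writing $|S| = \sigma|A|$, the definition of $\nu$-hereditarily energetic applied to $S \subset A$ gives $E(S) \ge \nu\sigma|S|^3 = \nu|S|^4/|A|$ (this is vacuous if $S = \emptyset$). It therefore suffices to prove the matching upper bound $E(S) \le 6\epsilon^2\nu|A||S|^2$, since then $|S|^2 \le 6\epsilon^2|A|^2$ and hence $|A \setminus A_{\epsilon^2\nu}| = |S| \le \sqrt6\,\epsilon|A|$, with room to spare in the constant.

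For the upper bound I would count additive quadruples by their $2$-adic valuations, exactly as in the proof of Lemma~\ref{lem.52}. Recall $E(S)$ is the number of ordered quadruples $(a,b,c,d) \in S^4$ with $a+b=c+d$; the only input is that $|x+y|_2 \le \max(|x|_2,|y|_2)$ for all $x,y\in\Z$, with equality unless $|x|_2=|y|_2$. Consequently, in any such quadruple, either two elements on one side of the equation have equal $2$-adic value, or else $|a+b|_2=|c+d|_2$ is forced to equal $\max(|a|_2,|b|_2)=\max(|c|_2,|d|_2)$, a value attained by exactly one element on each side. In the first case the quadruple is pinned down by the two equal-value elements, which lie in a common block $S_i := \{s\in S: |s|_2 = 2^{-i}\}$, together with one further element of $S$; in the second case by the two extremal elements, again forced into a common block $S_i$, together with one further element of $S$ — in each case the last coordinate is then determined by $a+b=c+d$. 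Summing over the boundedly many sub-cases and over $i \in \N_0\cup\{\infty\}$,
\[
E(S) \;\le\; 6\sum_i |S_i|^2|S| \;\le\; 6\Bigl(\max_i|S_i|\Bigr)\Bigl(\sum_i|S_i|\Bigr)|S| \;=\; 6\Bigl(\max_i|S_i|\Bigr)|S|^2.
\]

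Finally one observes that every non-empty block $S_i$ is small: since $S$ is the union of those $A_i$ with $i\notin I_{\epsilon^2\nu}(A)$, whenever $S_i\neq\emptyset$ we have $S_i=A_i$ for some $i\notin I_{\epsilon^2\nu}(A)$, and hence $|S_i|=|A_i|\le\epsilon^2\nu|A|$ by the very definition of $I_{\epsilon^2\nu}(A)$. Thus $E(S)\le 6\epsilon^2\nu|A||S|^2$, which combined with $E(S)\ge\nu|S|^4/|A|$ gives the claim.

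The only step requiring genuine care is the $2$-adic case analysis bounding $E(S)$ by $O(\sum_i|S_i|^2|S|)$: one must check the dichotomy is exhaustive — the degenerate quadruples through $0$ are subsumed by allowing $i=\infty$, since $|0|_2=0$ satisfies the ultrametric inequality — and that in every case three of the four coordinates range freely over $S$ with the fourth then determined, at least two of the free ones being confined to a single block $S_i$. Everything after that is bookkeeping, and in particular the argument needs no separate treatment of small $|A|$.
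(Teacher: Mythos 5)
Your proof is correct and is essentially the paper's own argument: you play the hereditary-energy lower bound $E(S)\ge \nu|S|^4/|A|$ for $S=A\setminus A_{\epsilon^2\nu}$ against the same ultrametric observation that every additive quadruple has two coordinates of equal $2$-adic valuation, hence confined to a single small block $A_i$ with $i\notin I_{\epsilon^2\nu}(A)$, giving $E(S)\le 6\epsilon^2\nu|A||S|^2$. The only difference is presentational: the paper expresses the six position-pair cases as six convolution inner products, while you count the quadruples directly.
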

\begin{proof}
Suppose that $(a_1,a_2,a_3,a_4) \in A^4$ are such that $a_1+a_2=a_3+a_4$.  Let $i$ be such that $|a_i|_2$ is maximal, and note that $|a_i| = |a_j+a_k-a_l|$ for any $j,k,l$ with $\{i,j,k,l\}=\{1,2,3,4\}$.  Since the $2$-adic valuation induces an ultra-metric $|a_i|_2 \leq \max\{|a_j|_2,|a_k|_2,|a_l|_2\} \leq |a_i|_2$, so there is some $j \in \{1,2,3,4\}$ with $j \neq i$ such that $|a_j|_2=|a_i|_2$.  Writing $A^-:=A \setminus A_{\epsilon^2\nu}$ it follows that
\begin{align*}
E(A^-) & \leq \sum_{i \not \in I_{\epsilon^2\nu}(A)}{\left(\langle 1_{A_i} \ast 1_{-A_i},1_{A^-}\ast 1_{-A^-}\rangle_{\ell_2(\Z)}+\langle 1_{A_i} \ast 1_{-A^-},1_{A_i}\ast 1_{-A^-}\rangle_{\ell_2(\Z)}\right. }\\
& \qquad \qquad +\langle 1_{A_i} \ast 1_{-A^-},1_{A^-}\ast 1_{-A_i}\rangle_{\ell_2(\Z)}+\langle 1_{A^-} \ast 1_{-A_i},1_{A_i}\ast 1_{-A^-}\rangle_{\ell_2(\Z)}\\
& \qquad \qquad \left.+\langle 1_{A^-} \ast 1_{-A_i},1_{A^-}\ast 1_{-A_i}\rangle_{\ell_2(\Z)}+\langle 1_{A^-} \ast 1_{-A^-},1_{A_i}\ast 1_{-A_i}\rangle_{\ell_2(\Z)}\right)\\
& \leq 6\sum_{i \not \in I_{\epsilon^2\nu}(A)}{|A_i|^2|A^-|} \leq 6\epsilon^2\nu |A||A^-| \sum_{i \not \in I_{\epsilon^2\nu}(A)}{|A_j|} = 6\epsilon^2\nu|A||A^-|^2.
\end{align*}
On the other hand the left hand side is at least $\nu |A^-|^4/|A|$ since $A$ is $\nu$-hereditarily energetic.  The result follows on rearranging.
\end{proof}
We shall use the above to get the following.
\begin{lemma}\label{lem.int}
Suppose that $A,S \subset \Z$ are both $\nu$-hereditarily energetic; $|S| \leq K|A|$; and $\kappa \in (0,1]$ and $r \in \N$ are parameters.  Then either $r\leq (\kappa^{-1}\nu^{-1}K)^{O(1)}$; or there is some $1\leq j \leq r$ such that $|(2^j\cdot S)\cap A| <\kappa |A|$.
\end{lemma}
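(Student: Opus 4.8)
The plan is a pigeonhole argument based on the $2$-adic level decomposition together with Lemma~\ref{lem.energy}. Let $C_0\ge 1$ denote the implied constant in Lemma~\ref{lem.energy}, so that for every $\nu$-hereditarily energetic $B\subset\Z$ and every $\delta\in(0,1]$ we have $|B\setminus B_{\delta^2\nu}|\le C_0\delta|B|$, where $B_{\delta^2\nu}=\bigcup\{B_i:i\in I_{\delta^2\nu}(B)\}$ and $|I_{\delta^2\nu}(B)|<\delta^{-2}\nu^{-1}$. We may assume $K\ge 1$ and that $0\notin A$ (the degenerate very small cases being routine); then every index in $I_\delta(A)$ is finite, and $(2^j\cdot S)\cap A$ consists of nonzero integers for each $j$.

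Put $\epsilon_A:=\kappa/(4C_0)$ and $\epsilon_S:=\kappa/(4C_0K)$, both of which lie in $(0,1]$, and abbreviate $A^\sharp:=A_{\epsilon_A^2\nu}$ and $S^\sharp:=S_{\epsilon_S^2\nu}$. Then $|A\setminus A^\sharp|\le\tfrac{\kappa}{4}|A|$, and since $|S|\le K|A|$ also $|S\setminus S^\sharp|\le C_0\epsilon_SK|A|=\tfrac{\kappa}{4}|A|$. Suppose, towards proving the dichotomy, that $|(2^j\cdot S)\cap A|\ge\kappa|A|$ for every $j\in\{1,\dots,r\}$, and fix such a $j$. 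Deleting the two exceptional pieces,
\begin{equation*}
\bigl|(2^j\cdot S^\sharp)\cap A^\sharp\bigr|\ \ge\ |(2^j\cdot S)\cap A|-|S\setminus S^\sharp|-|A\setminus A^\sharp|\ \ge\ \kappa|A|-\tfrac{\kappa}{4}|A|-\tfrac{\kappa}{4}|A|\ =\ \tfrac{\kappa}{2}|A|\ >\ 0,
\end{equation*}
so this intersection is nonempty; pick $a=2^js$ in it, with $s\in S_i$ for some $i\in I_{\epsilon_S^2\nu}(S)$ and $a\in A_l$ for some $l\in I_{\epsilon_A^2\nu}(A)$, both indices being finite because $a\neq 0$. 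The point that makes everything work is that multiplication by $2^j$ shifts $2$-adic valuations by exactly $j$: $|2^js|_2=2^{-j}|s|_2$, so $2^{-l}=|a|_2=2^{-(i+j)}$ and hence $j=l-i$.

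Therefore, under the above supposition, $\{1,\dots,r\}$ is contained in the difference set $I_{\epsilon_A^2\nu}(A)-I_{\epsilon_S^2\nu}(S)$, so
\begin{equation*}
r\ <\ |I_{\epsilon_A^2\nu}(A)|\cdot|I_{\epsilon_S^2\nu}(S)|\ <\ (\epsilon_A^{-2}\nu^{-1})(\epsilon_S^{-2}\nu^{-1})\ =\ \frac{(4C_0)^4K^2}{\kappa^4\nu^2}\ =\ O\!\left(\frac{K^2}{\kappa^4\nu^2}\right),
\end{equation*}
which is $(\kappa^{-1}\nu^{-1}K)^{O(1)}$. Hence either $r=(\kappa^{-1}\nu^{-1}K)^{O(1)}$, or the supposition fails and there is some $1\le j\le r$ with $|(2^j\cdot S)\cap A|<\kappa|A|$, which is the claim.

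The step needing the most care — though it is not a genuine obstacle — is the calibration of $\epsilon_A$ and $\epsilon_S$ so that the mass lying outside the concentrated parts $A^\sharp$ and $S^\sharp$ totals strictly less than $\kappa|A|$, leaving a nonempty core intersection; all the real content has already been packaged into Lemma~\ref{lem.energy}, which is also where the integers enter, through the ultrametric property of the $2$-adic valuation. The only other thing to watch is the harmless roles of $0$ and of the level $\infty$, which one disposes of by passing to $A\setminus\{0\}$ and $S\setminus\{0\}$: this changes cardinalities by at most $1$ and, by Lemma~\ref{lem.hed}(\ref{pt.2}), preserves being $\Omega(\nu)$-hereditarily energetic, the lost absolute constants being swallowed by the $O(1)$ in the exponent.
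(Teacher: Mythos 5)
Your proof is correct and follows essentially the same route as the paper's: apply Lemma \ref{lem.energy} to both $A$ and $S$ with $\epsilon$ of size $\kappa/K$ up to constants, note that if $|(2^j\cdot S)\cap A|\geq\kappa|A|$ for every $1\leq j\leq r$ then the trimmed sets $A_{\epsilon^2\nu}$ and $2^j\cdot S_{\epsilon^2\nu}$ must meet, read off $j$ as a difference of $2$-adic levels, and bound $r$ by the product of the sizes of the two index sets $I_{\epsilon^2\nu}(A)$ and $I_{\epsilon^2\nu}(S)$. The only cosmetic differences are your use of two separate calibrations $\epsilon_A,\epsilon_S$ in place of the paper's single $\epsilon$, and your explicit (indeed slightly more careful than the paper's) treatment of the element $0$ and the level $\infty$.
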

\begin{proof}
Let $\epsilon = \Omega(\kappa K^{-1})$ be such that the $O(\epsilon)$-term in Lemma \ref{lem.energy} is at most $\frac{\kappa}{2(1+K)}$.  It follows that
\begin{equation*}
|A\setminus A_{\epsilon^2\nu}| \leq \frac{\kappa}{2}|A| \text{ and } |S\setminus S_{\epsilon^2\nu}| \leq \frac{\kappa}{2(1+K)}|S| < \frac{\kappa}{2}|A|.
\end{equation*}
Suppose that $|(2^j\cdot S)\cap A| \geq\kappa |A|$ for all $1\leq j \leq r$.  Then by the triangle inequality
\begin{equation*}
|(2^j\cdot S_{\epsilon^2\nu}) \cap A_{\epsilon^2\nu}| > \kappa |A| -\frac{\kappa}{2}|A|-\frac{\kappa}{2}|A|>0 \text{ for all }i \leq j \leq r.
\end{equation*}
Let $t \in (2^j\cdot S_{\epsilon^2\nu}) \cap A_{\epsilon^2\nu}$ then there are odd numbers $a$ and $s$ such that $t=2^ia$ and $t = 2^{j+i'}s$ with $i \in I_{\epsilon^2\nu}(A)$ and $i' \in I_{\epsilon^2\nu}(S)$.  It follows that $j=i-i'$.  However, $|I_{\epsilon^2\nu}(A)|<\epsilon^{-2}\nu^{-1}$ and similarly $| I_{\epsilon^2\nu}(S)| < \epsilon^{-2}\nu^{-1}$ and so $r \leq \epsilon^{-4}\nu^{-2}$ and the result follows.
\end{proof}

The main ingredient replacing Proposition \ref{prop.ss} and allied arguments is the following proposition.
\begin{proposition}\label{prop.g}
Suppose that $G$ has no $2$-torsion; $A,X \subset G$; $|X\setminus A| \leq \eta|A|$; $A$ is $(k,X)$-summing; and $r \in \N$ is a parameter.  Then either $\eta^{-1} = (kr)^{O(1)}$; or $|A| \leq \exp((kr)^{O(1)})$; or there is a $k^{-O(1)}$-hereditarily energetic set $S$ with $|S| \geq k^{-O(1)}|A|$ and $|(2^j \cdot S)\cap A| \geq k^{-O(1)}|S|$ for all $1 \leq j \leq r$.
\end{proposition}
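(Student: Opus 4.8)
The plan is to handle the uninteresting range of $\eta$ first, then reduce to the case where $A$ is $k^{-O(1)}$-hereditarily energetic, and finally build the set $S$ out of a structured piece of $A$, using the summing hypothesis to control how that piece behaves under repeated doubling.

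\emph{Reductions.} Let $C$ be a large absolute constant. If $\eta \geq (kr)^{-C}$ then the first alternative holds, so assume $\eta < (kr)^{-C}$. Then $|X| \leq (1+\eta)|A| \leq 2|A|$, so Lemma~\ref{lem.hen} gives that $A$ is $\nu_0$-hereditarily energetic with $\nu_0 = k^{-O(1)}$; moreover $\eta$ is now as small as we wish relative to $k$ and $r$, and this smallness, together with $A \subseteq X$, is the only thing we shall use about $X$.

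\emph{The core.} The substance of the proposition is a quantitative, $\eta$-robust version of the phenomenon behind Theorem~\ref{thm.tv}: I would show that, unless $|A| \leq \exp((kr)^{O(1)})$, the set $A$ contains a subset $B$ of small doubling --- $|B| \geq k^{-O(1)}|A|$ and $|B + B| \leq k^{O(1)}|B|$ --- whose iterated doubles $B, 2\cdot B, \dots, 2^{r}\cdot B$ each meet $A$ in density $k^{-O(1)}$. Qualitatively one expects such a $B$ because a $(k,A)$-summing set sits, up to bounded error, densely inside finitely many finite subgroups (Theorem~\ref{thm.tv}), and on a finite subgroup of a $2$-torsion-free group the map $x \mapsto 2x$ is an automorphism, so dilates return to the subgroup; the work is to make this quantitative, with the $\eta$-slack in $X$, and with only polynomial losses in $k$. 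For that I would run the Sudakov--Szemer\'edi--Vu/Dousse--Shao circle of ideas --- Tur\'an's theorem (Lemma~\ref{lem.hen}), the $2$-adic ultrametric bookkeeping of Lemmas~\ref{lem.energy} and~\ref{lem.int}, and a Bohr-set-localised Fourier/density-increment argument in the style of Shao's treatment of Proposition~\ref{prop.ss} --- but against the easier target ``the sum lies in $X$'' rather than ``the sum lies in $2\cdot A$'', which is precisely where the improvement over the earlier arguments comes from. Throughout, the hypothesis of no $2$-torsion enters only through injectivity of doubling, which keeps $|2^j\cdot S| = |S|$ for every $j$.

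\emph{Assembly, and the main obstacle.} Given such a $B$, I would take $S$ to be $B$ itself: small doubling makes $S$ be $k^{-O(1)}$-hereditarily energetic by Lemma~\ref{lem.hed}(\ref{pt.1}), with a constant depending only on the doubling of $B$ and not on $r$; alternatively, if it is more convenient to produce a high-energy rather than a small-doubling piece along the way, I would pass to its symmetry set at threshold $\Omega(\nu_0)$ and invoke Lemma~\ref{lem.hed}(\ref{pt.4}). Either way $|S| \geq k^{-O(1)}|A|$, $S$ is $k^{-O(1)}$-hereditarily energetic, and $|(2^j\cdot S) \cap A| \geq k^{-O(1)}|S|$ for all $1 \leq j \leq r$, which is the third alternative. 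The step I expect to be the main obstacle is the core one: the summing hypothesis only asserts that \emph{some} restricted sum of each $k$-element subset of $A$ lands in $X$, with no handle on which pair, whereas I need structured sets whose \emph{doubles} come back to $A$; bridging that gap --- and doing it so cheaply that the threshold on $|A|$ is only $\exp((kr)^{O(1)})$ rather than the tower of \cite{sudszevu::} --- is where the Fourier-analytic heart of the argument must go, and is the content of the remaining sections.
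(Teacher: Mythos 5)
Your high-level target --- find a small-doubling set $B$, set $S=B$ so that Lemma~\ref{lem.hed}(\ref{pt.1}) gives hereditary energy, and show $2^j\cdot B$ continues to meet $A$ densely for $0\le j\le r$ --- is indeed what the paper proves, and your observation that the no-$2$-torsion hypothesis enters only via injectivity of doubling is correct.  But the sketch has two problems, one architectural and one substantive.

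First, the architectural one.  You propose to run ``the $2$-adic ultrametric bookkeeping of Lemmas~\ref{lem.energy} and~\ref{lem.int}'' inside the proof.  Those two lemmas are specific to $\Z$ --- they rely on the $2$-adic valuation --- and are not available in a general abelian group $G$ with no $2$-torsion, which is the setting of Proposition~\ref{prop.g}.  In the paper's architecture those lemmas sit strictly \emph{outside} Proposition~\ref{prop.g}: they are used together with it (and Lemma~\ref{lem.hen}) in the proof of Proposition~\ref{prop.key2}.  Invoking them here would collapse the separation between the group-general part of the argument and the $\Z$-specific part, which is exactly the separation Proposition~\ref{prop.g} is designed to make.

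Second, and more importantly, the genuinely hard point is not addressed at all: you have to ensure that the density lower bound on $(2^j\cdot S)\cap A$ does not decay as $j$ grows.  A naive iteration --- regularise, count, deduce $A$ is not too sparse on the next dilate, repeat --- costs a multiplicative factor at each step and yields a bound like $k^{-O(j)}$ or worse, which is useless once $j\approx r$ and $r$ can be as large as $k^{O(1)}$.  The paper's solution has two interlocking pieces.  One is the counting lemma (Lemma~\ref{lem.ct} in the model, Lemma~\ref{lem.c} in general): its threshold $\epsilon_0$ depends on $k$ but \emph{not on the density} $\alpha$ of $A$ on the relevant neighbourhood, so the inductive lower bound one propagates is the fixed quantity $\min\{\nu_0/2,\epsilon_0/2\}$ rather than something that shrinks with $j$.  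The other is the weighted-cover framework (Lemma~\ref{lem.gs}, Corollary~\ref{cor.iju}), built so that the $U_1$- and $U_2$-uniformity estimates hold \emph{simultaneously} on all the dilated covers $2^0z+Z,\dots,2^rz+Z$ --- the energy-increment potential function is $\sum_{s=0}^r \E\,m_{2^s\cdot(z+Z)}(A)^2$, which caps the number of increments at $O(\delta^{-3}r)$ independent of which dilate is being regularised.  Without an $\alpha$-independent threshold in the counting step, and without achieving uniformity on all $r+1$ dilates at once, the losses compound and you only get $|(2^j\cdot S)\cap A|\ge k^{-O(j)}|S|$, not $k^{-O(1)}|S|$.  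This is precisely the remark the paper makes after Lemma~\ref{lem.ct}, and it is the piece your sketch defers as ``the Fourier-analytic heart'' without giving any indication of how it would be overcome.
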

There are a couple of remarks worth making.  First, the proof we give actually shows the stronger conclusion that $S$ has doubling $k^{O(1)}$.  We do not need this, but it may be worth noting and is probably helpful in understanding the structure of the proof.  The notion of hereditarily energetic is necessary for dealing with $(k,Z)$-summing sets (see Lemma \ref{lem.hen}) which need not have small doubling.

Secondly, it is not surprising that we ask for $G$ to have no $2$-torsion in view of the conclusion: no analogue of Proposition \ref{prop.g} can be true for $G=(\Z/2\Z)^n$ since the final conclusion collapses to $|A| \leq k^{O(1)}$ since whatever $S$ is, $2\cdot S = \{0_G\}$ in this case.  Thus if $A=X=G$ then $A$ is $(2,X)$-summing (and so $(k,X)$-summing) and we can take $\eta=0$, but there is no bound on $|A|$ in terms of $k$.

We shall first prove Proposition \ref{prop.g} in a model setting in \S\ref{sec.mod} to illustrate our arguments, before moving on to the general result.

With this in hand we are ready for the main result. It is worth giving a word of explanation.  A number of our arguments involve careful dependences between various parameters and we shall say things like `let $\epsilon_0$ be such that the first conclusion of Theorem X does not hold'.  When we say this the conclusions of Theorem X will begin with a number of inequalities between parameters and we shall want to choose things so that those inequalities do not hold leading to the more substantial conclusion(s) of the theorem.  The proof below while short will give a flavour; the later arguments are more involved.
\begin{proof}[Proof of Proposition \ref{prop.key2}]
By Lemma \ref{lem.hen} $A$ is $\nu_0=k^{-O(1)}$-hereditarily energetic (assuming, as we may, that $\eta\leq 1$).  Let $\nu_1 = k^{-O(1)}$ such that the set $S$ in Proposition \ref{prop.g} is always $\nu_1$-hereditarily energetic; $\sigma_0=k^{-O(1)}$ be such that $|S| \geq \sigma_0 |A|$; and $\tau_0=k^{-O(1)}$ be such that $|(2^j\cdot S) \cap A| \geq \tau_0|S|$.  Finally, let $r = k^{O(1)}$ be such that the first conclusion of Lemma \ref{lem.int} applied to sets that are $\min\{\nu_0,\nu_1\}$-hereditarily energetic with size ratio is at most $\sigma_0^{-1}$, and with parameters $\sigma_0\tau_0$ and $r$ does not hold.

Apply Proposition \ref{prop.g} with parameter $r$.  Then either $\eta^{-1}=k^{O(1)}$; or $|A| \leq \exp(k^{O(1)})$; or else there is a $\nu_1$-hereditarily energetic set $S$ with $|S| \geq \sigma_0 |A|$ and $|(2^j\cdot S)\cap A| \geq \tau_0|S| \geq \tau_0\sigma_0|A|$ for all $1 \leq j \leq r$.  By Lemma \ref{lem.int}, whose first conclusion does not hold by design, we get a contradiction and the result is proved.
\end{proof}

\section{A model for Proposition \ref{prop.g}}\label{sec.mod}

The finite field model is useful for illustrating arguments in arithmetic combinatorics without a lot of the technical difficulties involved in general abelian groups.  One of the earliest introductions to the model is in \cite{gre::9}, and a summary of more recent developments can be found in \cite{wol::3}.

Throughout this section $V$ denotes a vector space over a finite field $\F_p$ where $p$ is an odd prime.  Our aim is to prove the following model version of Proposition \ref{prop.g}.
\begin{proposition}\label{prop.modg}
Suppose that $A,X \subset V$; $|X\setminus A| \leq \eta |A|$ for some $\eta \in (0,1]$; $A$ is $(k,X)$-summing for some $k \geq 2$; and $r \in \N$ is a parameter.  Then either $\eta^{-1} \leq (rk)^{O(1)}$; or $|A| \leq p^{(kr)^{O(1)}}$; or there is a $k^{-O(1)}$-hereditarily energetic set $S$ with $|S| \geq k^{-O(1)}|A|$, and $|(2^j \cdot S)\cap A| \geq k^{-O(1)}|S|$ for all $1 \leq j \leq r$.
\end{proposition}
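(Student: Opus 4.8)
The aim is to produce the set $S$ of the third conclusion as (essentially) a subspace $K\le V$. The point is that a subspace has $|K+K|=|K|$, so it is $1$-hereditarily energetic by Lemma~\ref{lem.hed}(\ref{pt.1}), and --- since $p$ is odd, so that $2$ is invertible --- it satisfies $2^j\cdot K=K$ for every $j$; hence the whole dilation condition $|(2^j\cdot K)\cap A|\ge k^{-O(1)}|K|$ collapses to the single requirement $|A\cap K|\ge k^{-O(1)}|K|$, and the parameter $r$ then intervenes only in the sizes of the two exceptional conclusions. So it suffices to find a subspace $K$ with $k^{-O(1)}|A|\le|K|\le k^{O(1)}|A|$ in which $A$ is $k^{-O(1)}$-dense, and then take $S=K$.

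We may assume $\eta\le 1$, so $|X|\le 2|A|$ and Lemma~\ref{lem.hen} makes $A$ hereditarily energetic with parameter $k^{-O(1)}$; more to the point, the proof of that lemma shows that every $S\subseteq A$ with $|S|>k^2$ satisfies $\langle 1_S\ast 1_S,1_X\rangle\ge k^{-O(1)}|S|^2$, so the sums of any not-too-small part of $A$ pour a $k^{-O(1)}$ proportion of their mass into the small set $X$. Applying the Balog--Szemer\'edi--Gowers theorem to the additive energy of $A$ and then a quantitative Freiman--Ruzsa theorem in $V$ in its sharpest available form, one extracts a subspace $H$ with $k^{-O(1)}|A|\le|H|\le k^{O(1)}|A|$ and an element $v_0$ with $|A\cap(v_0+H)|\ge k^{-O(1)}|H|$. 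If $v_0\in H$ one simply takes $K=H$.

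The remaining task --- and the crux --- is to show that if instead $A$ clusters only in a \emph{non-trivial} coset $v_0+H$, then $\eta$ must already be large, i.e. we are in the first conclusion. Heuristically: with $A_0:=A\cap(v_0+H)$, the sums of $A_0$ pour a $k^{-O(1)}$ share of their mass into $X$, but this mass lies on $A_0+A_0\subseteq 2v_0+H$, a coset distinct from $v_0+H$; if $A$ is thin there the mass must be absorbed by $X\setminus A$, forcing $\eta\ge (kr)^{-O(1)}$, while if $A$ is thick there one repeats the argument along the coset orbit $v_0+H,\ 2v_0+H,\ 4v_0+H,\dots$ until it either leaks into $X\setminus A$ (again $\eta$ large) or one has assembled, inside the subspace $\langle H,v_0\rangle$, a set on which $A$ is dense enough to play the role of $K$ after a dimension count. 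The third alternative, $|A|\le p^{(kr)^{O(1)}}$, absorbs the degenerate regime where $|A|$ is too small for these tools to engage.

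The main obstacle is quantitative bookkeeping: every bound must stay polynomial in $k$ (and controlled in $r$), which is why one cannot simply invoke Theorem~\ref{thm.tv}, whose dependence on $k$ is not polynomial, and why the Freiman--Ruzsa input has to be used in its strongest form; and the coset-versus-subgroup dichotomy just described must be handled so that the dilation-invariant object $K$ is recovered without losing more than a $(kr)^{-O(1)}$ factor. That the no-$2$-torsion hypothesis is genuinely needed is visible in the statement itself: for $V$ of characteristic $2$ one has $2\cdot S=\{0_V\}$ for every $S$, and the third conclusion becomes impossible.
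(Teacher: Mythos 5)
Your plan to take $S$ to be a genuine subspace $K$ proves a stronger, essentially $r$-free statement, and both inputs you need for it are unavailable. First, the Freiman-type input you invoke --- a subspace $H$ with $k^{-O(1)}|A|\le |H|\le k^{O(1)}|A|$ together with a coset $v_0+H$ on which $A$ has density $k^{-O(1)}$ --- is polynomial Freiman--Ruzsa strength. What the standard route (Balog--Szemer\'edi--Gowers, then Chang and Pl\"unnecke, as in Lemma \ref{lem.freiman}) actually gives is a subspace $U$ of size only $p^{-(k)^{O(1)}}|A|$ inside $2A'-2A'$; polynomial density commensurate with $|A|$ is then obtained only for the small-doubling set $S=A'+U$, not for a subspace. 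This is exactly why the second conclusion of the proposition carries the factor $p^{(kr)^{O(1)}}$ and why the paper never attempts to make $S$ a subspace; and even granting a PFR-type theorem, it delivers a coset, so your ``crux'' remains.

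Second, and more seriously, the crux itself is where the argument breaks quantitatively. At each step of the orbit $v_0+H,\,2v_0+H,\,4v_0+H,\dots$ the only estimates in play are $\langle 1_{A_j}\ast 1_{A_j},1_X\rangle \ge k^{-2}|A_j|^2$ (from the summing property, as in Lemma \ref{lem.hen}) and the trivial bound $\|1_{A_j}\ast 1_{A_j}\|_\infty\le |A_j|$, which yield only $|A_{j+1}|\ge k^{-2}|A_j|-\eta|A|$: the density guarantee degrades by a factor $k^{-O(1)}$ per step and the admissible range of $\eta$ shrinks with it. The orbit has length $\mathrm{ord}_p(2)$, which is unbounded in terms of $k$ and $r$, so the iteration becomes vacuous after $O(\log_k|A|)$ steps, long before it can close up; and the terminal ``dimension count'' cannot succeed when $p\gg k^{O(1)}$, because the visited cosets all lie in $\langle H,v_0\rangle$, which has size $p|H|\ge pk^{-O(1)}|A|$, so $A$ can never be $k^{-O(1)}$-dense there (and the union of the visited cosets is not a subspace). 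This per-step loss is precisely the phenomenon the paper's machinery exists to remove: the weighted covers and the $U_2$-uniformity increment (Lemma \ref{lem.unif}, Corollary \ref{cor.y}) make the counting in Lemma \ref{lem.ct} work with a threshold $\epsilon=\Omega(k^{-2})$ that is independent of the density $\alpha$, so the propagation of density along $2^s z+Z$ runs for $r$ steps with no loss in $s$; the paper explicitly notes after Lemma \ref{lem.ct} that a bound decreasing in $j$ would not suffice. Without an ingredient of this kind --- or an independent proof of your stronger subspace claim, whose truth your sketch does not establish --- the proposal does not prove the proposition.
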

There are three qualifying remarks to make.  First, there is (necessarily) no analogue of Lemma \ref{lem.int} in $V$, and without such it is difficult to argue that the conclusion of Proposition \ref{prop.modg} is terribly significant.

Secondly, the fact that the bound on $|A|$ is dependent on $p$ may look odd to those not familiar with this sort of model since there is no equivalent in Proposition \ref{prop.g}.  This is standard for the model and arises because in this setting we use genuine subspaces rather than `approximate' subgroups.  The size of genuine subspaces is a power of $p$, and replacing an `approximate' subgroup with a genuine subspace usually involves shrinking by a factor that is a power of $p$.

Typically in the model we think of $p$ as fixed although this could be seen to be in conflict with the fact that the $r$-dependence is important.  Indeed, for any natural number $j \in (p,r]$ there will be some $j' \in \{1,\dots,p\}$ such that $2^j\equiv 2^{j'}\pmod p$ -- it follows that one might as well take $r \leq p$.  We shall not make this simplification as it is an artefact of the model.

Finally, it may be that a better result could be proved using the new polynomial techniques of Croot, Lev and Pach \cite{crolevpac::}.  We have not used their method because at present there is no known way to convert it to give arguments in the setting we ultimately need (the integers).

We begin with a sketch in which we aim to convey the structure of the argument.  The more detailed work afterwards is to explain where the bounds come from.

Before going into the sketch we recall the two sets of hypotheses in Proposition \ref{prop.modg} and mention where they arise: we have that $A$ is $(k,X)$-summing which is used in \ref{step1} and \ref{step5}; we also (more or less) have that $|X\setminus A| \leq \eta |A|$ for $\eta$ small in terms of $k$, which is used in \ref{step5}.

Given a non-empty set $S$ we write $m_S$ for the uniform probability measure supported on $S$.\\

\renewcommand{\theenumi}{\textbf{STEP \Roman{enumi}}}
\begin{enumerate}[wide, labelwidth=!, labelindent=0pt]
\setlength\itemsep{1em}
\item \label{stepdef} We study $A$ with respect to an average of uniform probability measures on translates of (possibly different) subspaces.  Formally, a \textbf{weighted cover of $S$ by subspaces}\footnote{We shall tend to drop the `by subspaces' part.} is a pair $(z,Z)$ where $z$ is a $V$-valued random variable, $Z$ is a finite-subspace-of-$V$-valued random variable, and $\E{m_{z+Z}}=m_S$. Since we may take $V$ to be finite we can assume these random variables take finitely many values so that there are no analytic issues to worry about.

We construct new weighted covers by specifying conditional joint distributions: given a weighted cover $(z,Z)$ of $S$, and a weighted cover $(w^{(x,U)},W^{(x,U)})$ of $x+U$ for each $x \in V$ and $U \leq V$ (finite), we can define a new weighted cover $(z',Z')$ of $S$ by specifying that
\begin{equation*}
\P(z'=x' \text{ and } Z'=U'|z,Z)=\P(w^{(z,Z)}=x' \text{ and } W^{(z,Z)}=U').
\end{equation*}
Indeed, we have
\begin{equation*}
\E{m_{z'+Z'}} = \E{(\E{(m_{z'+Z'}|z,Z)})} = \E{(\E{(m_{w^{(z,Z)}+W^{(z,Z)}}|z,Z)})} = \E{m_{z+Z}} = m_S.
\end{equation*}
When we need to refer to the underlying sample space we call it $\Omega$, and $\omega$ will always denote an element of $\Omega$.

Given a weighted cover $(z,Z)$ of $S$, we shall also need weighted covers of $2^j\cdot S$ and conversely.  In the model setting this is particularly easy: if $(z,Z)$ is a weighted cover of $S$ then $(2^jz,Z)$ is a weighted cover of $2^j\cdot S$ (since $2^j$ is just a scalar, and so $2^j\cdot (z+Z) = 2^jz +Z$) whenever $j\in \N_0$.  We do not get this as cheaply in the non-model setting.
\item \label{step1} First we use the Balog-Szemer{\'e}di-Gowers-Freiman machinery to find a set $S$ having large intersection with $A$, and a subspace $U$ of size not too much smaller than $A$, such that $m_{S} = m_S \ast m_U$.  One can also require that $S$ has small doubling and so it is hereditarily energetic.  Considering $(S,m_S)$ as a probability space and writing $z:S \rightarrow V$ for the natural inclusion, and $Z$ the constant function taking the value $U$ we have that $m_S=\E{m_{z+Z}}$ and so $(z,Z)$ is a weighted cover of $S$.  This is Lemma \ref{lem.freiman}.

\item \label{step3} Suppose that $(z,Z)$ is a weighted cover and $A$ is not highly uniform on $z(\omega)+Z(\omega)$ then a Fourier argument tells us that there is a large subspace $Z' \leq Z(\omega)$ such that $m_{z(\omega)+Z(\omega)}=\E_{z' \in z(\omega)+Z(\omega)}{m_{z'+Z'}}$ and
\begin{equation*}
\E_{z' \in z(\omega)+Z(\omega)}{m_{z'+Z'}(A)^2} >  m_{z(\omega)+Z(\omega)}(A)^2
\end{equation*}
where the size of the difference between left and right increases with the level of non-uniformity.

If there is a small (but not too small) proportion of $\omega \in \Omega$ with $A$ not highly uniform on $z(\omega)+Z(\omega)$ then the above can be used to produce a new weighted cover $(z',Z')$ of $S$ where
\begin{equation*}
\E{m_{z'+Z'}(A)^2} > \E{m_{z+Z}(A)^2}.
\end{equation*}
Again, the size of the difference is dependent on the notion of small above and the level of non-uniformity.  As a result this can be set in an iteration until we end up with a weighted cover where none of the subspaces are too small and where $A$ is highly uniform on $z(\omega)+Z(\omega)$ for almost all $\omega \in \Omega$. This is Lemma \ref{lem.unif}.  In the end we shall need $A$ to be highly uniform on  $2^jz(\omega) + Z(\omega)$ for almost all $\omega \in \Omega$ and all $0 \leq j \leq r$.  This can be done at a cost of iterating $r$ times as often.  This (combined with \ref{step1}) is Corollary \ref{cor.y}.
\item \label{step4} By averaging (using the fact that $(z,Z)$ is a weighted cover of $S$) most of the mass of $A$ in $S$ corresponds to $\omega$s where $m_{z(\omega)+Z(\omega)}(A)$ is not too small.  By \ref{step3}, $A$ is highly uniform on most of these $\omega$s and \emph{if} $m_{2z(\omega) + Z(\omega)}(X^c)$ is very large (meaning close to $1$) we can use this uniformity, the size of $A$ on $z(\omega)+Z(\omega)$, and the pigeonhole principle to show that there are many $z \in (A\cap(z(\omega)+Z(\omega)))^k$ such that $z_i +z_j \in A^c$ whenever $i<j$.  Crucially, counting with the pigeonhole principle requires much less uniformity than counting $z$s for which $z_i+z_j \in 2\cdot A$ as in Proposition \ref{prop.ss}. This counting is Lemma \ref{lem.ct}.
\item \label{step5} Assuming that $A$ is large enough that most $z \in (A\cap(z(\omega)+Z(\omega)))^k$ have $z_i \neq z_j$ for $i \neq j$, then since $A$ is $(k,X)$-summing it follows from \ref{step4} that $m_{2z(\omega) + Z(\omega)}(X^c)$ is not very large for $\omega$s supporting most of the mass of $A$ -- equivalently $m_{2z(\omega)+Z(\omega)}(X)$ is not too small.  Since $X$ is only slightly bigger than $A$ (this is the condition on $\eta$), on average this means that $m_{2z(\omega)+Z(\omega)}(A)$ is not too small. Now, we arranged the same uniformity properties in (\ref{step3}) for the weighted cover $(2z,Z)$ of $2\cdot S$.  It follows from this that $m_{4z(\omega)+Z(\omega)}(X)$ is not too small for a large mass of points, and this process can be iterated to get Proposition \ref{prop.modg}.
\end{enumerate}
\renewcommand{\theenumi}{\roman{enumi}}

We now turn to the details.
\begin{lemma}\label{lem.freiman}
Suppose that $A, X \subset V$; $|X| \leq K|A|$; and $A$ is $(k,X)$-summing.  Then there is a $(kK)^{-O(1)}$-hereditarily energetic set $S$ and a weighted cover $(z,Z)$ of $S$ such that
\begin{equation*}
|A\cap S| \geq (kK)^{-O(1)}|A|, |S| \leq (kK)^{O(1)}|A| \text{ and }\min_\omega{|Z(\omega)|} \geq p^{-(kK)^{O(1)}}|A| .
\end{equation*}
\end{lemma}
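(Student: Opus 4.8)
The plan is to run the standard Balog--Szemer\'edi--Gowers--Freiman pipeline on $A$ and then package the output in the language of weighted covers. First I would invoke Lemma~\ref{lem.hen} (with the current set $X$ and parameter $k$): since $|X| \le K|A|$ and $A$ is $(k,X)$-summing, we get that $A$ is $(kK)^{-O(1)}$-hereditarily energetic, and in particular $E(A) \ge (kK)^{-O(1)}|A|^3$. If $|A| = k^{O(1)}$ one can take $S$ to be a single point (or $A$ itself with $U = \{0_V\}$) and the conclusion is trivial with room to spare, so assume $|A|$ is large. Now apply the Balog--Szemer\'edi--Gowers theorem to extract $A'' \subset A$ with $|A''| \ge (kK)^{-O(1)}|A|$ and $|A'' + A''| \le (kK)^{O(1)}|A''|$; then apply Freiman's theorem (in the sharp form for $\F_p$-vector spaces, i.e.\ the Ruzsa covering / Ruzsa embedding argument over $V$, as in the finite-field model statements cited in \S\ref{sec.mod}) to find a coset $x + U$ of a subspace $U \le V$ with
\begin{equation*}
|A'' \cap (x+U)| \ge (kK)^{-O(1)}|U| \quad\text{and}\quad |U| \ge p^{-(kK)^{O(1)}}|A|.
\end{equation*}

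The second step is the translation into the weighted-cover formalism. Having located a dense piece of $A$ inside a coset, I would take $S := A'' \cap (x+U)$, but this $S$ need not satisfy $m_S = m_S \ast m_U$ on the nose; instead I would let $S$ be the coset $x+U$ itself (or a union of a bounded number of cosets of $U$) so that $m_S \ast m_U = m_S$ holds automatically, while keeping $|A \cap S| \ge |A'' \cap (x+U)| \ge (kK)^{-O(1)}|A|$ and $|S| = |U| \le |A|$ (in fact $\le (kK)^{O(1)}|A|$, trivially, since $|U| \le |A| \cdot p^{(kK)^{O(1)}}$ — one should be a little careful here, but $|U|$ can be taken comparable to $|A''|$ up to the $p$-power losses, so $|S| \le (kK)^{O(1)}|A|$ is fine). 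Since $S = x+U$ is (a coset of) a subspace, $|S+S| = |U| = |S|$, so by Lemma~\ref{lem.hed}(\ref{pt.1}) $S$ is $1$-hereditarily energetic, hence certainly $(kK)^{-O(1)}$-hereditarily energetic. The weighted cover is then the trivial one: put $z$ equal to the inclusion $S \hookrightarrow V$ on the probability space $(S, m_S)$ and let $Z \equiv U$ be constant; then $\E m_{z+Z} = \E_{s \in S} m_{s+U} = m_S \ast m_U = m_S$, and $\min_\omega |Z(\omega)| = |U| \ge p^{-(kK)^{O(1)}}|A|$ as required.

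The main obstacle — and the only genuinely substantive point — is getting the lower bound on $|U|$ with the stated shape $p^{-(kK)^{O(1)}}|A|$, i.e.\ controlling exactly how much one loses in the BSG-then-Freiman chain. BSG costs a polynomial factor in the energy density; Freiman over $\F_p^n$ (via Ruzsa's covering lemma and the observation that a set of doubling $D$ inside a subspace is contained in a subspace of size $D^{O(1)}|A''|$) costs another polynomial-in-doubling factor, and it is the $p^{(kK)^{O(1)}}$ rather than $(kK)^{O(1)}$ appearance that reflects the model-specific remark in \S\ref{sec.mod} about replacing approximate subgroups by genuine subspaces. I would quote the finite-field Freiman estimate in the form: if $|B+B| \le D|B|$ with $B \subset V$ then $B$ lies in a subspace $W$ with $|W| \le D^{O(1)} p^{O(D)} |B|$ or, using the better Ruzsa embedding phrasing, $|W| \le p^{O(D)}|B|$; tracking $D = (kK)^{O(1)}$ through this gives $|U| \ge p^{-(kK)^{O(1)}}|A|$. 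Everything else is bookkeeping: one chooses the implied exponents so that the three displayed inequalities in the lemma hold simultaneously, which is possible since all losses are of the claimed polynomial-in-$kK$ (and polynomial-in-$p$-exponent) type.
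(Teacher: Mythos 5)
Your opening moves (Lemma \ref{lem.hen} to get large additive energy, then Balog--Szemer\'edi--Gowers to extract $A''\subset A$ with $|A''|\geq (kK)^{-O(1)}|A|$ and doubling $(kK)^{O(1)}$) agree with the paper, and your instinct that the weighted cover should be the trivial one with $Z$ constant is also right. The gap is in the Freiman step together with your choice of $S$. You want a \emph{single} coset $x+U$ to serve as $S$, satisfying simultaneously $|A\cap(x+U)|\geq (kK)^{-O(1)}|A|$ and $|S|=|U|\leq (kK)^{O(1)}|A|$. The classical Freiman--Ruzsa theorem over $\F_p$-vector spaces does not give this. In the containment form you quote it gives $A''\subset x+U$ with $|U|\leq p^{(kK)^{O(1)}}|A''|$, so the intersection bound holds but $|S|$ may exceed $|A|$ by a factor $p^{(kK)^{O(1)}}$, which is not $(kK)^{O(1)}$; your parenthetical ``$|U|$ can be taken comparable to $|A''|$ up to the $p$-power losses, so $|S|\leq(kK)^{O(1)}|A|$ is fine'' is exactly where the argument fails, since the $p$-power loss is precisely what that inequality cannot absorb. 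If instead you shrink $U$ (via Chang's theorem plus covering and pigeonhole) so that $A''$ has density $(kK)^{-O(1)}$ \emph{inside} some coset of $U$, then all that is guaranteed is $|A\cap(x+U)|\geq (kK)^{-O(1)}|U|\geq (kK)^{-O(1)}p^{-(kK)^{O(1)}}|A|$, which is exponentially short of the required $(kK)^{-O(1)}|A|$. Demanding a single coset with both properties at once --- a subspace of size $(kK)^{O(1)}|A''|$ one of whose cosets contains a $(kK)^{-O(1)}$ fraction of $A''$ --- is essentially the polynomial Freiman--Ruzsa statement, far stronger than the covering-lemma arguments you cite and certainly not what your quoted bound delivers.

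The paper sidesteps this precisely by not taking $S$ to be a coset: Chang's theorem \cite[Corollary 5.29]{taovu::} gives a subspace $U\subset 2A'-2A'$ with $|U|\geq p^{-(kK)^{O(1)}}|A'|$, and one sets $S:=A'+U$, a union of possibly $p^{(kK)^{O(1)}}$ many $U$-cosets. Pl\"unnecke's inequality gives $|S|\leq|3A'-2A'|\leq (kK)^{O(1)}|A'|$ and small doubling of $S$, so $S$ is $(kK)^{-O(1)}$-hereditarily energetic by Lemma \ref{lem.hed} (\ref{pt.1}); the containment $A'\subset S$ gives $|A\cap S|\geq|A'|\geq (kK)^{-O(1)}|A|$; and since $S+U=S$ we have $m_S\ast m_U=m_S$, so taking $z$ uniform on $S$ and $Z\equiv U$ really is a weighted cover, with $\min_\omega|Z(\omega)|=|U|\geq p^{-(kK)^{O(1)}}|A|$. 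The exponential-in-$(kK)$ loss in powers of $p$ is thereby confined to the size of $U$, the only place the lemma tolerates it (your own requirement $m_S=m_S\ast m_U$ needs $S$ to be a union of $U$-cosets, but nothing forces that union to be small). Your proof becomes correct if you replace ``the coset $x+U$ itself (or a union of a bounded number of cosets)'' by the full union $A''+U$ with $U$ the Chang subspace.
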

\begin{proof}
Apply Lemma \ref{lem.hen} to get that $A$ has $E(A) = \Omega((kK)^{-O(1)}|A|^3)$.  The Balog-Szemer{\'e}di-Gowers Theorem\footnote{In its usual form, which corresponds to \cite[Theorem 2.31((i) {$\Rightarrow$} (iv))]{taovu::} and then \cite[Exercise 2.3.15]{taovu::}).} then gives us a subset $A'\subset A$ with $|A'| = \Omega((kK)^{-O(1)}|A|)$ such that $|A'+A'| = (kK)^{O(1)}|A'|$.  By Chang's theorem for $r$-torsion groups \cite[Corollary 5.29]{taovu::} (applied to $A'$ despite only needing the large energy hypothesis) we get a subspace $U \leq V$ of size $p^{-(kK)^{O(1)}}|A'|$ such that $U \subset 2A'-2A'$.  Pl{\"u}nnecke's Inequality \cite[Corollary 6.27]{taovu::} then tells us that $|U+A'| \leq |3A'-2A'| \leq (kK)^{O(1)}|A'|$.  The claimed result follows by letting $S=U+A'$ (which has $|S+S| \leq (kK)^{O(1)}|S|$ and so is $(kK)^{-O(1)}$-hereditarily energetic by Lemma \ref{lem.hed} (\ref{pt.1})), where $z$ is the random variable taking values uniformly from $S$, and $Z$ is the constant random variable taking the value $U$.
\end{proof}
If we were interested in getting a good constant for the $O(1)$-terms in Proposition \ref{prop.modg} (and hence the $\Omega(1)$-term in Theorem \ref{thm.main}) then improvements could easily be made here.  Unusually with Freiman's theorem one is most interested in the size of the intersection of the set on the subspace, and not so much with the size of the subspace.  We give the argument we do because there are easy references to results in the literature.

We now turn to the Fourier argument in \ref{step3}.  This is a routine `energy increment' argument.
\begin{lemma}\label{lem.unif}
Suppose that $A,S \subset V$; $(z,Z)$ is a weighted cover of $S$; and $\delta \in (0,1]$ is a parameter.  Then either
\begin{enumerate}
\item
\begin{equation*}
\P(\|1_{A\cap (z+Z)} \ast (1_{A}dm_{z+Z}) -m_{z+Z}(A)^2\|_{L_2(m_{2z+Z})} \leq \delta) \geq 1-\delta;
\end{equation*}
\item or else there is a weighted cover $(z',Z')$ of $S$ with 
\begin{equation*}
\min{|Z'|} \geq p^{-O(\delta^{-2})}\min{|Z|} \text{ and } \E{m_{z'+Z'}(A)^2} \geq \E{m_{z+Z}(A)^2} + \Omega(\delta^{3}).
\end{equation*}
\end{enumerate}
\end{lemma}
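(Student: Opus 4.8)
I would prove this by a routine energy-increment argument, tracking the \emph{energy} $\E{m_{z+Z}(A)^2}=\E{\alpha_\omega^2}$ of the cover (writing $\alpha_\omega:=m_{z(\omega)+Z(\omega)}(A)\in[0,1]$), and I would begin by recasting the first alternative on each subspace $Z(\omega)$. Fix $\omega$, put $f_\omega(u):=1_A(z(\omega)+u)$ for $u\in Z(\omega)$, and Fourier-analyse $f_\omega$ on the group $Z(\omega)$ with the normalisation $\wh{g}(\gamma):=\E_{u\in Z(\omega)}{g(u)\overline{\gamma(u)}}$ (so $\wh{f_\omega}(0)=\alpha_\omega$, $|\wh{f_\omega}(\gamma)|\leq\alpha_\omega\leq 1$ for every $\gamma$, and Plancherel reads $\E_{u\in Z(\omega)}{|g(u)|^2}=\sum_\gamma|\wh{g}(\gamma)|^2$). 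A direct computation gives
\begin{equation*}
\bigl[1_{A\cap(z(\omega)+Z(\omega))}\ast(1_A dm_{z(\omega)+Z(\omega)})\bigr](2z(\omega)+v)=\E_{u\in Z(\omega)}{f_\omega(u)f_\omega(v-u)}\qquad(v\in Z(\omega)),
\end{equation*}
a function of $v$ with mean $\alpha_\omega^2$ and Fourier coefficient $\wh{f_\omega}(\gamma)^2$ at $\gamma$, so by Plancherel the $L_2(m_{2z+Z})$-quantity in the statement equals $\bigl(\sum_{\gamma\neq 0}|\wh{f_\omega}(\gamma)|^4\bigr)^{1/2}$; thus the first alternative is exactly the assertion that the event $E:=\{\omega:\sum_{\gamma\neq 0}|\wh{f_\omega}(\gamma)|^4>\delta^2\}$ has probability at most $\delta$. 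From now on I assume $\P(E)>\delta$ and build the cover of the second alternative.

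On $E$ I would pass to the annihilator of the $(\delta/\sqrt 2)$-large spectrum. For $\omega\in E$ let $\Delta_\omega:=\{\gamma\in\wh{Z(\omega)}\setminus\{0\}:|\wh{f_\omega}(\gamma)|>\delta/\sqrt 2\}$. By Parseval $\sum_{\gamma\neq 0}|\wh{f_\omega}(\gamma)|^2\leq\wh{f_\omega}(0)=\alpha_\omega\leq 1$, so $|\Delta_\omega|\leq 2\delta^{-2}$, and the $\ell_4$-mass from characters outside $\Delta_\omega$ is at most $(\delta/\sqrt 2)^2\sum_{\gamma\neq 0}|\wh{f_\omega}(\gamma)|^2\leq\delta^2/2$; hence membership of $E$ forces $\sum_{\gamma\in\Delta_\omega}|\wh{f_\omega}(\gamma)|^4>\delta^2/2$, and then $\sum_{\gamma\in\Delta_\omega}|\wh{f_\omega}(\gamma)|^2>\delta^2/2$ since $|\wh{f_\omega}(\gamma)|\leq 1$. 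Put $Z'(\omega):=\bigcap_{\gamma\in\Delta_\omega}\ker\gamma$, an $\F_p$-subspace of $Z(\omega)$ of index at most $p^{|\Delta_\omega|}\leq p^{2\delta^{-2}}$. Partitioning $z(\omega)+Z(\omega)$ into its cosets of $Z'(\omega)$, writing $\beta_{\omega,1},\beta_{\omega,2},\dots$ for the densities of $A$ on them and $\E_i$ for the average over them (so $\E_i{\beta_{\omega,i}}=\alpha_\omega$), and using that averaging $f_\omega$ over cosets of $Z'(\omega)$ retains precisely the Fourier modes trivial on $Z'(\omega)$ — all of $\Delta_\omega$ among them — Plancherel on $Z(\omega)/Z'(\omega)$ gives
\begin{equation*}
\E_i{\beta_{\omega,i}^2}-\alpha_\omega^2\ =\ \sum_{\substack{\gamma\neq 0\\ Z'(\omega)\leq\ker\gamma}}|\wh{f_\omega}(\gamma)|^2\ \geq\ \sum_{\gamma\in\Delta_\omega}|\wh{f_\omega}(\gamma)|^2\ >\ \delta^2/2.
\end{equation*}

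Finally I would glue these together using the conditional-composition rule for weighted covers from \ref{stepdef}: for $\omega\in E$ take as the conditional cover of $z(\omega)+Z(\omega)$ the constant subspace $Z'(\omega)$ together with the point drawn uniformly from the cosets of $Z'(\omega)$ in $z(\omega)+Z(\omega)$ (a weighted cover of $z(\omega)+Z(\omega)$, since those cosets partition it), and for $\omega\notin E$ leave $(z,Z)$ unchanged; the resulting $(z',Z')$ is a weighted cover of $S$. It has $\min|Z'|\geq p^{-2\delta^{-2}}\min|Z|=p^{-O(\delta^{-2})}\min|Z|$, and since conditionally on $(z,Z)=(z(\omega),Z(\omega))$ the quantity $m_{z'+Z'}(A)^2$ equals $\alpha_\omega^2$ for $\omega\notin E$ and has mean $\E_i{\beta_{\omega,i}^2}$ for $\omega\in E$,
\begin{equation*}
\E{m_{z'+Z'}(A)^2}=\E{m_{z+Z}(A)^2}+\E{1_E\bigl(\E_i{\beta_{\omega,i}^2}-\alpha_\omega^2\bigr)}\geq\E{m_{z+Z}(A)^2}+\tfrac{\delta^2}{2}\P(E)>\E{m_{z+Z}(A)^2}+\tfrac{\delta^3}{2},
\end{equation*}
which is the second alternative with $\Omega(\delta^3)=\delta^3/2$.

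I do not expect a genuine obstacle here — everything is elementary Fourier analysis on the $\F_p$-subspaces $Z(\omega)$. The two points that need care are (a) pinning the normalisations of Fourier transform and convolution so that the $L_2(m_{2z+Z})$-norm in the statement is literally $\bigl(\sum_{\gamma\neq 0}|\wh{f_\omega}(\gamma)|^4\bigr)^{1/2}$, and (b) checking that the conditionally-defined $(z',Z')$ really is a weighted cover of $S$; both become routine once one recalls that the cosets of $Z'(\omega)$ partition $z(\omega)+Z(\omega)$ and that weighted covers compose as in \ref{stepdef}.
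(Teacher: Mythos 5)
Your proposal is correct and follows essentially the same route as the paper's proof: identify the $L_2(m_{2z+Z})$-quantity with the off-zero $\ell_4$ Fourier mass, pass to the annihilator of the $\Theta(\delta)$-large spectrum (of size $O(\delta^{-2})$, hence index $p^{O(\delta^{-2})}$), extract a mean-square density increment of $\Omega(\delta^2)$ on its cosets, and recombine conditionally into a new weighted cover, gaining $\Omega(\delta^3)$ since the bad event has probability more than $\delta$. The only differences are cosmetic (threshold $\delta/\sqrt{2}$ versus $\tfrac{1}{2}\delta$, and Plancherel on the quotient $Z(\omega)/Z'(\omega)$ in place of the paper's computation with $\|1_{A\cap(x+U)}\ast m_{U'}\|_{L_2(m_{x+U})}$).
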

\begin{proof}
Suppose that $z=x$ and $Z=U$ are such that
\begin{equation}\label{eqn.o}
\|1_{A\cap (x+U)} \ast (1_{A}dm_{x+U}) -m_{x+U}(A)^2\|_{L_2(m_{2x+U})} >\delta.
\end{equation}
Write $A':=(A-x)\cap U$ and work inside the space $U$ considered as endowed with Haar probability measure $m_U$.  That means that while the definition of $\wh{U}$ is the same as in \S\ref{sec.over}, it is convenient (for this proof) to take different normalisations for convolution and the Fourier transform: we define convolution to be
\begin{equation*}
f\ast g(y):=\int{f(x)g(y-x)dm_U(x)} \text{ for all }y\in U \text{ for all }f,g \in L_1(m_U),
\end{equation*} 
and the Fourier transform by
\begin{equation*}
\wh{f}(\gamma):=\int{f(x)\overline{\gamma(x)}dm_U(x)} \text{ for all }\gamma \in \wh{U}.
\end{equation*}
These conventions are in line with those in \cite[Definition 4.7]{taovu::} and \cite[Definition 4.6]{taovu::} respectively and the key identities are summarised in \cite[(4.1),(4.2), and (4.8)]{taovu::}.

Set
\begin{equation*}
\Gamma:=\left\{\gamma \in \wh{U}: |\wh{1_{A'}}(\gamma)| \geq \frac{1}{2}\delta \right\} \text{ and }U':=\bigcap_{\gamma \in \Gamma}{\ker \gamma}.
\end{equation*}
By Parseval's theorem for $A'$ in $U$ we have
\begin{equation*}
\left(\frac{1}{2}\delta\right)^2|\Gamma| \leq \sum_{\gamma \in \wh{U}}{|\wh{1_{A'}}(\gamma)|^2} = \int{1_{A'}(x)^2dm_U(x)} \leq 1,
\end{equation*}
so $|\Gamma| = O(\delta^{-2})$ and $|U'| \geq p^{-O(\delta^{-2})}|U|$.  On the other hand
\begin{align*}
\sum_{\gamma \in \Gamma\setminus \{0_{\wh{U}}\}}{|\wh{1_{A'}}(\gamma)|^2} + \frac{1}{2}\delta^2 & \geq \sum_{\gamma\in \Gamma\setminus\{0_{\wh{U}}\}}{|\wh{1_{A'}}(\gamma)|^4}+\left(\frac{1}{4}\delta^2\right)\sum_{\gamma\not\in \Gamma \cup \{0_{\wh{U}}\}}{|\wh{1_{A'}}(\gamma)|^2}\\ & \geq \sum_{\gamma\neq 0_{\wh{U}}}{|\wh{1_{A'}}(\gamma)|^4} = \|1_{A'} \ast 1_{A'} -m_{U}(A')^2\|_{L_2(m_{U})}^2 >
\delta^2,
\end{align*}
whence
\begin{align*}
\|1_{A\cap (x+U)} \ast m_{U'}\|_{L_2(m_{x+U})}^2& =\|1_{A'}\ast m_{U'}\|_{L_2(m_U)}^2\\ &=\sum_{\gamma \in \wh{U}}{|\wh{1_{A'}}(\gamma)|^2|\wh{m_{U'}}(\gamma)|^2}\\ & \geq \sum_{\gamma \in \Gamma}{|\wh{1_{A'}}(\gamma)|^2} > m_U(A')^2 + \frac{1}{2}\delta^2.
\end{align*}
Let $(z',Z')$ be defined conditional on $z=x$ and $Z=U$ as follows: if (\ref{eqn.o}) holds then let $Z'$ be $U'$ with (conditional) probability $1$ and choose $z'$ uniformly from $x+U$; if (\ref{eqn.o}) does not hold then let $Z'$ be $U$ with (conditional) probability $1$ and $z'=x$ with (conditional) probability $1$.  It then follows that $(z',Z')$ is a weighted cover of $S$ and
\begin{equation*}
\E{m_{z'+Z'}(A)^2} \geq  \E{m_{z+Z}(A)^2} + \P(\|1_{A\cap (z+Z)} \ast (1_{A}dm_{z+Z}) -m_{z+Z}(A)^2\|_{L_2(m_{2z+Z})} > \delta)\cdot \frac{1}{2}\delta^2.
\end{equation*}
It follows that if we are not in the first case of the lemma then we must be in the second.
\end{proof}
If we were interested in optimising our arguments then it might be more effective to use an $L_p$-version of the above in the style of Croot and Sisask (compare, for example, \cite[Proposition 3.3]{crosis::} with \cite[Proposition 3.1]{crosis::}).

The above lemma leads to a weighted cover $(z,Z)$ where $Z$ is not necessarily constant.  This is necessary for us to ensure good bounds -- if we wanted $Z$ to be constant then examples such as those in \cite[Theorem 10.2]{gre::02} show we would have to have tower-type dependencies.  This phenomenon is discussed after \cite[Proposition 5.8]{gre::9} as an important part of Shkredov's argument from \cite{shk::1} (see also \cite{shk::00}).
\begin{corollary}\label{cor.y}
Suppose that $A,X \subset V$; $|X| \leq K|A|$; $A$ is $(k,X)$-summing; and $r \in \N$ and $\delta \in (0,1]$ are parameters.  Then there is a $(kK)^{-O(1)}$-hereditarily energetic set $S$ and a weighted cover $(z,Z)$ such that
\begin{equation*}
|A\cap S| \geq (kK)^{-O(1)}|A|, |S| \leq (kK)^{O(1)}|A| \text{ and }\min_\omega{|Z(\omega)|} \geq p^{-(\delta^{-1}rkK)^{O(1)}}|A|,
\end{equation*}
and for all $0\leq i \leq r-1$ we have
\begin{equation*}
\P(\|1_{A\cap (2^iz+Z)} \ast (1_{A}dm_{2^iz+Z}) -m_{2^iz+Z}(A)^2\|_{L_2(m_{2^{i+1}z+Z})} \leq \delta) \geq 1-\delta.
\end{equation*}
\end{corollary}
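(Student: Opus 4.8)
The plan is to start from the constant weighted cover handed to us by Lemma \ref{lem.freiman} and then repeatedly invoke Lemma \ref{lem.unif}, once for each of the $r$ ``directions'' $2^0,\dots,2^{r-1}$, using an energy-type quantity as a monotone potential; the iteration halts precisely when $A$ is simultaneously uniform on $2^{i+1}z+Z$ for every $0\le i\le r-1$, which is the conclusion we want.

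In detail, I would first apply Lemma \ref{lem.freiman} with this $K$ to obtain a $(kK)^{-O(1)}$-hereditarily energetic set $S$ with $|A\cap S|\ge (kK)^{-O(1)}|A|$, $|S|\le (kK)^{O(1)}|A|$, and a weighted cover $(z,Z)$ with $Z$ constant equal to a subspace of size $\ge p^{-(kK)^{O(1)}}|A|$. The device that makes the directions available, noted in \ref{stepdef}, is that $2$ is invertible in $\F_p$, so $(z,Z)$ is a weighted cover of $S$ if and only if $(2^iz,Z)$ is a weighted cover of $2^i\cdot S$, and $w\mapsto 2^{-i}w$ converts one into the other. Thus given a weighted cover $(z,Z)$ of $S$ and an index $i$, I can apply Lemma \ref{lem.unif} to the weighted cover $(2^iz,Z)$ of $2^i\cdot S$ with parameter $\delta$: either its first conclusion holds --- which is exactly the $i$-th displayed inequality in the corollary --- or it returns a refined weighted cover $(w',W')$ of $2^i\cdot S$, which I pull back to the weighted cover $(z',Z'):=(2^{-i}w',W')$ of $S$; this satisfies $\min_\omega|Z'(\omega)|\ge p^{-O(\delta^{-2})}\min_\omega|Z(\omega)|$ and $\E{m_{2^iz'+Z'}(A)^2}\ge \E{m_{2^iz+Z}(A)^2}+\Omega(\delta^3)$, since $2^iz'+Z'=w'+W'$.

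The iteration then maintains a weighted cover $(z,Z)$ of $S$: at each stage run Lemma \ref{lem.unif} in each direction $i=0,\dots,r-1$; if every one of the $r$ applications returns the first conclusion, stop; otherwise refine in some direction giving the second conclusion and repeat. To prove termination I would use the potential $\Phi(z,Z):=\sum_{j=0}^{r-1}\E{m_{2^jz+Z}(A)^2}\in[0,r]$. The step I expect to require the most care to write out --- although it is only convexity --- is that a refinement in direction $i$ never decreases \emph{any} term $\E{m_{2^jz+Z}(A)^2}$: conditioning on the values of $z$ and $Z$, the refinement either does nothing or replaces a coset $2^jx+U$ by a uniformly random $U'$-subcoset of it, and the average of $m_{y+U'}(A)$ over those subcosets is $m_{2^jx+U}(A)$, so Cauchy--Schwarz gives that the conditional expectation of $m_{2^jz'+Z'}(A)^2$ is at least $m_{2^jx+U}(A)^2$. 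Hence every refinement raises $\Phi$ by $\Omega(\delta^3)$, so there are at most $O(r\delta^{-3})$ of them, and the process halts at a weighted cover for which the first conclusion of Lemma \ref{lem.unif} holds in every direction $0\le i\le r-1$ --- i.e.\ all the displayed inequalities of the corollary.

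Finally, the parameter book-keeping: $S$ is never altered in the iteration, so the hereditarily-energetic bound and $|A\cap S|$, $|S|$ are inherited unchanged from Lemma \ref{lem.freiman}. Each of the at most $O(r\delta^{-3})$ refinements costs a factor $p^{-O(\delta^{-2})}$ in $\min_\omega|Z(\omega)|$, so at the end $\min_\omega|Z(\omega)|\ge p^{-O(r\delta^{-5})}\cdot p^{-(kK)^{O(1)}}|A|\ge p^{-(\delta^{-1}rkK)^{O(1)}}|A|$, completing the proof.
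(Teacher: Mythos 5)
Your proposal is correct and follows essentially the same route as the paper: initialise with Lemma \ref{lem.freiman}, iterate Lemma \ref{lem.unif} over the $r$ dilates (pulling back via $z \mapsto 2^{-i}z$), and bound the iterations via the potential $\sum_{j=0}^{r-1}\E{m_{2^jz+Z}(A)^2} \leq r$. The convexity step you spell out --- that a refinement in one direction does not decrease the potential contribution of any other direction --- is indeed needed and is left implicit in the paper's proof, so making it explicit is a welcome addition rather than a deviation.
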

\begin{proof}
We first apply Lemma \ref{lem.freiman} to get a $(kK)^{-O(1)}$-hereditarily energetic set $S$ and a weighted cover $(z_0,Z_0)$ of $S$ such that
\begin{equation*}
|A\cap S| \geq (kK)^{-O(1)}|A|, |S| \leq (kK)^{O(1)}|A| \text{ and }\min_\omega\{|Z_0(\omega)|\} \geq p^{-(kK)^{O(1)}}|A| .
\end{equation*}
Suppose that we have a weighted cover $(z_i,Z_i)$ of $S$ and there is some $0 \leq j < r$ such that
\begin{equation*}
\P(\|1_{A\cap (2^jz_i+Z_i)} \ast (1_{A}dm_{2^jz_i+Z_i}) -m_{2^jz_i+Z_i}(A)^2\|_{L_2(m_{2^{j+1}z_i+Z_i})} \leq \delta) < 1-\delta.
\end{equation*}
Then apply Lemma \ref{lem.unif} to the weighted cover $(2^jz_i,Z_i)$ of $2^j \cdot S$.  We get a new weighted cover $(z',Z_{i+1})$ of $2^j\cdot S$ (and we put $z_{i+1}=2^{-j}z'$ so $(z_{i+1},Z_{i+1})$ is a weighted cover of $S$) such that
\begin{equation*}
\min_\omega{|Z_{i+1}(\omega)|} \geq p^{-O(\delta^{-2})}\min_\omega{|Z_i(\omega)|} \text{ and } \E{m_{2^jz_{i+1}+Z_{i+1}}(A)^2} \geq \E{m_{2^jz_i+Z_i}(A)^2} + \Omega(\delta^3).
\end{equation*}
Since for any weighted cover $(z,Z)$ of $S$ we have
\begin{equation*}
\sum_{j=0}^{r-1}{\E{m_{2^jz+Z}(A)^2}} \leq r
\end{equation*}
we see that we can be in this situation at most $O(\delta^{-3}r)$ times.  The iteration terminates with the desired outcome.
\end{proof}
The second key ingredient as far as bounds are concerned comes in the next lemma covering \ref{step4}.  The level of uniformity necessary to count in the case we are interested in is polynomial in $k$ whereas in \emph{e.g.} \cite[Corollary 4.2]{sha::0} it is exponential in $k$.
\begin{lemma}\label{lem.ct}
Suppose that $x \in V$; $U \leq V$; $A,X \subset V$; $\alpha:=m_{x+U}(A)$ and $m_{2x+U}(X) \leq \epsilon$;
\begin{equation*}
\|1_{A\cap (x+U)} \ast (1_{A}dm_{x+U}) -\alpha^2\|_{L_2(m_{2x+U})} \leq \epsilon\alpha^2;
\end{equation*}
and $k\in \N$ is a parameter.  Then either $k=\Omega(\epsilon^{-\frac{1}{2}})$ or
\begin{equation}\label{eqn.qqqqq}
\int{\left(\prod_{1 \leq i <j \leq k}{1_{(2x+U) \setminus X}(z_i+z_j)}\right)\prod_{i=1}^k{1_A(z_i)dm_{x+U}(z_i)}} = \Omega(\alpha^k).
\end{equation}
\end{lemma}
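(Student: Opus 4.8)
The plan is a simple deletion argument, exploiting that we only need to count $k$-tuples whose pairwise sums \emph{avoid} $X$ rather than solve an equation; this is why crude (polynomial-in-$k$) uniformity will be enough. Write $N$ for the left-hand side of (\ref{eqn.qqqqq}), and assume $\alpha>0$ (otherwise there is nothing to prove). Every point in the support of $m_{x+U}$ lies in $x+U$, so any sum $z_i+z_j$ of two such points lies in $2x+U$ and hence $1_{(2x+U)\setminus X}(z_i+z_j)=1-1_X(z_i+z_j)$. Using the elementary inequality $\prod_{i<j}(1-1_X(z_i+z_j))\ge 1-\sum_{i<j}1_X(z_i+z_j)$, and recalling that the $z_i$ in (\ref{eqn.qqqqq}) are integrated independently with law $m_{x+U}$, symmetry gives
\begin{equation*}
N\ge \alpha^k-\binom{k}{2}\alpha^{k-2}\E_{z_1,z_2\sim m_{x+U}}{1_A(z_1)1_A(z_2)1_X(z_1+z_2)}.
\end{equation*}

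The only real work is bounding the trailing expectation using the two hypotheses. Reparametrising by $y=z_1+z_2$ — which is distributed according to $m_{2x+U}$, with $z_1$ ranging uniformly over $x+U$ conditionally on $y$ — identifies it with $\int 1_X(y)\,\phi(y)\,dm_{2x+U}(y)$, where $\phi:=1_{A\cap(x+U)}\ast(1_A\,dm_{x+U})$ is exactly the function appearing in the hypothesis and has mean $\alpha^2$ against $m_{2x+U}$. Writing $\phi=\alpha^2+(\phi-\alpha^2)$, the first term contributes $\alpha^2\,m_{2x+U}(X)\le \epsilon\alpha^2$, while by the Cauchy--Schwarz inequality the second contributes at most $\|1_X\|_{L_2(m_{2x+U})}\,\|\phi-\alpha^2\|_{L_2(m_{2x+U})}\le \epsilon\alpha^2$, using $\|1_X\|_{L_2(m_{2x+U})}^2=m_{2x+U}(X)\le\epsilon\le 1$ and the uniformity hypothesis. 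Hence the trailing expectation is at most $2\epsilon\alpha^2$, so $N\ge\alpha^k\bigl(1-k(k-1)\epsilon\bigr)\ge\alpha^k(1-k^2\epsilon)$.

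Finally we split on the size of $k^2\epsilon$: if $k^2\epsilon\le\tfrac12$ then $N\ge\tfrac12\alpha^k=\Omega(\alpha^k)$, which is (\ref{eqn.qqqqq}); otherwise $k>(2\epsilon)^{-1/2}=\Omega(\epsilon^{-1/2})$, which is the first alternative. There is no real obstacle here: the inputs are just the union bound, one application of Cauchy--Schwarz, and the observation that all sums $z_i+z_j$ automatically land in the coset $2x+U$, where both the near-constancy of $\phi$ and the smallness of $X$ are controlled. (If one wanted sharper constants one could keep the full product $\prod_{i<j}(1-1_X(z_i+z_j))$ and run a conditioning/iteration argument, but the crude estimate already yields the stated threshold.)
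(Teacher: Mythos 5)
Your proof is correct and takes essentially the same route as the paper: union bound (Bonferroni) over the $\binom{k}{2}$ pair constraints, then a single Cauchy--Schwarz against the $U_2$-uniformity hypothesis to bound the $z_1,z_2$ integral by $2\epsilon\alpha^2$. The paper phrases the Cauchy--Schwarz step via $\langle\phi,1_{(2x+U)\setminus X}\rangle$ rather than $\langle\phi,1_X\rangle$, but these are equivalent since the two inner products sum to $\alpha^2$.
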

\begin{proof}
First note that
\begin{align*}
& \int{1_X(z_1+z_2)1_A(z_1)1_A(z_2)dm_{x+U}(z_1)dm_{x+U}(z_2)}\\
& \qquad \qquad  = \alpha^2- \langle 1_{A\cap (x+U)} \ast (1_{A}dm_{x+U}) ,1_{(2x+U) \setminus X}\rangle_{L_2(m_{2x+U})}\\ & \qquad \qquad \leq \alpha^2- \alpha^2m_{2\cdot {x+U}}((2x+U) \setminus X) + \epsilon \alpha^2 \leq 2\epsilon\alpha^2.
\end{align*}
Now write $Q$ for the integral on the left of (\ref{eqn.qqqqq}). Apply Bonferroni's inequality\footnote{In the form $\P(\bigcap_i{E_i}) \geq 1- \sum_i{\P(E_i^c)}$.} to the events $\{z \in (A\cap (x+U))^k: z_i+z_j \in X\}$
\begin{align*}
Q & \geq \int{ \left(\prod_{1 \leq i <j \leq k}{1_{2x+U}(z_i+z_j)}\right)\prod_{i=1}^k{1_A(z_i)dm_{x+U}(z_i)}}\\
& \qquad - \sum_{1\leq i'<j'\leq k}{\int{1_{X}(z_{i'}+z_{j'})\prod_{\substack{1 \leq i < j \leq k\\(i,j)\neq (i',j')}}{1_{2\cdot {x+U}}(z_i+z_j)}\prod_{i=1}^k{1_A(z_i)dm_{x+U}(z_i)}}}\\
& = \alpha^k -\sum_{1 \leq i'<j' \leq k}{\int{1_{X}(z_{i'}+z_{j'})\prod_{i=1}^k{1_A(z_i)dm_{x+U}(z_i)}}} \geq \alpha^k - 2\epsilon \binom{k}{2}\alpha^k.
\end{align*}
The lemma follows.
\end{proof}
It seems likely that a more careful argument using Tur{\'a}n's theorem (in a form like \cite[Lemma 3.1]{sudszevu::}) or the Lov{\'a}sz Local Lemma \cite[Corollary 1.2.6]{taovu::} could be used to let us take $k=\Omega(\epsilon^{-1})$.  Again, such improvements would impact the $O(1)$-term in Proposition \ref{prop.modg} and, ultimately, the $\Omega(1)$-term in Theorem \ref{thm.main}, but this is not the concern of the present paper.

For us the crucial aspect of the above is that the conclusion $k=\Omega(\epsilon^{-\frac{1}{2}})$ does not depend on the density $\alpha$.  If it were allowed to depend on $\alpha$ then we would not need the uniformity argument in Lemma \ref{lem.unif}.  The reason that it is not is that it would lead to a lower bound on the intersections $(2^j\cdot S) \cap A$ which decreases with $j$.  This in turn is not enough for our application in the non-model setting.

Finally we have the tools to complete the argument -- \ref{step5}.
\begin{proof}[Proof of Proposition \ref{prop.modg}]
Take $\epsilon_0 = \Omega(k^{-2})$ such that the first conclusion of Lemma \ref{lem.ct} does not happen and $\nu_0=k^{-O(1)}$ be such that $m_S(A) \geq \nu_0$ always holds in the conclusion of Corollary \ref{cor.y} when $K\leq 2$.

Apply Corollary \ref{cor.y} with $K=2$ and $\delta=2^{-3}r^{-1}\epsilon_0\min\{ \nu_0^2,\epsilon_0^2\}$ to get a $k^{-O(1)}$-hereditarily energetic set $S$ and a weighted cover $(z,Z)$ (supported on the probability space $(\Omega,\P)$) such that
\begin{equation}\label{eqn.jut}
|A\cap S| \geq k^{-O(1)}|A|, |S| \leq k^{O(1)}|A| \text{ and }\min_\omega{|Z(\omega)|} \geq p^{-(rk)^{O(1)}}|A|,
\end{equation}
and, for $0 \leq s <r$ writing
\begin{equation*}
E_s:=\left\{\omega \in \Omega : \|1_{A\cap (2^sz+Z)} \ast (1_{A}dm_{2^sz+Z}) -m_{2^sz+Z}(A)^2\|_{L_2\left(m_{2^{s+1}z+Z}\right)} \leq \delta\right\}
\end{equation*}
we have $\P(E_s^c)<\delta$.  For $1\leq s \leq r$ write
\begin{equation*}
L_s:=\left\{\omega \in \Omega :m_{2^sz+Z}(X\setminus A)\leq \frac{1}{2}\epsilon_0\right\}.
\end{equation*}
Then
\begin{equation*}
\P(L_s^c) < 2\epsilon_0^{-1}\E{m_{2^sz+Z}(X\setminus A)} =2\epsilon_0^{-1}m_{2^s\cdot S}(X \setminus A) \leq k^{O(1)}\frac{|X \setminus A|}{|A|} = \eta k^{O(1)}.
\end{equation*}
Either $\eta^{-1}=(rk)^{O(1)}$ or else $\P(L_s^c) \leq \frac{1}{8r}\nu_0$ for all $1 \leq s \leq r$; we may assume the latter.  Write
\begin{equation*}
B:=\left\{\omega \in \Omega: m_{z+Z}(A)>\frac{1}{2}\nu_0\right\} \text{ and } \Omega':=B\cap \left(\bigcap_{i=1}^r{L_i}\right)\cap \left(\bigcap_{i=0}^{r-1}{E_i}\right),
\end{equation*}
and note that
\begin{align*}
\E{m_{z+Z}(A)1_{\Omega'}}& \geq \E{m_{z+Z}(A)} -\E{m_{z+Z}(A)\left(1_{B^c} +\sum_{s=1}^r{1_{L_s^c}} + \sum_{s=0}^{r-1}{1_{E_s^c}}\right)}\\ & \geq \nu_0 -\frac{1}{2}\nu_0- r\cdot \frac{1}{8r}\nu_0 - r\delta \geq \frac{1}{8}\nu_0.
\end{align*}

\begin{claim*}
Either $|A| \leq p^{(kr)^{O(1)}}$ or else for all $\omega \in \Omega'$ we have
\begin{equation*}
m_{2^sz(\omega)+Z(\omega)}(A)>\min\left\{\frac{1}{2}\nu_0,\frac{1}{2}\epsilon_0\right\} \text{ for all }0 \leq s \leq r.
\end{equation*}
\end{claim*}
\begin{proof}
We proceed by induction.  When $s=0$ we have $\omega \in B$ and so the hypothesis holds.  Suppose that it holds for $0 \leq s <r$.  Then since $\omega \in E_s$
\begin{align*}
&\|1_{A\cap (2^sz(\omega)+Z(\omega))} \ast (1_{A}dm_{2^sz(\omega)+Z(\omega)}) -m_{2^sz(\omega)+Z(\omega)}(A)^2\|_{L_2\left(m_{2^{s+1}z(\omega)+Z(\omega)}\right)}\\ & \qquad \qquad \qquad \qquad \qquad \qquad \qquad \qquad \qquad \qquad \qquad \qquad\leq \epsilon_0  m_{2^sz(\omega)+Z(\omega)}(A)^2.
\end{align*}
If $m_{2^{s+1}z(\omega)+Z(\omega)}(X) \leq \epsilon_0$ then it follows from Lemma \ref{lem.ct} that
\begin{align*}
& \int{\left(\prod_{1 \leq i <j \leq k}{1_{(2^{s+1}z(\omega)+Z(\omega)) \setminus X}(z_i+z_j)}\right)\prod_{i=1}^k{1_A(z_i)dm_{2^sz(\omega)+Z(\omega)}(z_i)}}\\ & \qquad \qquad \qquad \qquad \qquad \qquad \qquad \qquad \qquad \qquad  =\Omega\left(\left(\min\left\{\frac{1}{2}\nu_0,\frac{1}{2}\epsilon_0\right\}\right)^k\right).
\end{align*}
Since $A$ is $(k,X)$-summing we know that the first product on the left is $0$ on $z \in A^k$ unless there is $1 \leq i < j \leq k$ with $z_i=z_j$.  It follows that the integral is at most
\begin{equation*}
\binom{k}{2}m_{2^sz(\omega)+Z(\omega)}(A)^{k-1}\cdot \frac{1}{|2^sz(\omega)+Z(\omega)|}.
\end{equation*}
The upper bound on $|A|$ follows from the lower bound on $|Z(\omega)|$ in (\ref{eqn.jut}).  Thus we may assume $m_{2^{s+1}z(\omega)+Z(\omega)}(X) >\epsilon_0$, and since $\omega \in L_{s+1}$ we have
\begin{equation*}
m_{2^{s+1}z(\omega)+Z(\omega)}(A) \geq m_{2^{s+1}z(\omega)+Z(\omega)}(X) - m_{2^{s+1}z(\omega)+Z(\omega)}(X \setminus A) > \frac{1}{2}\epsilon_0 \geq \min\left\{\frac{1}{2}\nu_0,\frac{1}{2}\epsilon_0\right\}.
\end{equation*}
The claim is proved.
\end{proof}

We conclude that for all $1 \leq s \leq r$ we have
\begin{align*}
m_{2^s\cdot S}(A) = \E{m_{2^sz+Z}(A)} \geq \E{m_{2^sz+Z}(A)1_{\Omega'}} & \geq \min\left\{\frac{1}{2}\nu_0,\frac{1}{2}\epsilon_0\right\}\E{1_{\Omega'}}\\ & \geq  \min\left\{\frac{1}{2}\nu_0,\frac{1}{2}\epsilon_0\right\}\E{m_{z+Z}(A)1_{\Omega'}}= k^{-O(1)}
\end{align*}
as required.
\end{proof}

\section{Translating the model}

The remainder of the paper deals with converting the model argument of \S\ref{sec.mod} to give a proof of Proposition \ref{prop.g}; throughout $G$ denotes an abelian group.  This is analogous to converting Meshulam's proof of the Roth-Meshulam Theorem \cite[Proposition 10.12]{taovu::} to Bourgain's proof of Roth's Theorem \cite[Theorem 10.29]{taovu::}.

The reader already familiar with this sort of translation can move to \S\ref{sec.pf}.  The key definitions are $\tau$-closed pairs, defined before Lemma \ref{lem.basicclosed}; covering numbers -- $\mathcal{C}$ and $\mathcal{C}^\flat$, defined in (\ref{eqn.coveringnumber}) and before Lemma \ref{lem.cnum} respectively; and $\mathcal{S}(G)$, systems, and dimension defined after the proof of Lemma \ref{lem.cnum}.  The main variation in the definitions we have chosen here is in defining $\mathcal{C}^\flat$, which is set up in the way it is so that dimension is sub-additive with respect to intersection, rather than sub-additive up to a multiplicative constant.

The basic idea is to replace subspaces by pairs of sets which are `almost' closed, and we start with a definition for this purpose.  We write $\mathcal{N}(G)$ for the set of finite symmetric neighbourhoods of the identity, that is sets $S \subset G$ with $S=-S$ and $0_G \in S$.  (We use topological language here for motivation -- the systems we define later can be thought of as bases for particular topologies.) Given $Z,W \in \mathcal{N}(G)$ we say that $(Z,W)$ is \textbf{$\tau$-closed} if there are sets $Z^-,Z^+ \in \mathcal{N}(G)$ such that
\begin{equation*}
Z^-+W \subset Z,Z+W \subset Z^+ \text{ and } |Z^+| \leq (1+\tau)|Z^-|.
\end{equation*}
Given a measure $\mu$ on $G$ and $x \in G$ we write $\tau_x(\mu)$ for the measure induced by
\begin{equation*}
C(G) \rightarrow C(G); f \mapsto \left(y \mapsto \int{f(y-x)d\mu(y)}\right).
\end{equation*}
As in \S\ref{sec.mod} if $S$ is a finite non-empty subset of $G$ we write $m_S$ for the uniform probability measure supported on $S$.
\begin{lemma}[Basic properties of $\tau$-closed pairs]\label{lem.basicclosed} Suppose that $(Z,W)$ is a $\tau$-closed.  Then
\begin{enumerate}
\item \label{pt.inv} $\|\tau_w(m_Z)-m_Z\| \leq \tau$ for all $w \in W$;
\item \label{pt.nesting} if $W' \subset W$ then $(Z,W')$ is $\tau$-closed;
\item \label{pt.dilation} and if $G$ has no $2$-torsion then $(2^m\cdot Z,2^m\cdot W)$ is $\tau$-closed.
\end{enumerate}
\end{lemma}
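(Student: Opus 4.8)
The three parts are all short, with (\ref{pt.inv}) carrying the only genuine (and still minor) computation, so I would do them in order.

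For (\ref{pt.inv}) I would first unwind the definition of $\tau_w$: $\tau_w(m_Z)$ is the uniform probability measure on the translate $Z-w$ of $Z$, and since the two measures put mass $1/|Z|$ on each of their atoms this gives $\|\tau_w(m_Z)-m_Z\| \le |(Z-w)\,\triangle\,Z|/|Z|$ regardless of the normalisation of $\|\cdot\|$. Now fix $Z^-,Z^+\in\mathcal{N}(G)$ witnessing that $(Z,W)$ is $\tau$-closed. Since $0_G\in W$ and $W=-W$, both $w$ and $-w$ lie in $W$; hence $Z^-+w\subset Z^-+W\subset Z$ and $Z^-\subset Z^-+W\subset Z$, so $Z^-\subset(Z-w)\cap Z$, while $Z-w=Z+(-w)\subset Z+W\subset Z^+$ and $Z\subset Z+W\subset Z^+$, so $(Z-w)\cup Z\subset Z^+$. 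Therefore
\[
|(Z-w)\,\triangle\,Z|=|(Z-w)\cup Z|-|(Z-w)\cap Z|\le|Z^+|-|Z^-|\le\tau|Z^-|\le\tau|Z|,
\]
and dividing by $|Z|$ gives the claim. The one point to get right here is that it is the symmetry of $W$ that lets us sandwich $Z\pm w$ between $Z^-$ and $Z^+$.

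Part (\ref{pt.nesting}) is immediate: the same $Z^-,Z^+$ witness $\tau$-closedness of $(Z,W')$, since $Z^-+W'\subset Z^-+W\subset Z$ and $Z+W'\subset Z+W\subset Z^+$ (with $W'\in\mathcal{N}(G)$ by hypothesis). For part (\ref{pt.dilation}) the hypothesis that $G$ has no $2$-torsion makes $x\mapsto 2^mx$ an injective group homomorphism of $G$; consequently $2^m\cdot S\in\mathcal{N}(G)$ whenever $S\in\mathcal{N}(G)$, the operation $2^m\cdot(\,\cdot\,)$ commutes with sumsets and preserves inclusions, and $|2^m\cdot S|=|S|$ for every finite $S\subset G$. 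Taking $Z^-,Z^+$ as above, the sets $2^m\cdot Z^-$ and $2^m\cdot Z^+$ then witness $\tau$-closedness of $(2^m\cdot Z,2^m\cdot W)$: indeed $2^m\cdot Z^-+2^m\cdot W=2^m\cdot(Z^-+W)\subset 2^m\cdot Z$, similarly $2^m\cdot Z+2^m\cdot W\subset 2^m\cdot Z^+$, and $|2^m\cdot Z^+|=|Z^+|\le(1+\tau)|Z^-|=(1+\tau)|2^m\cdot Z^-|$. There is no substantive obstacle in the lemma; the only care needed anywhere is the inclusion bookkeeping in (\ref{pt.inv}), and, in (\ref{pt.dilation}), the observation that absence of $2$-torsion is exactly what keeps $|2^m\cdot Z^+|=|Z^+|$ and $|2^m\cdot Z^-|=|Z^-|$.
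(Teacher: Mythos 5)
Your proof is correct and matches the paper's (very terse) proof in spirit: all three parts follow from unwinding the definitions, with the only substance being the $Z^-\subset(Z-w)\cap Z\subset(Z-w)\cup Z\subset Z^+$ sandwich (using $W=-W$) in part~(\ref{pt.inv}) and the injectivity of $x\mapsto 2^m x$ in part~(\ref{pt.dilation}). The computation via symmetric differences is a clean way to make the paper's one-line "immediate by the triangle inequality" explicit.
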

\begin{proof}
The first property is immediate by the triangle inequality; the second immediate; and the third equally so once we recall that $x\mapsto 2^mx$ is an injection in a group with no $2$-torsion.
\end{proof}
These sorts of pairs behave enough like subspaces that we can prove counting-type results (as we shall in \S\ref{sec.k}), but not so much that they are amenable to iteration without greater losses than we can afford to bear.  One usually deals with this by recording auxiliary data in the form of (the-data-necessary-to-generate) Bohr sets.  We discuss this in detail before Lemma \ref{lem.bohr}.

The fact we are using Freiman's theorem suggests that we actually need Bohr sets inside coset progressions (defined before Lemma \ref{lem.gap}), and the notion of a Bourgain system was formulated in \cite[Definition 4.1]{gresan::0} to deal with exactly this situation.  This gives a common framework for Bohr sets and generalised arithmetic progressions and it turns out much of the technology Bourgain developed for Bohr sets in \cite{bou::5} extends.  We shall use a very similar definition.

It turns out that we only need to use Freiman's theorem once so rather than proceeding with the more general systems below we could simply pass to a long arithmetic progression within the generalised arithmetic progression (in the style of \cite[Exercise 3.2.5]{taovu::}) and consider Bohr sets inside that.  This does not seem to afford any great simplification.

We now turn to the basic definitions.  For $X,Y \subset G$ we write
\begin{equation}\label{eqn.coveringnumber}
\mathcal{C}(X;Y):=\min\{|T|: X \subset T+Y\}.
\end{equation}
We call these numbers covering numbers.  Here and throughout we take the usual conventions concerning $\infty$.
\begin{lemma}[Basic properties of covering numbers]\label{lem.bcn}Suppose that $G$ and $H$ are abelian groups.
\begin{enumerate}
\item\label{cpt.chain} \emph{(Chaining)} For all $X,Y,Z \subset G$ we have
\begin{equation*}
\mathcal{C}(X;Z) \leq \mathcal{C}(X;Y)\mathcal{C}(Y;Z).
\end{equation*}
\item\label{cpt.hom} \emph{(Homomorphisms)} For all $X,Y \subset G$ and homomorphisms $\phi:G \rightarrow H$ we have
\begin{equation*}
\mathcal{C}(\phi(X);\phi(Y)) \leq \mathcal{C}(X;Y).
\end{equation*}
\item\label{cpt.cs} \emph{(Covering and size)} For all $X,Y\subset G$ we have
\begin{equation*}
|X| \leq \mathcal{C}(X;Y)|Y|.
\end{equation*}
\item\label{cpt.rcl} \emph{(Ruzsa's covering lemma)} For all $X,Y\subset G$ we have
\begin{equation*}
\mathcal{C}(X;Y-Y) \leq \min\left\{\frac{|X+Y|}{|Y|},\frac{|X-Y|}{|Y|}\right\}.
\end{equation*}
\end{enumerate}
\end{lemma}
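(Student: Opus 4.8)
The plan is to verify each of the four parts directly from the definition $\mathcal{C}(X;Y)=\min\{|T|:X\subset T+Y\}$, in every case by taking a near-optimal covering set and manipulating it; throughout I work with finite coverings and note that the usual $\infty$-conventions make each inequality automatic when one of the covering numbers is infinite.

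For \ref{cpt.chain} (chaining) I would pick $T$ with $X\subset T+Y$ and $|T|=\mathcal{C}(X;Y)$, and $U$ with $Y\subset U+Z$ and $|U|=\mathcal{C}(Y;Z)$; then $X\subset T+U+Z=(T+U)+Z$ and $|T+U|\leq |T||U|$, which gives the bound. Part \ref{cpt.hom} (homomorphisms) uses the same move: apply $\phi$ to $X\subset T+Y$ (with $|T|=\mathcal{C}(X;Y)$) to get $\phi(X)\subset \phi(T)+\phi(Y)$, and note $|\phi(T)|\leq |T|$. Part \ref{cpt.cs} (covering and size) is immediate: from $X\subset T+Y$ with $|T|=\mathcal{C}(X;Y)$ we get $|X|\leq |T+Y|\leq |T||Y|$.

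Part \ref{cpt.rcl} (Ruzsa's covering lemma) is the only one needing an actual argument, and is the place to be slightly careful. I would run the standard maximality argument twice. For the bound by $|X+Y|/|Y|$, choose $T\subseteq X$ maximal subject to the translates $\{t+Y:t\in T\}$ being pairwise disjoint; then $|T||Y|=|T+Y|\leq |X+Y|$, while maximality forces every $x\in X$ to satisfy $(x+Y)\cap(t+Y)\neq\emptyset$ for some $t\in T$, hence $x\in t+(Y-Y)$, so $X\subseteq T+(Y-Y)$ and $\mathcal{C}(X;Y-Y)\leq |T|\leq |X+Y|/|Y|$. For the bound by $|X-Y|/|Y|$, repeat with $\{t-Y:t\in T\}$ pairwise disjoint, observing that $x-y=t-y'$ again yields $x\in t+(Y-Y)$. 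Taking the minimum of the two bounds finishes the proof.

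The main (and only mild) obstacle is getting the inclusion direction in part \ref{cpt.rcl} right: remembering that maximality gives covering by $Y-Y$ rather than by $Y$, and that both disjointness patterns ($t+Y$ and $t-Y$) land one in $T+(Y-Y)$, so that a single covering set $T\subseteq X$ simultaneously realises whichever of the two upper bounds is smaller. Everything else is a one-line consequence of $|A+B|\leq |A||B|$ together with submultiplicativity of images under homomorphisms.
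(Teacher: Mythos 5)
Your proof is correct and follows essentially the same route as the paper: parts (\ref{cpt.chain})--(\ref{cpt.cs}) are read off the definition, and part (\ref{cpt.rcl}) is the standard maximal-disjoint-translates argument, with maximality forcing $X \subset T+(Y-Y)$ and disjointness giving $|T||Y| \leq |X+Y|$. The only cosmetic difference is that the paper obtains the second bound in the minimum by applying the first with $Y$ replaced by $-Y$ (noting $(-Y)-(-Y)=Y-Y$), whereas you rerun the maximality argument with the translates $t-Y$; these are the same argument.
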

\begin{proof}
The first three facts are immediate from the definition.  The last is just Ruzsa's covering lemma \cite[Lemma 2.14]{taovu::}, which can be proved by letting $T\subset X$ be a maximal subset such that if $(t+Y) \cap (t'+Y)\neq \emptyset $ and $t,t' \in T$ then $t=t'$.  This gives the first bound in the minimum; applying the first with $Y$ replaced by $-Y$ and noticing that $(-Y)-(-Y) = Y-Y$ and $|-Y|=|Y|$ gives the second.
\end{proof}

Covering numbers do not behave well with respect to intersections and so we define\footnote{Here $\textbf{Ab}$ denotes the category of abelian groups.  One could equally say that $H$ is an abelian group and $\phi$ is a homomorphism from $G$ to $H$.}
\begin{equation*}
\mathcal{C}^\flat(X;Y):=\min_{\substack{H\in \textbf{Ab}\\ \phi\in \Hom(\langle X\rangle,H) }}{\left\{\mathcal{C}(W;Z): Z,W\subset H, X \subset \phi^{-1}(W) \text{ and } \phi^{-1}(Z-Z) \subset Y\right\}}.
\end{equation*}
Here $\langle X\rangle$ denotes the group generated by $X$ and is there to make the definition independent of the particular ambient group in which $X$ and $Y$ live.

These numbers are close to covering numbers but also respect intersections.
\begin{lemma}[Basic properties of $\mathcal{C}^\flat$]\label{lem.cnum}Suppose that $G$ and $K$ are abelian groups.
\begin{enumerate}
\item\label{pt.order} \emph{(Order)} For all $X' \subset X$ and $Y \subset Y'$ we have
\begin{equation*}
\mathcal{C}^\flat(X';Y') \leq \mathcal{C}^\flat(X;Y).
\end{equation*}
\item\label{pt.hom} \emph{(Homomorphisms)} For all $X,Y \subset G$ and $\psi:K \rightarrow G$ a homomorphism we have
\begin{equation*}
\mathcal{C}^\flat(\psi^{-1}(X);\psi^{-1}(Y)) \leq \mathcal{C}^\flat(X;Y).
\end{equation*}
\item\label{pt.equiv} \emph{(Equivalence)} Whenever $X,Y\subset G$ we have
\begin{equation*}
\mathcal{C}^\flat(X;Y-Y) \leq \mathcal{C}(X;Y) \leq \mathcal{C}^\flat(X;Y).
\end{equation*}
\item\label{pt.meet} \emph{(Meets)} Whenever $X,X',Y,Y' \subset G$ we have
\begin{equation*}
\mathcal{C}^\flat(X\cap X';Y\cap Y') \leq \mathcal{C}^\flat(X;Y)\mathcal{C}^\flat(X';Y').
\end{equation*}
\end{enumerate}
\end{lemma}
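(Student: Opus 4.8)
The plan is, for each of the four assertions, to take any data witnessing the right-hand side of the inequality and manufacture from it data witnessing the left-hand side. Recall that a witness for $\mathcal{C}^\flat(X;Y)$ is a tuple $(H,\phi,Z,W)$ with $H\in\textbf{Ab}$, $\phi\in\Hom(\langle X\rangle,H)$, $Z,W\subset H$, $X\subset\phi^{-1}(W)$ and $\phi^{-1}(Z-Z)\subset Y$, of cost $\mathcal{C}(W;Z)$; since all quantities here may be $\infty$ (in which case the relevant inequality is vacuous) we may freely assume the witnesses we start from are finite and, where convenient, of minimal cost. For (\ref{pt.order}), restrict a witness $(H,\phi,Z,W)$ for $\mathcal{C}^\flat(X;Y)$ to the subgroup $\langle X'\rangle\leq\langle X\rangle$: one still has $X'\subset\phi^{-1}(W)$, while $\phi^{-1}(Z-Z)\cap\langle X'\rangle\subset Y\subset Y'$, so $(H,\phi|_{\langle X'\rangle},Z,W)$ witnesses $\mathcal{C}^\flat(X';Y')$ at the same cost. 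For (\ref{pt.hom}), observe that $\psi$ carries the generators $\psi^{-1}(X)$ of $\langle\psi^{-1}(X)\rangle$ into $X\subset\langle X\rangle$, so $\phi\circ\psi$ is defined on $\langle\psi^{-1}(X)\rangle$, and a one-line check shows $(H,\phi\circ\psi,Z,W)$ witnesses $\mathcal{C}^\flat(\psi^{-1}(X);\psi^{-1}(Y))$ at cost $\mathcal{C}(W;Z)$.

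For the two inequalities in (\ref{pt.equiv}): the lower bound $\mathcal{C}^\flat(X;Y-Y)\leq\mathcal{C}(X;Y)$ is essentially trivial — if $X\subset T+Y$ with $|T|=\mathcal{C}(X;Y)$, use the inclusion $\phi:\langle X\rangle\hookrightarrow G$ together with $W:=T+Y$ and $Z:=Y$, noting $\phi^{-1}(Z-Z)=(Y-Y)\cap\langle X\rangle\subset Y-Y$ and $\mathcal{C}(W;Z)\leq|T|$. The upper bound $\mathcal{C}(X;Y)\leq\mathcal{C}^\flat(X;Y)$ is a pull-back of a covering in the style of Ruzsa's covering lemma: fix a minimal-cost witness $(H,\phi,Z,W)$ and $T\subset H$ with $W\subset T+Z$, $|T|=\mathcal{C}(W;Z)$; for each $t\in T$ with $\phi^{-1}(t+Z)\neq\emptyset$ choose $x_t\in\phi^{-1}(t+Z)$, and observe that any two points of $\phi^{-1}(t+Z)$ differ by an element of $\phi^{-1}(Z-Z)\subset Y$, so $\phi^{-1}(t+Z)\subset x_t+Y$; hence $X\subset\phi^{-1}(W)\subset\bigcup_t\phi^{-1}(t+Z)\subset\bigcup_t(x_t+Y)$, so $\mathcal{C}(X;Y)\leq|T|=\mathcal{C}^\flat(X;Y)$.

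The one part with any content is (\ref{pt.meet}), where the device is to pass to a product group. Given minimal-cost witnesses $(H,\phi,Z,W)$ for $\mathcal{C}^\flat(X;Y)$ and $(H',\phi',Z',W')$ for $\mathcal{C}^\flat(X';Y')$, set $H'':=H\times H'$ and define $\phi'':\langle X\cap X'\rangle\rightarrow H''$ by $g\mapsto(\phi(g),\phi'(g))$, which makes sense since $\langle X\cap X'\rangle$ is a subgroup of both $\langle X\rangle$ and $\langle X'\rangle$. Taking $W'':=W\times W'$ and $Z'':=Z\times Z'$, one has $X\cap X'\subset(\phi'')^{-1}(W'')$ (a point of $X\cap X'$ lands in $W$ under $\phi$ and in $W'$ under $\phi'$) and, since $Z''-Z''=(Z-Z)\times(Z'-Z')$, the set $(\phi'')^{-1}(Z''-Z'')$ consists of those $g$ with $\phi(g)\in Z-Z$ and $\phi'(g)\in Z'-Z'$, hence lies in $\phi^{-1}(Z-Z)\cap(\phi')^{-1}(Z'-Z')\subset Y\cap Y'$. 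Since a product of coverings is a covering we get $\mathcal{C}(W'';Z'')\leq\mathcal{C}(W;Z)\mathcal{C}(W';Z')$, and taking the minimum over the two starting witnesses yields the claim. The only things needing attention are bookkeeping about the ambient group in which each preimage is formed and the handling of the value $\infty$; I do not expect a genuine obstacle.
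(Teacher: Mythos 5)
Your proposal is correct and follows essentially the same route as the paper's proof: restriction of the homomorphism for (\ref{pt.order}), composition for (\ref{pt.hom}), the choice of representatives in the preimages of the $Z$-translates for the right-hand inequality of (\ref{pt.equiv}), and the product-group witness $(H\times H',\ W\times W',\ Z\times Z')$ for (\ref{pt.meet}). The only (harmless) deviation is in the left-hand inequality of (\ref{pt.equiv}), where you take $W:=T+Y$, $Z:=Y$ while the paper simply takes $W:=X$, $Z:=Y$ with the canonical embedding; both give the same bound.
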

\begin{proof}
Suppose for all parts that $H \in \textbf{Ab}$, $\phi \in \Hom(\langle X\rangle,H)$, and $Z,W \subset H$ are such that $X \subset \phi^{-1}(W)$ and $\phi^{-1}(Z-Z) \subset Y$, and $\mathcal{C}^\flat(X;Y)=\mathcal{C}(W;Z)$.

For (\ref{pt.order}) note $\psi:=\phi|_{\langle X'\rangle} \in \Hom(\langle X'\rangle,H)$ and $X' \subset X \cap \langle X'\rangle \subset \psi^{-1}(W)$ and $\psi^{-1}(Z-Z) \subset Y\subset Y'$.  It follows that $\mathcal{C}^\flat(X';Y') \leq \mathcal{C}(W;Z)=\mathcal{C}^\flat(X;Y)$ as claimed.

For (\ref{pt.hom}) note $\pi:=(\phi\circ \psi)|_{\langle \psi^{-1}(X)\rangle} \in \Hom(\langle \psi^{-1}(X)\rangle,H)$ and $\psi^{-1}(X) \subset \pi^{-1}(W)$ and $\pi^{-1}(Z-Z)\subset \psi^{-1}(Y)$.  Again, it follows that $\mathcal{C}^\flat(X';Y') \leq \mathcal{C}(W;Z)=\mathcal{C}^\flat(X;Y)$ as claimed.

For (\ref{pt.equiv}) we get the left hand inequality by considering the canonical embedding $\phi:\langle X\rangle \rightarrow G \in \Hom(\langle X\rangle,G)$, and noting that $\phi^{-1}(Y-Y) \subset Y-Y$ and $X \subset \phi^{-1}(X)$.

For the right hand inequality let $S \subset H$ be such that $W \subset S+Z$ and $|S| = \mathcal{C}(W;Z)$, and $T \subset \langle X\rangle$ be minimal such that if $s \in S$ has $(s+Z) \cap \phi(\langle X\rangle)\neq \emptyset$ then there is some $t \in T$ such that $\phi(t) \in s+Z$.  It follows that $|T| \leq |S|$, and if $x \in X$ then $x \in \phi^{-1}(W)$ and so $\phi(x) \in W$ and there is some $s \in S$ with $\phi(x) \in s+Z$.  By the definition of $T$ there is some $t \in T$ such that $\phi(t) \in s+Z$, whence $\phi(x-t) \in Z-Z$ and so $x \in T+\phi^{-1}(Z-Z) \subset T+Y$.  We conclude that $\mathcal{C}(X;Y) \leq |T| \leq |S|$.  The claimed inequality follows.

Finally, for (\ref{pt.meet}), suppose additionally that $H' \in \textbf{Ab}$, $\psi\in \Hom(\langle X'\rangle,H')$, and $Z',W' \subset H'$ are such that $X' \subset \psi^{-1}(W')$ and $\psi^{-1}(Z'-Z') \subset Y'$, and $\mathcal{C}^\flat(X';Y')=\mathcal{C}(W';Z')$.

Then $H \times H'$ is an abelian group; $W\times W', Z \times Z' \subset H\times H'$; $\pi:\langle X\cap X'\rangle \rightarrow H \times H'; x \mapsto (\phi(x),\psi(x))$ is a homomorphism; and
\begin{equation*}
\pi^{-1}(W \times W') = \langle X\cap X'\rangle \cap(\phi^{-1}(W) \cap \psi^{-1}(W')) \supset X \cap X',
\end{equation*}
and
\begin{align*}
\pi^{-1}((Z \times Z') - (Z \times Z')) & = \pi^{-1}((Z -Z)\times (Z'-Z'))\\ & \subset \phi^{-1}(Z-Z)\cap \psi^{-1}(Z'-Z') \subset Y\cap Y'.
\end{align*}
On the other hand,
\begin{equation*}
\mathcal{C}(W \times W';Z\times Z') \leq \mathcal{C}(W;Z) \mathcal{C}(W'; Z')=\mathcal{C}^\flat(X;Y)\mathcal{C}^\flat(X';Y').
\end{equation*}
The result follows.
\end{proof}
We use a slight variant of \cite[Definition 4.1]{gresan::0}, defining a \textbf{system} on $G$ to be a vector $B=(B_i)_{i \in \N_0}$ of symmetric neighbourhoods of the identity such that $B_{i+1}+B_{i+1} \subset B_i$ for all $i \in \N_0$.  We define its \textbf{dimension} to be
\begin{equation*}
\dim B:=\sup_{i \in \N_0}{\log_2\mathcal{C}^\flat(B_i;B_{i+1})},
\end{equation*}
and write $\mathcal{S}(G)$ for the set of systems on $G$.  For $S \subset G$ we shall also write $\mathcal{C}^\flat(S;B)$ for $\mathcal{C}^\flat(S;B_0)$.  This is how we record the `size' of $B$ relative to some reference set $S$.  (As an aside we remark that while we have found it convenient to use this notion of `size' the more conventional $|B_0|$ works better in some ways: while it does not deal so well with intersections, it would allow us to dispense with the second part of Lemma \ref{lem.new} (\ref{pt.mult}) below.)

There are many examples of systems: coset progressions naturally define systems as we shall show in Lemma \ref{lem.gap}, as do Bohr sets (which we cover in Lemma \ref{lem.bohr}), and subgroups which we deal with now.
\begin{lemma}[Subgroup systems]\label{lem.subgroup}
Suppose that $H \leq G$.  Then the $\N_0$-indexed vector $B$ taking the constant value $H$ is a $0$-dimensional system.
\end{lemma}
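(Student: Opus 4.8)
The plan is simply to unwind the two definitions involved and check they are satisfied trivially. First I would verify that the constant vector $B=(H)_{i\in\N_0}$ is a system: each $H$ is a symmetric neighbourhood of the identity ($H=-H$ since $H$ is a subgroup, and $0_G\in H$), and the chain condition $B_{i+1}+B_{i+1}\subset B_i$ becomes $H+H\subset H$, which holds because $H$ is closed under addition. So $B\in\mathcal{S}(G)$.

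Next I would compute $\dim B=\sup_i\log_2\mathcal{C}^\flat(B_i;B_{i+1})=\log_2\mathcal{C}^\flat(H;H)$, so it suffices to show $\mathcal{C}^\flat(H;H)=1$. Since covering numbers are always at least $1$ for nonempty sets, and $\mathcal{C}^\flat$ is bounded below by $\mathcal{C}$ applied to nonempty images (indeed $\mathcal{C}^\flat(X;Y)\geq 1$ whenever $X\neq\emptyset$), I only need an upper bound of $1$. For this I would exhibit a witnessing homomorphism: take $H'$ to be the trivial group $\{0\}$ and $\phi:\langle H\rangle=H\to H'$ the zero map, with $Z=W=\{0\}\subset H'$. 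Then $H\subset\phi^{-1}(W)=\phi^{-1}(\{0\})=H$ holds (in fact with equality), and $\phi^{-1}(Z-Z)=\phi^{-1}(\{0\})=H$, so the condition $\phi^{-1}(Z-Z)\subset Y$ with $Y=H$ is satisfied. Finally $\mathcal{C}(W;Z)=\mathcal{C}(\{0\};\{0\})=1$. Hence $\mathcal{C}^\flat(H;H)\leq 1$, so $\mathcal{C}^\flat(H;H)=1$ and $\dim B=\log_2 1=0$.

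There is no real obstacle here; the only point requiring the tiniest bit of care is the choice of witness for $\mathcal{C}^\flat(H;H)\leq 1$. One could alternatively invoke Lemma~\ref{lem.cnum}(\ref{pt.equiv}) which gives $\mathcal{C}^\flat(H;H)=\mathcal{C}^\flat(H;H-H)\leq\mathcal{C}(H;H)\leq 1$ directly since $H-H=H$, but using the trivial-group witness keeps things self-contained. Either route finishes the lemma in a line or two.
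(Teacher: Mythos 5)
Your proof is correct, and in substance it is the paper's proof: the "alternative route" you mention at the end, namely $\mathcal{C}^\flat(H;H)=\mathcal{C}^\flat(H;H-H)\leq\mathcal{C}(H;H)\leq 1$ via Lemma \ref{lem.cnum}(\ref{pt.equiv}) together with the trivial verification that the constant vector is a system, is exactly what the paper does. Your primary route, exhibiting the trivial group with the zero homomorphism and $Z=W=\{0\}$ as an explicit witness in the definition of $\mathcal{C}^\flat$, is an equally valid and slightly more self-contained variant (it works because $\phi^{-1}(\{0\})=\langle H\rangle=H\subset H$, which is where the subgroup hypothesis enters); the paper's version just delegates that bookkeeping to the already-proved equivalence lemma. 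Either way the lower bound $\mathcal{C}^\flat(H;H)\geq 1$ is immediate since $H\neq\emptyset$, so $\dim B=0$ as claimed.
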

\begin{proof}
$B$ is easily seen to be a system.  Moreover $\mathcal{C}(H;H)\leq \mathcal{C}^\flat(H;H) = \mathcal{C}^\flat(H;H-H) \leq \mathcal{C}(H;H)$ by Lemma \ref{lem.cnum} (\ref{pt.equiv}) and $\mathcal{C}(H;H)=1$, so $\dim B=0$ as claimed.
\end{proof}

There are three ways of creating new systems that will be useful to us. Given $B,B' \in \mathcal{S}(G)$ and $m \in\N_0$ we make the following definitions.
\begin{itemize}
\item The \textbf{intersection} of $B$ and $B'$ is the vector $B\wedge B':=(B_i \cap B_i')_{i \in \N_0}$.  It is easy to check that $(\mathcal{S}(G),\wedge)$ forms a meet semi-lattice.  (Meaning that $B\wedge B'$ is indeed a system; that $B\wedge (B'\wedge B'') = (B\wedge B') \wedge B''$; that $B \wedge B' = B'\wedge B$; and that $B\wedge B=B$.)
\item The \textbf{$2^{-m}$-dilate} of $B$ is the vector $2^{-m}B:=(B_{i+m})_{i \in \N_0}$.  It is easy to check that this is an action of the (additive) monoid $\N_0$ on $\mathcal{S}(G)$.  (Meaning that $2^{-m}B$ is indeed a system; that $1B=B$; and $2^{-m}(2^{-m'}B) = 2^{-(m+m')}B$.)
\item The \textbf{$2^m$-multiple} of $B$ is the vector $2^m\cdot B :=(2^m\cdot B_i)_{i \in \N_0}$.  Again, it is easy to check that this is an action of $\N_0$ on $\mathcal{S}(G)$.
\end{itemize}
The meet semi-lattice structure induces a partial order on $\mathcal{S}(G)$, and $B' \leq B$ if and only if $B'_i \subset B_i$ for all $i\in \N_0$.

Dilates distribute over intersection, meaning that $2^{-m}(B\wedge B') = (2^{-m}B) \wedge (2^{-m}B')$, but in general for multiples we only have $2^m \cdot (B \wedge B') \leq (2^m\cdot B) \wedge (2^m \cdot B')$; if $G$ has no $2$-torsion then we do have equality.

Finally it is worth noting that multiples and dilates do not interact terribly well: in particular we will need to consider systems of the form $2^m\cdot (2^{-m'}B)$ and this does \emph{not} in general simplify.
\begin{lemma}\label{lem.new} Suppose that $B,B' \in \mathcal{S}(G)$; $S \subset G$; and $m \in \N_0$.  Then
\begin{enumerate}
\item\label{pt.intdim} \emph{(Intersections)}
\begin{equation*}
\dim B\wedge B' \leq \dim B + \dim B' \text{ and }\mathcal{C}^\flat(S;B\wedge B') \leq \mathcal{C}^\flat(S;B)\mathcal{C}^\flat(S;B');
\end{equation*}
\item\label{pt.szy} \emph{(Dilations)}
\begin{equation*}
\dim 2^{-m}B \leq \dim B \text{ and }\mathcal{C}^\flat(S;2^{-m}B) \leq \mathcal{C}^\flat(S;B)2^{(m+1)\dim B};
\end{equation*}
\item \label{pt.mult}\emph{(Multiples)}
\begin{equation*}
\dim 2^{m}\cdot B \leq \dim B,
\end{equation*}
and if $G$ has no $2$-torsion then
\begin{equation*}
\mathcal{C}^\flat(S;2^{m}\cdot B) \leq \mathcal{C}^\flat(S;B)\exp(O(m\dim B)).
\end{equation*}
\end{enumerate}
\end{lemma}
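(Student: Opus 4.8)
The plan is to reduce each statement to the formal properties of $\mathcal{C}^\flat$ in Lemma \ref{lem.cnum} together with the elementary covering-number facts in Lemma \ref{lem.bcn}; only the last bullet involves a genuine computation. Throughout write $\mu_m\colon G\to G$ for the homomorphism $g\mapsto 2^mg$, so $2^m\cdot X=\mu_m(X)$, and note that the system axiom gives $B_{i+1}\subseteq B_i$, hence $B_{i+1}-B_{i+1}=B_{i+1}+B_{i+1}\subseteq B_i$, and (by induction, since $2\cdot B_{i}\subseteq B_{i}+B_{i}\subseteq B_{i-1}$) also $2^j\cdot B_i\subseteq B_{i-j}$ for $0\le j\le i$.

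Part (\ref{pt.intdim}) is immediate from Lemma \ref{lem.cnum} (\ref{pt.meet}): applied to the pairs $(B_i,B_i')$ and $(B_{i+1},B_{i+1}')$ it bounds $\mathcal{C}^\flat((B\wedge B')_i;(B\wedge B')_{i+1})$ by $\mathcal{C}^\flat(B_i;B_{i+1})\mathcal{C}^\flat(B_i';B_{i+1}')$, and taking $\log_2$ and then the supremum over $i$ gives $\dim B\wedge B'\le\dim B+\dim B'$; applied with $X=X'=S$ it gives $\mathcal{C}^\flat(S;B_0\cap B_0')\le\mathcal{C}^\flat(S;B_0)\mathcal{C}^\flat(S;B_0')$. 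For part (\ref{pt.szy}) the dimension bound is trivial since $(2^{-m}B)_i=B_{i+m}$ and a supremum over a subset of the indices is no larger. For the covering-number half, $B_{m+1}-B_{m+1}\subseteq B_m$, so Lemma \ref{lem.cnum} (\ref{pt.order}) and (\ref{pt.equiv}) give $\mathcal{C}^\flat(S;B_m)\le\mathcal{C}^\flat(S;B_{m+1}-B_{m+1})\le\mathcal{C}(S;B_{m+1})$; now chain (Lemma \ref{lem.bcn} (\ref{cpt.chain})) through $B_0,\dots,B_{m+1}$, bound each $\mathcal{C}(B_i;B_{i+1})\le\mathcal{C}^\flat(B_i;B_{i+1})\le 2^{\dim B}$, and use $\mathcal{C}(S;B_0)\le\mathcal{C}^\flat(S;B_0)$ to produce the factor $2^{(m+1)\dim B}$.

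For the dimension half of part (\ref{pt.mult}) I would first establish the \emph{image} counterpart of Lemma \ref{lem.cnum} (\ref{pt.hom}): for any homomorphism $\phi$ one has $\mathcal{C}^\flat(\phi(X);\phi(Y))\le\mathcal{C}^\flat(X;Y)$. Given a witnessing $(H,\psi,W,Z)$ for $\mathcal{C}^\flat(X;Y)$ (so $\psi\in\Hom(\langle X\rangle,H)$, $X\subseteq\psi^{-1}(W)$, $\psi^{-1}(Z-Z)\subseteq Y$), put $K=\ker(\phi|_{\langle X\rangle})$ and let $q\colon H\to H/\psi(K)$ be the quotient; then $q\circ\psi$ kills $K$ and hence factors as $\bar\psi\circ\phi|_{\langle X\rangle}$ for a homomorphism $\bar\psi\colon\langle\phi(X)\rangle\to H/\psi(K)$, and $(H/\psi(K),\bar\psi,q(W),q(Z))$ witnesses $\mathcal{C}^\flat(\phi(X);\phi(Y))$ with $\mathcal{C}(q(W);q(Z))\le\mathcal{C}(W;Z)$ by Lemma \ref{lem.bcn} (\ref{cpt.hom}). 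The only containment needing a line is $\bar\psi^{-1}(q(Z)-q(Z))\subseteq\phi(Y)$: if $h=\phi(x)$ lies in that preimage then $\psi(x)\in(Z-Z)+\psi(K)$, so $\psi(x-k)\in Z-Z$ for some $k\in K$, whence $x-k\in Y$ and $\phi(x-k)=\phi(x)=h\in\phi(Y)$. Applying this with $\phi=\mu_m$ to the pairs $(B_i,B_{i+1})$ gives $\dim 2^m\cdot B\le\dim B$ (with no torsion hypothesis).

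Finally, the covering-number bound in part (\ref{pt.mult}) is the one real computation, and the only place where the no-$2$-torsion hypothesis is used -- via injectivity of $\mu_1$, which gives $|2^m\cdot F|=|F|$ for every finite $F$. Since $(2^m\cdot B_1)-(2^m\cdot B_1)=2^m\cdot(B_1+B_1)\subseteq 2^m\cdot B_0$, arguing as in part (\ref{pt.szy}) yields $\mathcal{C}^\flat(S;2^m\cdot B_0)\le\mathcal{C}(S;2^m\cdot B_1)\le\mathcal{C}(S;B_0)\,\mathcal{C}(B_0;2^m\cdot B_1)$. I would then telescope the last factor through $B_0,2\cdot B_1,2^2\cdot B_1,\dots,2^m\cdot B_1$, using that $\mathcal{C}(2^j\cdot B_1;2^{j+1}\cdot B_1)=\mathcal{C}(\mu_1(2^{j-1}\cdot B_1);\mu_1(2^j\cdot B_1))\le\mathcal{C}(2^{j-1}\cdot B_1;2^j\cdot B_1)\le\cdots\le\mathcal{C}(B_1;2\cdot B_1)$ by Lemma \ref{lem.bcn} (\ref{cpt.hom}), and $\mathcal{C}(B_0;2\cdot B_1)\le\mathcal{C}(B_0;B_1)\mathcal{C}(B_1;2\cdot B_1)$. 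It then remains to bound $\mathcal{C}(B_1;2\cdot B_1)$ by a fixed power of $2^{\dim B}$: since $(2\cdot B_2)-(2\cdot B_2)=2\cdot(B_2+B_2)\subseteq 2\cdot B_1$, Ruzsa's covering lemma (Lemma \ref{lem.bcn} (\ref{cpt.rcl})) gives $\mathcal{C}(B_1;2\cdot B_1)\le|B_1+2\cdot B_2|/|2\cdot B_2|\le|B_0|/|B_2|\le\mathcal{C}(B_0;B_1)\mathcal{C}(B_1;B_2)\le 2^{2\dim B}$, using $2\cdot B_2\subseteq B_1$, $|2\cdot B_2|=|B_2|$, and Lemma \ref{lem.bcn} (\ref{cpt.cs}),(\ref{cpt.chain}). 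Multiplying the $O(m)$ resulting factors of $2^{O(\dim B)}$ gives $\mathcal{C}^\flat(S;2^m\cdot B_0)\le\mathcal{C}^\flat(S;B_0)\exp(O(m\dim B))$ (the bound being vacuous if $\dim B=\infty$). The one point I anticipate being delicate is exactly this telescoping -- ensuring each of the $m$ ``levels'' costs only a bounded power of $2^{\dim B}$ rather than compounding, which is what the homomorphism behaviour of $\mathcal{C}$ under $\mu_1$ together with Ruzsa's lemma provides; everything else is formal.
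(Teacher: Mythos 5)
Your argument is correct, and for parts (\ref{pt.intdim}), (\ref{pt.szy}), and the covering-number bound in (\ref{pt.mult}) it follows the paper's proof essentially step-for-step: the same appeal to Lemma \ref{lem.cnum} (\ref{pt.meet}); the same chain $\mathcal{C}^\flat(S;B_m)\le\mathcal{C}^\flat(S;B_{m+1}-B_{m+1})\le\mathcal{C}(S;B_{m+1})$ followed by chaining $\mathcal{C}(\cdot;\cdot)$ down to $B_0$; and the same telescoping through the sets $2^t\cdot B_1$ with the key factor $\mathcal{C}(B_1;2\cdot B_1)$ bounded via Ruzsa's covering lemma and the no-$2$-torsion identity $|2\cdot B_2|=|B_2|$. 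Where you genuinely depart from the paper is the dimension half of (\ref{pt.mult}). The paper argues via the map $\psi:2^m\cdot G\to G$, $x\mapsto 2^{-m}x$, and Lemma \ref{lem.cnum} (\ref{pt.hom}); for that $\psi$ to be a well-defined homomorphism one needs multiplication by $2^m$ to be injective, so the paper's argument implicitly invokes the no-$2$-torsion hypothesis even though the lemma asserts $\dim 2^m\cdot B\le\dim B$ unconditionally. Your workaround --- proving an \emph{image} analogue of Lemma \ref{lem.cnum} (\ref{pt.hom}), namely $\mathcal{C}^\flat(\phi(X);\phi(Y))\le\mathcal{C}^\flat(X;Y)$ for any homomorphism $\phi$, by pushing a witnessing quadruple $(H,\psi,W,Z)$ through the quotient $H\to H/\psi(\ker\phi|_{\langle X\rangle})$ --- is correct, and it buys the unconditional statement: the dimension bound then follows for arbitrary $G$ by applying the image lemma to $\mu_m$, whereas the paper's route strictly requires no $2$-torsion. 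Since the paper only ever deploys this lemma inside a no-$2$-torsion group, the discrepancy is harmless downstream, but your version is the one that actually proves what is stated.
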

\begin{proof}
The first part follows immediately from Lemma \ref{lem.cnum} (\ref{pt.meet}) and the definitions.

For (\ref{pt.szy}) the dimension inequality is immediate from the definition.  For the second estimate we have the following chain of inequalities justified afterwards.
\begin{align*}
\mathcal{C}^\flat(S;2^{-m}B) & = \mathcal{C}^\flat(S;B_m)\\ & \leq \mathcal{C}^\flat(X;B_{m+1}-B_{m+1})\\ & \leq \mathcal{C}(X;B_{m+1}) \leq \mathcal{C}(X;B_0)\prod_{i=0}^{m}{\mathcal{C}(B_i;B_{i+1})} \leq \mathcal{C}^\flat(X;B_0)2^{(m+1)\dim B}.
\end{align*}
$B$ is a system so $B_{m+1}-B_{m+1} \subset B_m$, and so Lemma \ref{lem.cnum} (\ref{pt.order}) gives the first inequality.  The second follows from the first inequality in Lemma \ref{lem.cnum} (\ref{pt.equiv}).  The third by Lemma \ref{lem.bcn} (\ref{cpt.chain}), and then the fourth by the second inequality in Lemma \ref{lem.cnum} (\ref{pt.equiv}).

For the dimension bound in (\ref{pt.mult}) note that the isomorphism $\psi:2^m \cdot G \rightarrow G; x \mapsto 2^{-m}x$ has $\psi^{-1}(B_i)=(2^m\cdot B)_i$ and the bound follows from Lemma \ref{lem.cnum} (\ref{pt.hom}) and the definition of dimension.

For the second part of (\ref{pt.mult}) suppose that $G$ has no $2$-torsion.  We have the same chain of inequalities as above.  Again, they are justified afterwards.
\begin{align*}
\mathcal{C}^\flat(S;2^m\cdot B) & =\mathcal{C}^\flat(S;2^m\cdot B_0)\\&\leq \mathcal{C}^\flat(S;2^m\cdot B_1 - 2^m \cdot B_1)\\ & \leq \mathcal{C}(S;2^m\cdot B_1) \leq \mathcal{C}(S;B_0)\mathcal{C}(B_0;B_1)\prod_{t=0}^{m-1}{\mathcal{C}(2^t\cdot B_1; 2^{t+1}\cdot B_1)}.
\end{align*}
$2^m\cdot B$ is a system so $2^m\cdot B_1 - 2^m \cdot B_1 \subset 2^m \cdot B_0$, and so Lemma \ref{lem.cnum} (\ref{pt.order}) gives the first inequality. The second follows from the first inequality in Lemma \ref{lem.cnum} (\ref{pt.equiv}), and then the third by Lemma \ref{lem.bcn} (\ref{cpt.chain}).

By Lemma \ref{lem.bcn} (\ref{cpt.hom}), the fact that $2\cdot B$ is a system so $2\cdot B_2 - 2\cdot B_2 \subset 2\cdot B_1$, and Ruzsa's covering lemma (Lemma \ref{lem.bcn} (\ref{cpt.rcl})) we have
\begin{equation*}
\mathcal{C}(2^t\cdot B_1; 2^{t+1}\cdot B_1) \leq  \mathcal{C}(B_1;2\cdot B_1)\leq \mathcal{C}(B_1;2\cdot B_2-2\cdot B_2) \leq \frac{|B_1+2\cdot B_2|}{|2\cdot B_2|}.
\end{equation*}
Since
\begin{equation*}
B_1+2\cdot B_2 \subset B_1+B_2+B_2\subset B_1+B_1 \subset B_0,
\end{equation*}
we have by Lemma \ref{lem.bcn} (\ref{cpt.cs}) and (\ref{cpt.chain}) that
\begin{align*}
|B_1+2\cdot B_2| &\leq |B_0| \leq \mathcal{C}(B_0;B_1)\mathcal{C}(B_1;B_2)|B_2|.
\end{align*}
Combining all this gives
\begin{equation*}
\mathcal{C}^\flat(S;2^m\cdot B) \leq \mathcal{C}(S;B_0)\mathcal{C}(B_0;B_1)\prod_{t=0}^{m-1}{\left( \mathcal{C}(B_0;B_1)\mathcal{C}(B_1;B_2)\frac{|B_2|}{|2\cdot B_2|}\right)}.
\end{equation*}
Since $G$ has no $2$-torsion $|2\cdot B_2|=|B_2|$ and so the second part of Lemma \ref{lem.cnum} (\ref{pt.equiv}) can then be used to give the result.
\end{proof}
The requirement that $G$ has no $2$-torsion is clearly necessary for the second part of (\ref{pt.mult}) above; the proof is essentially the same as the proof of \cite[Theorem 15]{buk::0} with Ruzsa's triangle inequality \cite[Lemma 2.6]{taovu::} replaced by Lemma \ref{lem.bcn} (\ref{cpt.chain}).

At the start of the section we introduced the notion of $\tau$-closed pair and we can use the pigeonhole principle to produce a plentiful supply of such pairs from systems.  Although we do not need it, a stronger result \cite[Lemma 4.24]{taovu::} is available.
\begin{lemma}\label{lem.ubreg}
Suppose that $Z \in \mathcal{N}(G)$; $B \in \mathcal{S}(G)$; $|Z+B_0| \leq K|Z|$; $\tau \in (0,1]$ is a parameter.  Then there is a set $Z_0 \in \mathcal{N}(G)$ with $Z\subset Z_0 \subset Z+B_0$ and a natural $m=\log_2\log 2 K+\log_2\tau^{-1} + O(1)$ such that $(Z_0,B_m)$ is $\tau$-closed.  
\end{lemma}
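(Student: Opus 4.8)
The plan is a telescoping pigeonhole argument applied not to the decreasing family $\{Z+B_i\}_i$ but to the \emph{increasing} family $\{Z+tB_m\}_{t\in\N_0}$, where $tB_m$ denotes the $t$-fold sumset $B_m+\cdots+B_m$ (with the convention $0B_m:=\{0_G\}$). The gain is that refining the system only produces a ``slow'' scale after about $\tau^{-1}\log K$ steps, whereas the family $\{Z+tB_m\}_t$ packs $2^m$ genuinely distinct terms into the range between $|Z|$ and $K|Z|$, so a doubly-logarithmic choice of $m$ is enough.

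Concretely, I would take $m$ to be the least natural number with $2^{m-1}\ge\tau^{-1}\log_2(2K)$; since $\tau\le1\le K$ this gives $m\ge1$ and $m=\log_2\log 2K+\log_2\tau^{-1}+O(1)$. I would then record two elementary facts. First, $0_G\in B_m$ forces $Z+tB_m\subseteq Z+(t+1)B_m$, so $t\mapsto|Z+tB_m|$ is non-decreasing. Second, an easy induction on $s$ from the defining relation $B_{i+1}+B_{i+1}\subseteq B_i$ gives $2^sB_m\subseteq B_{m-s}$ for $0\le s\le m$, so in particular $2^mB_m\subseteq B_0$ and hence $|Z+2^mB_m|\le|Z+B_0|\le K|Z|$.

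Now, telescoping,
\begin{equation*}
\prod_{s=1}^{2^{m-1}}\frac{|Z+2sB_m|}{|Z+(2s-2)B_m|}=\frac{|Z+2^mB_m|}{|Z|}\le K,
\end{equation*}
and since every factor is at least $1$, some $s$ satisfies $|Z+2sB_m|\le K^{1/2^{m-1}}|Z+(2s-2)B_m|$; by the choice of $m$ we have $\log_2 K/2^{m-1}\le\tau$, so $K^{1/2^{m-1}}\le 2^{\tau}\le 1+\tau$. Fix such an $s$, put $j:=2s-1\in\{1,3,\dots,2^m-1\}$, and set $Z_0:=Z+jB_m$, $Z^-:=Z+(j-1)B_m$, $Z^+:=Z+(j+1)B_m$; all three lie in $\mathcal N(G)$. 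Then $Z^-+B_m=Z_0$ and $Z_0+B_m=Z^+$, while $|Z^+|=|Z+2sB_m|\le(1+\tau)|Z+(2s-2)B_m|=(1+\tau)|Z^-|$, so $(Z_0,B_m)$ is $\tau$-closed. Finally $Z\subseteq Z^-\subseteq Z_0\subseteq Z+2^mB_m\subseteq Z+B_0$ because $0_G\in B_m$ and $j+1=2s\le 2^m$, which gives the required nesting $Z\subset Z_0\subset Z+B_0$.

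I do not expect a real obstacle here: the argument is one observation together with a telescoping product. The only care needed is in matching the arithmetic to the stated bound on $m$ (and noting that the case $K=1$, where every factor of the product equals $1$, is trivial) and in the inductive verification $2^mB_m\subseteq B_0$, both routine. The one genuinely useful idea -- and the source of the $\log\log K$ rather than $\log K$ -- is to inflate $Z$ by repeatedly adding the \emph{single} small set $B_m$ rather than by shrinking it through the system.
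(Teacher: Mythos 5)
Your proof is correct and is essentially the paper's own argument: the paper likewise telescopes $\prod_{j}|Z+(2j+2)B_m|/|Z+2jB_m| \le |Z+2^mB_m|/|Z|\le K$, pigeonholes an index with ratio at most $1+\tau$, and takes $Z_0=Z+(2j+1)B_m$ with $Z_0^\pm$ the neighbouring even multiples. Your write-up only adds the routine verifications ($2^mB_m\subseteq B_0$ and $K^{1/2^{m-1}}\le 2^\tau\le 1+\tau$) that the paper leaves implicit.
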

\begin{proof}
Let $m \in \N_0$ be a parameter to be optimised shortly
\begin{equation*}
\prod_{j=0}^{2^{m-1}-1}{\frac{|Z+(2j+2)B_m|}{|Z+2jB_m|}} \leq \frac{|Z+2^mB_m|}{|Z|} \leq \frac{|Z+B_0|}{|Z|}\leq K.
\end{equation*}
By the pigeonhole principle there is some $0 \leq j \leq 2^{m-1}-1$ such that
\begin{equation*}
\frac{|Z+(2j+2)B_m|}{|Z+2jB_m|} \leq K^{\frac{1}{2^{m-1}}},
\end{equation*}
so we can take $m=\log_2\log_2K + \log_2\tau^{-1} + O(1)$ such that the right hand side is at most $1+\tau$.  In that case let $Z_0:=Z+(2j+1)B_m$, $Z_0^-:=Z+2jB_m$ and $Z_0^+:=Z_0+(2j+2)B_m$ which are all symmetric neighbourhoods of the identity and have $Z_0^-+B_m = Z_0$ and $Z_0+B_m =Z_0^+$.  Moreover, the choice of $j$ ensures that $|Z_0^+| \leq (1+\tau)|Z_0^-|$ and the result is proved.
\end{proof}
In particular, for low-dimensional systems we have the following.
\begin{lemma}\label{lem.cc}
Suppose that $B \in \mathcal{S}(G)$ is $d$-dimensional; $\tau \in (0,1]$ is a parameter.  Then there is a set $Z_0 \in \mathcal{N}(G)$ with $B_1\subset Z_0 \subset B_0$ and a natural $m=\log_2d+\log_2\tau^{-1} + O(1)$ such that $(Z_0,B_m)$ is $\tau$-closed. 
\end{lemma}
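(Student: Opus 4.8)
The plan is to deduce this directly from Lemma \ref{lem.ubreg}, applied to the set $Z:=B_1$ together with the dilated system $2^{-1}B$ (whose zeroth term is $B_1$ and whose $j$-th term is $B_{j+1}$). The only real choice is to use $2^{-1}B$ rather than $B$ itself, so that the set produced by Lemma \ref{lem.ubreg} is contained in $B_0$: indeed for $Z=B_1$ we have $Z+(2^{-1}B)_0 = B_1+B_1 \subset B_0$ since $B$ is a system, whereas $Z+B_0$ need not lie in $B_0$.

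First I would record the doubling bound to feed into Lemma \ref{lem.ubreg}. Since $B$ is a system we have $B_1+B_1 \subset B_0$, so $|B_1+B_1| \leq |B_0|$; and by Lemma \ref{lem.bcn} (\ref{cpt.cs}), Lemma \ref{lem.cnum} (\ref{pt.equiv}), and the definition of $\dim B = d$,
\begin{equation*}
|B_0| \leq \mathcal{C}(B_0;B_1)|B_1| \leq \mathcal{C}^\flat(B_0;B_1)|B_1| \leq 2^d|B_1|.
\end{equation*}
Hence $|Z+(2^{-1}B)_0| = |B_1+B_1| \leq 2^d|Z|$, so Lemma \ref{lem.ubreg} applies with $K=2^d$ and the given $\tau$. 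It produces a set $Z_0 \in \mathcal{N}(G)$ with
\begin{equation*}
B_1 = Z \subset Z_0 \subset Z+(2^{-1}B)_0 = B_1+B_1 \subset B_0
\end{equation*}
and a natural $m' = \log_2\log(2\cdot 2^d) + \log_2\tau^{-1} + O(1) = \log_2(d+1) + \log_2\tau^{-1} + O(1)$ such that $(Z_0,(2^{-1}B)_{m'}) = (Z_0,B_{m'+1})$ is $\tau$-closed. Setting $m:=m'+1$ absorbs the index shift, and $\log_2(d+1) = \log_2 d + O(1)$ for $d\geq 1$ (when $d=0$ the doubling constant is $1$ and $m=O(1)$), so $m = \log_2 d + \log_2\tau^{-1} + O(1)$ and $(Z_0,B_m)$ is $\tau$-closed with $B_1\subset Z_0\subset B_0$, as required.

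There is essentially no obstacle here; the two points worth a moment's thought are the passage to the $2^{-1}$-dilate (to keep $Z_0$ inside $B_0$) and the inequality $|B_1+B_1|\leq 2^d|B_1|$ coming straight from the definition of dimension, which is exactly what converts the doubly-logarithmic dependence on $K$ in Lemma \ref{lem.ubreg} into the singly-logarithmic dependence on $d$ asserted here.
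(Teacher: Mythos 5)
Your proof is correct and follows exactly the same route as the paper: establish $|B_1+B_1|\leq 2^d|B_1|$ from the dimension bound via Lemma \ref{lem.bcn} (\ref{cpt.cs}) and Lemma \ref{lem.cnum} (\ref{pt.equiv}), then apply Lemma \ref{lem.ubreg} to $Z:=B_1$ and the dilated system $2^{-1}B$. Your observations about why the dilate is needed and about the degenerate case $d=0$ are sound but add nothing beyond the paper's (terser) argument.
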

\begin{proof}
By Lemma \ref{lem.bcn} (\ref{cpt.cs}) and the second inequality in Lemma \ref{lem.cnum} (\ref{pt.equiv}) we have
\begin{equation*}
|B_1+B_1| \leq |B_0| \leq \mathcal{C}(B_0;B_1)|B_1| \leq 2^d|B_1|.
\end{equation*}
Apply Lemma \ref{lem.ubreg} to $Z:=B_1$ and the system $2^{-1}B$ to get the result.
\end{proof}

\section{Proof of Proposition \ref{prop.g}}\label{sec.pf}

It is convenient to use the language of probability theory, but all our probability measures will have finite support so there is no analysis involved.  (This can be easily checked as the only places where we produce new probability spaces are in Lemma \ref{lem.gs} and Lemma \ref{lem.newprob}.)  We say that a probability space $(\Omega',\P')$ is an \textbf{extension} of a probability space $(\Omega,\P)$ if there is a map $\phi:\Omega' \rightarrow \Omega$ such that $\P'(\phi^{-1}(A))=\P(A)$ for all (measurable) $A$ in $\Omega$.  Note that if $(\Omega'',\P'')$ is an extension of $(\Omega',\P')$ and $(\Omega',\P')$ is an extension of $(\Omega,\P)$ then $(\Omega'',\P'')$ is an extension of $(\Omega,\P)$.

Given a random variable $X$ on $\Omega$, and an extension $(\Omega',\P')$ of $(\Omega,\P)$, then for convenience we also write $X$ for the pull-back of $X$ on $\Omega'$. 

We follow the plan in \S\ref{sec.mod}; the analogue of \ref{step1} and Lemma \ref{lem.freiman} is following lemma proved in \S\ref{sec.st}.
\begin{lemma}\label{lem.gs}
Suppose that $A$ is $(k,X)$-summing; $|X| \leq K|A|$; and $\tau \in \left(0,\frac{1}{2}\right]$ is a parameter.  Then there is a $(kK)^{-O(1)}$-hereditarily energetic set $S$ with $|A\cap S| \geq (kK)^{-O(1)}|S|$ and $|S| \geq k^{-O(1)}|A|$, a probability space $(\Omega,\P)$ supporting a $G$-valued random variable $z$, an $\mathcal{N}(G)$-valued random variable $Z$ and an $\mathcal{S}(G)$-valued random variable $T$ such that
\begin{equation*}
\|\E{m_{z+Z}}-m_S\| \leq \tau,
\end{equation*}
and for all $\omega \in \Omega$,
\begin{equation*}
\dim T(\omega) \leq (kK)^{O(1)},\text{ and }\mathcal{C}^\flat(S;T(\omega)) \leq \exp((kK\log \tau^{-1})^{O(1)})
\end{equation*}
and $(Z(\omega),T(\omega)_0)$ is $\tau$-closed.
\end{lemma}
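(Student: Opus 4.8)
This is the "translation of STEP I" step, so I follow the proof of Lemma \ref{lem.freiman} but carry along the extra Bohr-set data. First I apply Lemma \ref{lem.hen} to conclude $A$ is $(kK)^{-O(1)}$-hereditarily energetic, and in particular $E(A) = \Omega((kK)^{-O(1)}|A|^3)$. Then the Balog--Szemer\'edi--Gowers Theorem yields $A' \subset A$ with $|A'| = (kK)^{-O(1)}|A|$ and $|A'+A'| \leq (kK)^{O(1)}|A'|$. At this point, rather than invoking Chang's theorem for torsion groups (which is special to the finite-field model), I would apply Freiman's theorem in the form that gives a coset progression $P$ with $A' \subset P$ (or that $2A'-2A'$ contains a large coset progression; either works, but containing $A'$ is cleanest for the covering-number bookkeeping) of rank $d = (kK)^{O(1)}$ and $|P| \leq \exp((kK)^{O(1)})|A'|$. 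By Lemma \ref{lem.gap} the coset progression $P$ defines a system $B$ on $G$ with $\dim B \leq d = (kK)^{O(1)}$ and $\mathcal{C}^\flat(A';B) = \mathcal{C}^\flat(A';B_0)$ controlled in terms of $|P|/|A'| \leq \exp((kK)^{O(1)})$ using Lemma \ref{lem.new} and the covering-number machinery of Lemma \ref{lem.bcn}. I then set $S$ to be (roughly) $A'$ together with a small dilate of the system, or simply $S := A'$, after checking $S$ is $(kK)^{-O(1)}$-hereditarily energetic --- for the small-doubling version via Lemma \ref{lem.hed}(\ref{pt.1}) after Pl\"unnecke one gets $|S+S| \leq (kK)^{O(1)}|S|$, so this is fine --- and with $|A \cap S| \geq (kK)^{-O(1)}|S|$ and $|S| \geq k^{-O(1)}|A|$.

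Next I produce the probability space and the random variables. The point of Lemma \ref{lem.ubreg}/Lemma \ref{lem.cc} is to convert the (low-dimensional) system $B$ into a $\tau$-closed pair: applying Lemma \ref{lem.cc} to the dilate $2^{-1}B$ (or directly with the doubling bound $|B_1 + B_1| \leq 2^d |B_1|$) gives a symmetric neighbourhood $Z_0$ with $B_{j+1} \subset Z_0 \subset B_j$ and an index $m = \log_2 d + \log_2 \tau^{-1} + O(1)$ such that $(Z_0, B_m')$ is $\tau$-closed, where $B'$ is the relevant dilate. I take $Z$ to be the constant random variable $Z_0$, $T$ the constant random variable equal to the dilated system $2^{-m'}B$ for the appropriate $m'$ (so that $T_0 = B_m'$ and $(Z(\omega), T(\omega)_0)$ is $\tau$-closed), and $z$ the random variable on $S$ with law... here is the subtlety: I want $\E{m_{z+Z}} \approx m_S$. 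Choosing $z$ uniform from $S$ gives $\E{m_{z+Z}} = m_S \ast m_{Z_0}$, which is close to $m_S$ in total variation provided $Z_0$ is small relative to the scale on which $S$ is roughly invariant --- but $S$ need not be invariant under anything. The fix is standard: because $S = U + A'$-type set (or because we may replace $S$ by $S + Z_0$, paying a constant factor), $m_S \ast m_{Z_0}$ is within $\tau$ of $m_{S}$ in $\ell^1$ when $|S + Z_0| \leq (1+\tau)|S|$, and this last inequality we arrange by a further regularity/pigeonhole choice of the scale $m$ (enlarging $m$ by another $\log\tau^{-1} + \log d$ if necessary), exactly as in Lemma \ref{lem.ubreg}. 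Tracking the covering numbers through the dilation costs a factor $2^{(m+1)\dim B}$ by Lemma \ref{lem.new}(\ref{pt.szy}), and since $m = (kK\log\tau^{-1})^{O(1)}$ and $\dim B = (kK)^{O(1)}$, the bound $\mathcal{C}^\flat(S; T(\omega)) \leq \exp((kK\log\tau^{-1})^{O(1)})$ follows; and $\dim T(\omega) \leq \dim B \leq (kK)^{O(1)}$ by Lemma \ref{lem.new}(\ref{pt.szy}) again.

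**The main obstacle.** The genuinely delicate point is getting $\|\E{m_{z+Z}} - m_S\| \leq \tau$ with the stated quantitative control --- i.e. simultaneously making $Z_0$ (equivalently $B_m$) small enough, at scale $m = (kK\log\tau^{-1})^{O(1)}$, that convolving $m_S$ with $m_{Z_0}$ barely changes it, while keeping the covering number $\mathcal{C}^\flat(S; T)$ only exponential in $(kK\log\tau^{-1})^{O(1)}$. These pull in opposite directions: shrinking the system improves invariance but worsens the covering number by a factor $2^{\dim B}$ per dilation step. The resolution is that $\dim B$ is fixed at $(kK)^{O(1)}$ independent of $\tau$, so $\log \mathcal{C}^\flat(S;T) \leq \log\mathcal{C}^\flat(S;B) + (m+1)\dim B = (kK)^{O(1)} + (kK\log\tau^{-1})^{O(1)}\cdot(kK)^{O(1)} = (kK\log\tau^{-1})^{O(1)}$, which is exactly the claimed bound --- so the books do balance, but one has to be careful to choose $m$ via Lemma \ref{lem.ubreg} applied to the \emph{right} reference set (namely $S$, not $B_0$) so that the $\tau$-closure and the $(1+\tau)$-smallness of $S+Z_0$ come from the same pigeonhole step. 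Everything else (Pl\"unnecke, Lemma \ref{lem.hed}(\ref{pt.1}), Lemma \ref{lem.new}) is bookkeeping.
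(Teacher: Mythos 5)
Your route is essentially the paper's: Lemma \ref{lem.hen} plus Balog--Szemer\'edi--Gowers, a Freiman-type theorem turned into a system via Lemma \ref{lem.gap}, the pigeonhole regularisation of Lemma \ref{lem.ubreg} run against the $A'$-based reference set so that the thickened set $S$ is $\tau$-closed under a deep level of the system, then Lemma \ref{lem.cc} to extract the pair $(Z,T_0)$, with $z$ uniform on $S$ and $Z,T$ constant; your ``main obstacle'' paragraph is exactly the crux, and your resolution (closure of $S$ itself and smallness of $Z$ coming from the same system, via Lemma \ref{lem.ubreg} applied to the set rather than to $B_0$) is what the paper does, the invariance then giving $\|\E{m_{z+Z}}-m_S\|=\|m_S\ast m_{Z}-m_S\|\leq\tau$ by Lemma \ref{lem.basicclosed} (\ref{pt.inv}).

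The one genuine problem is your preferred form of Freiman's theorem. ``Either works'' is not right: if you take a coset progression $P\supset A'$ of rank $(kK)^{O(1)}$ and size $\exp((kK)^{O(1)})|A'|$, then the system from Lemma \ref{lem.gap} has $B_0\subset P-P$, which is not contained in any bounded sumset of $A'$, so Pl\"unnecke gives you nothing about $A'-A'+B_0$; the thickened set $S$ (which Lemma \ref{lem.ubreg} only locates between the reference set and its $B_0$-thickening) can then have $|S|$ and $|S+S|$ as large as $\exp((kK)^{O(1)})|A'|$, so Lemma \ref{lem.hed} (\ref{pt.1}) yields only $\exp(-(kK)^{O(1)})$-hereditary energy and $|A\cap S|\geq\exp(-(kK)^{O(1)})|S|$ rather than the claimed $(kK)^{-O(1)}$ bounds. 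These polynomial bounds are not cosmetic: in Proposition \ref{prop.g} and then Proposition \ref{prop.key2} they are what keep $\nu_0,\sigma_0,\tau_0=k^{-O(1)}$ and hence $r=k^{O(1)}$ in Lemma \ref{lem.int}, so an exponential loss here propagates to $|A|\leq\exp(\exp(k^{O(1)}))$ and destroys Theorem \ref{thm.main}. So you must use the inside-$2A'-2A'$ form (the paper uses Ruzsa--Chang \cite[Theorem 5.46]{taovu::}), which forces $B_0\subset 4A'-4A'$ and keeps $S\subset 5A'-5A'$, where Pl\"unnecke gives all sizes and doublings polynomially in $kK$. Two smaller points: the parenthetical ``simply $S:=A'$'' cannot be rescued by shrinking the scale (a dense random subset of an interval has bounded doubling but is not $(1+\tau)$-invariant under any sub-interval of super-logarithmic length), so the thickening is essential, as you in effect concede; and Lemma \ref{lem.ubreg} requires its reference set to lie in $\mathcal{N}(G)$, which is why the paper runs it on $A'-A'$ and then translates $A$ at the end to secure $|A\cap S|\geq(kK)^{-O(1)}|S|$.
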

The most technically demanding part is the following analogue of \ref{step3} and Corollary \ref{cor.y} which we prove in \S\ref{sec.en}.
\begin{corollary}\label{cor.iju}
Suppose that $G$ has no $2$-torsion; $A,S \subset G$; a probability space $(\Omega,\P)$ supporting a $G$-valued random variable $z$, an $\mathcal{N}(G)$-valued random variable $Z$ and an $\mathcal{S}(G)$-valued random variable $T$ such that for all $\omega \in \Omega$,
\begin{equation*}
\dim T(\omega) \leq d \text{ and } \mathcal{C}^\flat(S;T(\omega)) \leq D
\end{equation*}
and $(Z(\omega),T(\omega)_0)$ is $\tau$-closed; and $\delta \in (0,1]$ and $r \in \N$ are parameters.  Then either $\tau^{-1}\leq (\delta^{-1}r)^{O(1)}$; or there is a probability space $(\Omega',\P')$ extending $(\Omega,\P)$, supporting a $G$-valued random variable $z'$ and $\mathcal{N}(G)$-valued random variable $Z'$ with
\begin{equation*}
\|\E'{m_{z'+Z'}} - \E{m_{z+Z}}\| \leq\delta,
\end{equation*}
and a further extension $(\Omega'',\P'')$ of $(\Omega',\P')$, supporting a $G$-valued random variable $z''$ and $\mathcal{N}(G)$-valued random variables $Z_1,\dots,Z_k$ such that
\begin{enumerate}
\item 
\begin{equation*}
\|\E''{m_{2^sz''+Z_i}} - \E'{m_{2^s\cdot(z'+Z')}}\| \leq \delta
\end{equation*}
for all $1 \leq i\leq k$ and $0 \leq s \leq r$;
\item  \emph{($U_1$-uniformity)}
\begin{equation*}
\E''{|m_{2^sz''+Z_i}(A)-m_{2^s\cdot(z'+Z')}(A)|^2} \leq \delta
\end{equation*}
for all $1 \leq i\leq k$ and $0 \leq s \leq r$;
\item \emph{($U_2$-uniformity)}
\begin{equation*}
\E''{\left\|1_{A\cap (2^sz''+Z_i)} \ast (1_{A}dm_{2^sz''+Z_j})- m_{2^s\cdot(z'+Z')}(A)^2\right\|_{L_2\left(m_{2^{s+1}z''+Z_i}\right)}^2}\leq \delta
\end{equation*}
for all $1 \leq i <j \leq k$ and $0 \leq s \leq r$;
\end{enumerate}
and for all $\omega'' \in \Omega''$, $(Z_i(\omega''),Z_{i+1}(\omega''))$ is $\delta$-closed for all $1 \leq i < k$, and $\mathcal{C}^\flat(S;Z_k(\omega'')) \leq D\exp((dkr\delta^{-1})^{O(1)})$.
\end{corollary}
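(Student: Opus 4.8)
\textbf{Proof strategy for Corollary \ref{cor.iju}.}
The plan is to mirror the model proof (Corollary \ref{cor.y}) in two phases: first a ``density–increment over weighted covers by $\tau$-closed pairs'' iteration that regularises $A$ along the whole dyadic orbit $2^0z,\dots,2^rz$ simultaneously, and then a final extension step that splits a single $\tau$-closed pair into a chain $(Z_1,\dots,Z_k)$ of nested $\delta$-closed pairs, each still tracking $S$ via $\mathcal{C}^\flat$. Throughout, the role played by ``passing to a subspace $U'\leq U$'' in the model is played by intersecting the system $T(\omega)$ with an auxiliary system and then extracting a new $\tau$-closed pair from it via Lemma \ref{lem.cc}; Lemma \ref{lem.new} (\ref{pt.intdim}),(\ref{pt.szy}),(\ref{pt.mult}) controls how $\dim T$ and $\mathcal{C}^\flat(S;T)$ degrade, and the no-$2$-torsion hypothesis is exactly what keeps the multiples $2^m\cdot B$ under control (Lemma \ref{lem.new} (\ref{pt.mult}), second bound). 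Since $(Z(\omega),T(\omega)_0)$ is $\tau$-closed, Lemma \ref{lem.basicclosed} (\ref{pt.inv}) says $m_{z(\omega)+Z(\omega)}$ is almost invariant under $T(\omega)_0$, so we may legitimately treat $1_A$ restricted to $z(\omega)+Z(\omega)$ as a ``function on an approximate group'' and run Fourier analysis relative to a Bohr-type sub-system, the error being absorbed into $\delta$.

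\textbf{Phase 1 (the energy increment).}
Define the weighted $L_2$-mass $\Phi(z,Z):=\E{m_{z+Z}(A)^2}$, which lies in $[0,1]$; along the orbit one tracks $\sum_{s=0}^{r-1}\Phi(2^sz,Z)\leq r$. If for some $0\leq s<r$ and a non-negligible proportion of $\omega$ the relevant $U_2$-non-uniformity
\begin{equation*}
\bigl\|1_{A\cap(2^sz+Z)}\ast(1_A dm_{2^sz+Z})-m_{2^sz+Z}(A)^2\bigr\|_{L_2(m_{2^{s+1}z+Z})}
\end{equation*}
exceeds $\delta$, then the large-spectrum argument of Lemma \ref{lem.unif} produces, for each such $\omega$, a Bohr-type refinement of $T(\omega)$ of bounded codimension (dimension increases by $O(\delta^{-2})$, $\mathcal{C}^\flat(S;\cdot)$ multiplies by $\exp(O(\delta^{-2}d))$ using Lemma \ref{lem.new}), from which Lemma \ref{lem.cc} extracts a new $\tau$-closed pair; averaging these over $\omega$ (and doing nothing on the complementary $\omega$) gives an extension of $(\Omega,\P)$ and a new weighted cover with $\Phi$ increased by $\Omega(\delta^3)$ on the relevant coordinate and $\|\E m_{z'+Z'}-\E m_{z+Z}\|$ negligible. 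Because the total $\Phi$-budget is $O(r)$, after $O(\delta^{-3}r)$ steps (which forces $\tau^{-1}\geq(\delta^{-1}r)^{\Omega(1)}$, otherwise we fall into the first alternative) we reach a weighted cover on which, for \emph{every} $0\leq s\leq r$, the $U_2$-non-uniformity is $\leq\delta$ for all but a $\delta$-fraction of $\omega$; the resulting bounds are $\dim T(\omega)\leq d+O(\delta^{-3}r\delta^{-2})=d+(r\delta^{-1})^{O(1)}$ and $\mathcal{C}^\flat(S;T(\omega))\leq D\exp((dr\delta^{-1})^{O(1)})$, matching (after renaming $(z,Z):=(z',Z')$) the claimed $Z'$. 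The $U_1$-uniformity and the ``$m_{2^sz''+Z_i}$ close to $m_{2^s\cdot(z'+Z')}$'' statements are then automatic from Lemma \ref{lem.basicclosed} (\ref{pt.inv}) once we are inside a $\tau$-closed pair with $\tau$ small.

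\textbf{Phase 2 (the chain) and the obstacle.}
Given the regularised pair, build the chain $Z_1\supset\cdots\supset Z_k$ by repeatedly applying Lemma \ref{lem.cc} to deeper dilates $2^{-m}T(\omega)$: set $Z_1:=T(\omega)_0$ and, having produced the $\tau'$-closed pair $(Z_i,B^{(i)})$ with $B^{(i)}$ a sufficiently deep level of (a dilate of) $T(\omega)$, let $Z_{i+1}:=B^{(i)}$ and take its own closed refinement at the next level; choosing the depth at each stage so that $\tau'$ halves guarantees each $(Z_i,Z_{i+1})$ is $\delta$-closed, costs a dimension-independent number of dilation steps per link, and by Lemma \ref{lem.new} (\ref{pt.szy}) multiplies $\mathcal{C}^\flat(S;\cdot)$ by at most $2^{O(\dim\cdot)}$ per level, so over $k$ links $\mathcal{C}^\flat(S;Z_k(\omega''))\leq D\exp((dkr\delta^{-1})^{O(1)})$ as required; pushing forward the orbit-regularity from $Z_1$ to each $Z_i$ (one needs $U_2$-uniformity with the \emph{pair} $(Z_i,Z_j)$, $i<j$, not just a single set) uses again that each $Z_i$ is nested in $Z_1$ together with Lemma \ref{lem.basicclosed} (\ref{pt.nesting}) and the Cauchy--Schwarz/averaging bookkeeping, absorbed into $\delta$. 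The main obstacle — and the reason this is genuinely harder than the model — is the interaction of \emph{multiples} with \emph{dilates}: after forming $2^s\cdot(z+Z)$ we need the pair $(2^s\cdot Z_i, \text{something})$ to still be $\delta$-closed and still track $S$, so one cannot simply invoke $(2^sz,Z)$ as a weighted cover of $2^s\cdot S$ as in the model. One must instead, before running Phase 1, replace $T(\omega)$ by a sub-system on which all the multiples $2^s\cdot T(\omega)$, $0\leq s\leq r$, are simultaneously comparable — concretely intersect with $\bigwedge_{s=0}^{r}2^s\cdot(2^{-r}T(\omega))$ or similar — paying a dimension factor $O(r)$ and, crucially via the no-$2$-torsion clause of Lemma \ref{lem.new} (\ref{pt.mult}), only an $\exp(O(rd))$ factor in $\mathcal{C}^\flat$; getting this preprocessing to interface cleanly with the $\tau$-closed pair produced by Phase 1, and verifying that the $\delta$-errors from translation-invariance do not accumulate across the $r$ orbit coordinates and the $k$ chain links, is where the real work lies.
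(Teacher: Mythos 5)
Your Phase 1 is essentially the paper's energy-increment mechanism (and your observations about the orbit-summed energy budget $\leq r$, the role of no $2$-torsion via Lemma \ref{lem.new} (\ref{pt.mult}), and the multiples-versus-dilates issue are all on target), but the overall architecture has a genuine gap: you regularise only at the top scale $(z+Z)$ and then, in Phase 2, build the chain $Z_1\supset\dots\supset Z_k$ afterwards, claiming that the $U_1$-statement is ``automatic from Lemma \ref{lem.basicclosed} (\ref{pt.inv})'' and that the $U_2$-uniformity for the pairs $(Z_i,Z_j)$ can be ``pushed forward'' from $Z_1$ by nesting and Cauchy--Schwarz. Neither claim is true: $\tau$-closedness controls translation-invariance of the measures, not the behaviour of $A$, and uniformity of $A$ at a coarse scale does not transfer to finer scales. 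In the model setting, take $A$ to be a union of randomly chosen cosets of a small subspace $U'\leq U$ with total density $\alpha$: then $A$ is highly uniform and has density $\alpha$ relative to $U$, yet on each cell $x+U'$ its density is $0$ or $1$, so both the $U_1$-condition ($m_{x+U'}(A)$ close to $\alpha$ in mean square) and the $U_2$-condition relative to $\alpha^2$ fail badly at the finer scale. So the conclusion of the corollary for the chain cannot be extracted from top-scale regularity; it has to be earned at the scale of the $Z_i$ themselves.

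This is exactly why the paper's argument is structured differently (Lemma \ref{lem.inductivestep} iterated in the proof of Corollary \ref{cor.iju}): within each step of the iteration the chain $Z_1',\dots,Z_k'$ is constructed \emph{first} (deep levels of $T(\omega)$, multiplied by $2^r$ so that $2^sz'+2^r\cdot Z_i'=2^s\cdot(z'+2^{r-s}\cdot Z_i')$ handles the multiple/dilate interaction without your preprocessing intersection), and then the $U_1$- and $U_2$-conditions are tested for \emph{all} pairs $1\leq i<j\leq k$ and all $0\leq s\leq r$; failure of any one of them — including failure of the $U_1$-condition, which you treat as free — triggers an increment of the orbit-summed energy $\sum_{s=0}^r\E{m_{2^s\cdot(z+Z)}(A)^2}\leq r+1$ (via Lemma \ref{lem.newprob} for $U_1$ and Lemma \ref{lem.increment} for $U_2$), after which the whole chain is rebuilt over the refined system. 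Your two-phase plan would need to be restructured along these lines: the increment must be driven by non-uniformity detected at the chain pairs, not at the top pair, and the $U_1$-statement must be one of the tested conditions rather than a consequence of closedness.
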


Finally, we have the analogue of \ref{step4} and Lemma \ref{lem.ct} which is not terribly different to the model case and which is proved in \S\ref{sec.k}.
\begin{lemma}\label{lem.c}
Suppose that $A,X \subset G$; $z_0 \in G$; $(Z_i,Z_{i+1})$ is $\tau$-closed for all $1 \leq i < k$; and
\begin{enumerate}
\item \label{hyp.1} $|m_{Z_i}(A-z_0) -\alpha|\leq \tau$ for $1 \leq i \leq k$;
\item \label{hyp.2} $m_{Z_i}(X-2z_0) \leq \epsilon$ for all $1 \leq i < k$;
\item \label{hyp.3}
\begin{equation*}
\left\|1_{(A-z_0)\cap Z_i} \ast (1_{A-z_0}dm_{Z_j}) - \alpha^2\right\|_{L_2(m_{Z_i})}^2 \leq \delta \text{ for all }1\leq i <j \leq k.
\end{equation*}
\end{enumerate}
Then either $\delta^{-1}=O(k^2\alpha^{-4})$; or $\tau^{-1} = O(k\alpha^{-1})$; or $\epsilon^{-1}=O(k^2)$; or
\begin{equation*}
\int{\left(\prod_{i<j}{1_{(Z_i+2z_0)\setminus X}(z_i+z_j+2z_0)1_{Z_i}(z_i+z_j)}\right)\prod_{i=1}^k{1_A(z_i+z_0)dm_{Z_i}(z_i)}} =\Omega\left(\alpha^k\right).
\end{equation*}
\end{lemma}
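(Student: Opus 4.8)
The plan is to mimic the proof of Lemma~\ref{lem.ct}, with the $\tau$-closed pairs $(Z_i,Z_{i+1})$ playing the role the ambient subspace $U$ played there. Translating by $z_0$, put $A'=A-z_0$ and $X'=X-2z_0$; since $1_{(Z_i+2z_0)\setminus X}(z_i+z_j+2z_0)\,1_{Z_i}(z_i+z_j)=1_{Z_i\setminus X'}(z_i+z_j)$, the target becomes to bound below
\[
Q:=\int{\Big(\prod_{i<j}1_{Z_i\setminus X'}(z_i+z_j)\Big)\prod_{i=1}^k{1_{A'}(z_i)\,dm_{Z_i}(z_i)}},
\]
and the three hypotheses read $|m_{Z_i}(A')-\alpha|\le\tau$ for $1\le i\le k$, $m_{Z_i}(X')\le\epsilon$ for $1\le i<k$, and $\|1_{A'\cap Z_i}\ast(1_{A'}dm_{Z_j})-\alpha^2\|_{L_2(m_{Z_i})}^2\le\delta$ for $i<j$. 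Being $\tau$-closed, each pair supplies $Z_i^-\in\mathcal{N}(G)$ with $Z_i^-+Z_{i+1}\subset Z_i$ and $|Z_i^-|\ge(1+\tau)^{-1}|Z_i|$; in particular (taking the zero element of $Z_i^-$) $Z_{i+1}\subset Z_i$, so $Z_k\subset\cdots\subset Z_1$, and $|Z_i\setminus Z_i^-|\le\tau|Z_i|$.

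The one genuinely new ingredient is a shrinking of the domain. If $1\le i<k$, $z_i\in Z_i^-$ and $z_j\in Z_j$ with $j>i$, then $z_j\in Z_j\subset Z_{i+1}$, so $z_i+z_j\in Z_i^-+Z_{i+1}\subset Z_i$; hence on the set $D$ where $z_i\in Z_i^-$ for $i<k$ and $z_k\in Z_k$ every factor $1_{Z_i}(z_i+z_j)$ is identically $1$. Since the integrand of $Q$ is non-negative, $Q\ge P:=\int_D{(\prod_{i<j}1_{(X')^c}(z_i+z_j))\prod_i 1_{A'}(z_i)\,dm_{Z_i}(z_i)}$. Applying Bonferroni's inequality to the events $\{z_i+z_j\in X'\}$ exactly as in Lemma~\ref{lem.ct} gives $P\ge M-\sum_{i<j}E_{ij}$, where $M:=\int_D\prod_i 1_{A'}(z_i)\,dm_{Z_i}(z_i)$ and $E_{ij}:=\int_D 1_{X'}(z_i+z_j)\prod_i 1_{A'}(z_i)\,dm_{Z_i}(z_i)$.

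For the main term, $M=\big(\prod_{i<k}m_{Z_i}(A'\cap Z_i^-)\big)m_{Z_k}(A')\ge(\alpha-2\tau)^k$, using $m_{Z_i}(A'\cap Z_i^-)\ge m_{Z_i}(A')-|Z_i\setminus Z_i^-|/|Z_i|\ge\alpha-2\tau$; this is $\ge\frac12\alpha^k$ provided $\tau\le c\alpha/k$ for a suitable absolute $c$, i.e.\ unless $\tau^{-1}=O(k\alpha^{-1})$. For $E_{ij}$ the point of the shrinking is that when $z_i$ ranges over $Z_i^-$ and $z_j$ over $Z_j\subset Z_{i+1}$ the sum $z_i+z_j$ always lies in $Z_i$; so, integrating out the other $k-2$ coordinates (each contributing at most $\alpha+\tau$, with $\prod_{l\ne i,j}(\alpha+\tau)\le e\alpha^{k-2}$ once $(k-2)\tau/\alpha\le1$) and using that the count of pairs in $(A'\cap Z_i^-)\times(A'\cap Z_j^-)$ summing to $w$ is at most the count in $(A'\cap Z_i)\times(A'\cap Z_j)$, which equals $|Z_j|\,h_{ij}(w)$ for $h_{ij}:=1_{A'\cap Z_i}\ast(1_{A'}dm_{Z_j})$, one gets $E_{ij}\le e\alpha^{k-2}\langle h_{ij},1_{X'}\rangle_{L_2(m_{Z_i})}$. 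Splitting $\langle h_{ij},1_{X'}\rangle_{L_2(m_{Z_i})}=\langle h_{ij}-\alpha^2,1_{X'}\rangle_{L_2(m_{Z_i})}+\alpha^2 m_{Z_i}(X')$ and bounding the first term by Cauchy--Schwarz, $\le\|h_{ij}-\alpha^2\|_{L_2(m_{Z_i})}\sqrt{m_{Z_i}(X')}\le\sqrt{\delta\epsilon}$, yields $E_{ij}\le e\alpha^{k-2}(\sqrt{\delta\epsilon}+\alpha^2\epsilon)$. Summing over the $\binom{k}{2}$ pairs, $Q\ge P\ge(\alpha-2\tau)^k-O(k^2\alpha^{k-2})(\sqrt{\delta\epsilon}+\alpha^2\epsilon)$. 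If none of the first three alternatives holds then $\tau\le c\alpha/k$, $\epsilon$ is a small multiple of $k^{-2}$, and --- $\delta$ being a small multiple of $k^2\alpha^{-4}$ --- the product $\delta\epsilon$ is a small multiple of $\alpha^4 k^{-4}$; so the error is at most half the main term and $Q=\Omega(\alpha^k)$.

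The main obstacle to get right is exactly the feature absent from Lemma~\ref{lem.ct}: the pairs $(Z_i,Z_{i+1})$ are only approximately closed, so a priori $z_i+z_j$ need not lie in $Z_i$, and $h_{ij}$ spills slightly outside $Z_i$ where the $L_2(m_{Z_i})$-hypothesis is silent. Restricting each $z_i$ $(i<k)$ to the slightly smaller $Z_i^-$ cures both issues simultaneously, so that the effect of $\tau$ is reduced to the harmless passage from $m_{Z_i}(A')$ to $\alpha$ and multiplicative factors $(1\pm O(\tau))$; this is what keeps the $\tau$-dependence at $\tau^{-1}=O(k\alpha^{-1})$ (dealing with the constraints $z_i+z_j\in Z_i$ instead by a second Bonferroni step would cost an extra power of $k$).
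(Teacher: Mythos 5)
Your proposal is correct and is essentially the paper's own argument: translate so $z_0=0_G$, apply the Bonferroni/pigeonhole inequality, lower bound the main term by $(\alpha-2\tau)^k$ using the sets $Z_i^-$ together with the nesting $Z_j\subset Z_{i+1}$, and bound each error term by $\langle 1_{A\cap Z_i}\ast(1_A\,dm_{Z_j}),1_{X}\rangle_{L_2(m_{Z_i})}\leq \sqrt{\delta\epsilon}+\alpha^2\epsilon$ via Cauchy--Schwarz and hypotheses (\ref{hyp.2}) and (\ref{hyp.3}), with the same final case analysis; restricting the domain to the $Z_i^-$'s before rather than after the expansion is only a cosmetic reordering of the paper's steps. (One slip of wording: when none of the first three alternatives holds, $\delta$ is a small multiple of $k^{-2}\alpha^{4}$, not of $k^{2}\alpha^{-4}$; your subsequent estimate $\delta\epsilon\lesssim\alpha^4k^{-4}$ shows this is what you meant.)
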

With these tools we can proceed with the main proof.  (We begin by recalling the statement for convenience.)
\begin{proposition*}[Proposition \ref{prop.g}]
Suppose that $G$ has no $2$-torsion; $A,X \subset G$; $|X\setminus A| \leq \eta|A|$; $A$ is $(k,X)$-summing; and $r \in \N$ is a parameter.  Then either $\eta^{-1} = (kr)^{O(1)}$; or $|A| \leq \exp((kr)^{O(1)})$; or there is a $k^{-O(1)}$-hereditarily energetic set $S$ with $|S| \geq k^{-O(1)}|A|$ and $|(2^j \cdot S)\cap A| \geq k^{-O(1)}|S|$ for all $1 \leq j \leq r$.
\end{proposition*}
\begin{proof}
We begin by making a number of choices for parameters.
\begin{itemize}
\item Let $\epsilon_0 = \Omega(k^{-2})$ by such that the third conclusion of Lemma \ref{lem.c} does not follow in any application of that lemma with parameters $\epsilon_0$ and $k$.
\item Let $\nu_0=k^{-O(1)}$ be such that $m_S(A) \geq \nu_0$ and $|S| \geq \nu_0|A|$ in the conclusion of any application of Lemma \ref{lem.gs} with $K =2$.
\item Let $\delta_1= k^{-O(1)}$ be such that the first conclusion in Lemma \ref{lem.c} does not follow in any application of that lemma with parameters $\alpha \geq \min\left\{\frac{1}{2}\nu_0,\frac{1}{2}\epsilon_0\right\}$ and $k$.
\item Let $\frac{1}{4}\epsilon_0 \geq \tau_1 =k^{-O(1)}$ be such that the second conclusion of Lemma \ref{lem.c} does not follow in any application of that lemma with parameters $\alpha \geq \min\left\{\frac{1}{2}\nu_0,\frac{1}{2}\epsilon_0\right\}$ and $k$.
\item Let $\delta_0=(kr)^{-O(1)}$ be such that
\begin{equation*}
(1+4\epsilon_0^{-1}rk + (r+1)k\tau_1^{-2} + (r+1)k^2\delta_1^{-1})\delta_0 \leq \frac{\nu_0^2}{8}.
\end{equation*}
\item Finally, let $\frac{\nu_0^2}{8(4rk\epsilon_0^{-1}+1)} \geq \tau_0= (kr)^{-O(1)}$ be such that the first conclusion of Corollary \ref{cor.iju} does not follow in any application of that corollary with parameters $\delta_0$ and $r$.
\end{itemize}
Apply Lemma \ref{lem.gs} with $K=2$ and parameter $\tau_0$ to get a $k^{-O(1)}$-hereditarily energetic set $S$ with $|A\cap S| \geq \nu_0 |S|$ and $|S| \geq k^{-O(1)}|A|$, and a probability space $(\Omega,\P)$ supporting random variables $z$, $Z$, and $T$ such that
\begin{equation}\label{eqn.lkj}
\|\E{m_{z+Z}} - m_S\| \leq \tau_0.
\end{equation}
and for all $\omega \in \Omega$,
\begin{equation*}
(Z(\omega),T(\omega)_0) \text{ is $\tau_0$-closed, } \dim T(\omega) \leq k^{O(1)}, \mathcal{C}^\flat(S;T(\omega)_0) \leq \exp((kr)^{O(1)}).
\end{equation*}
Apply Corollary \ref{cor.iju} with parameters $\delta_0$ and $r$, so that (in light of the choice of $\tau_0$) we get an extension $(\Omega',\P')$ supporting random variables $z'$, $Z'$,  such that
\begin{equation*}
\|\E'{m_{z'+Z'}} - \E{m_{z+Z}} \| \leq \delta_0,
\end{equation*}
and a further extension $(\Omega'',\P'')$  supporting $z''$, $Z_1,\dots,Z_k$ such that 
\begin{equation}\label{eqn.cx}
\|\E''{m_{2^sz''+Z_i}} - \E'{m_{2^s\cdot(z'+Z')}}\| \leq \delta_0
\end{equation}
for all $1 \leq i\leq k$ and $0 \leq s \leq r$,
\begin{equation}\label{eqn.dwe}
\E''{|m_{2^sz''+Z_i}(A)-m_{2^s\cdot(z'+Z')}(A)|^2} \leq \delta_0
\end{equation}
for all $1 \leq i\leq k$ and $0 \leq s \leq r$, and
\begin{equation}\label{eqn.frt}
\E''{\left\|1_{A\cap (2^sz''+Z_i)} \ast (1_{A}dm_{2^sz''+Z_j})- m_{2^s\cdot(z'+Z')}(A)^2\right\|_{L_2\left(m_{2^{s+1}z''+Z_i}\right)}^2}\leq \delta_0
\end{equation}
for all $1 \leq i <j \leq k$ and $0 \leq s \leq r$, and for all $\omega'' \in \Omega''$ we have
\begin{equation*}
(Z_i(\omega''),Z_{i+1}(\omega'')) \text{ is $\delta_0$-closed for all $1 \leq i <k$}
\end{equation*}
and
\begin{equation*}
\mathcal{C}^\flat(S;Z_k(\omega'')) \leq \exp((kr)^{O(1)})\mathcal{C}^\flat(S;T(\omega'')) = \exp((kr)^{O(1)}).
\end{equation*}
For each $0 \leq s <r$ and $1 \leq i<j\leq k$ write
\begin{align*}
E_{s,i,j}&:=\left\{\omega'' \in \Omega'': \left\|1_{A\cap (2^sz''(\omega'')+Z_i(\omega''))} \ast (1_{A}dm_{2^sz''(\omega'')+Z_j(\omega'')})\right.\right.\\
&\qquad \qquad \qquad \qquad \qquad \left.\left.- m_{2^s\cdot(z'(\omega'')+Z'(\omega''))}(A)^2\right\|_{L_2\left(m_{2^{s+1}z''(\omega'')+Z_i(\omega'')}\right)}^2 \leq \delta_1\right\},
\end{align*}
so that by averaging and (\ref{eqn.frt}) we have $\P''(E_{s,i,j}^c) \leq \delta_1^{-1}\delta_0$.  For $0\leq s \leq r$ and $1\leq i \leq k$ write
\begin{equation*}
F_{s,i}:=\left\{\omega'' \in \Omega'': |m_{2^sz''(\omega'')+Z_i(\omega'')}(A)-m_{2^s\cdot(z'(\omega'')+Z'(\omega''))}(A)| \leq \tau_1\right\},
\end{equation*}
so that by averaging and (\ref{eqn.dwe}) we have $\P''(F_{s,i}^c) \leq \tau_1^{-2}\delta_0$.  For each $1 \leq s \leq r$ and $1 \leq i \leq k$ write
\begin{equation*}
L_{s,i}:=\left\{\omega'' \in \Omega'': m_{2^sz''(\omega'')+Z_i(\omega'')}(X \setminus A) \leq \frac{1}{4}\epsilon_0\right\}.
\end{equation*}
Then by (\ref{eqn.cx}) and (\ref{eqn.lkj}) we have
\begin{align*}
\P''(L_{s,i}^c) &\leq 4\epsilon_0^{-1}\E''{m_{2^sz''(\omega'')+Z_i(\omega'')}(X \setminus A)}\\ & \leq 4\epsilon_0^{-1}(\delta_0 +  \E'{m_{2^s\cdot(z'+Z')}(X\setminus A)})\\
& \leq 4\epsilon_0^{-1}(\delta_0 + \tau_0 + m_S(X \setminus A)) \leq 4\epsilon_0^{-1}(\delta_0+\tau_0+\nu_0^{-1}\eta).
\end{align*}
Finally, let
\begin{equation*}
B:=\left\{\omega'' \in\Omega'': m_{z'(\omega'')+Z'(\omega'')}(A) \geq \frac{1}{2}\nu_0\right\}
\end{equation*}
and
\begin{equation*}
 R:=B \cap \left(\bigcap_{s=1}^r{\bigcap_{i=1}^k{L_{s,i}}}\right)\cap \left(\bigcap_{s=0}^r{\bigcap_{i=1}^k{F_{s,i}}}\right)\cap \left(\bigcap_{s=0}^{r-1}{\bigcap_{1 \leq i<j\leq k}{E_{s,i,j}}}\right).
\end{equation*}
Then we have
\begin{align}
\label{eqn.fjt} \E''{1_Rm_{z'+Z'}(A)} &\geq \E''{m_{z'+Z'}(A)} \\
\nonumber &\qquad \qquad - \sum_{s=1}^r{\sum_{i=1}^k{\P''(L_{s,i}^c)}} - \sum_{s=0}^r{\sum_{i=1}^k{\P''(F_{s,i}^c)}}-\sum_{s=0}^{r-1}{\sum_{1 \leq i<j\leq k}{\P''(E_{s,i,j}^c)}}\\
\nonumber &\geq \left(m_S(A) - \|\E''{m_{z'+Z'}} - \E{m_{z+Z}}\| - \|\E{m_{z+Z}} - m_S\|\right)\\
\nonumber &\qquad \qquad -rk\cdot  4\epsilon_0^{-1}(\delta_0+\tau_0+\nu_0^{-1}\eta)  - (r+1)k\tau_1^{-2}\delta_0-(r+1)k^2\delta_1^{-1}\delta_0\\
\nonumber &\geq \nu_0-\delta_0-\tau_0\\
\nonumber &\qquad \qquad -rk\cdot  4\epsilon_0^{-1}(\delta_0+\tau_0+\nu_0^{-1}\eta)  - (r+1)k\tau_1^{-2}\delta_0-(r+1)k^2\delta_1^{-1}\delta_0.
\end{align}
This is at least $\frac{1}{2}\nu_0$ by choice of $\delta_0$ and $\tau_0$ provided $\eta \leq \frac{\nu_0^2\epsilon_0}{16rk}$ (if not then we are in the first conclusion of the proposition).
\begin{claim*}
Either $|A| \leq \exp((kr)^{O(1)})$ or else for all $\omega'' \in E$ we have\footnote{It may help to note that apart from the $s=0$ case the lower bound we establish is always $\frac{1}{2}\epsilon_0$.}
\begin{equation*}
m_{2^s\cdot (z'(\omega'')+Z'(\omega''))}(A)\geq\min\left\{\frac{1}{2}\nu_0,\frac{1}{2}\epsilon_0\right\} \text{ for all }0 \leq s \leq r.
\end{equation*}
\end{claim*}
\begin{proof}
We proceed by induction on $s$.  When $s=0$ we have $\omega'' \in B$ and so the hypothesis holds.  Suppose that it holds for some $s <r$.  Then since $\omega'' \in E_{s,i,j}$ for all $1 \leq i<j \leq k$ we have
\begin{align*}
&\big\|1_{A\cap (2^sz''(\omega'')+Z_i(\omega''))} \ast (1_{A}dm_{2^sz''(\omega'')+Z_j(\omega'')})\\ & \qquad \qquad \qquad \qquad \qquad \qquad- m_{2^s\cdot(z'(\omega'')+Z'(\omega''))}(A)^2\big\|_{L_2\left(m_{2^{s+1}z''(\omega'')+Z_i(\omega'')}\right)}^2 \leq \delta_1,
\end{align*}
and since $\omega \in F_{s,i}$ for all $1 \leq i \leq k$ we have
\begin{align*}
 |m_{2^sz''(\omega'')+Z_i(\omega'')}(A)-m_{2^s\cdot(z'(\omega'')+Z'(\omega''))}(A)| \leq \tau_1.
\end{align*}
Apple Lemma \ref{lem.c} with parameters $\epsilon_0$, $\delta_1$, $\tau_1$ and $\alpha=m_{2^s\cdot (z'(\omega'')+Z'(\omega''))}(A)$ to the set $A$ with $z_0=2^sz''(\omega'')$ and the $\delta_0$-closed pairs $(Z_i(\omega''),Z_{i+1}(\omega''))$ (for $1 \leq i <k$) valid since $\delta_0 \leq \tau_1$.

The inductive hypothesis tells us that $\alpha \geq \min\left\{\frac{1}{2}\nu_0,\frac{1}{2}\epsilon_0\right\}$, so in view of the choice of $\delta_1$ and $\tau_1$, either there is some $1 \leq i< k$ such that $m_{2^{s+1}z''(\omega'')+Z_i(\omega'')}(X)>\epsilon_0$ or
\begin{align*}
&\int{\left(\prod_{i<j}{1_{(Z_i+2^{s+1}z''(\omega''))\setminus X}(z_i+z_j+2^{s}z''(\omega''))1_{Z_i(\omega'')}(z_i+z_j)}\right)}\\ &\qquad \qquad \qquad \qquad \qquad \qquad \qquad \times \prod_{i=1}^k{1_A(z_i+2^sz''(\omega''))dm_{Z_i(\omega'')}(z_i)}\\ & \qquad \qquad\qquad \qquad \qquad \qquad \qquad \qquad \qquad \qquad=\Omega\left(\left(\min\left\{\frac{1}{2}\nu_0,\frac{1}{2}\epsilon_0\right\}\right)^k\right).
\end{align*}
Since $A$ is $(k,X)$-summing we know that the first product on the left is $0$ on $z \in A^k$ unless there is $1 \leq i < j \leq k$ with $z_i=z_j$.  It follows that the integral is at most 
\begin{align*}
\binom{k}{2}\cdot \max\left\{\frac{1}{|Z_i(\omega'')|}:1 \leq i \leq k\right\} &\leq k^2\max\left\{\frac{\mathcal{C}(S;Z_i(\omega''))}{|S|}:1 \leq i \leq k \right\}\\ & \leq k^2\frac{1}{\nu_0 |A|}\mathcal{C}^\flat(S;Z_k(\omega'')) \leq \exp((rk)^{O(1)})|A|^{-1}
\end{align*}
where the first inequality follows from Lemma \ref{lem.bcn} (\ref{cpt.cs}); and the second from Lemma \ref{lem.cnum} (\ref{pt.order}) and (\ref{pt.equiv}) using the nesting $Z_k(\omega'') \subset Z_i(\omega'')$.

The upper bound on $|A|$ follows.

Thus we may assume $m_{2^{s+1}z''(\omega'')+Z_i(\omega'')}(X)>\epsilon_0$, and since $\omega \in L_{s+1,i}$ we have
\begin{align*}
m_{2^{s+1}z''(\omega'')+Z_i(\omega'')}(A) & \geq m_{2^{s+1}z''(\omega'')+Z_i(\omega'')}(X) - m_{2^{s+1}z''(\omega'')+Z_i(\omega'')}(X \setminus A) > \frac{3}{4}\epsilon_0.
\end{align*}
Finally the claim is proved once we note that $\omega'' \in F_{s+1,i}$ on noting that $\tau_1 \leq \frac{1}{4}\epsilon_0$.
\end{proof}

We conclude that for all $1 \leq s \leq r$ we have
\begin{align*}
m_{2^s\cdot S}(A) &\geq \E''{m_{2^s\cdot (z'+Z')}(A)} - \|m_{2^s\cdot S} - \E''{m_{2^s\cdot (z'+Z')}}\|\\
& \geq \E''{1_Rm_{2^s\cdot (z'+Z')}(A)}- \delta_0-\tau_0\\
& \geq \E''{1_Rm_{z'+Z'}(A)m_{2^s\cdot (z'+Z')}(A)} - \delta_0-\tau_0\\
& \geq \min\left\{\frac{1}{2}\nu_0,\frac{1}{2}\epsilon_0\right\}\cdot \frac{1}{2}\nu_0 - \delta_0-\tau_0
\end{align*}
from the lower bound on the left of (\ref{eqn.fjt}) noted immediately afterwards.  The result follows since $\delta_0,\tau_0 \leq \frac{1}{16}\min\{\nu_0^2,\nu_0\epsilon_0\}$.
\end{proof}

\section{Finding structure}\label{sec.st}

In this section we prove the following.
\begin{lemma*}[Lemma \ref{lem.gs}]
Suppose that $A$ is $(k,X)$-summing; $|X| \leq K|A|$; and $\tau \in \left(0,\frac{1}{2}\right]$ is a parameter.  Then there is a $(kK)^{-O(1)}$-hereditarily energetic set $S$ with $|A\cap S| \geq (kK)^{-O(1)}|S|$ and $|S| \geq k^{-O(1)}|A|$, a probability space $(\Omega,\P)$ supporting a $G$-valued random variable $z$, an $\mathcal{N}(G)$-valued random variable $Z$ and an $\mathcal{S}(G)$-valued random variable $T$ such that
\begin{equation*}
\|\E{m_{z+Z}}-m_S\| \leq \tau,
\end{equation*}
and for all $\omega \in \Omega$,
\begin{equation*}
\dim T(\omega) \leq (kK)^{O(1)},\text{ and }\mathcal{C}^\flat(S;T(\omega)) \leq \exp((kK\log \tau^{-1})^{O(1)})
\end{equation*}
and $(Z(\omega),T(\omega)_0)$ is $\tau$-closed.
\end{lemma*}
Recall from \cite[Definition 0.2]{taovu::} that $P$ is a \textbf{generalised arithmetic progression of dimension $d$} if there are elements $x_0,\dots,x_d$ and naturals $N_1,\dots,N_d\in \N$ such that
\begin{equation}\label{eqn.rep}
P=\{x_0+t_1x_1+\cdots + t_dx_d : 0 \leq t_j \leq N_j \text{ for all }1 \leq j \leq d\}.
\end{equation}
A \textbf{coset progression of dimension $d$} is a set of the form $P+H$ where $P$ is a generalised arithmetic progression of dimension $d$ and $H \leq G$.  This corresponds to \cite[Definition 4.20]{taovu::} where dimension is given the name rank instead.

Coset progressions are easily related to systems as follows.
\begin{lemma}\label{lem.gap}
Suppose that $M$ is a $d$-dimensional coset progression.  Then there is an $O(d)$-dimensional system $B$ with $B_0\subset M-M$ and $M \subset x_0 +B_0$ for some $x_0$. 
\end{lemma}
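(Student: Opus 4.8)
Lemma \ref{lem.gap} asserts that a $d$-dimensional coset progression $M$ sits between the ground level $B_0$ of an $O(d)$-dimensional system and a translate of that ground level, namely $B_0 \subset M - M$ and $M \subset x_0 + B_0$. The plan is to write $M = P + H$ with $P$ a generalised arithmetic progression as in (\ref{eqn.rep}) with generators $x_1,\dots,x_d$ and lengths $N_1,\dots,N_d$, and to build the system by scaling down the lengths geometrically. Concretely, for $i \in \N_0$ set
\begin{equation*}
B_i := \left\{ t_1 x_1 + \cdots + t_d x_d + h : h \in H,\ |t_j| \leq 2^{-i} N_j \text{ for all } 1 \leq j \leq d \right\},
\end{equation*}
interpreting the bound as $|t_j| \leq \lfloor 2^{-i} N_j \rfloor$ (and noting that once $2^{-i} N_j < 1$ the only choice is $t_j = 0$). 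Each $B_i$ is a finite symmetric neighbourhood of $0_G$ since the constraints are symmetric in the $t_j$ and $H$ is a subgroup. The nesting condition $B_{i+1} + B_{i+1} \subset B_i$ holds because if $|t_j|, |t_j'| \leq \lfloor 2^{-(i+1)} N_j \rfloor$ then $|t_j + t_j'| \leq 2 \lfloor 2^{-(i+1)} N_j \rfloor \leq \lfloor 2^{-i} N_j \rfloor$ (using $2 \lfloor x/2 \rfloor \leq \lfloor x \rfloor$ for real $x \geq 0$), and $H + H = H$. Hence $B := (B_i)_{i \in \N_0}$ is a system.

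Next I would check the two containment conclusions. For $M \subset x_0 + B_0$: an element of $M$ has the form $x_0 + t_1 x_1 + \cdots + t_d x_d + h$ with $0 \leq t_j \leq N_j$, and in particular $|t_j| \leq N_j$, so it lies in $x_0 + B_0$. For $B_0 \subset M - M$: given $t_1 x_1 + \cdots + t_d x_d + h \in B_0$ with $|t_j| \leq N_j$, split $t_j = u_j - v_j$ with $u_j = \max(t_j, 0)$ and $v_j = \max(-t_j, 0)$, so $0 \leq u_j, v_j \leq N_j$; then $\sum_j t_j x_j + h = \big(x_0 + \sum_j u_j x_j + h\big) - \big(x_0 + \sum_j v_j x_j\big)$, and both bracketed terms lie in $M = P + H$ (the first using $h \in H$, the second using $0 \in H$), so the difference lies in $M - M$.

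The remaining point, and the one requiring the most care, is the dimension bound $\dim B = O(d)$, i.e. $\log_2 \mathcal{C}^\flat(B_i; B_{i+1}) = O(d)$ uniformly in $i$. Here I would use Lemma \ref{lem.cnum} (\ref{pt.equiv}) to reduce to bounding an ordinary covering number: since $B_{i+1}$ is symmetric, $\mathcal{C}^\flat(B_i; B_{i+1} + B_{i+1}) \leq \mathcal{C}(B_i; B_{i+1})$, but it is cleaner to observe that $B_{i+2} - B_{i+2} \subset B_{i+1}$ gives $\mathcal{C}^\flat(B_i; B_{i+1}) \leq \mathcal{C}(B_i; B_{i+2})$ via Lemma \ref{lem.cnum} (\ref{pt.order}) and (\ref{pt.equiv}); alternatively one covers $B_i$ directly by translates of $B_{i+1}$. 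The geometry is elementary: $B_i$ is the image under the homomorphism $(t_1, \dots, t_d, h) \mapsto \sum_j t_j x_j + h$ of a box in $\Z^d \times H$ of side lengths roughly $2 \cdot 2^{-i} N_j$, and $B_{i+1}$ is the image of the box of half those side lengths; a box of side $L$ in $\Z$ is covered by $O(1)$ translates of a box of side $L/2$ (three suffice), so by Lemma \ref{lem.bcn} (\ref{cpt.hom}) and (\ref{cpt.chain}) (chaining one coordinate at a time) $\mathcal{C}(B_i; B_{i+1}) \leq 3^d$, giving $\dim B \leq d \log_2 3 = O(d)$. The one subtlety to handle carefully is the interaction of the floor functions with halving lengths — when $2^{-i} N_j$ is small the coordinate contributes a covering factor of $1$, not $3$, so the bound $3^d$ is safe in all cases — and the fact that $\mathcal{C}^\flat$ rather than $\mathcal{C}$ appears, which is exactly why one routes through Lemma \ref{lem.cnum} (\ref{pt.equiv}) (passing through the homomorphism realising $B_i$ as a pushforward of an explicit box in $\Z^d \times H$, inside which ordinary box-covering estimates apply and match $\mathcal{C}^\flat$). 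This last observation — that $B_i$ is literally a homomorphic image of a product of intervals with a subgroup, so $\mathcal{C}^\flat$ can be computed in that model group — is the crux and the place I would be most careful.
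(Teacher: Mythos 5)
Your construction of $B$ and your verification of the nesting, the two containments, and the covering bound $\mathcal{C}(B_i;B_{i+1})\leq 3^d$ followed by $\mathcal{C}^\flat(B_i;B_{i+1})\leq\mathcal{C}(B_i;B_{i+2})\leq 3^{2d}$ via Lemma~\ref{lem.cnum}~(\ref{pt.order}) and (\ref{pt.equiv}) is exactly the paper's proof, just with the floor-function bookkeeping spelled out. One small caution about your closing remark: the definition of $\mathcal{C}^\flat(X;Y)$ quantifies over homomorphisms $\phi\in\Hom(\langle X\rangle,H')$, i.e.\ maps \emph{out of} the ambient group, so you cannot directly ``compute $\mathcal{C}^\flat$ in $\Z^d\times H$'' by pushing forward along the natural map $\Z^d\times H\to G$ (which goes the wrong way and need not be invertible); the correct reduction, which your main argument in fact uses, is the chain $\mathcal{C}^\flat(B_i;B_{i+1})\leq\mathcal{C}^\flat(B_i;B_{i+2}-B_{i+2})\leq\mathcal{C}(B_i;B_{i+2})$, after which ordinary covering estimates (and Lemma~\ref{lem.bcn}~(\ref{cpt.hom}) for the pushforward) apply.
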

\begin{proof}
Write $M=P+H$ where $P$ is as in (\ref{eqn.rep}) and put
\begin{equation*}
B_i:=\{t_1x_1+\cdots + t_dx_d : t_j \in \Z \text{ and }|t_j| \leq 2^{-i}N_j \text{ for all }1 \leq j \leq d\} +H
\end{equation*}
so that $B_0\subset M-M$ and $M \subset x_0+B_0$.  $B_i+B_i \subset B_{i-1}$ by the triangle inequality and each set is a symmetric neighbourhood of $0_G$, so $B=(B_i)_{i \in \N_0}$ is a system.  Moreover,
\begin{equation*}
B_i \subset  B_{i+1}+\{\sigma_1\lceil 2^{-(i+1)}N_1\rceil x_1+\cdots + \sigma_d\lceil 2^{-(i+1)}N_d\rceil x_d: \sigma_j \in \{-1,0,1\}\text{ for all } 1 \leq j \leq d\},
\end{equation*}
so $\mathcal{C}(B_i;B_{i+1}) \leq 3^d$.  It follows (\emph{c.f.} the proof of Lemma \ref{lem.new} (\ref{pt.szy})) that
\begin{align*}
\mathcal{C}^\flat(B_i;B_{i+1}) &\leq \mathcal{C}^\flat(B_i;B_{i+2}-B_{i+2})\\ & \leq \mathcal{C}(B_i;B_{i+2}) \leq \mathcal{C}(B_i;B_{i+1})\mathcal{C}(B_{i+1};B_{i+2})\leq \exp(O(d)),
\end{align*}
and we see that $B$ is $O(d)$-dimensional.
\end{proof}

\begin{proof}[Proof of Lemma \ref{lem.gs}]
By Lemma \ref{lem.hen} we see that $E(A) \geq (kK)^{-O(1)}|A|^3$ and so by the Balog-Szemer{\'e}di-Gowers Theorem\footnote{In its usual form, which corresponds to \cite[Theorem 2.31((i) {$\Rightarrow$} (iv))]{taovu::} and then \cite[Exercise 2.3.15]{taovu::}).} there is a set $A' \subset A$ with $|A'+A'| \leq (kK)^{O(1)}|A'|$ and $|A'| = (kK)^{-O(1)}|A|$.  Apply the Ruzsa-Chang theorem \cite[Theorem 5.46]{taovu::} to see that $2A'-2A'$ contains a coset progression $M$ of dimension $(kK)^{O(1)}$ such that $|M| \geq \exp(-(kK)^{O(1)})|A|$.  By Lemma \ref{lem.gap} there is a system $B$ such that $\dim B =(kK)^{O(1)}$, $B_0 \subset M-M \subset 4A'-4A'$ and $|B_0| \geq |M| \geq \exp(-(kK)^{O(1)})|A|$.

Since $|A'-A'+B_2| \leq |A'-A'+B_0| \leq |5A'-5A'| \leq (kK)^{O(1)}|A'-A'|$ by Pl{\"u}nnecke's inequality \cite[Corollary 6.27]{taovu::}, we can apply Lemma \ref{lem.ubreg} to the system $2^{-2}B$ and set $A'-A' \in \mathcal{N}(G)$ to get $m=\log_2 \log_2 kK +\log_2\tau^{-1}+ O(1)$ and a set $S \in \mathcal{N}(G)$ with $A'-A' \subset S \subset A'-A'+B_2$ so that $(S,B_{m+2})$ is $\tau$-closed.  By Lemma \ref{lem.basicclosed} (\ref{pt.inv}) it follows that
\begin{equation*}
\|\tau_t(m_S) - m_S\| \leq \tau \text{ for all }t \in B_{m+2}.
\end{equation*}
Apply Lemma \ref{lem.cc} to the system $2^{-(m+2)}B$ (which has dimension $(kK)^{O(1)}$ by Lemma \ref{lem.new} (\ref{pt.szy})) to get a natural $m' = O(\log_2 kK) +\log_2\tau^{-1}$ and a set $Z$ with $B_{m+3} \subset Z_* \subset B_{m+2}$ such that $(Z_*,B_{m'+m+2})$ is $\tau$-closed.

Let $z$ be the random variable taking values in $S$ uniformly; let $Z$ be the constant random variable taking the value $Z_*$; and let $T$ be the constant random variable taking the value $2^{-(m'+m+2)}B$.  Then
\begin{equation*}
\|\E{m_{z+Z}} - m_S\| = \left\|m_S \ast m_{Z_*} -m_S\right\| \leq \int{\|\tau_t(m_S) - m_S\|dm_{Z_*}(t)} \leq \tau.
\end{equation*}
Since $S \subset A'-A'+B_2$ we have $|S| \leq (kK)^{O(1)}|A|$; since $S \supset A'-A'$, $S$ intersects a translate of $A$ in a set of size at least $|A'| =(kK)^{-O(1)}|A|=(kK)^{-O(1)}|S|$; by translating $A$ if necessary (which results in translations of $z$ and $S$ too) we can assume that this translate is at $0_G$ so that $|A\cap S| \geq (kK)^{-O(1)}|S|$.  Since $S \subset A'-A'+B_2 \subset 5A'-5A'$ we see that $|S+S| \leq |10A'-10A'| \leq (kK)^{O(1)}|S|$ by Pl{\"u}nnecke's inequality \cite[Corollary 6.27]{taovu::}.  It follows from Lemma \ref{lem.hed} (\ref{pt.1}) that $S$ is $(kK)^{-O(1)}$-hereditarily energetic.

Finally we let $T$ be the random variable taking the constant value $2^{-(m+m'+2)}B$.  Then $(Z(\omega),T(\omega)_0)$ is $\tau$-closed by design; Lemma \ref{lem.new} (\ref{pt.szy}) tells us that
\begin{equation*}
\dim T(\omega) = \dim 2^{-(m+m'+2)}B\leq \dim B =(kK)^{O(1)},
\end{equation*}
and also
\begin{equation*}
\mathcal{C}^\flat(S;T(\omega)) \leq \exp(O((m+m'+2))(kK)^{O(1)})\mathcal{C}^\flat(S;B) = \exp((kK\log\tau^{-1})^{O(1)}\mathcal{C}^\flat(S;B).
\end{equation*}
Now, by Lemma \ref{lem.bcn} (\ref{cpt.cs}) and (\ref{cpt.chain}), and the second inequality in Lemma \ref{lem.cnum} (\ref{pt.equiv}) we have
\begin{equation*}
\frac{|B_0|}{|B_2|} \leq \mathcal{C}(B_0;B_2) \leq \mathcal{C}(B_0;B_1)\mathcal{C}(B_1;B_2) \leq \mathcal{C}^\flat(B_0;B_1)\mathcal{C}^\flat(B_1;B_2) \leq \exp((kK)^{O(1)}),
\end{equation*}
so $|B_2| \geq \exp(-(kK)^{O(1)})|A|$.  By Lemma \ref{lem.cnum} (\ref{pt.order}) and (\ref{pt.equiv}), and then Ruzsa's covering lemma (Lemma \ref{lem.bcn} (\ref{cpt.rcl})) we have 
\begin{align*}
\mathcal{C}^\flat(S;B)=\mathcal{C}^\flat(S;B_0) &\leq \mathcal{C}^\flat(S;(B_2-B_2)-(B_2-B_2))\\ & \leq \mathcal{C}(S;B_2-B_2)\leq \frac{|S+B_2|}{|B_2|} \leq \frac{|A'-A'+B_2+B_2|}{|B_2|}.
\end{align*}
By Pl{\"u}nnecke's inequality \cite[Corollary 6.27]{taovu::} and the lower estimate on $|B_2|$ we have $\mathcal{C}^\flat(S;B)\leq \exp((kK)^{O(1)})$, and the result is proved.
\end{proof}

\section{Uniformity}\label{sec.en}

The aim of this section is to prove Corollary \ref{cor.iju}  We shall do this through the Fourier transform which we introduced in \S\ref{sec.over}.  The reader unfamiliar with its use in this context may wish to consult \cite[Chapter 4]{taovu::} or the book \cite{rud::1}.

We begin with some definitions: the Fourier transform of a measure $\mu \in M(G)$ (where $M(G)$ denotes the set of measures on $G$ with finite support) is
\begin{equation*}
\wh{\mu}(\gamma):=\int{\overline{\gamma(x)}d\mu(x)} \text{ for all }\gamma \in \wh{G},
\end{equation*}
and given $f \in C(G)$ we define
\begin{equation*}
\mu \ast f(x) = f \ast \mu(x) = \int{fd\tau_x(\mu)} \text{ for all }x \in G.
\end{equation*}
Given $\mu \in M(G)$ and a parameter $\epsilon >0$ we write (\emph{c.f.} \cite[Definition 4.33]{taovu::})
\begin{equation}\label{eqn.spectrum}
\Spec_\epsilon(\mu):=\{\gamma \in \wh{G}: |\wh{\mu}(\gamma)| > \epsilon\}.
\end{equation}
Motivated by \cite[Theorem 1.2.6]{rud::1}, for $W \subset G$ and $\epsilon>0$ a parameter we write
\begin{equation*}
\Ann(W,\epsilon):=\{\gamma \in \wh{G}: |\gamma(x)-1|<\epsilon \text{ for all }x \in W\}.
\end{equation*}
The spectrum and approximate annihilators of $\eta$-closed pairs are closely related by the next result which is \cite[Lemma 3.6]{grekon::}.
\begin{lemma}\label{lem.inv}
Suppose that $(Z,W)$ is $\eta$-closed; and $\kappa \in (0,1]$ is a parameter.  Then $\Spec_\kappa(m_Z) \subset \Ann(W, \eta\kappa^{-1})$.
\end{lemma}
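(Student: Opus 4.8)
I want to show that if $(Z,W)$ is $\eta$-closed and $\gamma \in \Spec_\kappa(m_Z)$ then $\gamma \in \Ann(W,\eta\kappa^{-1})$, i.e.\ $|\gamma(w) - 1| < \eta\kappa^{-1}$ for every $w \in W$. The key point is that being $\eta$-closed means $Z$ is almost invariant under translation by elements of $W$, and on the Fourier side translation by $w$ multiplies $\wh{m_Z}(\gamma)$ by $\gamma(w)$; combining this with the lower bound $|\wh{m_Z}(\gamma)| > \kappa$ forces $\gamma(w)$ to be close to $1$.

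\textbf{Step 1.}
Fix $w \in W$. Since $(Z,W)$ is $\eta$-closed, Lemma \ref{lem.basicclosed} (\ref{pt.inv}) gives $\|\tau_w(m_Z) - m_Z\| \leq \eta$. (Alternatively one unwinds the definition directly: with $Z^-, Z^+$ as in the definition of $\tau$-closed, $Z^- \subset Z \cap (Z + w) $ because $w \in W$ implies $Z^- + w \subset Z^- + W \subset Z$ and $Z^- \subset Z$, and similarly $Z \cup (Z+w) \subset Z^+$; so the symmetric difference of $Z$ and $Z+w$ has size at most $|Z^+| - |Z^-| \leq \eta|Z^-| \leq \eta|Z|$, whence $\|\tau_w(m_Z) - m_Z\| = |Z \triangle (Z+w)|/|Z| \leq \eta$.)

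\textbf{Step 2.}
Take the Fourier transform of $\tau_w(m_Z) - m_Z$ at $\gamma$. Using $\wh{\tau_w(\mu)}(\gamma) = \gamma(w)\wh{\mu}(\gamma)$ (which follows from the definition $\wh{\mu}(\gamma) = \int \overline{\gamma(x)}\,d\mu(x)$ and the change of variables $x \mapsto x - w$ defining $\tau_w$), we get
\begin{equation*}
|\gamma(w) - 1|\cdot|\wh{m_Z}(\gamma)| = |\wh{\tau_w(m_Z)}(\gamma) - \wh{m_Z}(\gamma)| \leq \|\tau_w(m_Z) - m_Z\| \leq \eta,
\end{equation*}
where the middle inequality is the trivial bound $|\wh{\mu}(\gamma)| \leq \|\mu\|$ valid for any measure $\mu$ and any character $\gamma$.

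\textbf{Step 3.}
Since $\gamma \in \Spec_\kappa(m_Z)$ we have $|\wh{m_Z}(\gamma)| > \kappa$, so dividing through gives $|\gamma(w) - 1| < \eta\kappa^{-1}$. As $w \in W$ was arbitrary, $\gamma \in \Ann(W,\eta\kappa^{-1})$, completing the proof.

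\textbf{Main obstacle.} There isn't a real obstacle — this is a short computation once one has Lemma \ref{lem.basicclosed} (\ref{pt.inv}) in hand; the only thing to be slightly careful about is bookkeeping the Fourier normalization (the paper uses $\wh{\mu}(\gamma) = \int \overline{\gamma(x)}\,d\mu(x)$, so translation produces a factor $\gamma(w)$ rather than $\overline{\gamma(w)}$, but this is immaterial since we only use absolute values) and confirming that the trivial bound $|\wh{\mu}(\gamma)| \leq \|\mu\|$ is what supplies the passage from the Fourier difference to the total-variation norm.
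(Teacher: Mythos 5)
Your proof is correct and is essentially identical to the paper's: both fix $w \in W$, apply Lemma \ref{lem.basicclosed} (\ref{pt.inv}) to get $\|\tau_w(m_Z)-m_Z\|\leq \eta$, identify $|\gamma(w)-1|\,|\wh{m_Z}(\gamma)|$ with $|\wh{\tau_w(m_Z)}(\gamma)-\wh{m_Z}(\gamma)|$, bound this by the total variation norm, and then use $|\wh{m_Z}(\gamma)|>\kappa$ to conclude. No issues.
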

\begin{proof}
For $x \in W$ simply note that
\begin{align*}
\kappa |\gamma(x)-1| &< |\gamma(x)\wh{m_Z}(\gamma) -\wh{m_Z}(\gamma)|\\ & = \left|\int{\overline{\gamma(y)}d(\tau_x(m_Z)-m_Z)(y)}\right| \leq \|\tau_x(m_Z)-m_Z\| \leq \eta.
\end{align*}
\end{proof}
Dually to approximate annihilators we have Bohr sets which provide us with a ready supply of low dimensional systems.
\begin{lemma}[Bohr sets]\label{lem.bohr}
Suppose that $\gamma \in \wh{G}$ and $\rho>0$.  Then there is a system $B$ with $\dim B =O(1)$, $\mathcal{C}^\flat(G;B_0) =O(\rho^{-O(1)})$, and
\begin{equation*}
|\gamma(x)-1|<\rho \text{ for all }x \in B_0.
\end{equation*}
\end{lemma}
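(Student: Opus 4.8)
The plan is to take $B$ to be the Bohr-set system cut out by $\gamma$ at radius $\rho$, and then to read off both the dimension bound and the covering-number bound from the single homomorphism $\gamma$ into the circle group, using nothing beyond the elementary fact that an arc of the circle of angular length $\ell$ can be covered by $O(1+\ell/\ell')$ rotations of any arc of angular length $\ell'$.

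Concretely, I would set
\begin{equation*}
B_i:=\left\{x\in G:|\gamma(x)-1|\leq \rho 2^{-i-1}\right\}\qquad(i\in\N_0).
\end{equation*}
Each $B_i$ is a symmetric neighbourhood of $0_G$ — it contains $0_G$ since $\gamma(0_G)=1$, and it is symmetric since $|\gamma(-x)-1|=|\overline{\gamma(x)}-1|=|\gamma(x)-1|$ — and if $x,y\in B_{i+1}$ then
\begin{equation*}
|\gamma(x+y)-1|=|\gamma(x)\gamma(y)-1|\leq |\gamma(x)-1|+|\gamma(y)-1|\leq 2\cdot\rho 2^{-i-2}=\rho 2^{-i-1},
\end{equation*}
so $B_{i+1}+B_{i+1}\subset B_i$ and $B:=(B_i)_{i\in\N_0}$ is a system. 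Finally $B_0\subset\{x:|\gamma(x)-1|\leq\rho/2\}\subset\{x:|\gamma(x)-1|<\rho\}$, which is the last assertion of the lemma.

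For the two covering bounds I would pass through the circle group $\mathbf{T}:=\{z\in\C:|z|=1\}$ (recalling that $\gamma\in\Hom(G,\mathbf{T})$), using that for small $\delta$ the set $\{z\in\mathbf{T}:|z-1|\leq\delta\}$ is an arc of angular length comparable to $\delta$, and that $z_1/z_2$ lies within $2\delta$ of $1$ whenever $z_1,z_2$ do. For $\dim B$: fixing $i$, apply the definition of $\mathcal{C}^\flat(B_i;B_{i+1})$ with $H=\mathbf{T}$, $\phi=\gamma|_{\langle B_i\rangle}$, $W=\{z\in\mathbf{T}:|z-1|\leq\rho 2^{-i-1}\}$ (which contains $\gamma(B_i)$, so $B_i\subset\phi^{-1}(W)$), and $Z=\{z\in\mathbf{T}:|z-1|\leq\rho 2^{-i-3}\}$, so that $Z-Z\subset\{z:|z-1|\leq\rho 2^{-i-2}\}$ and hence $\phi^{-1}(Z-Z)\subset B_{i+1}$; since $W$ and $Z$ are arcs whose angular lengths differ by a bounded factor, $\mathcal{C}(W;Z)=O(1)$ uniformly in $i$, giving $\dim B=O(1)$. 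For $\mathcal{C}^\flat(G;B_0)$: apply the definition with $H=\mathbf{T}$, $\phi=\gamma$, $W=\mathbf{T}$ (so $G\subset\phi^{-1}(W)$ trivially), and $Z=\{z\in\mathbf{T}:|z-1|\leq\rho/8\}$ (so $Z-Z\subset\{z:|z-1|\leq\rho/2\}$ and thus $\phi^{-1}(Z-Z)\subset B_0$); covering $\mathbf{T}$ by rotations of $Z$ needs $O(\rho^{-1})$ translates, whence $\mathcal{C}^\flat(G;B_0)=O(\rho^{-1})=O(\rho^{-O(1)})$.

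All of this is routine; the only things to keep straight are the interpretation of the formal ``$Z-Z$'' in the definition of $\mathcal{C}^\flat$ as $Z\cdot Z^{-1}$ in the multiplicative group $\mathbf{T}$, and the comparison constants between $|e^{i\theta}-1|$ and $|\theta|$, which fix how much room one must leave between the successive arcs $W$ and $Z$. I do not expect any genuine obstacle.
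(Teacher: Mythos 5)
Your proof is correct and takes essentially the same approach as the paper: both push through $\gamma$ into the circle, take $B_i$ to be preimages of geometrically shrinking arcs, and bound $\mathcal{C}^\flat$ by exhibiting arcs $W,Z\subset\mathbf{T}$ in the definition and covering arcs by arcs. The only cosmetic difference is that you compute $\mathcal{C}^\flat(G;B_0)$ directly by covering all of $\mathbf{T}$ with $O(\rho^{-1})$ translates of a $\rho$-sized arc, whereas the paper starts its auxiliary system at scale $O(1)$ (so $B_0'\supset G$) and then dilates by $2^{-m}$ with $m=\log_2\rho^{-1}+O(1)$, invoking Lemma \ref{lem.new}(\ref{pt.szy}); both yield the stated $O(\rho^{-O(1)})$.
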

\begin{proof}
Let $A_i:=\{z \in S^1: |1-z|<2^{2-i}\}$ so that $A_{i+1}-A_{i+1} \subset A_{i}$ by the triangle inequality.  Let $T_i:=\{z \in S^1: |1-z| \in \{0,2^{-i},2\cdot 2^{-i},3\cdot 2^{-i},4\cdot 2^{-i}\}\}$ so that $|T_i|\leq 9$ and $A_i \subset T_i +A_{i+2}$, whence $\mathcal{C}(A_i;A_{i+2}) =O(1)$.  Put $B_i'=\gamma^{-1}(A_i)$ for all $i \in \N_0$.  By the definition of $\mathcal{C}^\flat$ we have
\begin{equation*}
\mathcal{C}^\flat(B_i';B_{i+1}') \leq \mathcal{C}(A_i;A_{i+2})=O(1),
\end{equation*}
so $B'$ is $O(1)$-dimensional.  On the other hand there is $m=\log_2\rho^{-1} + O(1)$ such that $B_m' \subset \{x: |\gamma(x)-1| <\rho\}$; setting $B:=2^{-m}B$ gives the result by Lemma \ref{lem.new} (\ref{pt.szy}).
\end{proof}
The Parseval bound is the standard way to bound the size of spectrum (\emph{c.f.} \cite[(4.38)]{taovu::}).  In our approximate setting, we do not have perfectly orthogonal characters necessary for Parseval's theorem, but fortunately we do have a notion of `almost orthogonal'.  This was first exploited to achieve a result of the below type by Green and Tao -- see \cite[Corollary 8.6]{gretao::1}.
\begin{lemma}[The Parseval bound]\label{lem.pars}
Suppose that $(Z,W)$ is $\tau$-closed; $f\in L_2(m_Z)$ has $\|f\|_{L_2(m_Z)}\leq 1$; and $\epsilon>0$ and $\delta \in \left(0,\frac{1}{2}\right]$ are parameters.  Then there is a system $B$ with $\dim B =O(\epsilon^{-2})$ and $\mathcal{C}^\flat(G;B_0) \leq \delta^{-O(\epsilon^{-2})}$ and
\begin{equation*}
\Spec_\epsilon(fdm_Z) \subset \Ann(B_0 \cap W, 4\epsilon^{-2}\tau+ \delta).
\end{equation*}
\end{lemma}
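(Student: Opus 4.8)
The plan is to run the standard "almost-orthogonality" argument for bounding the size of a spectrum when the characters are only approximately orthogonal. First I would enumerate a maximal $\delta$-separated subset of $\Spec_\epsilon(fdm_Z)$; call it $\gamma_1,\dots,\gamma_N$ where separated means $\gamma_i\gamma_j^{-1}\notin \Ann(W,\delta')$ for some suitable threshold $\delta'$ (to be related to $\delta$ by a bounded factor). By maximality every element of $\Spec_\epsilon(fdm_Z)$ lies in $\gamma_i\Ann(W,\delta')$ for some $i$. The goal is to bound $N=O(\epsilon^{-2})$; then the whole spectrum is covered by $N$ translates of $\Ann(W,\delta')$, and taking the intersection of $N$ Bohr systems (one for each $\gamma_i$, via Lemma \ref{lem.bohr}) together with a Bohr system for $W$ via $\tau$-closedness produces the desired system $B$ with $\dim B = O(N) = O(\epsilon^{-2})$ (using subadditivity of dimension, Lemma \ref{lem.new} (\ref{pt.intdim})) and $\mathcal{C}^\flat(G;B_0) \leq \delta^{-O(N)} = \delta^{-O(\epsilon^{-2})}$ (again by the product bound in Lemma \ref{lem.new} (\ref{pt.intdim}), since each factor is $\delta^{-O(1)}$).

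The bound $N = O(\epsilon^{-2})$ is the crux. I would consider the sum $\sum_{i=1}^N c_i \wh{fdm_Z}(\gamma_i)$ where $c_i$ has modulus $1$ chosen so that $c_i\wh{fdm_Z}(\gamma_i) = |\wh{fdm_Z}(\gamma_i)| > \epsilon$; thus $\sum_i c_i\wh{fdm_Z}(\gamma_i) > \epsilon N$. Writing this as $\langle f, g\rangle_{L_2(m_Z)}$ where $g := \sum_i \overline{c_i}\gamma_i$ restricted to $Z$, Cauchy--Schwarz and $\|f\|_{L_2(m_Z)}\leq 1$ give $\epsilon^2 N^2 < \|g\|_{L_2(m_Z)}^2 = \sum_{i,j} c_i\overline{c_j}\langle \gamma_j,\gamma_i\rangle_{L_2(m_Z)}$. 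The diagonal contributes exactly $N$. For the off-diagonal terms $i\neq j$ I need $|\langle \gamma_j,\gamma_i\rangle_{L_2(m_Z)}| = |\wh{m_Z}(\gamma_i\overline{\gamma_j})|$ to be small: here is where $\tau$-closedness and $\delta$-separation enter. By Lemma \ref{lem.inv}, $\Spec_\kappa(m_Z)\subset \Ann(W,\tau\kappa^{-1})$; contrapositively, if $\gamma_i\overline{\gamma_j}\notin \Ann(W,\delta')$ then $|\wh{m_Z}(\gamma_i\overline{\gamma_j})| \leq \tau/\delta'$ for the choice $\kappa = \tau/\delta'$ — wait, I need to be careful: I want $\gamma_i\overline{\gamma_j}\notin \Spec_\kappa(m_Z)$, i.e. I want $\Spec_\kappa(m_Z)\subset \Ann(W,\delta')$, which by Lemma \ref{lem.inv} holds when $\tau\kappa^{-1}\leq \delta'$, i.e. $\kappa \geq \tau/\delta'$. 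So choosing the separation threshold $\delta'$ appropriately and $\kappa$ just above $\tau/\delta'$, each off-diagonal term is at most $\tau/\delta'$ in modulus, and there are at most $N^2$ of them, giving $\epsilon^2 N^2 < N + N^2\tau/\delta'$. If $\delta'$ is chosen so that $\tau/\delta' \leq \epsilon^2/2$ — i.e. $\delta' = 2\tau\epsilon^{-2}$ — this yields $\epsilon^2 N^2/2 < N$, hence $N < 2\epsilon^{-2}$, as desired. The annihilator appearing in the conclusion is then $\Ann(W, \delta')$ intersected with the Bohr constraint, and one checks $\delta' = 2\tau\epsilon^{-2}$ combined with the $\delta$-width Bohr sets from Lemma \ref{lem.bohr} gives membership in $\Ann(B_0\cap W, 4\epsilon^{-2}\tau + \delta)$ after accounting for the triangle inequality when covering the full spectrum by translates (the $4$ rather than $2$ absorbs the maximality/covering step where $\gamma_i\overline{\gamma_j}$ could be a difference of two near-annihilator elements).

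The only genuine obstacle is bookkeeping the constants so the final threshold reads exactly $4\epsilon^{-2}\tau + \delta$: I would pick the $\delta$-separation of the $\gamma_i$ with respect to $\Ann(W,\delta)$ (using the $\delta$ of the statement directly), so that covering elements of $\Spec_\epsilon$ by $\gamma_i\Ann(W,\delta)$ and then intersecting with Bohr sets $B^{(i)}$ of width $\delta$ around each $\gamma_i$ lands a general spectral element $\gamma$ in $\gamma_i \cdot \Ann(W,\delta)$ with $\gamma_i \in \Ann(B_0, \delta)$, whence $\gamma \in \Ann(B_0 \cap W, 2\delta) \subset \Ann(B_0\cap W, 4\epsilon^{-2}\tau + \delta)$ provided $\delta \leq 4\epsilon^{-2}\tau + \delta$ trivially — but actually I realize the cleaner route is to separate with respect to $\Ann(W, 2\tau\epsilon^{-2})$, run the near-orthogonality count there, and use the $\delta$-Bohr sets only to control the in-cell variation; then the final width is $2\tau\epsilon^{-2} + 2\delta$ or so and one loosens to $4\epsilon^{-2}\tau + \delta$ by a harmless rescaling of $\delta$ at the outset (replace $\delta$ by $\delta/2$, which only changes the implied constant in the exponent of $\mathcal{C}^\flat$). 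I would present it with the cleaner separation-threshold choice and absorb all slack into the stated bounds at the end. No step requires anything beyond Lemmas \ref{lem.inv}, \ref{lem.bohr}, \ref{lem.new}, \ref{lem.cnum}, Cauchy--Schwarz, and the triangle inequality.
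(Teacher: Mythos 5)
Your proposal is correct and is essentially the paper's own argument: a maximal separated subset of $\Spec_\epsilon(fdm_Z)$ with respect to $\Ann(W,2\epsilon^{-2}\tau)$, the Cauchy--Schwarz/almost-orthogonality count via Lemma \ref{lem.inv} giving at most $2\epsilon^{-2}$ representatives, and then the intersection of the Bohr systems from Lemma \ref{lem.bohr} around those representatives, with the triangle inequality yielding the width $4\epsilon^{-2}\tau+\delta$. The passing suggestion to also include ``a Bohr system for $W$'' is unnecessary (and not something Lemma \ref{lem.bohr} provides), but your final assembly does not use it, so nothing is lost.
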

\begin{proof}
Let $\Lambda \subset \Spec_\epsilon(fdm_Z)$ be a maximal subset such that if $(\lambda+ \Ann(W,2\epsilon^{-2}\tau)) \cap (\lambda'+ \Ann(W,2\epsilon^{-2}\tau)) \neq \emptyset$ for some $\lambda,\lambda' \in \Lambda$ then $\lambda = \lambda'$.  By maximality we have
\begin{equation*}
\Spec_\epsilon(fdm_Z) \subset \Lambda + \Ann(W,2\epsilon^{-2}\tau)-\Ann(W,2\epsilon^{-2}\tau) \subset \Lambda + \Ann(W,4\epsilon^{-2}\tau),
\end{equation*}
where the last inclusion is by the triangle inequality.

On the other hand writing $\sigma_\lambda$ for the sign of $\wh{fdm_Z}(\lambda)$ we can use linearity and the Cauchy-Schwarz inequality to see that
\begin{align*}
|\Lambda|\epsilon \leq \sum_{\lambda \in \Lambda}{|\wh{fdm_Z}(\lambda)|} &= \left\langle f,\sum_{\lambda \in \Lambda}{\sigma_\lambda\lambda}\right\rangle_{L_2(m_Z)} \leq \|f\|_{L_2(m_Z)}\left(\sum_{\lambda, \lambda' \in \Lambda}{|\langle\lambda ,\lambda'\rangle_{L_2(m_Z)}|}\right)^{\frac{1}{2}}.
\end{align*}
By Lemma \ref{lem.inv} we have $\Spec_{\frac{1}{2}\epsilon^2}(m_Z) \subset \Ann(W,2\epsilon^{-2}\tau)$ so
\begin{equation*}
\sum_{\lambda, \lambda' \in \Lambda}{|\langle\lambda ,\lambda'\rangle_{L_2(m_Z)}|} \leq |\Lambda| + \frac{1}{2}\epsilon^2|\Lambda|^2.
\end{equation*}
Rearranging and using the fact that $\|f\|_{L_2(m_Z)}\leq 1$ (by hypothesis) we see that $|\Lambda| \leq 2\epsilon^{-2}$.  For each $\lambda \in \Lambda$ let $B^{(\lambda)}$ be the system given by Lemma \ref{lem.bohr}; let $B:=\bigcap_{\lambda \in \Lambda}{B^{(\lambda)}}$.  Then by Lemma \ref{lem.new} (\ref{pt.intdim}) we see that $\dim B=O(\epsilon^{-2})$, and by Lemma \ref{lem.cnum} (\ref{pt.equiv}) we have $\mathcal{C}^\flat(G;B_0) \leq \delta^{-O(\epsilon^{-2})}$.  By definition $\Lambda \subset \Ann(B_0,\delta)$; the result follows.
\end{proof}
We are now in a position to prove that large local $U_2$-norm gives rise to a density increment.  As a small word of caution we remark that normalising constants below may not be exactly as expected -- this is already apparent from the definition of the spectrum in (\ref{eqn.spectrum}) where we look at characters where $|\wh{\mu}(\gamma)|>\epsilon$ \emph{not} $|\wh{\mu}(\gamma)| > \epsilon \|\mu\|$.  The latter is essentially the definition in \cite[Definition 4.33]{taovu::}.
\begin{lemma}\label{lem.increment}
Suppose that $A \subset G$, $(Z_0,Z_1)$ and $(Z_1,Z_2)$ are $\tau$-closed, $|m_{Z_0}(A)-\alpha|<\tau$ and $|m_{Z_1}(A)-\alpha|<\tau$, and $\epsilon,\delta>0$ are parameters.  Then there is a Bohr system $B$ with $\dim B = O(\epsilon^{-2})$ and $\mathcal{C}^\flat(G;B_0) \leq \delta^{-O(\epsilon^{-2})}$ such that
\begin{equation*}
\|1_{A} \ast m_{Z'}-\alpha\|_{L_2(m_{Z_0})}^2 \geq \|1_{A\cap Z_0}\ast (1_A dm_{Z_1})-\alpha^2\|_{L_2(m_{Z_0})}^2-O(\epsilon^2 + \epsilon^{-2}\tau + \delta)
\end{equation*}
for all $Z' \subset Z_2\cap B_0$.
\end{lemma}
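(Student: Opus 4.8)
The plan is to run the $L^2$ energy-increment argument from the proof of Lemma \ref{lem.unif}, but relative to the approximate subgroups $Z_2\subseteq Z_1\subseteq Z_0$ in place of a genuine subspace, and with the ``second copy'' of $A$ weighted by $m_{Z_1}$ rather than by $m_{Z_0}$ (this asymmetry is unavoidable once the $Z_i$ are no longer subgroups). The inclusions $Z_2\subseteq Z_1\subseteq Z_0$ are automatic: if $(Z_i,Z_{i+1})$ is $\tau$-closed with auxiliary set $Z_i^-\in\mathcal N(G)$, then $Z_{i+1}=0_G+Z_{i+1}\subseteq Z_i^-+Z_{i+1}\subseteq Z_i$. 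We may also assume $\epsilon\le 1$, since otherwise $O(\epsilon^2)$ already exceeds the trivial bound on the right-hand quantity.

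The structural input is the Parseval bound. First I would apply Lemma \ref{lem.pars} to $f:=1_{A\cap Z_0}$, which has $\|f\|_{L_2(m_{Z_0})}^2=m_{Z_0}(A)\le 1$, to the $\tau$-closed pair $(Z_0,Z_1)$, and with the given $\epsilon$ and $\delta$. This produces exactly the Bohr system $B$ with $\dim B=O(\epsilon^{-2})$ and $\mathcal C^\flat(G;B_0)\le\delta^{-O(\epsilon^{-2})}$ required by the statement, together with $\Spec_\epsilon(1_A\,dm_{Z_0})\subseteq\Ann(B_0\cap Z_1,\ 4\epsilon^{-2}\tau+\delta)$. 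Now fix any $Z'\subseteq Z_2\cap B_0$; since $Z_2\subseteq Z_1$ we get $Z'\subseteq B_0\cap Z_1$, so every $\gamma\in\Spec_\epsilon(1_A\,dm_{Z_0})$ satisfies $|\gamma(x)-1|<4\epsilon^{-2}\tau+\delta$ for all $x\in Z'$, and hence $|\wh{m_{Z'}}(\gamma)-1|\le 4\epsilon^{-2}\tau+\delta=:\theta$. In other words, convolution by $m_{Z'}$ preserves the large Fourier coefficients of $1_A\,dm_{Z_0}$ up to the multiplicative error $\theta$.

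From here I would expand both quantities on the Fourier side and split off the large spectrum $\Gamma:=\Spec_\epsilon(1_A\,dm_{Z_0})$, writing $M$ for the Fourier $\ell_2$-mass of $1_A\,dm_{Z_0}$ carried by $\Gamma\setminus\{1\}$. The estimate then splits into two halves mirroring the chain of inequalities in the proof of Lemma \ref{lem.unif}. First, $\|1_{A\cap Z_0}\ast(1_A\,dm_{Z_1})-\alpha^2\|_{L_2(m_{Z_0})}^2\le M+O(\epsilon^2+\tau)$: the $O(\epsilon^2)$ bounds the small-spectrum contribution, on which $|\wh{1_A\,dm_{Z_0}}(\gamma)|\le\epsilon$, while the $O(\tau)$ absorbs both $|m_{Z_0}(A)m_{Z_1}(A)-\alpha^2|=O(\tau)$ and the Plancherel defect caused by $m_{Z_0}$ being only approximately $Z_1$-invariant. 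Second, by the previous paragraph $\|1_A\ast m_{Z'}-\alpha\|_{L_2(m_{Z_0})}^2\ge(1-O(\theta))M-O(\tau)\ge M-O(\theta+\tau)$, where this $O(\tau)$ absorbs $|m_{Z_0}(A)-\alpha|<\tau$ and the same near-orthogonality losses. Subtracting the second bound from the first, and using $\theta=O(\epsilon^{-2}\tau+\delta)$ together with $\tau\le\epsilon^{-2}\tau$, yields the claimed inequality.

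The one genuinely delicate point is the error accounting behind the second half: because $m_{Z_0}$ is only $\tau$-close to each of its $Z_1$-translates (Lemma \ref{lem.basicclosed} (\ref{pt.inv})) rather than exactly translation-invariant, the characters are only \emph{approximately} orthogonal with respect to $m_{Z_0}$, so the identity $\wh{f\ast\mu}=\wh f\,\wh\mu$, the Plancherel relation, and the step of moving a convolution across an $L_2(m_{Z_0})$-inner product each acquire $O(\tau)$-type defects. Lemma \ref{lem.inv}, which places $\Spec_\kappa(m_{Z_0})$ inside an approximate annihilator of $Z_1$, is precisely the device for controlling these defects — exactly as it is used inside the proof of Lemma \ref{lem.pars} — and checking that every one of them is dominated by $O(\epsilon^2+\epsilon^{-2}\tau+\delta)$ is the real substance of the proof.
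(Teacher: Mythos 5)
There is a genuine gap, and it is located at the very first structural choice: you take the spectrum of the wrong measure. After passing to $\ell_2(G)$, the quantity you must dominate is
\begin{equation*}
\|1_{A\cap Z_0}\ast(1_A\,dm_{Z_1})-\alpha^2\|_{L_2(m_{Z_0})}^2 \approx \frac{1}{|Z_0|}\int{|\wh{(1_{A\cap Z_0}-\alpha 1_{Z_0})}(\gamma)|^2\,|\wh{1_A dm_{Z_1}}(\gamma)|^2\,d\gamma},
\end{equation*}
so the only factor whose pointwise smallness can be used to discard frequencies is $\wh{1_A dm_{Z_1}}$; the frequencies that must be kept (and on which $\wh{m_{Z'}}$ must be close to $1$) are those of $\Spec_\epsilon(1_A dm_{Z_1})$, which is why the paper applies Lemma \ref{lem.pars} to the pair $(Z_1,Z_2)$ with $\|1_A\|_{L_2(m_{Z_1})}\leq 1$, and why the hypotheses include the $\tau$-closedness of $(Z_1,Z_2)$ and the conclusion restricts $Z'$ to $Z_2\cap B_0$. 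You instead build $B$ from $\Spec_\epsilon(1_A dm_{Z_0})$ via the pair $(Z_0,Z_1)$, and your ``first bound'' claims the complementary contribution is $O(\epsilon^2)$ ``since $|\wh{1_A dm_{Z_0}}(\gamma)|\leq\epsilon$ there''. But off your spectrum the integrand still carries $|\wh{1_A dm_{Z_1}}(\gamma)|^2$, which is not controlled by $\wh{1_A dm_{Z_0}}$ at all: the Plancherel mass of $1_A dm_{Z_1}$ lives at scale $|A\cap Z_1|/|Z_1|^2\approx\alpha/|Z_1|$, so the best you can extract from ``$|\wh{(1_{A\cap Z_0}-\alpha 1_{Z_0})}|\lesssim\epsilon|Z_0|$ off $\Gamma$'' is an error of order $\epsilon^2\alpha|Z_0|/|Z_1|$ (or, using the trivial bound $|\wh{1_A dm_{Z_1}}|\lesssim\alpha$, an error of order $\alpha^2$), and $|Z_0|/|Z_1|$ is unbounded in the intended application. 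This is not a bookkeeping loss that $O(\tau)$-accounting can absorb.

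Moreover the defect is not merely that your sketch is incomplete: one can arrange $A$ so that $A\cap Z_1$ correlates at level $\Theta(\alpha)$ with many characters at which $A\cap Z_0$ correlates only just below the $\epsilon$-threshold. Then the $U_2$-type norm is $\gg\epsilon^2$ while $\Spec_\epsilon(1_A dm_{Z_0})$ misses all the responsible frequencies, so your Bohr system places no constraint there and one can choose $Z'\subset Z_2\cap B_0$ with $\wh{m_{Z'}}$ essentially vanishing at them, violating the claimed inequality. A telling symptom is that your argument never uses the $\tau$-closedness of $(Z_1,Z_2)$ beyond the inclusion $Z_2\subset Z_1$, nor the hypothesis $|m_{Z_1}(A)-\alpha|<\tau$ in any essential way, whereas both are needed exactly to run the Parseval/annihilator step for $1_A dm_{Z_1}$ relative to $(Z_1,Z_2)$. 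A secondary point: your plan to work with ``approximately orthogonal characters relative to $m_{Z_0}$'' and absorb Plancherel defects is also not needed and is where unspecified errors would pile up; the paper's route avoids this entirely by using exact Plancherel on $\ell_2(G)$, with the closedness of $(Z_0,Z_1)$ entering only through the near-constancy of $1_{Z_0}\ast(1_A dm_{Z_1})$ on $Z_0^-$ and the comparison $\ell_2(Z_0^-)$ versus $L_2(m_{Z_0})$ at the final step. If you rerun your argument with $\Gamma:=\Spec_\epsilon(1_A dm_{Z_1})$, Lemma \ref{lem.pars} applied to $(Z_1,Z_2)$, and these exact-Plancherel comparisons, it becomes the paper's proof.
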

\begin{proof}
First note that
\begin{equation*}
1_{Z_0} \ast (1_A dm_{Z_1})(z)=m_{Z_1}(A)  \text{ for all } z \in Z_0^-,
\end{equation*}
whence
\begin{equation*}
\|1_{Z_0}\ast  (1_A dm_{Z_1}) -\alpha\|_{L_1(m_{Z_0})}\leq |m_{Z_1}(A)-\alpha| + \max\{\alpha,1-\alpha\}m_{Z_0}(Z_0 \setminus Z_0^-) = O(\tau).
\end{equation*}
The inequality $\|f\|_{L_2}^2\leq \|g\|_{L_2}^2 + (\|f\|_{L_\infty} + \|g\|_{L_\infty})\|f-g\|_{L_1}$ and Plancherel's theorem then give
\begin{align*}
 &|Z_0|\|1_{A\cap Z_0}\ast (1_A dm_{Z_1})-\alpha^2\|_{L_2(m_{Z_0})}^2\\
  & \qquad \qquad \qquad \leq |Z_0|\|(1_{A\cap Z_0}-\alpha 1_{Z_0})\ast (1_A dm_{Z_1})\|_{L_2(m_{Z_0})}^2 \\
  & \qquad \qquad \qquad \qquad\qquad \qquad+ O\left(|Z_0|\|1_{Z_0}\ast  (1_A dm_{Z_1}) -\alpha\|_{L_1(m_{Z_0})}\right)\\
 & \qquad \qquad \qquad = |Z_0|\|(1_{A\cap Z_0}-\alpha 1_{Z_0})\ast (1_A dm_{Z_1})\|_{L_2(m_{Z_0})}^2 + O(\tau |Z_0|)\\
& \qquad \qquad \qquad \leq \| (1_{A \cap Z_0} - \alpha1_{Z_0}) \ast (1_A dm_{Z_1})\|_{\ell_2(G)}^2+ O(\tau |Z_0|)\\
& \qquad \qquad \qquad = \int{|(1_{A\cap Z_0} - \alpha 1_{Z_0})^\wedge(\gamma)|^2|(1_Adm_{Z_1})^\wedge(\gamma)|^2d\gamma}+ O(\tau |Z_0|).
\end{align*}
On the other hand
\begin{equation*}
\int{|(1_{A\cap Z_0} - \alpha 1_{Z_0})^\wedge(\gamma)|^2d\gamma} = \|1_{A\cap Z_0}-\alpha 1_{Z_0}\|_{\ell_2(G)}^2 \leq (1+O(\tau))|Z_0|,
\end{equation*}
and so
\begin{align*}
 &|Z_0|\|1_{A\cap Z_0}\ast (1_A dm_{Z_1})-\alpha^2\|_{L_2(m_{Z_0})}^2\\
  & \qquad \qquad\leq \int_{\Spec_{\epsilon}(1_Adm_{Z_1})}{|(1_{A\cap Z_0} - \alpha 1_{Z_0})^\wedge(\gamma)|^2d\gamma} + \epsilon^2|Z_0|+ O(\tau |Z_0|).
\end{align*}
Apply Lemma \ref{lem.pars} (we may certainly assume $\delta \leq \frac{1}{2}$ or else there is nothing to prove) to the $\tau$-closed pair $(Z_1,Z_2)$ and $1_{A}$ (which has $\|1_{A}\|_{L_2(m_{Z_1})} \leq 1$) to get a system $B$ with $\dim B=O(\epsilon^{-2})$ and $\mathcal{C}^\flat(G;B_0) \leq \delta^{-O(\epsilon^{-2})}$ s.t.
\begin{equation*}
\Spec_{\epsilon}(1_Adm_{Z_1}) \subset \Ann(B_0\cap Z_2,4\epsilon^{-2}\tau +\delta).
\end{equation*}
It follows that if $Z' \subset B_0\cap Z_2$ then
\begin{equation*}
|\wh{m_{Z'}}(\gamma)-1|=O(\epsilon^{-2}\tau +\delta) \text{ for all }\gamma\in\Spec_{\epsilon}(1_Adm_{Z_1}),
\end{equation*}
and hence by the triangle inequality
\begin{align*}
&\int{|(1_{A\cap Z_0} - \alpha 1_{Z_0})^\wedge(\gamma)|^2|\wh{m_{Z'}}(\gamma)|^2d\gamma}\\
& \qquad \qquad \qquad \geq \int_{\Spec_{\epsilon}(1_Adm_{Z_1})}{|(1_{A\cap Z_0} - \alpha 1_{Z_0})^\wedge(\gamma)|^2d\gamma} - O((\epsilon^{-2}\tau + \delta)|Z_0|)\\
& \qquad \qquad \qquad \geq |Z_0|\|1_{A\cap Z_0}\ast (1_A dm_{Z_1})-\alpha^2\|_{L_2(m_{Z_0})}^2- O((\epsilon^2+\epsilon^{-2}\tau + \delta)|Z_0|).
\end{align*}
The result follows on dividing by $|Z_0|$ and applying Parseval's theorem again to see that
\begin{align*}
\int{|(1_{A\cap Z_0} - \alpha 1_{Z_0})^\wedge(\gamma)|^2|\wh{m_{Z'}}(\gamma)|^2d\gamma}& =\|(1_{A\cap Z_0} - \alpha 1_{Z_0})\ast m_{Z'}\|_{\ell_2(G)}^2\\
& =\|(1_{A\cap Z_0} - \alpha 1_{Z_0})\ast m_{Z'}\|_{\ell_2(Z_0^-)}^2 + O(|Z_0^+ \setminus Z_0^-|)\\
& =\|1_{A}\ast m_{Z'} - \alpha\|_{\ell_2(Z_0^-)}^2 + O(|Z_0^+ \setminus Z_0^-|)\\
&\leq \|1_{A} \ast m_{Z'}-\alpha\|_{L_2(m_{Z_0})}^2|Z_0| + O(\tau |Z_0|)
\end{align*}
since $Z' \subset Z_2 \subset Z_1$ and $(Z_0,Z_1)$ is $\tau$-closed.
\end{proof}
We now have all the Fourier tools we need.  The next lemma captures some facts about the non-model analogue of weighted covers (from \ref{stepdef} in \S\ref{sec.mod}) in a useful package for the main iteration lemma in our argument.
\begin{lemma}\label{lem.newprob}
Suppose that $G$ has no $2$-torsion, $A \subset G$, $(\Omega,\P)$ is a probability space, $z$ is a $G$-valued random variable, and $Z$ is an $\mathcal{N}(G)$-valued random variable.  Then there is an extension $(\Omega',\P')$ of $(\Omega,\P)$ supporting a $G$-valued random variable $z'$, such that for any $\mathcal{N}(G)$-valued random variable $W$ on $\Omega$ we have: for all $0 \leq s \leq r$
\begin{equation}\label{eqn.kkj}
\E{\|1_A \ast m_{2^r\cdot W} - m_{2^s\cdot (z+Z)}(A)\|_{L_2(m_{2^s\cdot(z+Z)})}^2} = \E'{|m_{2^sz'+2^r\cdot W}(A)- m_{2^s\cdot (z+Z)}(A)|^2};
\end{equation}
and
\begin{equation}\label{eqn.ifr}
\|\E'{m_{z'+W}} - \E{m_{z+Z}}\|\leq \tau \text{ if $(Z(\omega),W(\omega))$ is $\tau$-closed for all $\omega \in \Omega$};
\end{equation}
and for all $0 \leq s_0 \leq r$, we have
\begin{align}\label{eqn.irf}
\sum_{s=0}^r{\E'{m_{2^s\cdot (z'+2^{r-s_0}\cdot W)}(A)^2}} & \geq \sum_{s=0}^r{\E{m_{2^s\cdot(z+Z)}(A)^2}}\\ \nonumber& \qquad \qquad +\E'{|m_{2^{s_0}z'+2^r\cdot W}(A)- m_{2^{s_0}\cdot(z+Z)}(A)|^2}-O(\tau' r)
\end{align}
if $(Z(\omega),2^rW(\omega))$ is $\tau'$-closed for all $\omega \in \Omega$.
\end{lemma}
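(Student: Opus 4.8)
The plan is to build $(\Omega',\P')$ by \emph{resolving} the cover $z+Z$: take $\Omega':=\{(\omega,y):\omega\in\Omega,\ y\in z(\omega)+Z(\omega)\}$ with $\P'(\{(\omega,y)\}):=\P(\{\omega\})/|Z(\omega)|$, let $\phi(\omega,y):=\omega$ (so $\P'(\phi^{-1}(E))=\P(E)$ and $(\Omega',\P')$ extends $(\Omega,\P)$), and set $z'(\omega,y):=y$. Writing $z,Z$ (and later $W$) for their pull-backs, which depend only on $\phi(\cdot)=\omega$, the point of the construction is that conditioned on $\omega$ the variable $z'$ is distributed as $m_{z(\omega)+Z(\omega)}$; since $G$ has no $2$-torsion, $x\mapsto 2^sx$ is injective, so $2^sz'$ is conditionally distributed as $m_{2^s\cdot(z(\omega)+Z(\omega))}$.

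From here (\ref{eqn.kkj}) would be a direct computation: for fixed $\omega$ the identity $m_{2^sz'+2^r\cdot W(\omega)}(A)=(1_A\ast m_{2^r\cdot W(\omega)})(2^sz')$ (valid because $2^r\cdot W(\omega)$ is symmetric) turns $\E'{(|m_{2^sz'+2^r\cdot W}(A)-c_\omega|^2\mid\omega)}$, with $c_\omega:=m_{2^s\cdot(z(\omega)+Z(\omega))}(A)$, into $\|1_A\ast m_{2^r\cdot W(\omega)}-c_\omega\|_{L_2(m_{2^s\cdot(z(\omega)+Z(\omega))})}^2$; averaging over $\omega$ gives the stated equality. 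For (\ref{eqn.ifr}) one has $\E'{(m_{z'+W}\mid\omega)}=m_{z(\omega)+Z(\omega)}\ast m_{W(\omega)}$, so by translation-invariance and convexity of the total-variation norm together with Lemma \ref{lem.basicclosed} (\ref{pt.inv}), this differs from $m_{z(\omega)+Z(\omega)}$ by at most $\tau$ whenever $(Z(\omega),W(\omega))$ is $\tau$-closed; averaging over $\omega$ and using $\E{m_{z+Z}}=\E_\omega{m_{z(\omega)+Z(\omega)}}$ yields (\ref{eqn.ifr}).

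For (\ref{eqn.irf}) I would work one scale $s$ at a time, using $2^s\cdot(z'+2^{r-s_0}\cdot W)=2^sz'+2^{s+r-s_0}\cdot W$. Exactly as for (\ref{eqn.kkj}), conditioning on $\omega$ gives $\E'{(m_{2^sz'+2^{s+r-s_0}\cdot W}(A)^2\mid\omega)}=\|1_A\ast m_{2^{s+r-s_0}\cdot W(\omega)}\|_{L_2(m_{2^s\cdot(z(\omega)+Z(\omega))})}^2$. For $s\neq s_0$ I would bound this below by Cauchy--Schwarz (Jensen) by $(m_{2^s\cdot(z(\omega)+Z(\omega))}\ast m_{2^{s+r-s_0}\cdot W(\omega)}(A))^2$, and then use that --- via Lemma \ref{lem.basicclosed} (\ref{pt.dilation}) and (\ref{pt.nesting}) applied to the hypothesis that $(Z(\omega),2^r\cdot W(\omega))$ is $\tau'$-closed --- the relevant pair is $O(\tau')$-closed, so this mean is within $O(\tau')$ of $m_{2^s\cdot(z(\omega)+Z(\omega))}(A)$ and the term is $\geq m_{2^s\cdot(z(\omega)+Z(\omega))}(A)^2-O(\tau')$. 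For the single scale $s=s_0$, where the width is $2^r\cdot W(\omega)$, I would instead expand the square \emph{exactly}: with $c:=m_{2^{s_0}\cdot(z(\omega)+Z(\omega))}(A)$ and all norms and inner products in $L_2(m_{2^{s_0}\cdot(z(\omega)+Z(\omega))})$, $\|1_A\ast m_{2^r\cdot W(\omega)}\|_{L_2}^2=\|1_A\ast m_{2^r\cdot W(\omega)}-c\|_{L_2}^2+2c\langle 1_A\ast m_{2^r\cdot W(\omega)},1\rangle_{L_2}-c^2$, where the cross term equals $m_{2^{s_0}\cdot(z(\omega)+Z(\omega))}\ast m_{2^r\cdot W(\omega)}(A)=c+O(\tau')$ by the same closedness, and where (by (\ref{eqn.kkj})) the first summand averages over $\omega$ to exactly $\E'{|m_{2^{s_0}z'+2^r\cdot W}(A)-m_{2^{s_0}\cdot(z+Z)}(A)|^2}$. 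Summing the $s\neq s_0$ estimates and the $s=s_0$ identity over $0\leq s\leq r$ and averaging over $\omega$ then gives (\ref{eqn.irf}) with accumulated error $O(\tau'r)$.

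The hard part will be the bookkeeping in (\ref{eqn.irf}): one must verify that the particular multiple $2^{s+r-s_0}\cdot W(\omega)$ appearing at scale $s$ is genuinely $O(\tau')$-closed relative to $2^s\cdot Z(\omega)$ --- deducing this from the single hypothesis on $(Z(\omega),2^r\cdot W(\omega))$ through the dilation and nesting properties of $\tau$-closed pairs (Lemma \ref{lem.basicclosed}) --- and that the $O(\tau')$ errors really do add up to only $O(\tau'r)$ over the $r+1$ scales and the averaging in $\omega$, with no hidden blow-up. Everything outside of this is routine manipulation of conditional expectations and convolutions of finitely supported measures.
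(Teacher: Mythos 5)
Your construction of $(\Omega',\P')$ and $z'$ is identical to the paper's (the paper parametrises by $(\omega,z_*)$ with $z'=z(\omega)+z_*$, you by $(\omega,y)$ with $z'=y$, which is the same thing), and your treatment of (\ref{eqn.kkj}), (\ref{eqn.ifr}) and the expand-the-square/Jensen bookkeeping for (\ref{eqn.irf}) is exactly what the paper does (the paper expands $\E'{|X-c|^2}=\E'{X^2}-\E{c^2}+O(\tau')$ at every scale and then drops the nonnegative left side when $s\neq s_0$, which is Jensen in disguise). One caution on the point you yourself flag as delicate: the hypothesis is that $(Z(\omega),2^rW(\omega))$ is $\tau'$-closed, where $2^rW(\omega)$ denotes the $2^r$-fold iterated sumset $W(\omega)+\cdots+W(\omega)$, not the scalar multiple $2^r\cdot W(\omega)$ that you wrote; this distinction is exactly what makes the propagation work, since $2^{r-s_0}\cdot W(\omega)\subset 2^rW(\omega)$ (pad with zeros, as $0\in W(\omega)$) hands you $\tau'$-closedness of $(Z(\omega),2^{r-s_0}\cdot W(\omega))$ by Lemma \ref{lem.basicclosed} (\ref{pt.nesting}), after which (\ref{pt.dilation}) gives the pair you need at each scale $s$, whereas with the scalar reading $2^{r-s_0}\cdot W\not\subset 2^r\cdot W$ in general and the nesting step fails.
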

\begin{proof}
Let $\Omega':=\{(\omega,z_*): \omega \in \Omega, z_*\in Z(\omega)\}$ and
\begin{equation*}
\P'(\{(\omega,z_*)\})=\frac{1}{|Z(\omega)|}\cdot \P(\{\omega\}) \text{ for all } (\omega,z_*) \in \Omega'.
\end{equation*}
The space $(\Omega',\P')$ is an extension of $(\Omega,\P)$ via the canonical projection $\Omega'\rightarrow \Omega$.  Let $z'(\omega,z_*):=z(\omega)+z_*$.

The first part is immediate once the definition has been unpacked (using the fact that $G$ has no $2$-torsion so that $|2^s\cdot (z(\omega)+Z(\omega))| =|Z(\omega)|$.  For the second part use Lemma \ref{lem.basicclosed} (\ref{pt.inv}) to see that
\begin{align*}
\left\|\E'{m_{z'+W}} - \E{m_{z+Z}}\right\| & = \left\|\E{\int{m_{z(\omega)+z_*+W(\omega)}dm_{Z(\omega)}(z_*)}} - \E{m_{z(\omega)+Z(\omega)}}\right\|\\
& = \left\|\E{\int{\left(\tau_w(m_{z(\omega) + Z(\omega)})- m_{z(\omega)+Z(\omega)}\right)dm_{W(\omega)}(w)}}\right\|\\
& \leq \E{\int{\left\|\tau_w(m_{z(\omega) + Z(\omega)})- m_{z(\omega)+Z(\omega)}\right\|dm_{W(\omega)}(w)}}\\
& = \E{\int{\left\|\tau_w(m_{Z(\omega)})- m_{Z(\omega)}\right\|dm_{W(\omega)}(w)}}\leq \tau.
\end{align*}
For the third part, note that for $0\leq s \leq r$ we have
\begin{align}
\label{eqn.iv} &\E'{|m_{2^sz'+2^{r+s-s_0}\cdot W}(A)- m_{2^{s}\cdot(z+Z)}(A)|^2}\\
\nonumber & \qquad \qquad \qquad= \E{\int{|m_{2^{s}\cdot (z+z_*+2^{r-s_0}\cdot W)}(A)-m_{2^{s}\cdot (z+Z)}(A)|^2dm_{Z}(z_*)}}\\
\nonumber & \qquad \qquad \qquad= \E{\bigg(\int{m_{2^{s}\cdot (z+z_*+2^{r-s_0}\cdot W)}(A)^2dm_{Z}(z_*)}}\\
\nonumber & \qquad \qquad \qquad \qquad \qquad-2m_{2^{s}\cdot (z+Z)}(A)\int{m_{2^{s}\cdot (z+z_*+2^{r-s_0}\cdot W)}(A)dm_{Z}(z_*)}\\
\nonumber &  \qquad \qquad \qquad \qquad\qquad \qquad \qquad+ m_{2^{s}\cdot (z+Z)}(A)^2\bigg)\\
\nonumber & \qquad \qquad \qquad= \E'{m_{2^{s}\cdot (z'+2^{r-s_0}\cdot W)}(A)^2} - \E'{m_{2^{s}\cdot (z+Z)}(A)^2} +O(\tau').
\end{align}
The last equality follows since
\begin{align*}
&\int{m_{2^{s}\cdot (z(\omega)+z_*+2^{r-s_0}\cdot W(\omega))}(A)dm_{Z(\omega)}(z_*)}\\
 &\qquad \qquad =\int{m_{2^s\cdot (z(\omega)+Z(\omega)+2^{r-s_0}w)}(A)dm_{W(\omega)}(w)}\\ & \qquad \qquad =\int{\tau_{2^{r-s_0}w}(m_{z(\omega)+Z(\omega)})(2^{-s}\cdot (A\cap 2^s\cdot G))dm_{W(\omega)}(w)}\\ &\qquad \qquad =m_{z(\omega)+Z(\omega)}(2^{-s}\cdot (A\cap 2^s\cdot G)) + O(\tau')=m_{2^s\cdot (z(\omega)+Z(\omega))}(A) + O(\tau'),
\end{align*}
in view of the $\tau'$-closure of $(Z(\omega),2^{r-s_0}\cdot W(\omega))$ (inherited since $2^{r-s_0}\cdot W(\omega) \subset 2^rW(\omega)$) and Lemma \ref{lem.basicclosed} (\ref{pt.inv}) and the fact that $G$ has no $2$-torsion.

Sum (\ref{eqn.iv}) over $s$ and for $s \neq s_0$ use the lower bound of $0$ for the left hand side, valid since it is an average over a square.  The result follows.
\end{proof}
With these results in hand we turn to the main technical ingredient of the whole argument which we shall iterate to get Corollary \ref{cor.iju}.
\begin{lemma}\label{lem.inductivestep}
Suppose that $G$ has no $2$-torsion; $A,S \subset G$; and $(\Omega,\P)$ is a probability space supporting a $G$-valued random variable $z$, an $\mathcal{N}(G)$-valued random variable $Z$, and an $\mathcal{S}(G)$-valued random variable $T$ such that for all $\omega \in \Omega$ we have
\begin{equation*}
(Z(\omega),T(\omega)_0)\text{ is $\tau$-closed, }  \dim T(\omega) \leq d \text{ and } \mathcal{C}^\flat(S;T(\omega)) \leq D;
\end{equation*}
and $\delta,\nu \in \left(0,\frac{1}{2}\right]$ and $r \in \N$ are parameters.  Then either $\tau^{-1}\leq (\delta^{-1}r)^{O(1)}$; or 
\begin{enumerate}
\item \label{casefirst} there is an extension $(\Omega'',\P'')$ of $(\Omega,\P)$, supporting a $G$-valued random variable $z''$, an $\mathcal{N}(G)$-valued random variable $Z''$, and a $\mathcal{S}(G)$-valued random variable $T''$ with
\begin{equation*}
\|\E''{m_{z''+Z''}} - \E{m_{z+Z}}\| \leq \tau
\end{equation*}
and
\begin{equation}\label{eqn.op}
\left(\sum_{s=0}^r{\E''{m_{2^s\cdot (z''+Z'')}(A)^2}} \right)\geq \left(\sum_{s=0}^r{\E{m_{2^s\cdot(z+Z)}(A)^2}}\right) +\delta^{O(1)},
\end{equation}
such that for all $\omega'' \in \Omega''$, $(Z''(\omega''),T''(\omega'')_0)$ is $\nu$-closed, and
\begin{equation*}
\dim T''(\omega'') \leq d +\delta^{-O(1)} \text{ and }\mathcal{C}^\flat(S;T''(\omega'')) \leq D\exp((dkr\delta^{-1}\log\nu^{-1})^{O(1)});
\end{equation*}
\item \label{casesecond} or there is an extension $(\Omega',\P')$ of $(\Omega,\P)$, supporting a $G$-valued random variable $z'$ and $\mathcal{N}(G)$-valued random variables $Z_1,\dots,Z_k$ such that
\begin{enumerate}
\item \label{acon.u1X}
\begin{equation*}
\|\E'{m_{2^sz'+Z_i}} - \E{m_{2^s\cdot(z+Z)}}\| \leq \tau
\end{equation*}
for all $1 \leq i\leq k$ and $0 \leq s \leq r$;
\item \label{acon.u1} \emph{($U_1$-uniformity)}
\begin{equation*}
\E'{|m_{2^sz'+Z_i}(A)-m_{2^s\cdot(z+Z)}(A)|^2} \leq \delta
\end{equation*}
for all $1 \leq i\leq k$ and $0 \leq s \leq r$;
\item \label{acon.u2} \emph{($U_2$-uniformity)}
\begin{equation*}
\E'{\left\|1_{A\cap (2^sz'+Z_i)} \ast (1_{A}dm_{2^sz'+Z_j})- m_{2^s\cdot(z+Z)}(A)^2\right\|_{L_2\left(m_{2^{s+1}z'+Z_i}\right)}^2}\leq \delta
\end{equation*}
for all $1 \leq i <j \leq k$ and $0 \leq s \leq r$;
\end{enumerate}
and for all $\omega' \in \Omega'$, $(Z_i(\omega'),Z_{i+1}(\omega'))$ is $\delta$-closed for all $1 \leq i < k$, $\mathcal{C}^\flat(S;Z_k(\omega')) \leq D\exp((dkr\delta^{-1}\log \nu^{-1})^{O(1)})$.
\end{enumerate}
\end{lemma}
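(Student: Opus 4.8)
The strategy is the standard density-increment dichotomy: we run one step of the argument, and either $A$ fails to be sufficiently $U_2$-uniform on enough of the fibres (in which case we extract an energy increment and land in case \eqref{casefirst}), or it is uniform and we are essentially in case \eqref{casesecond}. The bookkeeping is entirely about passing $\tau$-closure, dimension, and covering-number bounds through the constructions, using Lemma \ref{lem.newprob}, Lemma \ref{lem.increment}, Lemma \ref{lem.new}, and Lemma \ref{lem.cc}. Throughout I will allow myself to discard the outcome $\tau^{-1} \leq (\delta^{-1}r)^{O(1)}$; so I fix $\tau$ small enough (in terms of $\delta$ and $r$) that every application of Lemma \ref{lem.newprob} and Lemma \ref{lem.increment} below produces genuinely negligible error terms, and every pair we ever need to re-regularise is closed enough to apply Lemma \ref{lem.cc}.

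First I would set up the fibre extension. Apply Lemma \ref{lem.newprob} to $(\Omega,\P)$, $z$, $Z$ to get $(\Omega',\P')$ and $z'$. Now I need $\mathcal{N}(G)$-valued random variables $Z_1 \supset Z_2 \supset \cdots \supset Z_k$ with $(Z_i,Z_{i+1})$ $\delta$-closed, obtained from the system $T(\omega)$ by repeatedly applying Lemma \ref{lem.cc}: each application to a dilate $2^{-m_i}T(\omega)$ costs a multiplicative $\exp((d k r \delta^{-1}\log\nu^{-1})^{O(1)})$-factor in the covering number (via Lemma \ref{lem.new} \eqref{pt.szy}) and leaves the dimension essentially unchanged, and we only iterate $k$ times, so the final $\mathcal{C}^\flat(S;Z_k)$ bound in both cases is as stated. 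Crucially, since $G$ has no $2$-torsion, $\tau$-closure of $(Z(\omega),T(\omega)_0)$ passes to $(2^s\cdot Z(\omega), 2^s\cdot T(\omega)_0)$ by Lemma \ref{lem.basicclosed} \eqref{pt.dilation}, so the same construction simultaneously controls all the dilates $2^s\cdot(z'+Z_i)$ for $0\le s\le r$. Conditions \eqref{acon.u1X} and \eqref{acon.u1} then follow from \eqref{eqn.ifr} and \eqref{eqn.kkj} of Lemma \ref{lem.newprob} respectively (with $W=Z_i$, using $2^rW(\omega)\subset$ something closed), once we know \emph{each} $Z_i$ is chosen inside a common small Bohr system when a $U_2$-increment is available.

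The heart is the dichotomy. Set up an energy functional $\Phi := \sum_{s=0}^r \E{m_{2^s\cdot(z+Z)}(A)^2}$, bounded above by $r+1$. Consider whether there exists some $0 \le s \le r$, some $1 \le i < j \le k$, and a positive-$\P'$-probability set of $\omega'$ on which the local $U_2$-quantity
\[
\left\|1_{A\cap(2^sz'+Z_i)}\ast(1_A\,dm_{2^sz'+Z_j}) - m_{2^s\cdot(z+Z)}(A)^2\right\|_{L_2(m_{2^{s+1}z'+Z_i})}^2
\]
exceeds $\delta$ with probability at least $\delta$ (say). If not, then averaging gives \eqref{acon.u2} and we are in case \eqref{casesecond} — here $Z_i$ can be taken directly from the $\delta$-closed chain built above, with no Bohr refinement needed. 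If yes, fix such $s$; then on a non-negligible set of $\omega'$ we apply Lemma \ref{lem.increment} (to $(Z_i,Z_{i+1})$ and $(Z_{i+1},Z_{i+2})$, so we need the chain to have length at least three at that index — harmless, just take $k$ a little larger or re-index) to obtain a single Bohr system $B^{(\omega')}$ with $\dim B^{(\omega')} = O(\delta^{-O(1)})$ and $\mathcal{C}^\flat(G;B^{(\omega')}_0)\le \delta^{-O(\delta^{-O(1)})}$ such that passing to $Z'' := Z_{i+2}\cap B^{(\omega')}_0$ (intersected with the fibre) boosts $\E''{m_{2^sz''+Z''}(A)^2}$, hence $\Phi$, by $\delta^{O(1)}$. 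I would package $B^{(\omega')}$ into a new system-valued random variable $T''$ by intersecting $T(\omega)$ (suitably dilated) with the Bohr system, using Lemma \ref{lem.new} \eqref{pt.intdim} for the dimension addition and the covering-number product; the $\nu$-closure of $(Z''(\omega''),T''(\omega'')_0)$ comes from one more application of Lemma \ref{lem.cc} to $T''$ at level depending on $\log\nu^{-1}$. Then \eqref{eqn.op} is exactly the $\Phi$-increment, and on the non-increment part of $\Omega''$ we just leave things unchanged (so $\Phi$ does not decrease there); \eqref{eqn.irf} of Lemma \ref{lem.newprob} is what lets us conclude the increment survives averaging over the fibre, provided $\tau$ was chosen small against $\delta$. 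The claimed measure-preservation $\|\E''{m_{z''+Z''}}-\E{m_{z+Z}}\| \le \tau$ is \eqref{eqn.ifr}.

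**Main obstacle.** The real work is the simultaneous-in-$s$ control: Lemma \ref{lem.increment} gives a density increment for one scale $2^s$, but the conclusion \eqref{eqn.op} demands an increment in the \emph{summed} energy $\sum_s \E''{m_{2^s\cdot(z''+Z'')}(A)^2}$, and one must check that refining the fibre-subspace at scale $2^s$ (via a Bohr set) does not decrease the energy at the other scales by more than the $\tau'r$ error in \eqref{eqn.irf}. This is precisely why Lemma \ref{lem.newprob} \eqref{eqn.irf} is phrased with the extra squared term at a single chosen $s_0$ and with the other scales contributing a trivial lower bound of $0$ — that is the mechanism that makes the increment at one scale dominate the (controlled) losses at the rest, and getting the order of quantifiers and the dependence of $\tau$ on $\delta$ and $r$ right there is the delicate point. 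The rest is routine propagation of the three bookkeeping quantities through intersections and dilates.
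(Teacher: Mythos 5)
Your plan correctly identifies the high-level iteration structure (build a $\delta$-closed chain inside $T(\omega)$ via Lemma~\ref{lem.cc}, use Lemma~\ref{lem.newprob} to pass to the fibre, dichotomise on whether the local $U_2$-norm is large, and if so apply Lemma~\ref{lem.increment} and feed the resulting increment into \eqref{eqn.irf}). But it is missing the first of the \emph{two} dichotomies, and this is not a bookkeeping gap.

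After constructing $(\Omega',\P')$ and the nested $Z_i'$, the paper first tests whether there exist $i_0,s_0$ with
\begin{equation*}
\E'{|m_{2^{s_0}z'+2^r\cdot Z_{i_0}'}(A)- m_{2^{s_0}\cdot(z+Z)}(A)|^2}\geq \delta_1.
\end{equation*}
If so, \eqref{eqn.irf} of Lemma~\ref{lem.newprob} converts this directly into the summed-energy increment and we land in case~\eqref{casefirst} already, with no Fourier work at all. Only if this fails does the argument proceed, and the \emph{negation} of this inequality (over all $i_0,s_0$) is precisely conclusion~\eqref{acon.u1}. You assert that \eqref{acon.u1} ``follows from \eqref{eqn.kkj}'', but \eqref{eqn.kkj} is an identity between two quantities; it gives no reason for either side to be small, and there is no a priori bound on the variance of the fibre-density $m_{2^sz'+Z_i}(A)$ around $m_{2^s\cdot(z+Z)}(A)$. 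The $U_1$-uniformity has to be extracted as a dichotomy outcome.

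The second, more consequential, problem is that your $U_2$-increment branch cannot close without the $U_1$-uniformity already secured. Lemma~\ref{lem.increment} needs hypotheses of the form $|m_{Z_i}(A-z_0)-\alpha|<\tau$, and when you convert the Bohr-refined $L_2$-discrepancy on $Z_{i_1}$ into the fibre variance that \eqref{eqn.kkj} and then \eqref{eqn.irf} can digest, you pick up error terms of the form $\delta_3^{-2}\,|m_{2^{s_1}z'+Z_{i_1}}(A)-m_{2^{s_1}\cdot(z+Z)}(A)|$ and $\delta_3^{-2}\,|m_{2^{s_1}z'+Z_{j_1}}(A)-m_{2^{s_1}\cdot(z+Z)}(A)|$. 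In the paper these are controlled \emph{only} because the first dichotomy returned its uniform branch (so the $U_1$-discrepancy is $<\delta_1$, and Cauchy--Schwarz then bounds these terms by $O(\delta_3^{-2}\delta_1^{1/2})$). Without the prior $U_1$-test these errors are unbounded and can swamp the $\delta_2$-increment the $U_2$-discrepancy was supposed to deliver. So your outline needs a $U_1$-dichotomy inserted as a first step, with the $U_2$-branch explicitly reading off the negation of the $U_1$-test when bounding its error terms.

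Two smaller issues: Lemma~\ref{lem.increment} is applied in the paper to the pairs $(Z_{i_1},Z_{j_1})$ and $(Z_{j_1},T'(\omega')_0)$ where $T'$ is a further dilate of the recorded system $T$, not to three consecutive $Z_i$'s --- $i_1$ and $j_1$ need not be adjacent, so your ``re-index'' fix does not apply. And the final system $T''$ must be obtained by intersecting a dilate of $T$ with the Bohr system and then running Lemma~\ref{lem.cc} once more to recover $\nu$-closure; you gesture at this but the dimension and $\mathcal{C}^\flat$ bookkeeping needs the specific chain of Lemma~\ref{lem.new}~\eqref{pt.intdim}, \eqref{pt.szy}, \eqref{pt.mult}, since multiplication by $2^r$ does not commute cheaply with intersection.
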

\begin{proof}
Let $\delta_0,\delta_1,\delta_2,\delta_3$ be related constants to be optimised later.  (They will all be of the shape $\delta^{O(1)}$.)

For each $\omega \in \Omega$ we shall create sets $Z_i'(\omega)$ and naturals $m_i$ iteratively.  Let $m_0:=r$, and suppose that $m_i$ has been defined for some $0 \leq i <k$.  Apply Lemma \ref{lem.cc} to the system $2^{-m_i}T(\omega)$ to get a set $T(\omega)_{m_i+1} \subset Z_{i+1}'(\omega) \subset T(\omega)_{m_i}$ and a natural $m_{i+1}=m_i+\log_2dr\delta_0^{-1}\nu^{-1}+O(1)$ such that $(Z'_{i+1}(\omega),T(\omega)_{m_{i+1}})$ is $r^{-1}\delta_0\nu$-closed.  Although the $m_i$s may depend on $\omega$, by construction we have the universal bound $m_i=O(r+k\log dr\delta_0^{-1}\nu^{-1})$ for $0 \leq i \leq k$; and we have that
\begin{equation}\label{eqn.aclosure}
(Z'_{i}(\omega),Z'_{i+1}(\omega)) \text{ is $r^{-1}\delta_0\nu$-closed for all }1 \leq i < k \text{ and }\omega \in \Omega,
\end{equation}
and (since $2^rZ'_{i}(\omega) \subset 2^rT(\omega)_{m_0} =2^rT(\omega)_r \subset T(\omega)_0$)
\begin{equation}\label{eqn.aclosuref}
(Z(\omega),2^rZ'_{i}(\omega)) \text{ is $\tau$-closed for all }1 \leq i \leq k \text{ and }\omega \in \Omega.
\end{equation}

Apply Lemma \ref{lem.newprob} to $z$, $Z$ to get an extension $(\Omega',\P')$ of $(\Omega,\P)$.  Suppose that there is some $1 \leq i_0 \leq k$ and $0 \leq s_0 \leq r$ such that
\begin{equation}\label{eqn.assnot}
\E'{|m_{2^{s_0}z'+2^r\cdot Z_{i_0}'}(A)- m_{2^{s_0}\cdot(z+Z)}(A)|^2}\geq \delta_1.
\end{equation}
Put $Z'(\omega):=2^{r-s_0}\cdot Z_{i_0}'(\omega)$ and use conclusion (\ref{eqn.irf}) of Lemma \ref{lem.newprob} (with $W=Z_{i_0}'$ applicable in view of (\ref{eqn.aclosuref})) to see that
\begin{align*}
\left(\sum_{s=0}^r{\E'{m_{2^s\cdot (z'+Z')}(A)^2}} \right)& \geq \left(\sum_{s=0}^r{\E{m_{2^s\cdot(z+Z)}(A)^2}}\right) +\delta_1-O(\tau r).
\end{align*}
It follows that either $\tau^{-1}=O(r\delta_1^{-1})$ (and we are done) or else
\begin{align*}
\left(\sum_{s=0}^r{\E'{m_{2^s\cdot (z'+Z')}(A)^2}} \right)& \geq \left(\sum_{s=0}^r{\E{m_{2^s\cdot(z+Z)}(A)^2}}\right) +\Omega(\delta_1).
\end{align*}
Let $T'(\omega):=2^{r-s_0}\cdot (2^{-m_{i_0}}T(\omega))$.  By Lemma \ref{lem.basicclosed} (\ref{pt.dilation}), the fact that $(2^{-m_{i_0}}T(\omega))_0=T(\omega)_{m_{i_0}}$, and the construction of $Z'_{i_0}(\omega)$ we have that $(Z'(\omega),T'(\omega)_0)$ is $\delta_0$-closed.  Moreover, by Lemma \ref{lem.new} (\ref{pt.mult}) and then Lemma \ref{lem.new} (\ref{pt.szy})
\begin{align}
\label{eqn.szest}\mathcal{C}^\flat(S;T'(\omega)_0)&=\mathcal{C}^\flat(S;2^{r-s_0}\cdot T(\omega)_{m_{i_0}})\\ \nonumber & \leq \exp(O(rd))\mathcal{C}^\flat(S;T(\omega)_{m_{i_0}})\\ \nonumber&  \leq \exp(O(rd+m_{i_0}))\mathcal{C}^\flat(S;T(\omega)_0) \leq D\exp((dkr\delta_0^{-1}\log \nu^{-1})^{O(1)}).
\end{align} 
Finally, use conclusion (\ref{eqn.ifr}) of Lemma \ref{lem.newprob} with $W=2^{r-s_0}\cdot Z_{i_0}'$ (applicable in view of (\ref{eqn.aclosuref})) and the fact that
\begin{equation*}
2^{r-s_0}\cdot Z_{i_0}'(\omega) \subset 2^{r-s_0}Z_{i_0}'(\omega) \subset 2^rZ_{i_0}'(\omega) \subset 2^rT(\omega)_{m_{i_0-1}}\subset 2^rT(\omega)_{m_0} = 2^rT(\omega)_r \subset T(\omega)_0
\end{equation*}
to see that
\begin{equation*}
\|\E'{m_{z'+Z'}} - \E{m_{z+Z}}\| \leq \tau.
\end{equation*}
Set $(\Omega'',\P''):=(\Omega',\P')$, $Z'':=Z'$, $z'':=z'$ and $T'':=T'$, and we are in case (\ref{casefirst}) of the lemma.

In view of this we assume that there is no $i_0$ or $s_0$ (in the given ranges) such that (\ref{eqn.assnot}) holds.  Put $Z_i:=2^r\cdot Z_i'$ for all $1\leq i \leq k$.  Then for $1 \leq i \leq k$ and $0 \leq s \leq r$ we have
\begin{align*}
\|\E'{m_{2^sz'+Z_i}} - \E{m_{2^s\cdot(z+Z)}}\| & = \|\E'{m_{2^sz'+2^r\cdot Z_i'}} - \E{m_{2^s\cdot(z+Z)}}\|\\
& = \|\E'{m_{z'+2^{r-s}\cdot Z_i'}} - \E{m_{z+Z}}\|\leq \tau,
\end{align*}
by (\ref{eqn.ifr}) with $W=2^{r-s}\cdot Z_i'$ since $2^{r-s}\cdot Z_i'(\omega) \subset 2^rZ_i'(\omega)\subset 2^rT(\omega)_{m_0} \subset T(\omega)_0$; we have established the conclusion (\ref{acon.u1X}).

Now, by (\ref{eqn.aclosure}) and Lemma \ref{lem.basicclosed} (\ref{pt.dilation}) we have that
\begin{equation}\label{eqn.zinv}
(Z_i(\omega),Z_{i+1}(\omega)) \text{ is $\delta_0$-closed for all }1 \leq i <k.
\end{equation}
Since $Z_k'(\omega)\supset T(\omega)_{m_k+1}$, Lemma \ref{lem.cnum} (\ref{pt.order}) and almost exactly the same argument as in (\ref{eqn.szest}) shows that
\begin{equation*}
\mathcal{C}^\flat(S;Z_k'(\omega)) \leq D\exp((dkr\delta_0^{-1}\log\nu^{-1})^{O(1)}).
\end{equation*}\
Since (\ref{eqn.assnot}) does not hold for any $1 \leq i \leq k$ or $0 \leq s\leq r$ we have that
\begin{equation*}
\E'{|m_{2^{s}z'+Z_i}(A)- m_{2^{s}\cdot(z+Z)}(A)|^2}<\delta_1 \text{ for all }1 \leq i \leq k, 0 \leq s \leq r.
\end{equation*}
Conclusion (\ref{acon.u1}) follows.

Suppose that for all $1 \leq i <j \leq k$ and $0 \leq s \leq r$ we have
\begin{equation*}
\E'{\|1_{A\cap (2^sz'+Z_i)} \ast (1_A dm_{2^sz'+Z_j}) - m_{2^s\cdot (z+Z)}(A)\|_{L_2(m_{2^{s+1}z'+Z_i})}^2} < \delta_2.
\end{equation*}
Then conclusion (\ref{acon.u2}) follows, and we are in case (\ref{casesecond}) of the lemma.  Thus we assume not so that there are elements $1 \leq i_1 <j_1 \leq k$ and $0 \leq s_1 \leq r$ such that
\begin{equation}\label{eqn.kymss}
\E'{\|1_{A\cap (2^{s_1}z'+Z_{i_1})} \ast (1_A dm_{2^{s_1}z'+Z_{j_1}}) - m_{2^{s_1}\cdot (z+Z)}(A)\|_{L_2(m_{2^{s_1+1}z'+Z_{i_1}})}^2} \geq \delta_2.
\end{equation}
For each $\omega \in \Omega$ put $T'(\omega):=2^r\cdot (2^{-{m_k}}T(\omega))$ so that Lemma \ref{lem.basicclosed} (\ref{pt.dilation}) tells us that $(Z_k(\omega),T'(\omega)_0)$ is $\delta_0$-closed.

For each $\omega' \in \Omega'$ apply Lemma \ref{lem.increment} with $\alpha= m_{2^{s_1}\cdot(z(\omega')+Z(\omega'))}(A)$ and both free parameters equal to $\delta_3$ (for reasons which will become clear) to $(Z_{i_1}(\omega'),Z_{j_1}(\omega'))$ and $(Z_{j_1}(\omega'),T'(\omega')_0)$ (which are both $\delta_0$-closed, the former by (\ref{eqn.zinv})), and the set $A-2^{s_1}z'(\omega')$.

Out of the lemma we get a Bohr system $B''(\omega')$ with $\dim B''(\omega') \leq \delta_3^{-O(1)}$ and $\mathcal{C}^\flat(G;B''(\omega')_0) \leq \exp(\delta_3^{-O(1)})$ such that for any $W''(\omega') \subset T'(\omega')_0\cap B''(\omega')_0$ we have
\begin{align*}
&\left\|1_{(A-2^{s_1}z'(\omega'))} \ast m_{W''(\omega')} - m_{2^{s_1}\cdot(z(\omega')+Z(\omega'))}(A)\right\|_{L_2\left(m_{Z_{i_1}(\omega')}\right)}^2\\
& \qquad  \geq \left\|1_{(A-2^{s_1}z'(\omega'))\cap Z_{i_1}(\omega')} \ast (1_{A-2^{s_1}z'(\omega')} dm_{Z_{j_1}(\omega')}) - m_{2^{s_1}\cdot (z(\omega')+Z(\omega'))}(A)^2\right\|_{L_2\left(m_{Z_{i_1}(\omega')}\right)}^2\\
&\qquad \qquad \qquad  - O\left( \delta_3 + \delta_3^{-2}\delta_0 + \delta_3^{-2}|m_{2^{s_1}z'(\omega')+Z_{i_1}(\omega')}(A)-m_{2^{s_1}\cdot (z(\omega')+Z(\omega'))}(A)|\right)\\
&\qquad \qquad \qquad \qquad \qquad \qquad - O\left(\delta_3^{-2}|m_{2^{s_1}z'(\omega')+Z_{j_1}(\omega')}(A)-m_{2^{s_1}\cdot (z(\omega')+Z(\omega'))}(A)|\right)\\
& \qquad  = \big\|1_{A\cap (2^{s_1}z'(\omega')+Z_{i_1}(\omega'))} \ast (1_{A} dm_{2^{s_1}z'(\omega')+Z_{j_1}(\omega')})\\
&\qquad \qquad \qquad \qquad \qquad \qquad \qquad \qquad \qquad - m_{2^{s_1}\cdot (z(\omega')+Z(\omega'))}(A)^2\big\|_{L_2\left(m_{2^{s_1+1}z'(\omega')+Z_{i_1}(\omega')}\right)}^2\\
&\qquad \qquad \qquad  - O\left( \delta_3 + \delta_3^{-2}\delta_0 +  \delta_3^{-2}|m_{2^{s_1}z'(\omega')+Z_{i_1}(\omega')}(A)-m_{2^{s_1}\cdot (z(\omega')+Z(\omega'))}(A)|\right)\\
&\qquad \qquad \qquad \qquad \qquad \qquad  - O\left( \delta_3^{-2}|m_{2^{s_1}z'(\omega')+Z_{j_1}(\omega')}(A)-m_{2^{s_1}\cdot (z(\omega')+Z(\omega'))}(A)|\right).
\end{align*}
Apply Lemma \ref{lem.cc} to the system $2^{-2r}(T'(\omega')\wedge B''(\omega'))$ (which has dimension $d+\delta_3^{-O(1)}$ by (Lemma \ref{lem.new} (\ref{pt.intdim}) and (\ref{pt.szy})) to get a set $Z''(\omega') \subset (T'(\omega')\wedge B''(\omega'))_{2r}$ and a natural $m'=O(\log_2d\delta_3^{-1}r\delta_0^{-1})$ such that $(Z''(\omega'),(T'(\omega')\wedge B''(\omega'))_{m'+2r})$ is $r^{-1}\delta_0$-closed.  Put $T''(\omega'):=2^r\cdot (2^{-(m'+r)}(T'(\omega')\wedge B''(\omega')))$ which has dimension $d+\delta_3^{-O(1)}$, and
\begin{equation*}
\mathcal{C}^\flat(S;T''(\omega')_0) \leq D\exp((dkr\delta_3^{-1}\delta_0^{-1}\log\nu^{-1})^{O(1)}).
\end{equation*}
Taking $W''(\omega'):=2^r\cdot Z''(\omega')$ (which is contained in $T'(\omega')_0\cap B''(\omega')_0$ by design), and averaging against $\P'$ we have
\begin{align*}
&\E'{\|1_{A-2^{s_1}z'} \ast m_{2^r\cdot Z''} - m_{2^{s_1}\cdot (z+Z)}(A)\|_{L_2\left(m_{Z_{i_1}}\right)}^2}\\
& \qquad  \geq \delta_2  - O\left( \delta_3 + \delta_3^{-2}\delta_0 + \delta_3^{-2}\E'{|m_{2^{s_1}z'+Z_{i_1}}(A)-m_{2^{s_1}\cdot (z+Z)}(A)|}\right)\\
&\qquad \qquad \qquad  - O\left( \delta_3^{-2}\E'{|m_{2^{s_1}z'+Z_{j_1}}(A)-m_{2^{s_1}\cdot (z+Z)}(A)|}\right)\\
& \qquad \geq \delta_2 - O\left(\delta_3 +\delta_3^{-2}\delta_0 + \delta_3^{-2}\left(\E{|m_{2^{s_1}z'+Z_{i_1}}(A)-m_{2^{s_1}\cdot (z+Z)}(A)|^2}\right)^{\frac{1}{2}}\right)\\
&\qquad \qquad \qquad  - O\left( \delta_3^{-2}\left(\E'{|m_{2^{s_1}z'+Z_{j_1}}(A)-m_{2^{s_1}\cdot (z+Z)}(A)|^2}\right)^{\frac{1}{2}}\right)\\
& \qquad \geq \delta_2 - O(\delta_3 + \delta_3^{-2}\delta_0 + \delta_3^{-2}\delta_1^{\frac{1}{2}})
\end{align*}
by (\ref{eqn.kymss}), the Cauchy-Schwarz inequality, and the fact that (\ref{eqn.assnot}) does not hold for $i_0=i_1$ and $s_0=s_1$, or $i_0=j_1$ and $s_0=s_1$.

Again, since (\ref{eqn.assnot}) does not hold for $s_0=s_1$ and $i_0=i_1$, we have from the Cauchy-Schwarz inequality that
\begin{align*}
&\E'{\|1_{A} \ast m_{2^r\cdot Z''} - m_{2^{s_1}z'+Z_{i_1}}(A)\|_{L_2\left(m_{2^{s_1}z'+Z_{i_1}}\right)}^2}\\
& \qquad \qquad \geq \E'{\|1_{A} \ast m_{2^r\cdot Z''} - m_{2^{s_1}\cdot(z+Z)}(A)\|_{L_2\left(m_{2^{s_1}z'+Z_{i_1}}\right)}^2}\\
&  \qquad \qquad  \qquad \qquad -O(\E'{|m_{2^{s_1}\cdot(z+Z)}(A)-m_{2^{s_1}z'+Z_{i_1}}(A)|})\\
& \qquad \qquad \geq \E'{\|1_{A-2^{s_1}z'} \ast m_{2^r\cdot Z''} - m_{2^{s_1}\cdot (z+Z)}(A)\|_{L_2\left(m_{Z_{i_1}}\right)}^2} -O(\delta^{-\frac{1}{2}}).
\end{align*}
Combining all this tells us that
\begin{equation*}
\E'{\|1_{A} \ast m_{2^r\cdot Z''} - m_{2^{s_1}z'+Z_{i_1}}(A)\|_{L_2(m_{2^{s_1}z'+Z_{i_1}})}^2} \geq  \delta_2 - O(\delta_3 + \delta_3^{-2}\delta_0 + \delta_3^{-2}\delta_1^{\frac{1}{2}}).
\end{equation*}
Recall that $Z_{i_1}=2^r\cdot Z_{i_1}'$ and apply Lemma \ref{lem.newprob} to $(\Omega',\P')$, $z'$ and $2^{r-s_1}\cdot Z_{i_1}'$ to get $(\Omega'',\P'')$ and $z''$.  (\ref{eqn.kkj}) combined with the above tells us that
\begin{equation*}
\E''{|m_{2^{s_1}z''+2^r\cdot Z''}(A)-m_{2^{s_1}\cdot (z'+2^{r-s_1}\cdot Z_{i_1}')}(A)|^2}  \geq  \delta_2 - O(\delta_3 + \delta_3^{-2}\delta_0 + \delta_3^{-2}\delta_1^{\frac{1}{2}}).
\end{equation*}
Since $(2^{r-s_1}\cdot Z_{i_1}'(\omega'),2^rZ''(\omega'))$ is $r^{-1}\delta_0$-closed (since $2^{r-s_1}\cdot (2^rZ''(\omega')) \subset T'(\omega')_0$ by design), (\ref{eqn.irf}) for $s_0=s_1$ tells us that
\begin{equation*}
\sum_{s=0}^r{\E''{m_{2^s\cdot (z''+2^{r-s_1}\cdot Z'')}(A)^2}} \geq \sum_{s=0}^r{\E'{m_{2^s\cdot (z'+2^{r-s_1}\cdot Z_{i_1}')}(A)^2}} +\delta_2 - O(\delta_3 + \delta_3^{-2}\delta_0 + \delta_3^{-2}\delta_1^{\frac{1}{2}}).
\end{equation*}
On the other hand from (\ref{eqn.irf}) (when we applied Lemma \ref{lem.newprob} to get $(\Omega',\P')$) and the fact that $(Z(\omega),2^rZ_{i_1}'(\omega))$ is $\tau$-closed (\ref{eqn.aclosuref}) we also have
\begin{equation*}
\sum_{s=0}^r{\E'{m_{2^s\cdot (z'+2^{r-s_1}\cdot Z_{i_1}')}(A)^2}} \geq \sum_{s=0}^r{\E{m_{2^s\cdot (z+Z)}(A)^2}} - O(\tau r).
\end{equation*}
Hence (either $\tau^{-1} =O(\delta_2^{-1}r^{-1})$) or else
\begin{equation*}
\sum_{s=0}^r{\E''{m_{2^s\cdot (z''+2^{r-s_1}\cdot Z'')}(A)^2}} \geq  \sum_{s=0}^r{\E{m_{2^s\cdot (z+Z)}(A)^2}} +\delta_2 - O(\delta_3 + \delta_3^{-2}\delta_0 + \delta_3^{-2}\delta_1^{\frac{1}{2}}).
\end{equation*}
Taking $\delta_2=\delta$, $\delta_3=c\delta$, $\delta_1=c\delta^6$ and $\delta_0=c\delta^3$ for some sufficiently small $c$ gives the result and we are in case (\ref{casefirst}) of the lemma.
\end{proof}

\begin{corollary*}[Corollary \ref{cor.iju}]Suppose that $G$ has no $2$-torsion; $A,S \subset G$; $(\Omega,\P)$ is a probability space supporting a $G$-valued random variable $z$, an $\mathcal{N}(G)$-valued random variable $Z$ and an $\mathcal{S}(G)$-valued random variable $T$ such that for all $\omega \in \Omega$,
\begin{equation*}
\dim T(\omega) \leq d \text{ and } \mathcal{C}^\flat(S;T(\omega)) \leq D
\end{equation*}
and $(Z(\omega),T(\omega)_0)$ is $\tau$-closed; and $\delta \in (0,1]$ and $r \in \N$ are parameters.  Then either $\tau^{-1}\leq (\delta^{-1}r)^{O(1)}$; or there is a probability space $(\Omega',\P')$ extending $(\Omega,\P)$, supporting a $G$-valued random variable $z'$ and $\mathcal{N}(G)$-valued random variable $Z'$ with
\begin{equation*}
\|\E'{m_{z'+Z'}} - \E{m_{z+Z}}\| \leq\delta,
\end{equation*}
and a further extension $(\Omega'',\P'')$ of $(\Omega',\P')$, supporting a $G$-valued random variable $z''$ and $\mathcal{N}(G)$-valued random variables $Z_1,\dots,Z_k$ such that
\begin{enumerate}
\item 
\begin{equation*}
\|\E''{m_{2^sz''+Z_i}} - \E'{m_{2^s\cdot(z'+Z')}}\| \leq \delta
\end{equation*}
for all $1 \leq i\leq k$ and $0 \leq s \leq r$;
\item  \emph{($U_1$-uniformity)}
\begin{equation*}
\E''{|m_{2^sz''+Z_i}(A)-m_{2^s\cdot(z'+Z')}(A)|^2} \leq \delta
\end{equation*}
for all $1 \leq i\leq k$ and $0 \leq s \leq r$;
\item  \emph{($U_2$-uniformity)}
\begin{equation*}
\E''{\left\|1_{A\cap (2^sz''+Z_i)} \ast (1_{A}dm_{2^sz''+Z_j})- m_{2^s\cdot(z'+Z')}(A)^2\right\|_{L_2\left(m_{2^{s+1}z''+Z_i}\right)}^2}\leq \delta
\end{equation*}
for all $1 \leq i <j \leq k$ and $0 \leq s \leq r$;
\end{enumerate}
and for all $\omega'' \in \Omega''$, $(Z_i(\omega''),Z_{i+1}(\omega''))$ is $\delta$-closed for all $1 \leq i < k$, and $\mathcal{C}^\flat(S;Z_k(\omega'')) \leq D\exp((dkr\delta^{-1})^{O(1)})$.
Suppose that $G$ has no $2$-torsion; $A,S \subset G$; $(\Omega,\P)$ is a probability space supporting a $G$-valued random variable $z$, an $\mathcal{N}(G)$-valued random variable $Z$ and an $\mathcal{S}(G)$-valued random variable $T$ such that for all $\omega \in \Omega$,
\begin{equation*}
\dim T(\omega) \leq d \text{ and } \mathcal{C}^\flat(S;T(\omega)) \leq D
\end{equation*}
and $(Z(\omega),T(\omega)_0)$ is $\tau$-closed; and $\delta \in (0,1]$ and $r \in \N$ are parameters.  Then either $\tau^{-1}\leq (\delta^{-1}r)^{O(1)}$; or there is a probability space $(\Omega',\P')$ extending $(\Omega,\P)$, supporting a $G$-valued random variable $z'$ and $\mathcal{N}(G)$-valued random variable $Z'$ with
\begin{equation*}
\|\E'{m_{z'+Z'}} - \E{m_{z+Z}}\| \leq\delta,
\end{equation*}
and a further extension $(\Omega'',\P'')$ of $(\Omega',\P')$, supporting a $G$-valued random variable $z''$ and $\mathcal{N}(G)$-valued random variables $Z_1,\dots,Z_k$ such that
\begin{enumerate}
\item 
\begin{equation*}
\|\E''{m_{2^sz''+Z_i}} - \E'{m_{2^s\cdot(z'+Z')}}\| \leq \delta
\end{equation*}
for all $1 \leq i\leq k$ and $0 \leq s \leq r$;
\item  \emph{($U_1$-uniformity)}
\begin{equation*}
\E''{|m_{2^sz''+Z_i}(A)-m_{2^s\cdot(z'+Z')}(A)|^2} \leq \delta
\end{equation*}
for all $1 \leq i\leq k$ and $0 \leq s \leq r$;
\item  \emph{($U_2$-uniformity)}
\begin{equation*}
\E''{\left\|1_{A\cap (2^sz''+Z_i)} \ast (1_{A}dm_{2^sz''+Z_j})- m_{2^s\cdot(z'+Z')}(A)^2\right\|_{L_2\left(m_{2^{s+1}z''+Z_i}\right)}^2}\leq \delta
\end{equation*}
for all $1 \leq i <j \leq k$ and $0 \leq s \leq r$;
\end{enumerate}
and for all $\omega'' \in \Omega''$, $(Z_i(\omega''),Z_{i+1}(\omega''))$ is $\delta$-closed for all $1 \leq i < k$, and $\mathcal{C}^\flat(S;Z_k(\omega'')) \leq D\exp((dkr\delta^{-1})^{O(1)})$.
\end{corollary*}
\begin{proof}
Let $\tau_0^{-1}=(r\delta^{-1})^{O(1)}$ be the function in the first conclusion of Lemma \ref{lem.inductivestep} applied with parameters $r$ and $\delta$.  If $\tau^{-1} \leq 2\delta^{-1}$ then terminate with $\tau^{-1}=(r\delta^{-1})^{O(1)}$, so assume not.  Let $\delta_0=\delta^{O(1)}$ be the lower bound in (\ref{eqn.op}) (when that lemma is applied with parameter $\delta$), and let $\nu_0^{-1}=(r\delta^{-1})^{O(1)}$ be such that $\nu_0<\tau_0$ and $\nu_0(r+1)\lceil \delta_0^{-1}\rceil + \tau \leq \delta$ which is possible since $\tau \leq \frac{1}{2}\delta$.

We proceed inductively to define $\Omega^{(i)}$, $\P^{(i)}$ such that $(\Omega^{(i)},\P^{(i)})$ is a probability space, $z^{(i)}$ is a $G$-valued random variable, $Z^{(i)}$ is an $\mathcal{N}(G)$-valued random variable, and $T^{(i)}$ is an $\mathcal{S}(G)$-valued random variable such that for all $\omega^{(i)} \in \Omega^{(i)}$ we have
\begin{equation*}
(Z^{(i)}(\omega^{(i)}),T^{(i)}(\omega^{(i)})) \text{ is $\tau$-closed if $i=0$, and $\nu_0$-closed if $i>0$},
\end{equation*}
\begin{equation*}
\dim T^{(i)}(\omega^{(i)})\leq d+i\delta^{-O(1)},
\end{equation*}
\begin{equation*}
\mathcal{C}^\flat(S;T^{(i)}(\omega^{(i)})) \leq D\exp(i(dkr\delta^{-1}\log\nu_0^{-1})^{O(1)}),
\end{equation*}
\begin{equation}\label{eqn.allow}
\|\E^{(i)}{m_{z^{(i)}+Z^{(i)}}} - \E{m_{z+Z}}\| \leq \begin{cases}0 & \text{ if } i=0\\ \tau + (i-1)\nu_0 & \text{ if }i>0\end{cases},
\end{equation}
and
\begin{equation}\label{eqn.summax}
\sum_{s=0}^r{\E^{(i)}{m_{2^s\cdot (z^{(i)} + Z^{(i)})}(A)^2}} \geq i\delta_0.
\end{equation}
We initialise with $\Omega^{(0)}:=\Omega$, $\P^{(0)}:=\P$, $z^{(0)}:=z$, $Z^{(0)}:=Z$, and $T^{(0)}:=T$, which satisfies the above requirements trivially.  At stage $i$ apply Lemma \ref{lem.inductivestep} to the space $(\Omega^{(i)},\P^{(i)})$; random variables $z^{(i)}$, $Z^{(i)}$ and $T^{(i)}$; parameter $\nu_0$ in place of $\nu$; $\nu_0$ or $\tau$ in place of $\tau$ (as named in Lemma \ref{lem.inductivestep}) depending on whether $i=0$ or $i>0$; and $\delta$ and $r$ as given.

Since $\nu_0^{-1}>\tau_0^{-1}$ we are not in the first case of the lemma.  (And if $i=0$ we can assume we are not in the first case or else we are in the $\tau$ large conclusion of the corollary.)  We shall terminate if in case (\ref{casesecond}) of the lemma, so assume not.  It follows we are in case (\ref{casefirst}) of the lemma which gives us an extension  $(\Omega^{(i+1)},\P^{(i+1)})$ of $(\Omega^{(i)},\P^{(i)})$ and random variables $z^{(i+1)}$, $Z^{(i+1)}$ and $T^{(i+1)}$ with (\ref{eqn.allow}) being a result of the triangle inequality.

In view of (\ref{eqn.summax}) this iteration cannot proceed for more than $(r+1)\lceil \delta_0^{-1}\rceil$ steps at which point we are in case (\ref{casesecond}) of Lemma \ref{lem.inductivestep}.  The conclusion follows from the triangle inequality again.
\end{proof}

\section{Counting}\label{sec.k}

In this section we prove the following which is the analogue of the model Lemma \ref{lem.ct}.  The key feature is that bound on $\epsilon$ in the third of the four possible conclusions only depends on $k$.  If we were prepared to admit $\alpha$-dependence then the uniformity of hypothesis (\ref{hyp.3}) would not be necessary.

\begin{lemma*}[Lemma \ref{lem.c}]
Suppose that $A,X \subset G$; $z_0 \in G$; $(Z_i,Z_{i+1})$ is $\tau$-closed for all $1 \leq i < k$; and
\begin{enumerate}
\item $|m_{Z_i}(A-z_0) -\alpha|\leq \tau$ for $1 \leq i \leq k$;
\item $m_{Z_i}(X-2z_0) \leq \epsilon$ for all $1 \leq i < k$;
\item 
\begin{equation*}
\left\|1_{(A-z_0)\cap Z_i} \ast (1_{A-z_0}dm_{Z_j}) - \alpha^2\right\|_{L_2(m_{Z_i})}^2 \leq \delta \text{ for all }1\leq i <j \leq k.
\end{equation*}
\end{enumerate}
Then either $\delta^{-1}=O(k^2\alpha^{-4})$; or $\tau^{-1} = O(k\alpha^{-1})$; or $\epsilon^{-1}=O(k^2)$; or
\begin{equation}\label{eqn.integral}
\int{\left(\prod_{i<j}{1_{(Z_i+2z_0)\setminus X}(z_i+z_j+2z_0)1_{Z_i}(z_i+z_j)}\right)\prod_{i=1}^k{1_A(z_i+z_0)dm_{Z_i}(z_i)}} =\Omega\left(\alpha^k\right). 
\end{equation}
\end{lemma*}
\begin{proof}
Replacing $A$ by $A+z_0$ and $X$ by $X+2z_0$ we may assume that $z_0=0_G$.  Recall the inequality
\begin{equation*}
\prod_{1 \leq i'<j' \leq k}{(1-x_{i'j'})} \geq 1-\sum_{1 \leq i'<j' \leq k}{x_{i'j'}} \text{ whenever }0 \leq x_{i'j'} \leq 1 \text{ for all }1 \leq i'<j'\leq k;
\end{equation*}
this is what we call the pigeonhole principle. Write $I$ for the integral in (\ref{eqn.integral}).  Then using the stated pigeonhole principle and integrating we have
\begin{align*}
I & \geq \int{\left(\prod_{1 \leq i<j \leq k}{1_{Z_i}(z_i+z_j)}\right)\prod_{i=1}^k{1_A(z_i)dm_{Z_i}(z_i)}} \\
& \qquad -\sum_{1 \leq i'<j' \leq k}{\int{1_{X\cap Z_{i'}}(z_{i'}+z_{j'})\left(\prod_{\substack{1 \leq i<j\leq k\\ (i,j) \neq (i',j')}}{1_{ Z_i}(z_i+z_j)}\right)\prod_{i=1}^k{1_A(z_i)dm_{Z_i}(z_i)}}}.
\end{align*}
Since the $Z_i$s are nested, for fixed $1\leq i <k$, we have
\begin{equation*}
\prod_{i<j \leq k}{1_{Z_i}(z_i+z_j)} \geq 1_{Z_i^-}(z_i) \text{ for all }z_{i+1} \in Z_{i+1},\dots,z_k\in Z_k.
\end{equation*}
From this and hypothesis (\ref{hyp.1}) we conclude that
\begin{align*}
I & \geq \prod_{i=1}^k{m_{Z_i}(A\cap Z_i^-)} - \sum_{1 \leq i' < j' \leq k}{\int{1_{X\cap Z_{i'}}(z_{i'}+z_{j'})\prod_{i=1}^k{1_A(z_i)dm_{Z_i}(z_i)}}}\\
& \geq (\alpha-2\tau)^k  -(\alpha+\tau)^{k-2}\sum_{1 \leq i' < j' \leq k}{\int{1_{X\cap Z_{i'}}(z_{i'}+z_{j'})1_A(z_{i'})dm_{Z_{i'}}(z_{i'})1_A(z_{j'})dm_{Z_{j'}}(z_{j'})}}\\
& = (\alpha-2\tau)^k-(\alpha+\tau)^{k-2}\sum_{1 \leq i' < j' \leq k}{\langle 1_{X},1_{A \cap Z_{i'}}\ast (1_A dm_{Z_{j'}})\rangle_{L_2(m_{Z_{i'}})}}.
\end{align*}
Now for fixed $1 \leq i'<j'\leq k$, the Cauchy-Schwarz inequality and hypothesis (\ref{hyp.3}) tell us that
\begin{equation*}
\left|\langle 1_{X},1_{A \cap Z_{i'}}\ast (1_A dm_{Z_{j'}})\rangle_{L_2(m_{Z_{i'}})} -\langle 1_{X},\alpha^2\rangle_{L_2(m_{Z_{i'}})}\right| \leq \delta^{\frac{1}{2}}m_{Z_{i'}}(X)^{\frac{1}{2}},
\end{equation*}
and so by (\ref{hyp.2}) we see that
\begin{equation*}
\langle 1_{X},1_{A \cap Z_{i'}}\ast (1_A dm_{Z_{j'}})\rangle_{L_2(m_{Z_{i'}})} \leq \langle 1_{X},\alpha^2\rangle_{L_2(m_{Z_{i'}})}+(\delta m_{Z_{i'}}(X))^{\frac{1}{2}} < \epsilon\alpha^2 + (\epsilon\delta)^{\frac{1}{2}}.
\end{equation*}
Combining all this we get that
\begin{equation*}
I \geq (\alpha-2\tau)^k - \binom{k}{2}(\alpha + \tau)^{k-2}(\epsilon\alpha^2 + (\epsilon \delta)^{\frac{1}{2}}).
\end{equation*}
It follows that either $\delta^{-1}= O(k^2\alpha^{-4})$; or $\tau^{-1}=O(k\alpha^{-1})$; or $\epsilon^{-1}=O(k^2)$; and if none of these holds then $I=\Omega(\alpha^k)$ as claimed.  The result is proved.
\end{proof}

\section*{Acknowledgements}  The author should like to thank the three referees both for their comments which very much improved the paper, and also their care in reading the paper which given its technical nature was no small ask.

\bibliographystyle{halpha}

\bibliography{references}

\end{document}